
\documentclass[11pt]{amsart}
\usepackage {amsmath, amssymb, a4wide,  enumerate, psfrag, color}
\usepackage{epsfig}
\usepackage[latin1]{inputenc}
\usepackage[english]{babel}

\date{\today}

\keywords{}
 \author{Romain Dujardin}
 \address{LAMA  \\
Université Paris-Est Marne-la-Vallée  \\
5 boulevard Descartes \\
77454 Champs sur Marne  \\
France}
\email{romain.dujardin@u-pem.fr} 
 
\author{Misha Lyubich}
\address{Mathematics Department and IMS \\ Stony Brook University \\ Stony Brook \\ NY 11794\\ USA.}
\email{mlyubich@math.sunysb.edu}

\title[Bifurcations of polynomial automorphisms of $\cd$]{Stability
  and bifurcations for dissipative polynomial automorphisms of $\cd$
  }




\newcommand{\cc}{\mathbb{C}}
\newcommand{\bb}{\mathbb{B}}
\newcommand{\re}{\mathbb{R}}
\newcommand{\dd}{\mathbb{D}}
\newcommand{\zz}{\mathbb{Z}}

\newcommand{\pp}{\mathbb{P}}
\newcommand{\e}{\varepsilon}
\newcommand{\cv}{\rightarrow}

\newcommand{\fr}{\partial}
\newcommand{\om}{\Omega}
\newcommand{\set}[1]{\left\{#1\right\}}
\newcommand{\norm}[1]{\left\Vert#1\right\Vert}
\newcommand{\abs}[1]{\left\vert#1\right\vert}
\newcommand{\cd}{{\cc^2}}
\newcommand{\pd}{{\mathbb{P}^2}}
\newcommand{\pu}{{\mathbb{P}^1}}
\newcommand{\rest}[1]{ \arrowvert_{#1}}

\newcommand{\unsur}[1]{\frac{1}{#1}}

\newcommand{\lrpar}[1]{\left(#1\right)}

\newcommand{\la}{\lambda}
\newcommand{\lo}{{\lambda_0}}

\newcommand{\La}{\Lambda}

\newcommand{\G}{\mathcal{G}}

\renewcommand{\Re}{\mathrm{Re}}

\newcommand{\itm}{\item[-]}

\DeclareMathOperator{\Int}{Int}
\DeclareMathOperator{\dist}{dist}

 \newtheorem{prop} {Proposition} [section]
\newtheorem{thm}[prop] {Theorem}
\newtheorem{defi}[prop] {Definition}
\newtheorem{lem}[prop] {Lemma}
\newtheorem{cor}[prop]{Corollary}
\newtheorem{theo}{Theorem}
\newtheorem{theoprime}{Theorem}

\newtheorem{conj}{Conjecture \hspace{-.5em}}

\theoremstyle{remark}

\newtheorem{rmk}[prop]{Remark}
\newtheorem{rmknonb}{Remark}

\newcommand{\Jac}{\operatorname{Jac}\,}

\newcommand{\OO}{{\mathcal{O}}}
\newcommand{\EE}{{\mathcal{E}}}
\newcommand{\FF}{{\mathcal{F}}}
\newcommand{\GG} {{\mathcal {G}}}
\newcommand{\PP}  {{\mathcal{P}}}

\newcommand{\Om}{{\Omega}}

\newcommand{\sm}{\setminus}
\newcommand{\ra}{\rightarrow}
\newcommand{\Hyp} {{\Bbb{H}}}
\newcommand{\ssk}{\smallskip}
\newcommand{\msk}{\medskip}
\newcommand{\nin}{\noindent}
\newcommand{\cl}{\operatorname{cl}}

\newcommand{\BHM}{{BHM}\ }
\newcommand{\Saddles}{\mathfrak{S}}

\newcommand{\de}{\delta}
\newcommand{\eps}{\epsilon}

\newcommand{\bJ}{\widehat J}
\newcommand{\isom}{\approx}
\newcommand{\intr}{{i}}

\newcommand{\comm}[1]{}

\begin{document}

 \begin{abstract}
 We study stability and bifurcations in holomorphic families of polynomial automorphisms of $\cd$. 
 We say that such a family is weakly stable over some parameter domain if periodic orbits do not bifurcate 
  there. We first show that this defines a meaningful notion of stability, which parallels in many ways the classical notion of $J$-stability in one-dimensional dynamics. Define the bifurcation locus to be the complement of the weak stability locus. 
   In the second part of the paper, we prove that under an assumption of moderate 
 dissipativity, the parameters displaying homoclinic tangencies are dense in the bifurcation locus. 
 This confirms one of Palis' Conjectures in the complex setting. The proof relies on the formalism of semi-parabolic bifurcation and the construction of ``critical points" in semi-parabolic basins (which makes use of the classical Denjoy-Carleman-Ahlfors and Wiman Theorems). 
 \end{abstract}
 
 \maketitle

 \section*{Introduction}
 
One of the main goals in the modern theory of dynamical systems is
to describe the dynamics of  typical  mappings in a representative family. 
 Let us consider for instance  the space of $C^k$ diffeomorphisms ($k\geq 1$) of real compact surfaces.   
It was briefly believed in the 1960's that hyperbolicity was  generically satisfied in 
$\mathrm {Diff}^k(M)$. 
This hope  was 
discouraged fast, particularly  with the discovery by  S. Newhouse
 \cite{newhouse1, newhouse2} of  an open region ${\mathcal N}$  in
 $\mathrm{Diff}^k(M)$,  $k\geq 2$ 
containing a dense subset of maps that
 display   homoclinic tangencies.
Moreover, a generic  map in  ${\mathcal N}$ has infinitely many sinks. 
(We will refer to ${\mathcal N}$ as the {\it Newhouse region}.)

A more refined picture of typical dynamics of diffeomorphisms
then  gradually   emerged. It was articulated  by  J. Palis as a series of conjectures
 (see e.g. \cite{Palis}, \cite[Chap. 7]{palis takens}). 
The first conjecture on this list  is the following:
 
 \begin{conj}[Palis] 
 Every $f \in \mathrm{Diff}^k (M)$, $k \geq  1$, can be
 $C^k$-approximated either by a hyperbolic diffeomorphism or by one  
 exhibiting a homoclinic tangency.
 \end{conj}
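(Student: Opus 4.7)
The plan is to establish a dichotomy on each chain-recurrence class of $f$ and then assemble the global conclusion by a Baire-category argument. First I would reduce to a Kupka--Smale generic $f$, so that all periodic orbits are hyperbolic and their invariant manifolds are in general position; by Conley's theory the non-wandering set $\Omega(f)$ then decomposes into (countably many) chain-recurrence classes $\{C_i\}$. The local goal becomes: on each $C_i$, either show that a small $C^k$ perturbation makes $C_i$ uniformly hyperbolic (contributing to an Axiom A approximation), or produce a homoclinic tangency by a small $C^k$ perturbation. A diagonal/Baire argument then delivers the required global approximation, since the hyperbolic property is open and the set of maps with a tangency is $C^k$-dense among the non-hyperbolic ones by hypothesis.

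The model for each class is the Pujals--Sambarino theorem in the $C^1$ topology on surfaces. On a non-trivial homoclinic class $H(p)$, the key alternative is the presence or absence of a dominated splitting of $TM$ over $H(p)$. If such a splitting exists, a Mañé-type argument (combined with perturbations that remove sources and non-hyperbolic periodic orbits) upgrades it to a uniformly hyperbolic splitting and yields the Axiom A approximation. If not, Mañé's ergodic closing lemma produces periodic saddles in $H(p)$ whose eigenvalue ratios tend to $1$; a localized perturbation near one such saddle then creates a weak saddle that can be bent, via a Franks-type perturbation of its derivative and a Hayashi connecting argument, into either a sink or a homoclinic tangency inside $H(p)$. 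Generic position of the invariant manifolds then ensures the tangency survives further $C^k$-small perturbations.

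The chief obstacle, and the reason the conjecture remains open in the stated generality, is that essentially all of the perturbative tools available in the $C^1$ category fail in $C^k$ for $k\geq 2$: Franks' lemma, Mañé's ergodic closing lemma, and Hayashi's connecting lemma all rely on the flexibility of $C^1$-perturbations supported in narrow tubes along orbits. Any $C^k$ substitute has to reckon with the fact that a $C^k$-small bump is also $C^{k-1}$-small in all intermediate derivatives, which drastically constrains how much one can alter Jacobians or eigendirections without paying a global cost. In dimensions $\geq 3$ there is the additional problem that the dichotomy ``dominated splitting or not'' must be refined to handle several intermediate bundles, partially hyperbolic blocks, and heterodimensional cycles, as in the Bonatti--D\'iaz--Pujals--Crovisier program.

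Concretely I would therefore attempt: (i) reduce to a Kupka--Smale $f$ and pass to the chain-recurrent decomposition; (ii) on each class, run the dichotomy above with $C^1$-perturbations and obtain the $C^1$ surface case \emph{\`a la} Pujals--Sambarino; (iii) in higher dimensions substitute Crovisier's central-model technology to analyse the non-hyperbolic direction inside a normally hyperbolic invariant piece, reducing the problem to a one-dimensional Mañé--Pujals argument; (iv) upgrade to $C^k$ by confining all needed perturbations to arbitrarily small neighbourhoods of finitely many periodic orbits, where bounded distortion lets a $C^k$ bump realize the same dynamical effect as a $C^1$ Franks perturbation. Step (iv) is where I expect the proof to stall: it is exactly the missing ingredient in the real smooth category, and it is also the gap that the present paper bypasses in the holomorphic setting by exploiting the rigidity of semi-parabolic bifurcations under the moderate dissipativity hypothesis.
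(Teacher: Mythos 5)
The statement you are addressing is not a theorem of the paper: it is the Palis Conjecture, quoted in the introduction as motivation, and the paper explicitly says it ``remains wide open for $k>1$'' (only the $C^1$ case on surfaces, due to Pujals--Sambarino, is known). So there is no ``paper's proof'' to compare against, and your proposal cannot be judged as a proof attempt --- correctly, you do not claim it is one. Your sketch is an accurate summary of the known strategy (Kupka--Smale reduction, chain-recurrent decomposition, dominated-splitting dichotomy, Franks/Ma\~n\'e/Hayashi perturbations, Crovisier's central models in higher dimension), and you put your finger on exactly the right obstruction: the $C^1$-perturbation toolbox (Franks' lemma, ergodic closing lemma, connecting lemma) has no $C^k$ analogue for $k\ge 2$, because a $C^k$-small bump carries global $C^{k-1}$ costs. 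Step (iv) of your plan is precisely where the argument is not known to go through, and that is why the conjecture is open.

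What the paper actually proves is a holomorphic analogue in a restricted setting: for holomorphic families of moderately dissipative polynomial automorphisms of $\mathbb{C}^2$ (Jacobian $< 1/d^2$), weak $J^*$-stability together with homoclinic tangencies is dense in parameter space (Theorems A and A$'$). The mechanism there is entirely different from the real $C^k$ program: instead of Franks-type derivative perturbations, the authors use holomorphic motions and branched holomorphic motions to set up a notion of weak stability, then locate ``critical points'' in semi-parabolic basins via the Denjoy--Carleman--Ahlfors and Wiman theorems on entire functions of finite order, and finally propagate these critical points out of the basin using semi-parabolic implosion (transit mappings \`a la Douady--Sentenac) to produce tangencies. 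None of this relies on $C^1$-flexibility --- it exploits the rigidity of holomorphic dynamics, which is why it works where the real $C^k$ argument stalls. Your last sentence already anticipates this distinction, and it is the correct reading of the relationship between the conjecture and the paper's result.
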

 
 Here  ``homoclinic tangency'' 
means a tangency between the stable and unstable manifolds of some saddle periodic point.  
Since hyperbolic diffeomorphisms are structurally stable, this singles out homoclinic tangencies as a basic phenomenon responsible for bifurcations. 
 This  conjecture was proven for $k=1$ by E. Pujals and M. 
 Sambarino \cite{pujals sambarino}, nevertheless  it remains wide open for $k>1$. 
 More generally,  there has been an important progress in the understanding of $C^1$-generic dynamics in the past few years 
 (see  \cite{crovisier} for a recent  overview).
 
 \medskip
 
 Another  situation that has been extensively studied 
 is  One-Dimensional Dynamics, both real and complex. 
In fact, the early Density of Hyperbolicity Conjecture turned out to
be true in the real one-dimensional case  \cite{Acta,GS,KSS}. 
It is conjectured to be true in the complex case as well
(this is known as the {\it Fatou Conjecture}), but this
problem is still open. 

 Consider a holomorphic family $(f_\la)_{\la\in \La}$ of rational mappings of degree $d$
 on the Riemann sphere $\pu(\cc)$, parameterized by a complex manifold $\La$ (which may be the whole space of rational mappings 
 of degree $d$). We say that the family is {\em $J$-stable} in a connected open subset 
 $\om\subset\La$ if in $\om$ the dynamics is   structurally stable on the Julia set $J$. 
Work of R. Ma\~né, P. Sad and D. Sullivan \cite{mss} and independently 
 of the second author \cite{lyubich,lyubich-2} implies  that the $J$-stability locus is dense in $\La$. In addition, 
parameters with 
preperiodic critical points 
(which is the one-dimensional counterpart of the homoclinic tangency)
are  dense in the bifurcation locus.   
We see that the Fatou Conjecture is reduced to the
problem  
whether $J$-stability implies 
hyperbolicity (for a sufficiently 
generic  family, 
like the whole space of 
polynomials or rational maps of a given  degree).

\medskip

In this paper we deal with families of   
polynomial automorphisms of $\cd$, which shares features with both of the previous settings. 
S. Friedland and J. Milnor \cite{fm}  showed
that dynamically interesting automorphisms in $\cd$ are conjugate to 
compositions of Hénon mappings $(z,w)\mapsto (aw + p(z), az)$, where $a$ is a non-zero complex number and $p$ is a polynomial
of degree at least two. 
{\it  In what follows, we assume without
   saying that all  automorphisms under consideration are dynamically
   interesting}, and in particular, they have dynamical degree $d\geq 2$ (see \S \ref{sec:prel} for a review of this notion).

Note that a polynomial automorphism $f$ has constant complex
Jacobian $\Jac f= \det Df$. 
So 
$\Jac f$
is a well-defined quantity attached  to $f$. We work in the dissipative setting, and our main results actually 
require some stronger form of  dissipation, namely we need 
\begin{equation}
  \abs{ \Jac f} < \frac 1{d^2}, \quad \mathrm{where }\ d \text{ is the dynamical degree of }f.
  \end{equation}
We will call such maps {\it moderately dissipative}\footnote{The word ``moderately" was chosen to  contrast with the 
 very strong dissipativity assumptions that are usually made in the study of real  Hénon mappings.}

We denote by $J^*$ the
closure of saddle periodic points of $f$.
 It is unknown whether $J^*$ is always equal to the ``small Julia set'' $J$,  
which can be defined in classical terms as the locus where both families 
$\{f^n\}_{n\geq 0}$ and $\{f^n\}_{n\leq 0}$  are not normal.

From one-dimensional holomorphic dynamics we borrow the idea of focusing on $J$-stability rather than hyperbolicity, and in 
accordance with the Palis program, we explain bifurcations by the presence of homoclinic tangencies. 
Our main result is the following, in the spirit of the Palis
conjecture
 (the precise meaning of the terminology ``weakly 
 stable" will be explained shortly). 

\begin{theo}\label{theo:dense}
 Let $(f_\la)_{\la\in \La}$ be a holomorphic family of 
moderately dissipative polynomial automorphisms of $\cd$
of dynamical degree $d\geq 2$. 
Then  weakly 
stable maps, together with maps exhibiting homoclinic
tangencies form a dense subset  of 
$\La$. 
 \end{theo}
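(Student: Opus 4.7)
The plan follows the two-stage Ma\~n\'e--Sad--Sullivan strategy: first reduce to a canonical bifurcation, then produce nearby homoclinic tangencies. The goal is to show that every parameter $\lo$ in the bifurcation locus is accumulated by parameters displaying a homoclinic tangency. Since $\lo$ is not weakly stable, some saddle periodic point $p_\la$ of period $n$, defined on a neighborhood of $\lo$, has multipliers $(\alpha(\la),\beta(\la))$ with $|\alpha|\leq|\beta|$ that cross the unit circle. The identity $\alpha\beta = \Jac(f_\la^n)$ combined with the moderate dissipativity bound $|\Jac(f_\la)|<d^{-2}$ forces $|\alpha(\la)|\leq d^{-n}$ whenever $|\beta(\la)|$ is close to $1$, so the bifurcation must come from $\beta$ crossing the unit circle. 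By a small perturbation one can hit a parameter $\la_1$ close to $\lo$ for which $\beta(\la_1)$ is a root of unity; passing to an iterate we reduce to $\beta(\la_1)=1$. Thus $p_{\la_1}$ is a \emph{semi-parabolic} fixed point of (an iterate of) $f_{\la_1}$, with eigenvalues $(\alpha_1,1)$ and $|\alpha_1|\ll 1$.

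At this semi-parabolic parameter, the local theory provides an open attracting basin $\mathcal{B}$ for $p_{\la_1}$, foliated by strong-stable manifolds and containing a local attracting petal. One finds a saddle $q$ of $f_{\la_1}$ whose unstable manifold $W^u(q)$ meets $\mathcal{B}$ (by density arguments in $J^*$, e.g. taking $q$ to be a nearby saddle accumulating on $p_{\la_1}$). Uniformize $W^u(q)$ by an entire curve $\psi:\cc\to\cd$, and let $\tau:\mathcal{B}\to\cc$ be a Fatou coordinate on the basin, normalized so that $\tau\rond f_{\la_1}=\tau+1$. The entire function $F:=\tau\rond\psi$, restricted to the open set $\psi^{-1}(\mathcal{B})\subset\cc$, then satisfies the functional equation $F(\beta_q z)=F(z)+1$, where $\beta_q$ is the unstable multiplier of $q$. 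The heart of the argument is to produce a critical point of $F$ inside $\psi^{-1}(\mathcal{B})$. If no critical point existed, $F$ would be locally biholomorphic on $\psi^{-1}(\mathcal{B})$; combined with the functional equation and the topology of the ends of $\psi^{-1}(\mathcal{B})$, this would force $F$ to have a strongly restricted asymptotic behavior. The classical Denjoy--Carleman--Ahlfors and Wiman theorems limit the number of asymptotic tracts of an entire function in terms of its order, and a careful analysis of the ends of $\psi^{-1}(\mathcal{B})$ and their images under $F$ should yield the required contradiction.

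Once such a critical point $\zeta\in\psi^{-1}(\mathcal{B})$ has been produced, set $c:=\psi(\zeta)\in W^u(q)\cap\mathcal{B}$. A suitable one-parameter unfolding of the semi-parabolic replaces $p_{\la_1}$ by a nearby saddle $p'$ whose stable manifold $W^s(p')$ is close to the attracting petal of $p_{\la_1}$, and the deformation of $c$ produces a tangent intersection of $W^u(q)$ with $W^s(p')$. Transversality of the unfolding then yields an actual homoclinic tangency at an arbitrarily close parameter. The principal obstacle is clearly the construction of the critical point in $\mathcal{B}$ via entire function theory: one must translate a genuinely two-dimensional dynamical problem into a question on the growth and asymptotic behavior of an entire function on a prescribed open subset of $\cc$, and apply Denjoy--Carleman--Ahlfors and Wiman in just the right geometric framework so that the absence of a critical point becomes topologically impossible.
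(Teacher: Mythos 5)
Your reduction to a semi-parabolic bifurcation, and your definition of a ``critical point'' as a critical point of $\varphi^\iota\circ\psi^u$ (your $F=\tau\circ\psi$), match the paper. But there are two genuine gaps.

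First, in the critical-point construction. The Denjoy--Carleman--Ahlfors and Wiman theorems are statements about \emph{entire} functions of finite order. Your $F=\tau\circ\psi$ is defined only on the open set $\psi^{-1}(\mathcal{B})\subset\cc$, not on all of $\cc$, so you cannot apply DCA/Wiman to $F$ directly, and the functional equation $F(\beta_q z)=F(z)+1$ does not rescue this (it even suggests $F$ has unbounded many asymptotic tracts near $\partial\psi^{-1}(\mathcal{B})$, which is not immediately a contradiction). The paper's resolution is to apply DCA/Wiman not to $F$ but to the coordinate functions of $\psi^u$ itself, which \emph{are} entire and of finite order $\log d/\log|\kappa^u|^{-1}$ (Lemma 6.2). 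One then has to split into two cases depending on whether the cluster set of the asymptotic path $\psi^u(\gamma)$ in $\mathcal{B}$ touches the strong stable manifold of $p$ or is compact in $\mathcal{B}$; the first case uses Wiman and the moderate dissipativity hypothesis $|\Jac f|<d^{-2}$ (which makes $W^{ss}(p)$ have order $<1/2$), while the second requires a new $\epsilon$-approximate version of DCA (Theorem 6.3) since the asymptotic value of $F$ in $\cc$ is not automatically an asymptotic value of $\psi^u$ in $\cd$. You have not identified where the $d^{-2}$ bound actually enters; it is not used in the reduction step you cite (plain dissipativity suffices there).

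Second, the passage from critical point to homoclinic tangency is far too sketchy. The paper devotes all of Section 7 (the ``tour de valse'') to constructing transit maps $f^n_{\la_n}$ under a semi-parabolic implosion with arbitrary multiplicity $k$ and good control on derivatives and foliation pullbacks, and then Proposition 8.1 gives a soft complex-analytic device (via intersection indices and the Argument Principle) that produces a tangency between a moving family of horizontal curves of degree $>1$ and moving vertical graphs, \emph{without any genericity or transversality assumption}. Your phrase ``transversality of the unfolding then yields an actual homoclinic tangency'' assumes precisely the genericity that the paper works hard to avoid. You also aim for a tangency between $W^u(q)$ and $W^s(p')$ for a new saddle $p'$ born from the semi-parabolic, which would be heteroclinic; the paper actually produces a homoclinic tangency of $q$ with itself by transporting the critical point through the gate onto a transverse intersection $m\in\Sigma\cap W^s(q)$.
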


It is also true that
{\it  weakly 
stable maps, together with maps that have
  infinitely many  sinks  form a dense subset  
in   $\La$}.  
Somewhat surprisingly, this is just an  observation
obtained by analyzing the one-dimensional argument. 

The set of  locally weakly 
stable parameters 
will be simply referred to as the {\em stability locus}, and its complement is by definition the {\em bifurcation locus}. 
It is worth  mentioning here that  G. Buzzard \cite{buzzard} showed
that {\it the Newhouse region is non-empty} 
in the  space of polynomial automorphisms of sufficiently high degree.
It follows that  the  stability locus is not dense in general.

\medskip

 Let us now 
 discuss the notion of weak 
 stability. To say it briefly, a family of polynomial automorphisms is weakly 
 stable in some open set 
  if periodic points do not bifurcate there. The first part 
  of this paper is devoted to demonstrating that this defines a reasonable notion of 
 stability in this context, parallel to the usual  $J$-stability in dimension 1. 
 In particular we show that in a weakly 
 stable family: 
 \begin{itemize}
\itm there are no homoclinic bifurcations, and moreover all homoclinic and heteroclinic 
intersections can be followed holomorphically;
\itm the sets $J^*$, $J^-$, $J^+$, $K$
 move continuously in the Hausdorff topology;
\itm  connectivity properties of the Julia sets are preserved. 
\end{itemize}
Let us point out that these results are true for any dissipative family, without further assumption on the Jacobian.
Naturally,  these results are based on a generalization to two dimensions of the key  
 idea of {\em holomorphic motion}.   
A fundamental problem here is  
 that a holomorphic motion of a set $X$ in higher dimension 
does not automatically admit an extension to a motion of $\overline X$.  
In practice, we work with a weaker notion of ``branched holomorphic motion", in which collisions are allowed. 
Because of this, we have not been able to prove that weak
stability implies structural $J^*$-stability.

An important special case is when $f$ is uniformly hyperbolic on $J^*$. 
It then follows from the classical theory of hyperbolic dynamical systems that $f$ is locally structurally stable on $J^*$. In addition it is known 
that  $J^*$ moves holomorphically (see Jonsson \cite{jonsson}), 
and that this holomorphic motion extends to a holomorphic motion of $J^+\cup J^-$ 
(see Buzzard-Verma \cite{buzzard verma}). 

\medskip

The main point of this paper is to design a mechanism
creating homoclinic tangencies from bifurcations of periodic points 
(for moderately dissipative polynomial automorphisms of $\cd$).
Notice that conversely, the creation 
of sinks from (generic) homoclinic tangencies is classical and goes back to
  Newhouse \cite{newhouse2} (see Gavosto \cite{gavosto} for a proof in our context).  
The theory of weak $J^*$-stability gives a fresh insight into this phenomenon as well.

\medskip
Let us now formulate  a more precise version of Theorem
\ref{theo:dense}:

 \begin{theoprime}\label{theo:tangencies}
 Let $(f_\la)_{\la\in \La}$ be a holomorphic family of moderately
 dissipative polynomial automorphisms of $\cd$
of dynamical degree $d\geq 2$.
Then parameters with homoclinic tangencies are dense in the
bifurcation locus.  
\end{theoprime}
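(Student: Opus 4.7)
The plan is to exploit the fact that, in the dissipative regime, a bifurcation of a periodic orbit must pass through a semi-parabolic regime, and then to produce tangencies at semi-parabolic parameters via a value-distribution argument. Let $\la_0$ lie in the bifurcation locus. By the weak-stability theory of the first part of the paper, some periodic point of $f_{\la_0}$ bifurcates arbitrarily close to $\la_0$. Since $f$ is dissipative, the product of the two multipliers of any periodic point has modulus strictly less than $1$, so a bifurcation (for instance a sink turning into a saddle) forces one multiplier to cross $\partial\dd$ while the other stays strictly inside $\dd$. This produces a parameter $\la_*$ arbitrarily close to $\la_0$ at which $f_{\la_*}$ admits a \emph{semi-parabolic} periodic point $p$, with one multiplier on the unit circle and the other strictly inside.

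I would then work inside a local semi-parabolic basin $\mathcal{B}$ attached to $p$ at $\la_*$. Such a basin carries a canonical holomorphic strong-stable foliation $\mathcal{F}^{ss}$ whose leaves are biholomorphic to $\cc$, and whose leaf space $\Sigma=\mathcal{B}/\mathcal{F}^{ss}$ is a Riemann surface on which $f_{\la_*}$ acts as a one-dimensional parabolic map. The unstable manifold $W^u(p)$ is uniformized by an entire immersion $\psi:\cc\to\cd$. I would then consider the holomorphic map $\Phi:\Om\to\Sigma$ obtained by composing $\psi$ with the projection $\mathcal{B}\to\Sigma$, where $\Om=\psi^{-1}(W^u(p)\cap\mathcal{B})\subset\cc$. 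The moderate-dissipativity bound $|\Jac f|<d^{-2}$ enters here to guarantee that $\cc\sm\Om$ is very small (of zero logarithmic capacity, say) and to yield a useful a priori control on the conformal distortion of $\Phi$.

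The heart of the proof is to show that $\Phi$ must have a critical point. Assuming the contrary, $\Phi$ is a local biholomorphism on $\Om$ taking values in the hyperbolic Riemann surface $\Sigma$; the Denjoy--Carleman--Ahlfors theorem (bounding the number of direct asymptotic values of an entire function of prescribed order) together with Wiman's minimum-modulus theorem then contradict the area and growth budget provided by the previous step. Hence there exists $c\in\Om$ with $d\Phi(c)=0$: this means that $W^u(p)$ is tangent at $\psi(c)$ to a leaf of $\mathcal{F}^{ss}$, i.e.\ a semi-parabolic tangency at $\la_*$. I expect this step to be the main obstacle: calibrating the growth of $\psi$ and the size of $\cc\sm\Om$ precisely enough to apply these classical entire-function theorems is where moderate dissipativity seems essential, and where the whole argument can easily fail.

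Finally, I would unfold $\la_*$ using the standard semi-parabolic bifurcation theory: on an appropriate side of $\la_*$, the point $p$ splits into two saddle points whose stable manifolds converge to leaves of $\mathcal{F}^{ss}$ as $\la\to\la_*$. Under this unfolding, the semi-parabolic tangency produced above persists as a genuine homoclinic tangency between $W^u$ and $W^s$ of a saddle continuation of $p$, at parameters arbitrarily close to $\la_*$ and hence to $\la_0$. Since $\la_0$ was an arbitrary point of the bifurcation locus, this would establish density of homoclinic tangencies there.
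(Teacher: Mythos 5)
Your overall skeleton matches the paper's: reduce to a semi-parabolic bifurcation near $\la_0$, find a ``critical point'' in a semi-parabolic basin using entire-function theory, then perturb to a genuine homoclinic tangency. But there are two substantive problems.

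First, in the critical-point step the mechanism is mis-described. The paper does not map into the quotient $\Sigma=\mathcal{B}/\mathcal{F}^{ss}$ and does not invoke any ``zero logarithmic capacity'' control on $\cc\sm\Om$ or distortion bounds for $\Phi$. Instead it considers a single component $\om$ of $\psi^{-1}(\mathcal{B})$ (a disk) and studies $\varphi^\iota\circ\psi:\om\to\cc$, the composition with the \emph{incoming Fatou coordinate}; the absence of critical points forces this map to have an asymptotic value, and iterating forward yields either an unbounded continuum in $W^{ss}(p)\cap J^-$ (ruled out by Wiman, where moderate dissipativity gives $\rho(W^{ss})<1/2$) or infinitely many (approximate) asymptotic values of the finite-order parametrization $\psi$ (ruled out by the $\eps$-approximate Denjoy--Carleman--Ahlfors theorem). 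Your dichotomy and the role of moderate dissipativity are broadly in the right direction, but the stated ``area and growth budget'' argument is not what makes the proof close.

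The more serious gap is the final step. A critical point is a tangency between an unstable manifold and a leaf of $\mathcal{F}^{ss}$; those leaves are \emph{not} stable manifolds of any saddle --- they are strong stable sets of points of the basin, asymptotic to the attracting cycle. When the semi-parabolic point splits under perturbation, the leaves of $\mathcal{F}^{ss}$ do not ``become'' stable manifolds of the new saddles, and there is no persistence argument that turns the tangency with $\mathcal{F}^{ss}$ directly into a homoclinic tangency. What the paper actually does is semi-parabolic \emph{implosion}: one chooses parameters $\la_n\to\la_*$ and iterates $f^n_{\la_n}$ so that the critical point transits through the ``gate'' and lands on the repelling petal $\Sigma$, where $W^s(q)$ has a transverse intersection; the pullbacks $\mathcal{F}^{-n}$ of a foliation transverse to $\Sigma$ converge to $\mathcal{F}^{ss}$, so for suitable parameters the image of $W^u(q)$ acquires a vertical tangency near that intersection, and an argument-principle/intersection-theory lemma (Proposition 7.1) then locates a parameter with an actual tangency $W^u(q)\pitchfork W^s(q)$. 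This transit-map machinery (the ``tour de valse'' of \S 6) is the technical core of the proof and is entirely absent from your proposal; without it the conclusion does not follow.
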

 
To understand the  strategy of the  proof of this theorem, let us first  
review the one-dimensional result that parameters with preperiodic
 critical points are dense in the bifurcation locus. The classical proof of this fact, 
based on Montel's theorem \cite{levin}, 
does not seem to have  
an analogue in our context. 

Let us outline an argument that admits a generalization to dimension two.
If $\lo$ belongs to the bifurcation 
locus, then 
some periodic point changes type near $\lo$. In particular
 there exists $\la_1$ close to $\lo$ such that at $\la_1$, there is a periodic 
point $p$ whose multiplier crosses the unit circle at a rational parameter. The theory of parabolic implosion \cite{lavaurs, shishikura} 
describes how  the dynamics  in the basin $\mathcal{B}$ of the parabolic point can ``implode" for some parameters close to $\la_1$. 
In particular, under generic assumptions, and replacing $f$ by some iterate if needed,  for well chosen sequences $\la_n\cv\la_1$, 
$f^{n}_{\la_n}$ converges locally uniformly in $\mathcal{B}$ to some limiting holomorphic function $g:\mathcal{B}\cv\cc$, 
which we refer to as a {\em 
transit map}. 
Fix a repelling periodic point $q$, which necessarily persists as $q(\la)$ in the neighborhood of $\la_1$.
By a classical theorem of Fatou, there exists a critical point $c$ in  $\mathcal{B}$.  
The point is that it is actually possible to adjust the 
sequence $\la_n$ so that $g(c) = q$. From this  
we infer that for large $n$,  there exists $\la'_n$ close to $\la_n$, such that $f^n_{\la'_n}(c(\la'_n)) = q(\la'_n)$, 
which is precisely  the result that we seek.

\medskip

To prove Theorem \ref{theo:tangencies}, in the second part of the paper
 we 
design a two-dimensional generalization of this argument. In the dissipative regime, 
if some periodic point $p(\la)$ bifurcates at $\lo$, then one multiplier  of $p(\la)$   crosses the unit circle while 
the other  stays smaller than 1 (recall that the product of the multipliers is equal to the Jacobian). 
If furthermore $p(\lo)$ has a root of unity as multiplier,  it is said to be {\em semi-parabolic}, and we 
say that $p(\la)$ undergoes a {\em semi-parabolic bifurcation}. 
Then the proof is divided into two main steps:
\begin{itemize}
\itm  Step 1: prove the existence of ``critical points'' in the basins of semi-parabolic periodic points.
\itm Step 2: use ``semi-parabolic implosion" to make these critical points leave the basin under small perturbations of $\lo$, eventually creating tangencies.
\end{itemize}  

\medskip

The critical points in Step 1 are defined as follows. Let $f$ be a polynomial automorphism with a semi-parabolic periodic point $p$, 
which we may assume is fixed. Then $p$ admits a basin of attraction $\mathcal{B}$, which is endowed with a holomorphic 
{\em strong stable foliation}, whose leaves are characterized by the
property that points in the same leaf approach one another 
exponentially fast under iteration. Then by definition
 a critical point is  a point of tangency between the strong stable foliation in $\mathcal{B}$ and 
the unstable manifold of some saddle periodic point $q$. 

We obtain the following result.

 \begin{theo}\label{theo:critical}
 Let $f$ be a moderately dissipative  polynomial automorphism of $\cd$.
Assume that  $f$ possesses a semi-parabolic periodic point  
 with basin of attraction $\mathcal{B}$. Then for any saddle periodic
  point $q$, every component of $W^u(q)\cap \mathcal{B}$ contains a 
 critical point. 
 \end{theo}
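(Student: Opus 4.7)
My plan is to reduce the existence of a critical point to a one-dimensional value-distribution problem on the unstable manifold, and then to invoke the classical Denjoy--Carleman--Ahlfors and Wiman theorems, as advertised in the abstract.

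After replacing $f$ by an iterate I may assume that both $p$ and $q$ are fixed; let $(1,b)$ with $|b|<1$ be the eigenvalues at $p$ and $\lambda$, $|\lambda|>1$, the unstable multiplier at $q$. The essential local input is Ueda's theory of semi-parabolic germs: the basin $\mathcal{B}$ carries a holomorphic strong stable foliation $\mathcal{F}^{ss}$, and on each attracting petal $\mathcal{P}\subset\mathcal{B}$ there is a Fatou coordinate $\phi\colon\mathcal{P}\to\mathbb{C}$ satisfying $\phi\circ f=\phi+1$ whose level sets are plaques of $\mathcal{F}^{ss}$. Extending $\phi$ along orbits by $\phi(x):=\phi(f^n(x))-n$ produces a (possibly multivalued) holomorphic function on $\mathcal{B}$ whose critical points along any holomorphic curve $\Sigma\subset\mathcal{B}$ coincide with the tangency locus of $\Sigma$ with $\mathcal{F}^{ss}$.

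Uniformize $W^u(q)$ by $\sigma\colon\mathbb{C}\to W^u(q)$ with $\sigma(0)=q$ and $\sigma(\lambda z)=f(\sigma(z))$. Fix the component $\Delta$, set $U=\sigma^{-1}(\Delta)\subset\mathbb{C}^*$, and let $\Psi=\phi\circ\sigma\colon U\to\mathbb{C}$. The pull-back of the functional equation is $\Psi(\lambda z)=\Psi(z)+1$, and the critical points of $\Psi$ on $U$ are precisely the tangencies we seek. Suppose, for contradiction, that $\Psi'$ has no zero on $U$. Then $F:=1/\Psi'$ is holomorphic and non-vanishing on $U$, and differentiating the functional equation yields the $\lambda$-equivariance $F(\lambda z)=\lambda F(z)$. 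Consequently $G(z):=F(z)/z$ is $\lambda$-invariant and descends to a holomorphic, non-vanishing function on the annular quotient $U/\langle z\sim\lambda z\rangle$ inside the \'Ecalle-type cylinder $\mathbb{C}^*/\langle z\sim\lambda z\rangle$.

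The crucial step is to extend $G$ to the whole cylinder and control its growth. Each end of $U$ inside $\mathbb{C}^*$ corresponds to a way $W^u(q)$ exits $\mathcal{B}$, along strong stable leaves hitting $\partial\mathcal{B}$, and one must argue that $G$ extends meromorphically across these ends with a growth controlled by the eigenvalue ratio between the unstable expansion $|\lambda|$ at $q$ and the strong stable contraction $|b|$ at $p$. Under the moderate dissipativity hypothesis $|\Jac f|<d^{-2}$ this ratio is small enough to force the resulting entire-like function on the cylinder to have order $<\tfrac12$. Wiman's theorem then supplies a sequence of circles along which $|G|\to\infty$ uniformly, while Denjoy--Carleman--Ahlfors bounds the number of asymptotic values of such a function; both conflict with the non-vanishing of $G$ and the finite structure of the ends of $U$, yielding the contradiction.

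The main obstacle I expect is exactly this extension-with-growth step: everything else is either formal or follows from standard semi-parabolic theory, but extending a holomorphic function from an open subset of the cylinder to the whole cylinder with a quantitative bound on its order requires genuine understanding of the geometry of $W^u(q)\cap\partial\mathcal{B}$ and of the transverse measure carried by $\mathcal{F}^{ss}$. This is where the moderate dissipativity hypothesis, by calibrating the expansion along $W^u(q)$ against the contraction along $\mathcal{F}^{ss}$, must do the essential work of placing the order of $G$ in the range where Wiman's theorem can deliver the needed uniform lower bound.
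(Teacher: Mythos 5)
Your reduction to the functional equation $\Psi(\lambda z)=\Psi(z)+1$ and the resulting $\lambda$-equivariance of $F=1/\Psi'$ is a nice observation, but the plan after that point has a genuine gap, and its description of what moderate dissipativity accomplishes is not correct.

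First, the proposed ``extension-with-growth'' step is not a step that can be carried out: $U=\sigma^{-1}(\Delta)$ is a proper open subset whose boundary maps into $J^+=\partial\mathcal B$, where the Fatou coordinate $\varphi^\iota$ has no extension whatsoever, so there is no mechanism to continue $G$ across the ends of $U$. Worse, $\cc^*/\langle z\sim\lambda z\rangle$ is a compact torus, not a cylinder or a plane; ``extend to the whole quotient'' would force $G$ to be constant, and neither Wiman's theorem nor Denjoy--Carleman--Ahlfors applies to functions on a compact Riemann surface. The equivariant object you build simply lives on the wrong kind of space for those theorems.

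Second, the role you assign to moderate dissipativity is not the one it plays. The condition $|\Jac f|<d^{-2}$ controls only the order of the entire parametrization of the \emph{strong stable} manifold $W^{ss}(p)$: by Lemma~\ref{thm:jin} applied to $f^{-1}$ the order is $\log d/\log|\Jac f|^{-1}$, and moderate dissipativity makes it $<\tfrac12$. The unstable multiplier $\lambda$ at $q$ does not enter at all, and indeed the order of the parametrization of $W^u(q)$, namely $\log d/\log|\lambda|$, can be arbitrarily large, so Wiman's theorem can never be applied along $W^u(q)$. Your ``ratio of $|\lambda|$ to $|b|$'' has no counterpart in the actual argument. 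What the paper does instead is: (1) use Wiman on $W^{ss}(p)$ to show (Corollary~\ref{cor:point}) that the component of $p$ in $W^{ss}(p)\cap J^-$ is $\{p\}$; and (2) on $W^u(q)$, use only that $\varphi^\iota\circ\psi^u$ is locally univalent on a disk, hence has an asymptotic value, and then apply a quantitative $\e$-approximate Denjoy--Carleman--Ahlfors theorem to a coordinate of $\psi^u$ (finite order suffices there, not order $<\tfrac12$). The two cases, according to whether the second Fatou coordinate along the asymptotic path is bounded or not, are both needed, and neither is visible in your cylinder picture. As it stands the proposal cannot be completed along the route you outline.
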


Notice that this is precisely the place where  the assumption on the
Jacobian is required. 
Curiously, the proof relies on the classical theory of entire functions of finite
order in one complex variable. 
The same idea was then used by H. Peters and the second author \cite{lyubich peters} to obtain a nearly 
complete classification of periodic Fatou components for moderately dissipative polynomial automorphisms of $\cd$. 

\begin{rmknonb}
The classical theory of entire functions was first
applied to (one-dimensional) polynomial dynamics by Eremenko and
Levin \cite{EL}.  
\end{rmknonb}
 
The second step 
 relies on the construction of transit mappings in the context of semi-parabolic bifurcations. Semi-parabolic points are roughly 
classified according to the multiplicity of $f-\mathrm{id}$ at the periodic point under consideration. The theory of semi-parabolic 
implosion was recently developed by Bedford, Smillie and Ueda
\cite{bsu} who obtained a 
satisfactory  picture in the 
 multiplicity two case.  
In particular,  it generalizes a theorem of Lavaurs \cite{lavaurs},  thus obtaining a  precise 
description of  the   transit behaviour in this setting. 
However, these results depend on certain  explicit changes of
variables that do not readily extend  to the general case. 

In our situation we  
 have to deal with semi-parabolic points of   arbitrary multiplicity, 
so we need  to develop 
a more general  method. 
It was inspired by a chapter of the celebrated Orsay Notes by Douady and
Hubbard,   
cheerfully 
entitled  ``{\em un tour de valse}" \cite{tv}
(written by Douady and  Sentenac).

 To be specific, if $\lo$ is a parameter at which a semi-parabolic bifurcation occurs, replacing $f$ by some iterate if needed, 
 there exists a sequence of parameters $\la_n\cv\lo$     such that 
$f^{n}_{\la_n}$ converges in $\mathcal{B}$
to some holomorphic map $g:\mathcal{B}\cv\cd$. 
Notice that due to dissipation, $g$ has 1-dimensional image. 
An important phenomenon   
here is that  $g(\mathcal{B})$ 
needn't be contained in $\mathcal{B}$: indeed $f_{\la_n}$ shifts $\mathcal{B}$ slightly, 
 which is then amplified by iteration. 
 In this sense the limiting dynamics of $f_{\la_n}$ is richer than that of $f_0$.  
Though these transit mappings $g$ are not as explicit  as in  \cite{bsu}, 
they can still  be well controlled (see Theorem \ref{thm:tv}).  If now
 $q=q(\lo)$ is any   saddle point, and $c$ is a critical point in $W^u(q)$,   we can adjust the sequence $\la_n$ so that $g(c)\in W^s(q)$. It is then easy to find parameters  $\la'_n$ close to $\la_n$ for which $W^u(q(\la'_n))$ and $W^s(q(\la'_n))$ are tangent, thereby concluding the proof. 

\medskip

The plan of the paper is the following.   The first section  is devoted to some preliminaries on polynomial automorphisms of $\cd$. 
The  notion of branched holomorphic motion is explained  in detail in \S \ref{sec:branched}. 
In \S \ref{sec:stable}, we define the notion of weak $J^*$-stability, which is the direct analogue of the one-dimensional notion of 
$J$-stability and study the properties of weakly $J^*$-stable families. 
In \S \ref{sec:extension} 
we show that   a weakly $J^*$-stable family is also weakly stable on $J^+$, $J^-$, and $K$. In particular this justifies the use of the 
more general  ``weakly stable" terminology. 
We also prove that if a
dissipative family of polynomial automorphisms has persistently connected Julia set, then it is weakly stable 
(Theorem \ref{thm:connected}). This generalizes a well known result in dimension 1. 
The proof of Theorem \ref{theo:tangencies} occupies 
\S \ref{sec:semi parabolic} to \ref{sec:proof}. In \S \ref{sec:semi parabolic}, we recall some basics on semi-parabolic 
dynamics. The existence of critical points in semi-parabolic basins (Theorem \ref{theo:critical})
 is discussed in \S\ref{sec:critical}, which also includes some  
preparatory material on entire functions of finite order. A slight adaptation gives the existence of critical points in attracting basins. Details are given in Appendix \ref{sec:attracting}.    Semi-parabolic implosion  and transit mappings are studied in 
  \S \ref{sec:tv}, and finally in \S \ref{sec:proof} we assemble these results to prove
  Theorem \ref{theo:tangencies}.

\medskip

Throughout the paper 
we use the following notation: if $u$ and $v$ are two  real valued functions, we write
 $u \asymp v$ (resp $u\lesssim v$)
  if there exists a constant $C>0$ such that $\unsur{C}u\leq v\leq Cu$ (resp. $u\leq Cv$).  
  The disk  in $\cc$  of radius $r$ centered at 0 is denoted by
  $\dd_r$. Moreover, $\dd$ stands for $\dd_1$.
 Throughout the paper, $\La$   stands for a connected complex
manifold, which serves as a dynamical parameter space.

\begin{rmknonb}
The results of this paper were first announced  at the
Balzan-Palis Symposium on Dynamical Systems (IMPA, June 2012) 
 and at the Workshop on  Holomorphic Dynamical Systems  (Banff, July 2012). 
\end{rmknonb}
 
\medskip
\noindent {\it Acknowledgement.} 
We thank Alex Eremenko for many useful comments on the
Denjoy-Carleman-Ahlfors and Wiman Theorems,
and Eric Bedford for interesting discussions.  
We are also grateful to Serge Cantat and the anonymous referee for carefully reading the
manuscript and making many suggestions that improved the exposition.  
This work was  partly supported by the NSF,  the Balzan-Palis
Fellowship,  and the   ANR project ANR-13-BS01-0002.

  \section{Preliminaries}\label{sec:prel} 

In this section we recall some basics on the dynamics of polynomial automorphisms of $\cd$, 
and establish some preparatory results. 

\subsection{Basics}\label{basics sec}

Let  $f$ be a polynomial automorphism of $\cd$ 
with non-trivial dynamics. 
Non-trivial dynamics here means for instance   that $f$ has positive 
topological entropy, which then equals $\log d$, where 
$$
   d=\lim_{n\cv\infty} (\deg (f^n))^{1/n}
$$ 
is the {\em dynamical degree} of $f$. According to Friedland and Milnor \cite{fm} this happens if and only if $f$ is conjugate to a
composition of Hénon mappings 
$$
   (z,w)\mapsto (p(z) -b\, w, z).
$$ 

Let us recall the following basic dynamical objects and facts. 
The reader can consult \cite{hov,bs1,fornaess sibony,bls}  for details. 
  
\ssk\nin $\bullet$ 
$K^\pm$ are the  forward and backward {\it filled Julia sets}, that is, the   
 sets of points with bounded forward/backward orbits respectively.

\ssk \nin $\bullet$
 $U^\pm = \cd\setminus K^\pm$ are the forward and backward {\it basins of infinity}.

\ssk\nin  $\bullet$ 
$J^\pm = \fr K^\pm$  are the   forward and backward {\it  Julia sets}.
They can be also defined 
as the {\it sets of non-normality} for the families $\{ f^{\pm n} \}_{n\geq    0}$ respectively.
Note that in the dissipative case, $K^-$ has empty interior so  $J^- =K^-$.   
 
\ssk\nin $\bullet$
 $K:=K^+\cap K^-$ is the {\it filled Julia set} consisting of points
 whose {\it two-sided orbits} do not escape.

\ssk\nin $\bullet$
   $J := J^+\cap J^-$ is the {\it ``little''  Julia set} and 
$\bJ:=    J^+\cup
   J^-$ 
     is the {\it ``big'' } one.   In   the complement of the
   former,  at least one of the families, $\{ f^n\}_{n\geq 0}$ or $\{ f^n\}_{n\leq 0}$,  
  is normal. In the complement of the latter, the whole two-sided family 
  $\{ f^n\}_{n\in \mathbb Z}$ is.   

\ssk\nin $\bullet$  
$\Saddles$ is the set of saddle periodic points ({\it saddles}).
As usual, $W^s(p)$ and $W^u(p)$  stand for the {\em stable} and
{\it unstable} manifolds of a saddle\footnote{or for a more general periodic point whenever they exist.} $p$.
They are holomorphically  immersed complex lines $\cc\ra \cc^2$.

\ssk\nin $\bullet$
Given a saddle $p$, $H(p)$ denotes  the set of {\em homoclinic
  intersections} between  $W^u(p)$ and $W^s(p)$, while 
 $H^{\rm tr}(p)$ is the subset of {\it transverse} homoclinic
 intersection.  For any $p\in \Saddles$ 
the closure of either of these sets coincides with $J^*$
\cite[Prop. 9.8]{bls}, so according to the general dynamics  terminology,
$J^*$ is the {\em homoclinic class of $f$}.  

We do not devote a special notation for the set  of {\em heteroclinic}
intersections,
but   use the following abbreviated terminology:
{\em s/u intersection} is   a shorthand for  
``homoclinic or  heteroclinic intersection of stable and unstable manifolds of saddle 
periodic orbits''.

\ssk\nin $\bullet$
  $J^*$ is the closure of $\Saddles$. 
It is contained in $J$, and it is an open problem (posed by Hubbard) whether $J=J^*$.

\ssk\nin $\bullet$
  $S^-= J^-\sm K^+$ and $S^+=J^+\sm K^-$. 

\ssk\nin  $\bullet$
 $G^\pm$ are  the forward and backward  {\it Green functions}.
Their dynamical meaning is that of     {\it escape rate}  functions:
$$
    G^\pm (z) = \lim_{n\to +\infty} \frac 1{d^n} \log^+ \| Df^{\pm n} (z) \|.
$$
Moreover, they  have the following properties:

\ssk --  $G^\pm$ are non-negative and  vanish on $K^\pm$ respectively;

\ssk -- $G^\pm$ are 
  pluri-subharmonic on the whole $\cc ^2$, and pluri-harmonic on  $U^\pm $ respectively; 

\ssk --
they satisfy the functional equation $G^\pm (f^{\pm 1}  z) = d\, G^\pm (z)$.

\ssk\nin  $\bullet$
   $\varphi^\pm$ are the forward and backward {\it B\"ottcher
     functions}.  
They are well defined and holomorphic in appropriate sectors in $U^\pm$  near
    infinity and satisfy $\log  |\varphi^\pm| = G^\pm$. Moreover, they
    satisfy the {\it B\"ottcher functional equations} 
$$
   \varphi^\pm    (f^\pm z) = (\varphi^\pm (z))^d.
$$ 
Note that by means of this equation,  $\varphi^\pm$  extend anaytically to
the whole basins $U^\pm$ as 
multi-valued functions with a single-valued  absolute value $>1$ (namely $\exp(G^\pm)$ ). 

\ssk\nin  $\bullet$
Though the   B\"ottcher functions do not coherently extend 
to the whole basins $U^\pm$,
   their level sets do (by means of the  dynamics), defining holomorphic {\it
     B\"ottcher $\cc$-foliations}%
\footnote{meaning that their  leaves are conformally equivalent to $\cc$}
 $\FF^\pm$ in   $U^\pm$. 

\ssk\nin  $\bullet$
Stable and unstable {\it Green currents}   $T^\pm  : = dd^c G^\pm $.
They are supported on the forward and backward Julia sets $J^\pm$ respectively
and satisfy the dynamical functional equations $f^* T^\pm = d^{\pm 1}$. 
Moreover, {\it the  unstable manifold $W^u(p)$ of any saddle $p$ is
equidistributed with respect to} $T^-$, while the stable manifolds
$W^s(p)$ are equidistributed with respect ot $T^+$ \cite{bs1,fornaess sibony}.
It follows that any $W^u(p)$ is dense in $J^-$,
while any $W^s(p)$ is dense in $J^+$.  

\ssk\nin $\bullet$ 
The {\it measure of maximal entropy} $\mu= T^+ \wedge T^-$.
By \cite{bls2}, {\it saddles are equidistributed with respect to} $\mu$. 
Moreover,   $\operatorname{supp}\,  \mu = J^*$.

\subsection{Families of compositions of H\'enon maps}
 We will be interested in holomorphic families $(f_\lambda)_{\la\in
   \La}$ of polynomial automorphisms,   parameterized by some complex
 manifold $\Lambda$.  
We put a subscript $\la$ to denote the  parameter dependence of the 
corresponding objects, e.g.,  $J_\la$, $\mu_\la$, etc.

\medskip

The following proposition, which might be known to some experts
 (see e.g. \cite{furter}, and also \cite[Thm 1.6]{xie}  for the birational case),
asserts that as far as we are interested in properties of $f_\la$ which are typical with respect to $\la$, it is not a restriction to assume 
that the $f_\la$ are  products of  Hénon mappings. 

\begin{prop}\label{prop:henon}
Let $(f_\la)_{\la\in \La}$ be a holomorphic family of polynomial automorphisms in $\cd$, parameterized by a connected complex 
manifold. There exists a Zariski open set $\La'\subset \La$ and an integer $d\geq 1$ 
such that for $\la\in \La'$, $f_\la$ has dynamical degree $d$. 

Furthermore, if $d\geq 2$, locally in $\La'$ we can write 
$$f_\la = \varphi_\la^{-1} \circ h_\la^1 \circ \cdots \circ h_\la^m \circ \varphi_\la$$ where $(\varphi_\la)$ is a  
polynomial automorphism  and $(h_\la^i)_{i=1, \ldots , m}$  are Hénon mappings of degree $d_i$, with $\sum d_i = d$,  
all depending holomorphically on $\la$. 
\end{prop}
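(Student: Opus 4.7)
The plan rests on two ingredients: the algebraic behavior of polynomial degrees in a holomorphic family, and the structure theory of $\mathrm{Aut}(\cc^2)$ due to Friedland--Milnor and Jung--van der Kulk.

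For the first assertion, observe that for each $n \geq 1$ the iterate $f_\la^n$ is a polynomial automorphism whose coefficients (as polynomials in $(z,w)$) depend holomorphically on $\la$; hence the function $\la \mapsto \deg f_\la^n$ is integer-valued and takes its generic value $D_n$ outside a proper analytic subset of $\La$. By the Friedland--Milnor trichotomy, $f$ is conjugate in $\mathrm{Aut}(\cc^2)$ to a map of one of three types: affine (dynamical degree $1$); elementary of the form $(z,w) \mapsto (az + p(w), bw + c)$, for which $\deg f^n = \deg f$ for all $n$ and dynamical degree equals $1$; or a composition of H\'enon maps of degrees $d_i$, for which $\deg f^n = (\prod d_i)^n$ in natural coordinates and dynamical degree equals $\prod d_i \geq 2$. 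In particular the case distinction, and the dynamical degree itself, can already be read off from the pair $(D_1, D_2)$: H\'enon type is equivalent to $D_2 = D_1^2 \geq 4$, in which case $d = D_1$, and otherwise $d = 1$. Taking $\La'$ to be the Zariski open intersection of the loci where $\deg f_\la = D_1$ and $\deg f_\la^2 = D_2$ then gives the first assertion.

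Assume now $d \geq 2$ and fix $\la_0 \in \La'$. By Friedland--Milnor, $f_{\la_0}$ admits a factorization $f_{\la_0} = \varphi_0^{-1} \circ h_1^0 \circ \cdots \circ h_m^0 \circ \varphi_0$ with $\varphi_0$ a polynomial automorphism and $h_i^0$ H\'enon maps of degrees $d_i$, $\prod d_i = d$. The Jung--van der Kulk theorem presents $\mathrm{Aut}(\cc^2)$ as the amalgamated free product $A *_{A \cap E} E$ of its affine and elementary subgroups; it follows that $m$ equals half the translation length of $f_\la$ in the associated Bass--Serre tree, and that the tuple $(d_1, \ldots, d_m)$ (up to cyclic rotation) can be read off the reduced word. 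Both of these are conjugacy invariants of $f_\la$ in $\mathrm{Aut}(\cc^2)$, and are therefore locally constant on $\La'$. To obtain a holomorphic factorization near $\la_0$, the plan is to normalize $\varphi_\la$ and the $h_i^\la$ (for instance by fixing the linear part of $\varphi_\la$ and the leading coefficients of each $p_i^\la$, where $h_i^\la(z,w) = (p_i^\la(z) - b_i^\la w, z)$) so as to eliminate the residual ambiguities coming from conjugation by elements of $A \cap E$ and from cyclic rotation of the H\'enon factors. With these normalizations the algebraic parameterization
\[
(\varphi, h_1, \ldots, h_m) \longmapsto \varphi^{-1} \circ h_1 \circ \cdots \circ h_m \circ \varphi
\]
from the finite-dimensional variety of normalized factorization data onto the stratum of $\mathrm{Aut}(\cc^2)$ of H\'enon-type maps with combinatorics $(m, d_1, \ldots, d_m)$ becomes an \'etale local isomorphism near $(\varphi_0, h_1^0, \ldots, h_m^0)$; composing its local holomorphic inverse with $\la \mapsto f_\la$ yields the desired local factorization.

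The main obstacle I expect is the precise algebraic-geometric setup of these normalizations: one has to verify that, after imposing them, the parameterization above really is an \'etale local isomorphism, with no artificial singularities or residual ambiguities. This is essentially a detailed study of the Jung--van der Kulk normal form in holomorphic families, which is the content of the references by Furter and Xie to which the authors appeal.
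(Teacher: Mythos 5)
Your handling of the first assertion contains a genuine error. You claim that ``H\'enon type is equivalent to $D_2 = D_1^2 \geq 4$, in which case $d = D_1$, and otherwise $d = 1$.'' This is false: the identity $\deg f^n = (\deg f)^n$ characterizes automorphisms whose reduced word in the amalgamated product is \emph{already cyclically reduced}, not automorphisms that are merely conjugate to such a word (which is what ``H\'enon type'' means). Take for instance $f = e_2\circ a\circ e_1$ with $e_1,e_2\in E\setminus S$ of degrees $2$ and $3$, $a\in A\setminus S$, and $e_1 e_2\notin S$ (e.g.\ $e_1(z,w)=(z+w^2,w)$, $a(z,w)=(w,z)$, $e_2(z,w)=(z+w^3,w)$). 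Then $\deg f = 6$, and a direct computation gives $\deg f^2 = 18 \neq 36$. Yet $f$ is conjugate (via $e_1$) to $e_1e_2\circ a$, which is cyclically reduced of degree $\max(2,3)=3$, so the dynamical degree is $d=3$, not $1$. Your criterion therefore misclassifies $f$, and the value of $d$ you attach to your $\La'$ is wrong; consequently the hypothesis ``$d\geq 2$'' governing the second assertion would be tested against the wrong number. The correct route, and the one the paper follows, is to first pass to the Zariski open set where $\deg f_\la$ is constant, invoke Furter's theorem that constant degree forces constant \emph{multidegree} in a connected family, show the Friedland--Milnor factors can be chosen holomorphically there, and only then observe that failure of cyclic reducedness becomes an analytic condition on $\la$. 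Removing that Zariski closed set yields $\La'$, on which $d=\deg f_\la$.

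For the second assertion your outline (read the combinatorics off the Bass--Serre tree, normalize the factors, show the parameterization of normalized factorizations is \'etale) is reasonable in spirit, but you have deferred exactly the hard part --- choosing the factors $a_{i,\la}$, $e_{i,\la}$ holomorphically despite the non-uniqueness of the amalgamated decomposition --- to the literature. The paper's proof does this explicitly and rather differently: the extremal cosets $a_{1,\la}^{-1}S$ and $a_{m,\la}S$ are identified with the indeterminacy points $I(f_\la)$ and $I(f_\la^{-1})$ on the line at infinity in $\pd$, which manifestly vary holomorphically, and the remaining ambiguity is killed by the \emph{unique} normal form of [FM, Lemma 2.10] for alternating words, whose uniqueness forces holomorphic dependence. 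Without such a concrete mechanism the claim that your normalization map is a local isomorphism is unverified, so as written the proposal does not constitute a proof of the second assertion either.
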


To prove the proposition we need to recall some ideas from \cite{fm}. 
Fix coordinates $(z,w)$ on $\cd$. We denote by $E$ the group  of automorphisms
preserving the family of lines $\set{w= C}$. 
Such automorphisms are of the form 
$(z,w)\mapsto(\alpha z+ p(w), \beta  w +\gamma)$ and
 will be referred to as {\em elementary}. 
(More generally, an automorphisms is elementary if it can be put in this form in some system of coordinates $(z,w)$.)
The group of affine automorphisms  will be denoted by $A$. It turns out that 
 the group $\mathrm{Aut}(\cd)$ of polynomial automorphisms of $\cd$ is the free product of $A$ and 
$E$, amalgamated along their intersection $S:=A\cap E$, that is, every $f\in \mathrm{Aut}(\cd)\setminus  S$, 
 can be written as a composition 
$f=g_k\circ \cdots \circ g_1$, where $g_i$ belongs to $A\setminus S$ or $E\setminus S$. 
This decomposition is unique, up to simultaneously replacing $g_i$ by $g_{i}\circ s$ and $g_{i-1}$ by $s^{-1}\circ g_{i-1}$, for some 
$s\in S$.  
The degree of such a composition is to equal $\prod \deg(g_i)$ (of course only elementary automorphisms contribute to the degree). 
 One has that $\deg(f^n) = (\deg f)^n$ if and only if $f$ is cyclically reduced, that is the extreme factors  $g_1$ and $g_k$ 
belong to different subgroups $A$ and $E$. In general, write
$$f= a_m\circ e_m\circ a_{m-1}\circ e_{m-1} \circ \cdots \circ e_1\circ a_1, \text{ with } a_i\in A\setminus S \text{ and } e_i\in E\setminus S, $$
with possibly $a_m$ or $a_1$ equal to the identity.  
We define the multidegree of $f$ as $(d_m, \ldots, d_1)$ where $d_i  = \deg(e_i)$.  

\begin{proof}
It is clear that there exists a Zariski open set $\La_0\subset \La$ where the degree is constant, say, equal to $d'$. If $d'=1$ there is nothing to prove so assume $d'\geq 2$. 
A  theorem due to Furter asserts that in a connected holomorphic family of polynomial automorphisms, the degree is constant if and 
only if the multidegree is constant \cite[Cor. 3]{furter}. Hence there exists an integer $m$ such that 
 for every  $\la\in \La_0$   we can write 
$$f_{\la}= a_{m, \la} \circ e_{m,\la} \circ a_{m-1, \la}\circ e_{m-1, \la} \circ \cdots \circ e_{1,\la} \circ a_{1, \la}.$$ 

We claim that the factors $a_{i, \la}$ and $e_{i,\la}$ may be chosen to depend holomorphically on $\la$. 
This is not obvious since they are not unique.
 We can deal with the extreme factors $a_m$ and $a_1$ 
as in \cite[Lemma 2.4]{fm}, by observing that the coset space $A/S$ is isomorphic to $\pu$ and 
that there is a well defined      mapping 
$f_\la\mapsto (a_{1, \la}^{-1} S, a_{m, \la} S)\in \pu\times \pu$. In a   more explicit fashion, this mapping may be expressed as 
 $f_\la\mapsto  (I (f_\la), I (f_\la^{-1}))$, 
where $I(f)$ is the indeterminacy set of $f$ viewed as a rational 
mapping on $\pd$, and $\pu$ is identified to the line at infinity.  Since $f_\la$ depends holomorphically on $\la$, so do $a_{1, \la}^{-1} S$ and $a_{m, \la} S$, hence absorbing some of the  $S$ factors in 
$e_{m,\la}$ and $e_{1,\la}$ if necessary, we infer that $a_{1,\la}$ and $a_{m,\la}$ depend holomorphically in $\la$. 
Thus we are left to proving that if $f_\la$ is of the form 
$f_{\la}=  e_{m,\la} \circ a_{m-1, \la}\circ \cdots \circ e_{1,\la}$, then the factors may be chosen to depend holomorphically on $\la$. 
By \cite[Lemma 2.10]{fm}, $f_\la$ admits a {\em unique} decomposition of the form 
$$f_\la = (\hat{s}_{m,\la} \circ \hat e _{m,\la})\circ t \circ \hat e_{m-1, \la }\circ \cdots \circ t \circ \hat e_{1, \la},$$ where 
$\hat{s}_{m,\la}$ is affine with diagonal linear part, $\hat{e}_i$ is of the form 
$(z,w)\mapsto (z+p_i(w), w)$, with $p_i(0)=0$ and $t(z,w) = (w,z)$. By uniqueness, the factors of this decomposition depend holomorphically on $\la$ (see \cite[p.909]{furter} for details) and our claim is proven.  

\medskip

From this point it is clear that the set of parameters such that $f_\la$ is not cyclically reduced is Zariski closed in $\La_0$. Indeed, conjugating $f_\la$ by $a_{1,\la}$ we obtain an expression of the form 
  $$a_{1,\la}\circ a_{m,\la}\circ e_{m, \la} \circ \cdots \circ e_{1,\la},$$ which is not cyclically reduced if and only if 
  $a_{1,\la}\circ a_{m,\la}\in S$, which is an analytic condition. If so, we absorb $a_{1,\la}\circ a_{m,\la}$ into $e_{m,\la}$ and infer that the resulting word is not cyclically reduced iff $e_{1,\la}\circ e_{m,\la} \in S$, and so on. Iterating this process we obtain a Zariski open set $\La'$ such that if $\la\in \La'$, $f_\la$ is cyclically reduced, and the first part of the proposition is proved. 
  
  To establish the second assertion, in $\La'$ 
  we conjugate $f_\la$ as above to make it cyclically reduced and of the form 
  $$(t\circ e_k)\circ \cdots \circ (t\circ e_1).$$ Then we 
  argue as in \cite[Theorem 2.6]{fm}
that a mapping of the form $t\circ e_i$ is affinely conjugate to a Hénon mapping $(z,w)\mapsto (\delta_iz + p_i(w), z)$, 
which is unique up to finitely many choices if $p_i$ is chosen to be monic and centered. 
\end{proof}



 \part{Holomorphic motions and stability}

\section{Branched holomorphic motions}  
 \label{sec:branched}

Recall the notation  $\La$ for the parameter domain, which is a connected complex
manifold. 
It will often be  pointed by a  {\it base point} $\la_0\in \La$. In this case,
if we have a family of objects parametrized by $\La$,     the base
objects will often be simply  labeled with $0$, e.g.,  $f_0\equiv f_{\la_0}$,
$J_0\equiv J_{\la_0}$, etc. 

Recall that a {\em  holomorphic motion} of a set $A$ in $\cc^d$ over 
$\Lambda$ 
 is a family of mappings $h_\lambda:A\cv\cc^d$ such that                                                
\begin{itemize}
\itm for fixed $a\in A$, $\lambda\mapsto h_\lambda(a)$ is holomorphic;
\itm for fixed $\lambda\in \Lambda$, $a\mapsto h_\lambda(a)$ is injective;
\end{itemize}
Holomorphic motions are often {\em pointed} by 
assuming that $h_\lo$ is the identity mapping.  

The {\it total space} of a 
holomorphic motion of $A$ over $\La$ is a family of
 disjoint holomorphic  graphs over the first coordinate in
 $\La\times\cd$, which we 
endow  with the  topology of uniform convergence on compact subsets of $\La$.
Let us relax this notion as follows:

\begin{defi}
A branched holomorphic motion (abbreviated as ``\BHM'' in the following)   
 over $\La$ is a family   of holomorphic graphs 
over the first coordinate in $\La\times\cd$. 
\end{defi}

\begin{rmk}
This definition bears some similarity with the notion of ``analytic multifunction", which was studied by S\l odkowski, and others. 
In particular it appears in \cite{slodkowski} under the name of ``locally trivial analytic multifunction". 
\end{rmk}

With 
$\mathcal{G}$ being such a family, we let 
 $\mathcal{G}_\la =  \set{ \gamma(\la), \ \gamma\in \mathcal{G}}$ be
 the {\it sections} of the total space,  and we say that they 
 $\mathcal{G}_\la$  ``move under the branched motion 
$\mathcal{G}$''.  We let $\Pi_\la: \GG\ra \GG_\la$ be the natural
projection, which is obviously continuous.

\msk
As in the one-dimensional setting, we will use  extension 
properties of (branched) holomorphic motions.
The classical $\lambda$-lemma  asserts that  a holomorphic motion of 
$A\subset \cc$ extends to $\overline A$ and is automatically
continuous. 
These virtues come from {\it Montel normality} of the family $\GG$ of disjoint
graphs $\La \ra \cc$  and from the {\it Hurwitz Theorem} that ensures that
disjointness is inherited by the closure $\overline \GG$.  
Of course neither of  these statements is true in higher dimension, 
which motivates  our use of branched motions as well as the following
definition: 

\begin{defi}
  A branched holomorphic motion $\GG$ in $\cc^2$ over $\La$   is called normal if
  $\GG$ is a normal family of graphs $\gamma: \La\ra \cc^2$. 
\end{defi}

Recall that {\it normality} means that from any sequence of graphs   
$\gamma_n$ we can extract a subsequence $\gamma_{n_k}$  which is either locally
bounded (and hence locally equicontinuous) or else  $\gamma_{n_k}\to \infty$ locally uniformly.
In particular, this is the case if the whole family $\GG$ is locally
uniformly bounded, or more generally, if the sections $\GG_\la$ belong
to a Kobayashi hyperbolic domain $U_\la \subset \cc^2$ that depends 
lower semi-continuously on  $\la$
(i.e.,  any compact subset $Q\subset U_{\la}$ is contained in
$U_{\la'}$ for all $\la'$ sufficiently close to $\la$).

With these definitions in hand, the following lemma is obvious.

\begin{lem}\label{lem:extension}
If $\mathcal{G}$ is a normal
branched holomorphic motion   over $\La$, then so is
$\overline{\mathcal{G}}$. 
\end{lem}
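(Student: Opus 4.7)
The plan is to verify directly the two defining conditions for $\overline{\GG}$ to be a normal branched holomorphic motion: that every element of $\overline{\GG}$ is a holomorphic graph over $\La$, and that $\overline{\GG}$ is normal in the sense recalled just above the statement. The closure is understood with respect to local uniform convergence on $\La$, as already fixed when the topology on the total space was introduced.

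For the first property, any $\gamma\in\overline{\GG}$ is by definition a local uniform limit of a sequence $\gamma_n\in\GG$ of holomorphic maps $\La\ra\cd$; by the Weierstrass convergence theorem $\gamma$ is itself holomorphic, so its image in $\La\times\cd$ is a holomorphic graph over $\La$. This is precisely the step where the branched formalism pays off: no analogue of Hurwitz's theorem is needed to preserve the graph property, because $\overline\GG$ is allowed to contain colliding graphs.

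For normality, I would proceed by a standard diagonal extraction. Let $(\gamma_n)$ be any sequence in $\overline{\GG}$ and fix a compact exhaustion $(K_m)$ of $\La$. Using the definition of the closure, choose for each $n$ an element $\gamma_n'\in\GG$ with
\begin{equation*}
\sup_{K_n}\, \mathrm{dist}_{\pd}(\gamma_n,\gamma_n')\leq \frac{1}{n},
\end{equation*}
where $\mathrm{dist}_{\pd}$ is the spherical distance on $\cd\subset\pd$ (this is convenient because it makes sense of both bounded and unbounded behaviour simultaneously). By normality of $\GG$, after passing to a subsequence either $\gamma_{n_k}'$ is locally uniformly bounded, in which case by Montel's theorem a further subsequence converges locally uniformly to some holomorphic $\gamma_\infty:\La\ra\cd$, or $\gamma_{n_k}'\cv\infty$ locally uniformly on $\La$. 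In view of the approximation estimate, the same alternative holds for $\gamma_{n_k}$ itself, and in the convergent case the limit $\gamma_\infty$ again lies in $\overline{\GG}$. This establishes normality.

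I do not anticipate any real obstacle: the argument is essentially formal once one observes that holomorphy of graphs passes to limits and that approximating sequences can be chosen by a diagonal procedure. The only mildly delicate point is choosing a metric in which both possibilities in the normality dichotomy can be expressed uniformly, which is why I would work with the spherical metric on $\pd$ rather than the Euclidean metric on $\cd$.
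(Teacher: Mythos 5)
Your proof is correct, and since the paper declares this lemma ``obvious'' and gives no argument, your write-up is simply the standard verification spelled out: holomorphy of graphs passes to local uniform limits, and normality of the closure follows by a diagonal approximation over a compact exhaustion combined with normality of $\GG$ itself. The use of the spherical metric to handle both alternatives of the dichotomy uniformly is a sensible touch, though not strictly necessary; this is essentially the same reasoning the authors have in mind.
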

 
Recall that the {\it Hausdorff topology} on the space of subsets of
$\cc^2$ is defined by the following  basis of neighborhoods: 
$  {\mathcal U}_{r, \eps} (A)  $ consists of subsets $X\subset \cc^2$ such
that the set $X\cap \dd_r^2$  is contained in the $\eps$-neighborhood of
$A\cap \dd_r^2 $,  and the other way around.  

\begin{lem}\label{Hausdorff cont}
If $\GG$ is a normal \BHM then the sections
$\GG_\la$ depends continuously on $\la$ in the Hausdorff topology. 
\end{lem}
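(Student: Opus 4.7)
The plan is to verify the two one-sided inclusions of Hausdorff continuity at an arbitrary base point $\la_0 \in \La$, after a preliminary reduction to the case where $\GG$ is closed under locally uniform convergence of graphs. First I would replace $\GG$ by its closure $\overline{\GG}$ in that topology: by Lemma \ref{lem:extension} this is again a normal \BHM, and one checks that $\overline{\GG}_\la$ coincides with $\overline{\GG_\la}$ (the inclusion $\overline{\GG}_\la \subset \overline{\GG_\la}$ follows from the fact that locally uniform convergence of graphs implies pointwise convergence of their values at $\la$, while the reverse uses normality to extract a graph limit from a convergent sequence $\gamma_n(\la) \to z$). Since a set and its closure have Hausdorff distance zero in $\cd$, it suffices to treat the case $\GG = \overline{\GG}$.

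For upper semicontinuity, I would argue by contradiction. Suppose that for some $r, \eps > 0$ there exist $\la_n \to \la_0$ and points $z_n = \gamma_n(\la_n) \in \GG_{\la_n} \cap \dd_r^2$ with $\dist(z_n, \GG_{\la_0}) \geq \eps$. By compactness extract $z_{n_k} \to z_\infty$; by normality of $\GG$, after passing to a further subsequence, $(\gamma_{n_k})$ converges locally uniformly near $\la_0$ either to some graph $\gamma_\infty$ or to $\infty$. The second alternative is ruled out since $\gamma_{n_k}(\la_{n_k}) = z_{n_k}$ is bounded while $\la_{n_k} \to \la_0$. A standard $3\eps$-argument then gives $z_\infty = \gamma_\infty(\la_0) \in \GG_{\la_0}$ (using $\GG = \overline{\GG}$), contradicting the distance bound.

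For lower semicontinuity, I would run a symmetric argument: if it failed, there would exist $\la_n \to \la_0$ and $w_n = \gamma_n(\la_0) \in \GG_{\la_0} \cap \dd_r^2$ with $\dist(w_n, \GG_{\la_n}) \geq \eps$. Again by compactness and normality, after extraction $w_n \to w_\infty$ and $\gamma_n \to \gamma_\infty \in \GG$ locally uniformly near $\la_0$, and the points $\gamma_n(\la_n) \in \GG_{\la_n}$ would satisfy $\gamma_n(\la_n) \to \gamma_\infty(\la_0) = w_\infty$, which forces $\dist(w_n, \GG_{\la_n}) \leq \abs{w_n - \gamma_n(\la_n)} \to 0$, a contradiction.

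The only step requiring a little care is the preliminary reduction to a closed family, which is precisely what justifies the appeal to Lemma \ref{lem:extension}; everything afterward is a routine normal-families extraction. The normality hypothesis is clearly essential, since without it sections can easily vary discontinuously as $\la$ moves (for instance, graphs escaping to infinity near $\la_0$ would immediately break the first inclusion).
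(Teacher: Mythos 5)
Your proof is correct and follows essentially the same route as the paper's (very terse) argument: the paper simply cites local equicontinuity of the truncated families $\GG_\la(r,\delta)$, and you have unpacked exactly that into a normal-families extraction argument for each of the two Hausdorff inclusions, with a sensible preliminary reduction to $\GG=\overline{\GG}$ that the paper leaves implicit.
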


\begin{proof}
  This  easily  follows from the local equicontinuity of the truncated families 
\begin{equation}\label{GG(r)}
  \GG_\la (r, \delta) : = \{ \gamma\in \GG:\ \gamma(\la)\in \dd_r^2  \text{ for } \la\in \dd_{1-\delta}\} .
\end{equation}
\end{proof}

Let us say that a BHM  $\GG$ is {\it unbranched} at some $\la\in
\La$ if the natural projection $\GG\ra \GG_\la$ is injective.
It is {\it unbranched along} $\gamma_0 \in \GG$ if $\gamma_0$ does not cross
any other graph $\gamma\in \GG$. 

\begin{lem}\label{continuity}
  Let $\GG$ be  a normal \BHM in $\cc^2$  over $\La$ .
If  $\overline\GG$ is
  unbranched at some parameter $\la_0\in \La$ then 
  the mappings 
$h_\la: {\overline \GG}_{\la_0} \ra {\overline \GG}_\la$ defined by 
$$
     \gamma(\la_0)\mapsto \gamma(\la), \text{ for } \gamma\in \overline \GG, 
$$
are continuous and
  depend holomorphically on $\la\in \La$. 
\end{lem}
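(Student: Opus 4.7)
By Lemma~\ref{lem:extension}, $\overline{\GG}$ is itself a normal \BHM, so we may work with it directly. The unbranching hypothesis at $\la_0$ states precisely that $\Pi_{\la_0}:\overline{\GG}\to \overline{\GG}_{\la_0}$ is a bijection. Consequently, for every $z\in\overline{\GG}_{\la_0}$ there exists a unique graph $\gamma_z\in\overline{\GG}$ with $\gamma_z(\la_0)=z$, and setting $h_\la(z):=\gamma_z(\la)=\Pi_\la\circ \Pi_{\la_0}^{-1}(z)$ gives a well-defined family of maps $\overline{\GG}_{\la_0}\to \overline{\GG}_\la$.

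Holomorphic dependence on $\la$ is automatic: for each fixed $z\in\overline{\GG}_{\la_0}$, the assignment $\la\mapsto h_\la(z)=\gamma_z(\la)$ is, by definition of a \BHM, a holomorphic map $\La\to \cd$.

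The substantive point is continuity of $h_\la$ in the base-point variable. I would argue as follows: fix $\la$ and take $z_n\to z$ in $\overline{\GG}_{\la_0}$; let $\gamma_n:=\gamma_{z_n}\in\overline{\GG}$, so $\gamma_n(\la_0)=z_n$ is bounded. By the normality of $\overline{\GG}$, any subsequence of $(\gamma_n)$ admits a further subsequence converging locally uniformly in $\La$, either to a graph $\gamma\in\overline{\GG}$ or to $\infty$; boundedness at $\la_0$ rules out the latter. For any such limit $\gamma$ one has $\gamma(\la_0)=\lim \gamma_{n_k}(\la_0)=z$, and by the unbranching at $\la_0$ this forces $\gamma=\gamma_z$. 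Since every convergent subsequence has the same limit $\gamma_z$, the full sequence $(\gamma_n)$ converges to $\gamma_z$ locally uniformly on $\La$, and in particular $h_\la(z_n)=\gamma_n(\la)\to \gamma_z(\la)=h_\la(z)$.

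The main (modest) obstacle here is the normal-family dichotomy: one must rule out escape to infinity, which is where the hypothesis that $z_n$ converges (hence the graphs are bounded at $\la_0$) is crucially used, together with the fact that normality of $\overline{\GG}$ propagates this local boundedness to the whole of $\La$.
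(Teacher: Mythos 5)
Your proof is correct and follows essentially the same approach as the paper's: both use the normality of $\overline\GG$ to get relative compactness of the family of graphs (the paper phrases this as compactness of the truncated space $\overline\GG_0(r,\delta)$, you phrase it via the subsequence extraction and ruling out escape to infinity by boundedness at $\la_0$), and both use the unbranching hypothesis at $\la_0$ to identify the limit graph uniquely. The paper then invokes the compact-to-Hausdorff homeomorphism criterion to conclude $\Pi_0^{-1}$ is continuous, while you reach the same conclusion by the standard ``every subsequence has the same subsequential limit'' argument; these are equivalent formulations of the same idea.
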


\begin{proof}
  The last statement is obvious from the definitions. 
To prove continuity of the $h_\la$,  let us consider the functional space
$ {\overline \GG}_0 (r, \delta) \equiv \overline \GG_{\la_0} (r, \delta)$ (defined in (\ref{GG(r)})), which is compact.  
By the  unbranching assumption, 
the natural projection 
$$
   \Pi_0 : \overline\GG_0 (r, \delta) \ra \overline\GG_0 \cap \dd_r^2
$$
 is bijective and hence is a homeomorphism. It follows that the maps
$h_\la= \Pi_\la \circ \Pi_0^{-1}$ are continuous. 
\end{proof}

\begin{cor}\label{homeos}
  Under the circumstances of Lemma \ref{continuity},
if the motion of $\overline \GG$ is unbranched, then the maps
$h_\la: {\overline\GG}_{\la_0} \ra {\overline  \GG}_\la$ are homeomorphisms.  
\end{cor}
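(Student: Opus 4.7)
The plan is to produce a continuous inverse of $h_\la$ by re-running Lemma \ref{continuity} with the roles of $\la_0$ and $\la$ exchanged, and then to conclude that $h_\la$ is a homeomorphism by virtue of continuity in both directions together with the bijectivity forced by unbranching.

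First I would apply Lemma \ref{continuity} as stated: the hypothesis that $\overline{\GG}$ is unbranched at $\la_0$ is a special case of the blanket assumption that $\overline{\GG}$ is unbranched at every parameter, so the lemma gives that
\[
   h_\la : \overline{\GG}_{\la_0} \longrightarrow \overline{\GG}_\la, \qquad \gamma(\la_0)\mapsto \gamma(\la),
\]
is well-defined and continuous for every $\la\in\La$.

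Next, since $\La$ is just a connected complex manifold and nothing privileges the point $\la_0$, I would invoke Lemma \ref{continuity} a second time, now treating $\la$ as the base point. The unbranching of $\overline{\GG}$ at $\la$ (again a special case of our standing assumption) then produces a continuous map
\[
   k_{\la_0} : \overline{\GG}_\la \longrightarrow \overline{\GG}_{\la_0}, \qquad \gamma(\la)\mapsto \gamma(\la_0).
\]

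Finally I would check that $h_\la$ and $k_{\la_0}$ are mutually inverse. In the notation of Lemma \ref{continuity}, $h_\la = \Pi_\la\circ \Pi_{\la_0}^{-1}$ and $k_{\la_0} = \Pi_{\la_0}\circ \Pi_\la^{-1}$; the inverses $\Pi_{\la_0}^{-1}$ and $\Pi_\la^{-1}$ exist as set maps because the unbranching of $\overline{\GG}$ at $\la_0$ and at $\la$ makes both projections bijections from $\overline{\GG}$ onto $\overline{\GG}_{\la_0}$ and $\overline{\GG}_\la$, respectively. Together with the continuity in both directions, this yields that $h_\la$ is a homeomorphism.

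I do not anticipate a serious obstacle here: the content of the corollary is a direct symmetrization of Lemma \ref{continuity}, and the only mildly delicate point is to verify that $k_{\la_0}$ is genuinely defined on all of $\overline{\GG}_\la$ and lands in $\overline{\GG}_{\la_0}$, which is exactly what the symmetric application of the lemma provides.
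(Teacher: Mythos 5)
Your proposal is correct and coincides with the argument the paper leaves implicit (no proof is given for this corollary): one simply applies Lemma~\ref{continuity} a second time with $\la$ as the base point, which is legitimate precisely because the unbranching hypothesis now holds at every parameter, and observes that $h_\la = \Pi_\la \circ \Pi_{\la_0}^{-1}$ and $k_{\la_0}=\Pi_{\la_0}\circ\Pi_\la^{-1}$ are mutually inverse and both continuous.
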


More generally, let us say that a normal holomorphic motion $\GG$
is {\it strongly unbranched} if
for every $\gamma \in \GG$, $\overline\GG$ is unbranched along $\gamma$ 
(notice that  the whole $\overline \GG$ is allowed to be branched).

We say that a holomorphic motion is {\it continuous} if all
the maps $h_\la: \GG_0\ra \GG_\la$, $\la \in \La$, are homeomorphisms.

\begin{lem}\label{strong unbranching}
  A normal holomorphic motion $\GG$ is continuous iff it is 
strongly unbranched.
\end{lem}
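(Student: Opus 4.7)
The plan is to exploit, in both directions, the interplay between normality and the defining feature of strong unbranching: that within $\overline{\GG}$ the distinguished graphs $\gamma\in \GG$ cannot be met by any other graph. This is the same Hurwitz-style mechanism as in the proof of Lemma \ref{continuity}, but applied graph by graph rather than globally.

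Assume first that $\GG$ is strongly unbranched, and fix a base point $\la_0$. Because $\GG$ is an honest (non-branched) holomorphic motion, the projection $\Pi_\la$ is bijective onto $\GG_\la$ for every $\la$, so $h_\la : \GG_0 \to \GG_\la$ is a well-defined bijection. To establish continuity at $a=\gamma(\la_0)\in \GG_0$, pick a sequence $a_n=\gamma_n(\la_0)\to a$ with $\gamma_n\in \GG$. Normality rules out the divergent alternative (the $\gamma_n$ are bounded at $\la_0$), so from any subsequence we may extract $\gamma_{n_k}\to \hat\gamma$ locally uniformly, with $\hat\gamma\in\overline{\GG}$ and $\hat\gamma(\la_0)=\gamma(\la_0)$. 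Strong unbranching along $\gamma$ then forces $\hat\gamma=\gamma$, so $\gamma_{n_k}(\la)\to \gamma(\la)=h_\la(a)$; since this holds along every subsequence, $h_\la(a_n)\to h_\la(a)$. The same argument with $\la_0$ and $\la$ interchanged (strong unbranching being a parameter-free condition) yields continuity of $h_\la^{-1}$, so $h_\la$ is a homeomorphism.

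Conversely, assume the motion is continuous and suppose, for contradiction, that some $\gamma\in \GG$ is met by a distinct graph $\tilde\gamma\in \overline{\GG}$ at a parameter $\la_1$. Since graphs in $\GG$ are already disjoint, $\tilde\gamma$ must lie in $\overline{\GG}\setminus \GG$; choose $\gamma_n\in \GG$ converging locally uniformly to $\tilde\gamma$, which we may assume are eventually distinct from $\gamma$. Then $\gamma_n(\la_1)\to \tilde\gamma(\la_1)=\gamma(\la_1)$ in $\GG_{\la_1}$, and continuity of $h_{\la_1}^{-1}$ gives $\gamma_n(\la_0)\to \gamma(\la_0)$. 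Applying the continuous maps $h_\la$ then gives $\gamma_n(\la)\to \gamma(\la)$ for every $\la\in \La$, while local uniform convergence forces $\gamma_n(\la)\to \tilde\gamma(\la)$. Hence $\gamma\equiv \tilde\gamma$, a contradiction.

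The only mildly delicate point is the extraction step in the forward direction, where we must guarantee that the limiting graph $\hat\gamma$ actually lives in $\overline{\GG}$ and is not the constant $\infty$; normality together with the boundedness of $\gamma_n(\la_0)$ takes care of this. Everything else is a direct consequence of the parameter-independent nature of strong unbranching and the uniqueness of limits it enforces.
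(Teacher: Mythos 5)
Your proof is correct and follows the paper's approach: normality supplies subsequential limits in $\overline\GG$, and strong unbranching forces those limits to coincide with the prescribed graph, which is exactly the Hurwitz-style mechanism the paper uses. You state the forward direction directly where the paper argues by contrapositive, and you spell out the converse (via continuity of $h_\la$ and $h_\la^{-1}$ together with uniqueness of locally uniform limits) where the paper only gestures at Lemma~\ref{continuity}, but the content is the same.
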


\begin{proof}
Assume $h_\la$ is discontinuous for some $\la\in \La$. Then for some $\gamma\in \GG$ there exists
a sequence $\gamma_k\in \GG$ such that $\gamma_k(\la_0) \to \gamma(\la_0)$ while
$\| \gamma_k(\la) -\gamma(\la) \| \geq \de>0$. Since $\GG$ is normal, we can pass to a limit
$\gamma_\infty\in \overline \GG$ such that $\gamma_\infty(\la_0)= \gamma (\la_0)$
while $\gamma_\infty(\la)\neq  \gamma (\la)$. Thus, $\overline \GG$ is branched at $\gamma$.
The same argument shows that discontinuity  of $h_\la^{-1}$ implies branching of $\overline \GG$ at some
$\gamma\in \GG$. 

The reverse assertion 
easily follows from Lemma \ref{continuity}. 
\end{proof}

\msk
Next, let us formulate a simple 
consequence of the classical one-dimensional $\la$-lemma:

\begin{lem}\label{entire curves}
    Let $\psi_\la: \cc \ra \cc^2$, $\la\in \dd $, be a holomorphic
    family of injectively immersed entire curves.
 Let $h_\la: A_0\ra \cc^2$, $\la\in \dd $, be a holomorphic motion in $\cc^2$
such that $A_\la:= h_\la (X_0) \subset \psi_\la(\cc)$.  Then it extends
to a holomorphic motion of $\psi_0(\cc) $ with values in $ \psi_\la(\cc)$. 
Moreover, locally in $\la$ (independently of the particular motion over
$\La$), there is a canonical extension which depends only on the
images $\psi_\la(\cc)$ but not on the  particular choice of the parametrizations
$\psi_\la$.  
\end{lem}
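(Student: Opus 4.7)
The plan is to reduce to the one-dimensional $\la$-lemma by pulling the given motion back through the parametrizations $\psi_\la$. Set $\widetilde A_0 := \psi_0^{-1}(A_0) \subset \cc$, and for each $\widetilde a \in \widetilde A_0$ with $a := \psi_0(\widetilde a)$, define $\widetilde h_\la(\widetilde a)$ as the unique element of $\cc$ such that $\psi_\la(\widetilde h_\la(\widetilde a)) = h_\la(a)$. Existence is the hypothesis $A_\la \subset \psi_\la(\cc)$, uniqueness comes from injectivity of $\psi_\la$.

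I claim that $(\widetilde h_\la)_{\la \in \dd}$ is a one-dimensional holomorphic motion of $\widetilde A_0$. Injectivity of $\widetilde h_\la$ for fixed $\la$ is immediate from that of $h_\la$ and $\psi_0$. For holomorphy in $\la$, fix $\widetilde a$ and $\lo \in \dd$: since $\psi_{\lo}$ is an immersion at $\widetilde h_{\lo}(\widetilde a)$, the holomorphic dependence of $\psi_\la$ on $\la$ combined with the inverse function theorem yields a holomorphic family of retractions $\rho_\la$, defined on a neighborhood of $h_{\lo}(a)$ in $\cd$, satisfying $\rho_\la \circ \psi_\la = \mathrm{id}$ near $\widetilde h_{\lo}(\widetilde a)$. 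Hence $\widetilde h_\la(\widetilde a) = \rho_\la(h_\la(a))$ is holomorphic near $\lo$, and these local formulas patch into a globally holomorphic function on $\dd$: for each $\la$ the equation $\psi_\la(\cdot) = h_\la(a)$ has a unique solution in $\cc$ by global injectivity of $\psi_\la$, so the analytic continuation is unambiguous.

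Apply the one-dimensional $\la$-lemma in Słodkowski's form (extending first to $\cc \cup \{\infty\}$, with $\infty \mapsto \infty$, and then restricting to $\cc$) to obtain a holomorphic motion $\widetilde H_\la: \cc \to \cc$ over $\dd$ that extends $\widetilde h_\la$. Setting
$$H_\la(\psi_0(\widetilde b)) := \psi_\la(\widetilde H_\la(\widetilde b))$$
defines, by injectivity of $\psi_0$, a holomorphic motion of $\psi_0(\cc)$ with values in $\psi_\la(\cc)$ that extends $h_\la$, giving the first assertion.

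For the canonical statement, any other holomorphic family of injective parametrizations $\psi'_\la$ of the same images $C_\la := \psi_\la(\cc)$ satisfies $\psi'_\la = \psi_\la \circ \phi_\la$ for a holomorphic family of affine automorphisms $\phi_\la \in \mathrm{Aut}(\cc)$. The pulled-back motion then transforms to $\widetilde h'_\la = \phi_\la^{-1} \circ \widetilde h_\la \circ \phi_0$; choosing the Słodkowski extension locally in $\la$ so as to commute with affine conjugation (which is possible since affine maps are conformal automorphisms of $\hat\cc$), pushing forward by $\psi'_\la$ gives back the same map $H_\la: C_0 \to C_\la$. Hence $H_\la$ depends only on the images $C_\la$. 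The main subtle point of the argument is ensuring that $\widetilde h_\la$ is globally holomorphic on $\dd$; this relies crucially on the global injectivity of each $\psi_\la$, without which the local formula $\rho_\la \circ h_\la$ could give inconsistent branches along analytic continuation.
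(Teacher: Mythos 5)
Your first step --- pulling the motion back through the parametrizations, verifying it is a holomorphic motion in $\cc$, applying S\l odkowski, and pushing forward --- is exactly the paper's route, and you supply the holomorphy-in-$\la$ verification that the paper leaves implicit, which is a welcome addition. So up through the first assertion the proposal is correct and essentially identical to the paper's argument.

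The second assertion (the \emph{canonical} local extension) has a genuine gap. S\l odkowski's theorem is an existence theorem: it asserts that \emph{some} extension of a holomorphic motion of a subset of $\hat\cc$ to all of $\hat\cc$ exists, but it does not single out a preferred one, and nothing in its statement or proof lets you ``choose the extension so as to commute with affine conjugation'' --- the family of all S\l odkowski extensions is large and has no distinguished equivariant member that you can simply point to. To get a canonical, conjugation-equivariant extension you need a different result: the Bers--Royden ``harmonic'' $\la$-lemma, which (over a smaller parameter disk, i.e.\ \emph{locally} in $\la$, exactly as the statement says) produces a \emph{unique} extension characterized by harmonicity of its Beltrami coefficients, and this uniqueness is what forces equivariance under affine (indeed M\"obius) changes of coordinate. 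That is precisely what the paper invokes; S\l odkowski is used only for the first, non-canonical, global-in-$\la$ extension. Replace the appeal to S\l odkowski in the canonicity step by Bers--Royden and the argument closes: if $\psi'_\la = \psi_\la \circ \phi_\la$ with $\phi_\la$ affine, the two pulled-back motions are conjugate by $\phi_\la$, hence so are their Bers--Royden extensions, and pushing forward gives the same map $C_0 \to C_\la$.
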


\begin{proof}
  Apply the S\l odkowski $\la$-lemma \cite{slodkowski} to the holomorphic  motion in $\cc$: 
$$
       \psi_\la^{-1} \circ h_\la \circ \psi_0: \  \psi_0^{-1}( A_0)\ra
       \psi_\la^{-1} (A_\la), \quad \la\in \dd.
$$
Moreover, locally in $\la$, there is the  {\it canonical} ``harmonic'' extension due to Bers
and Royden \cite{bers royden}  which is equivariant under complex affine
changes of variable, so it is independent of the particular
choice of the $\psi_\la$. 
\end{proof}

We will refer to the above canonical extension as the {\it Bers-Royden motion}.
It implies the following {\it foliated $\la$-lemma}
(first considered in \cite{buzzard verma}). 

Let us say that a family of holomorphic $\cc$-foliations $\FF_\la$ depends
holomorphically on $\la\in \La$ if the local defining functions
$\phi_\la$ for the $\FF_\la$ can be selected holomorphic in $\la$. 
Given a set $A$ and a $\cc$-foliation $\FF$ we define the  
 {\it leafwise closure}  $\cl_\FF A$ as $\bigcup_L (\cl_L  (A\cap L)$, where the union is
taken over all the leaves $L$ of $\FF$ and the closure $\cl_L$ is taken in the
intrinsic topology of the leaf.

\begin{cor}\label{foliated lambda-lemma}
  Let $h_\la: A_0\ra \cc^2$ be a  holomorphic motion in
  $\cc^2$, and let $\FF_\la$ be a holomorphic family of $\cc$-foliations
supported on open sets $U_\la\subset \cc^2$ 
containing  $ A_\la$. Then $h_\la$ extends to a
holomorphic motion of the leafwise   
closure  $\cl_{\FF_0}  A_0$. Moreover, locally in $\la$,
it further extends to the motion of  the whole   leaves of $\FF_\la$ that 
meet $A_\la$.   
\end{cor}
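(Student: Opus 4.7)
The plan is to reduce to Lemma~\ref{entire curves} by applying it one leaf at a time. Fix a base point $\la_0\in \La$ with $h_{\la_0}=\mathrm{id}$. For each leaf $L_0$ of $\FF_0$ that meets $A_0$, I pick a marked point $a_0\in A_0\cap L_0$ and consider the holomorphic curve $\la\mapsto h_\la(a_0)$ in $\cc^2$. Since $\FF_\la$ depends holomorphically on $\la$, on any simply connected neighborhood $\Om\ni\la_0$ there is a unique holomorphic family of leaves $L_\la\subset U_\la$ passing through $h_\la(a_0)$. Because the leaves of a $\cc$-foliation are conformally equivalent to $\cc$, I can choose parametrizations $\psi_\la:\cc\ra L_\la$ depending holomorphically on $\la\in \Om$, for instance by fixing $\psi_\la(0)=h_\la(a_0)$ together with a holomorphic choice of $\psi_\la'(0)$.

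Next I extend to $\cl_{\FF_0}A_0$ leaf by leaf. The natural compatibility of the motion with the foliation (implicit in the formulation via the leafwise closure, and holding in the intended applications) gives $h_\la(A_0\cap L_0)\subset L_\la$. Conjugating by the parametrizations yields a one-dimensional holomorphic motion
$$
\psi_\la^{-1}\circ h_\la\circ \psi_0 :\ \psi_0^{-1}(A_0\cap L_0)\ra \cc
$$
over $\Om$. The classical one-dimensional $\la$-lemma extends this continuously to $\overline{\psi_0^{-1}(A_0\cap L_0)}$, hence to $\cl_{L_0}(A_0\cap L_0)\subset L_0$. Because the extension is uniquely determined by continuity from $A_0\cap L_0$, it does not depend on the monodromy-ambiguous choice of $\psi_\la$, and it patches consistently into a globally defined motion of $\cl_{L_0}(A_0\cap L_0)$ over all of $\La$. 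Taking the union over leaves meeting $A_0$ produces the desired motion of $\cl_{\FF_0}A_0$ over $\La$.

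For the local-in-$\la$ extension to whole leaves, I apply Lemma~\ref{entire curves} directly to the pair $(\psi_\la, h_\la|_{A_0\cap L_0})$ on each leaf: this extends the motion to all of $L_0=\psi_0(\cc)$ with values in $L_\la$, and the lemma supplies the canonical Bers-Royden extension over the neighborhood $\Om$, depending only on the images $L_\la$ and not on the particular choice of $\psi_\la$. Taking the union over leaves gives the second assertion. The main obstacle I foresee is the inherently local nature of the Bers-Royden construction: it is defined on a disk in parameter space, which is precisely why the whole-leaf extension must be stated only locally in $\la$. The secondary bookkeeping --- following the leaves coherently and ensuring foliation-compatibility of the motion --- is resolved by the holomorphic dependence of $\FF_\la$ and is implicit in the framing of the statement. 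With these points in place, the argument reduces cleanly to Lemma~\ref{entire curves} and the classical one-dimensional $\la$-lemma.
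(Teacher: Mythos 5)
Your proof is correct and follows the same two-step approach the paper sketches in its two-line proof: conjugate by holomorphically varying leaf parametrizations and apply the one-dimensional $\la$-lemma to get the leafwise closure, then invoke Lemma~\ref{entire curves} (with canonicity of the Bers--Royden extension) for the local whole-leaf extension. You have usefully made explicit the hypotheses the paper leaves implicit --- the leaf-compatibility $h_\la(A_0\cap L_0)\subset L_\la$ and the monodromy/patching of the chosen parametrizations --- but the route is the same.
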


\begin{proof}
   The extension to the leafwise closure is obvious by the simplest
   one-dimensional version of the $\la$-lemma. Further extension
   comes  from Lemma \ref{entire curves} (it is important that
   this extension is canonical).
\end{proof}

\section{Weak $J^*$-stability} \label{sec:stable}

\subsection{Substantial families}

{\em 
From now on, $(f_\la)_{\la\in \La}$ will stand for  a holomorphic family of   
polynomial automorphisms of $\cc^2$ of dynamical degree $d\geq 2$ over a parameter domain $\La$,
which is a connected complex manifold. }

We will  often require an additional --presumably superfluous-- assumption. We say that 
a holomorphic family of polynomial automorphisms is {\em substantial} if 
\begin{itemize}
\itm either all members of the family are dissipative
\itm or for any 
periodic point with eigenvalues $\alpha_1$, $\alpha_2$,
no relation of the form $\alpha_1^a\alpha_2^b=c$,  holds persistently in parameter 
space, where  $a$, $b$, $c$ are complex numbers and $\abs{c}=1$. 
\end{itemize}
As an example, 
any open subset of the family of all polynomial automorphisms 
of  dynamical degree $d$ is substantial \cite[Theorem 1.4]{bhi}. On the other hand, a family of 
conservative polynomial automorphisms is not.

\subsection{Stability and Newhouse phenomenon} 

A family $(f_\la)_{\la\in \La}$ induces a  {\em fibered map } 
\begin{equation}\label{fibered map}
  \widehat{f}: \La \times \cc^2 \ra \La\times \cc^2 ,\quad     \widehat{f}:(\la,z)\mapsto (\la, f_\la(z)),
\end{equation}
which in turn, induces an action on the space  of  graphs $(\la,
\gamma(\la))_{\la\in \La}$ of holomorphic functions $\gamma: \La \ra \cc^2$.  
A branched holomorphic motion $\mathcal{G}$  over $\La$
is called {\em equivariant} if 
 $\widehat {f} (\mathcal{G}) = \GG$. 
 

\begin{defi}\label{def:stable}        
A holomorphic family $(f_\la)_{\la\in \La}$ 
 of polynomial automorphisms of $\cd$ 
is called weakly $J^*$-stable  if the sets  $J_\la^*$ move under an 
equivariant%
\footnote{Later on we will see that equivariance is automatically satisfied.}  
 \BHM. 
A map $f_{\la_0}$ and the corresponding parameter $\la_0\in \La$  are
called weakly $J^*$-stable if the family $(f_\la)$ is weakly $J^*$-stable over a
neighborhood $\La_0\subset \La$ of $\la_0$, otherwise we say that a bifurcation occurs at $\lo$.    
\end{defi} 

If in this definition we require that the motion in question is
{\it unbranched} then we obtain the usual notion of 
{\it  $J^*$-stability}. 
Given any dynamical set $X_f$ (e.g., $K_f$ or $J^\pm_f$), 
we can define {\em (weak)   $X$-stability} in the same way. 

\begin{rmknonb}
 Note that we do not assume that the \BHM in question is normal. It
 turns out that for all dynamical sets considered in this paper (e.g.,
 $X= \widehat J$), the \BHM can be selected to be normal. (Of course,
in case of $X= J^*$ it is automatically so.) 
\end{rmknonb}


\msk
The following theorem is very much in the spirit of one dimensional dynamics 
\cite{mss, lyubich}. It shows that   weak $J^*$-stability is a reasonable notion of stability for polynomial automorphisms.

\begin{thm}\label{thm:equiv}
Let $(f_\la)_{\la\in \La}$ be a substantial family of polynomial automorphisms of $\cd$ of dynamical degree $d\geq 2$. The 
following are equivalent:
\begin{enumerate}
\item[\rm { (i) } ]  
The family  $(f_\la)$ is weakly $J^*$-stable.
\item[\rm { (ii) } ] Every  periodic point  stays of constant type (saddle, attracting, repelling, indifferent) throughout the family.
\item[\rm { (iii) } ] $J_\la^*$ moves continuously in the Hausdorff topology.
\end{enumerate}
If furthermore $(f_\la)$ is dissipative,  the following two conditions are equivalent, and imply the previous ones:
\begin{enumerate}
\item[\rm { (iv) } ] The number of attracting cycles is (finite and) locally constant.
\item[\rm { (v) } ] The period of attracting cycles  is locally uniformly bounded.
\end{enumerate}
\end{thm}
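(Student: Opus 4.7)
The strategy mimics the one-dimensional Mañé--Sad--Sullivan--Lyubich pattern: establish the cyclic implications $(ii)\Rightarrow(i)\Rightarrow(iii)\Rightarrow(ii)$, then attach the dissipative addendum via $(iv)\Leftrightarrow(v)$ and $(iv)\Rightarrow(ii)$. The main novelty compared with dimension one is that we only get a branched motion rather than a genuine one, so we must exploit the material of \S\ref{sec:branched} (normality, extension to the closure, equivariance).

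\medskip

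\textbf{Step 1: $(ii)\Rightarrow(i)$.} Since every periodic point keeps a constant type, no multiplier of a saddle ever hits the unit circle, so the implicit function theorem continues each $p_0\in\Saddles_{\la_0}$ to a global holomorphic graph $p(\la)$ of saddles over $\La$. Two such graphs cannot collide: a collision parameter would force the common point to have a parabolic eigenvalue, violating constant type. Thus the saddles form a family $\GG$ of pairwise disjoint holomorphic graphs. Since every saddle lies in the filled Julia set $K$, which is locally uniformly bounded in $\La$, $\GG$ is a normal family. By Lemma \ref{lem:extension}, $\overline\GG$ is a normal \BHM. Its section over $\la$ is $\overline{\Saddles_\la}=J^*_\la$, and equivariance is inherited from the density of saddles (and $\widehat f$-invariance of $\GG$). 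So $(f_\la)$ is weakly $J^*$-stable.

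\medskip

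\textbf{Step 2: $(i)\Rightarrow(iii)\Rightarrow(ii)$.} Continuity in the Hausdorff topology is immediate from Lemma \ref{Hausdorff cont} applied to the normal \BHM provided by $(i)$, once one observes that the sections stay in a uniformly bounded set. For $(iii)\Rightarrow(ii)$, suppose some persistent periodic point $p(\la)$ changes type at $\la_0$, so one multiplier $\alpha(\la_0)$ lies on the unit circle. The substantiality hypothesis rules out a persistent neutral multiplier, so we may perturb $\la_0$ and assume $\alpha(\la_0)$ is a root of unity, i.e.\ $p_0$ is (semi-)parabolic. Either $p_0$ lies in $J^*_0$, in which case on the ``attracting side" of the bifurcation a definite neighborhood of $p(\la)$ is captured by the basin of $p(\la)$ and hence disjoint from $J^*_\la$, violating Hausdorff continuity; or $p_0\notin J^*_0$, in which case perturbations on the ``saddle side" produce transverse homoclinic intersections (via a standard period doubling / saddle-node picture) that inject new saddles into any neighborhood of $p_0$, again contradicting continuity.

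\medskip

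\textbf{Step 3: Dissipative case.} The equivalence $(iv)\Leftrightarrow(v)$ is immediate once one recalls the Friedland--Milnor bound: a polynomial automorphism of dynamical degree $d$ has at most $O(d^n)$ periodic points of period $n$, hence finitely many attracting cycles of each period. Thus boundedness of periods is equivalent to finiteness plus local constancy of the attractor count. For $(iv)\Rightarrow(ii)$, observe that in the dissipative regime the product of multipliers at a periodic point has modulus $|\Jac f|^{\text{per}}<1$, so a bifurcation of a saddle means that its larger multiplier crosses the unit circle and the point becomes attracting (or conversely). Such a transition changes the number of attracting cycles near $\la_0$, contradicting $(iv)$. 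Persistent neutrality is excluded by substantiality. Conversely $(ii)$ trivially implies $(iv)$ since attracting cycles can then be followed holomorphically and their count is locally constant. The main obstacle is the delicate $(iii)\Rightarrow(ii)$ direction, where one must rule out Hausdorff-continuous bifurcations despite the absence of a direct Montel argument; this is where the fine structure of semi-parabolic perturbations, to be developed in later sections, implicitly enters.
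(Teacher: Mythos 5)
Your cyclic architecture $(ii)\Rightarrow(i)\Rightarrow(iii)\Rightarrow(ii)$ is a legitimate alternative to the paper's decomposition (which establishes $(i)\Leftrightarrow(ii)$ directly via Proposition \ref{prop:extending correspondence}, and then treats $(iii)$ separately), and Steps 1 and $(i)\Rightarrow(iii)$ are essentially fine. However the load-bearing step $(iii)\Rightarrow(ii)$ is flawed, and the dissipative addendum contains both an incomplete argument and a false claim.

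For $(iii)\Rightarrow(ii)$: you perturb the bifurcating multiplier to a \emph{root of unity}, but a (semi-)parabolic periodic point generically lies \emph{in} $J^*$, so no discontinuity is produced. The paper instead argues as in Lemma \ref{lem:substantial}: substantiality yields an arc of the unit circle swept by the multiplier, hence (by Schmidt's theorem) a parameter where the neutral eigenvalue is Diophantine, creating a Siegel ball/basin. Only then does the point jump into the interior of $K$ and leave $J^*$, producing the discontinuity. Your Case~2 compounds the problem: a saddle-node or generic period-doubling does not automatically ``inject new saddles into any neighborhood of $p_0$'' (and actually you don't need homoclinic intersections there --- if the point is a saddle on one side it is already in $J^*$). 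More seriously, your case split presupposes the existence of a ``saddle side,'' which fails if the point transitions attracting $\to$ indifferent $\to$ repelling without ever becoming a saddle; the Siegel perturbation handles this uniformly.

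For the dissipative block: $(v)\Rightarrow(iv)$ does not follow from the Friedland--Milnor degree bound. Finiteness of attracting cycles of bounded period is immediate, but local \emph{constancy} of their number is not --- a saddle could turn into a sink (or vice versa) without violating the period bound and without, in a period-doubling scenario, even changing the count. The paper closes this gap by a detour: $(v)$ forces all sufficiently high prime period cycles to be persistent saddles, Corollary \ref{cor:dense saddles} then gives weak $J^*$-stability, and only afterwards does local constancy of the non-saddle count follow. Your direct argument for $(iv)\Rightarrow(ii)$ (``such a transition changes the number of attracting cycles'') has the same weakness: the count can be conserved across a period-doubling. Finally, your parenthetical ``Conversely $(ii)$ trivially implies $(iv)$'' is not part of the theorem and in fact not known: whether weak $J^*$-stability excludes infinitely many sinks is precisely the open question recorded in the paper immediately after Corollary \ref{cor:palis}. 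Your last sentence also misattributes the difficulty: semi-parabolic implosion plays no role in Theorem \ref{thm:equiv}; the $(iii)\Rightarrow(ii)$ mechanism is the Siegel ball.
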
  


Most of the proof of this  theorem is contained in \S\S \ref{subs:motion} and \ref{subs:special} below 
(see in particular Proposition \ref{prop:extending correspondence}). The proof will be completed in \S \ref{subs:proof}.

\begin{cor}\label{attracting par}
Let $(f_\la)_{\la\in \La}$ be a 
family of dissipative polynomial automorphisms of $\cd$.
Then any bifurcation parameter $\la_0\in \La$ can be approximated
by a parameter $\la\in \La$ such that $f_\la$ has an attracting cycle. 
\end{cor}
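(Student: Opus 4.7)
The plan is to derive this corollary directly from Theorem \ref{thm:equiv}, specifically from the implications established in the dissipative regime. First, observe that a dissipative holomorphic family is automatically substantial in the sense of \S \ref{subs:motion} (it satisfies the first alternative in the definition), so Theorem \ref{thm:equiv} applies without further hypothesis; this is what makes the deduction available.

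The key step is the contrapositive of the implication $(\mathrm{v}) \Rightarrow (\mathrm{i})$. By hypothesis $\lo$ is a bifurcation parameter, meaning that for every open neighborhood $U \subset \La$ of $\lo$ the subfamily $(f_\la)_{\la \in U}$ is not weakly $J^*$-stable, i.e.\ condition $(\mathrm{i})$ of Theorem \ref{thm:equiv} fails on $U$. Hence condition $(\mathrm{v})$ must fail on every such $U$ as well: for each neighborhood $U$ of $\lo$ and every integer $N \geq 1$, there exists $\la \in U$ such that $f_\la$ admits an attracting cycle of period exceeding $N$.

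Shrinking $U$ along a basis of neighborhoods and letting $N \to \infty$ produces a sequence $\la_n \to \lo$ such that each $f_{\la_n}$ has (at least) one attracting cycle, which is the desired approximation. The only point to be checked is that the negation of $(\mathrm{v})$ genuinely provides attracting cycles at nearby parameters rather than, say, only non-uniform bounds; but the statement of $(\mathrm{v})$ is precisely a uniform bound on periods, so its failure is exactly the existence of attracting cycles (of arbitrarily large period) at parameters arbitrarily close to $\lo$.

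There is no real obstacle here beyond correctly invoking Theorem \ref{thm:equiv}: the substantive work — the chain of equivalences and the implication from bounded attracting periods to weak $J^*$-stability — has been carried out in the course of proving that theorem. The corollary is essentially a rephrasing of the fact that, in the dissipative setting, bifurcations are witnessed by the appearance of attracting cycles.
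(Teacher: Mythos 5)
Your proof is correct, but it takes a genuinely different route from the paper's. The paper invokes item (ii) of Theorem \ref{thm:equiv}: since (ii) fails near $\lo$, some periodic cycle changes type, and in the dissipative setting (where the product of multipliers has modulus $<1$) this means one multiplier must cross the unit circle; hence there is a nearby parameter $\mu_0$ at which the cycle is non-persistently indifferent with multipliers $|\alpha_1|<|\alpha_2|=1$, and a further small perturbation turns it into an attracting cycle. You instead invoke item (v) via the contrapositive of $(\mathrm v)\Rightarrow(\mathrm i)$: the failure of (v), i.e.\ non-uniformly bounded attracting periods, forces attracting cycles to exist at parameters arbitrarily close to $\lo$ (you correctly observe that if there were none, the set of attracting periods would be empty and (v) would hold vacuously). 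Both routes are valid since they merely pick out different links in the chain of equivalences of Theorem \ref{thm:equiv}, and neither introduces circularity (the proof of $(\mathrm v)\Rightarrow(\mathrm i)$ in the paper does not depend on this corollary). Your version is slightly more abstract and avoids the explicit perturbation-from-indifference step; the paper's version is more constructive, exhibiting the attracting cycle as coming from a concretely bifurcating periodic point. One small remark: the definition of ``substantial'' is given in \S 3.1, not in the subsection you cite; this is only a mislabel and does not affect the argument, since a dissipative family is indeed substantial by the first clause of that definition.
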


\begin{proof}  
 By item (ii) of the theorem, $\la_0$  can be a approximated by a
 parameter $\mu_0$ such that $f_{\mu_0}$  has an non-persistently  indifferent periodic
 point $p_0$ with multipliers $|\alpha_1| < |\alpha_2|=1$.
Such a point can be perturbed to an attracting one.
\end{proof} 

The following consequence follows exactly as in dimension 1:

\begin{cor}\label{cor:opendense}
Let $(f_\la)_{\la\in \La}$ be a  
family of dissipative polynomial automorphisms of $\cd$.
If the number of attracting cycles is locally uniformly bounded on $\La$, then the 
locus of weak $J^*$-stability is open and dense. 
\end{cor}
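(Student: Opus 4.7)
The plan is to mimic the one-dimensional argument of Ma\~n\'e--Sad--Sullivan and Lyubich, using Theorem~\ref{thm:equiv} as the sole substantive input. The whole content of the corollary will reduce to a lower semi-continuity statement for the attracting cycle count combined with the implication (iv)$\Rightarrow$(i) of that theorem.

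First I would introduce the counting function
\[
   N : \La \longrightarrow \nn \cup \{\infty\}, \qquad N(\la) = \#\{\text{attracting cycles of } f_\la\},
\]
and verify that $N$ is lower semi-continuous on $\La$. Concretely, if $p$ is a point of period $n$ of $f_{\la_0}$ with multipliers of modulus $<1$, then the implicit function theorem applied to $f_\la^n(z) - z = 0$ produces a holomorphic continuation $p(\la)$ of $p$ on a neighborhood of $\la_0$, and the condition on multipliers is open, so $p(\la)$ remains attracting nearby. Distinct attracting cycles at $\la_0$ give rise to distinct continuations, whence $N(\la) \geq N(\la_0)$ locally.

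Next I would show that the set
$\Om \subset \La$ on which $N$ is locally constant is open and dense. Openness is tautological from the definition. For density, fix any non-empty open set $U \subset \La$. The hypothesis of the corollary provides, after possibly shrinking $U$, a finite bound $M := \sup_U N < \infty$. Pick $\la_1 \in U$ with $N(\la_1) = M$; lower semi-continuity yields a neighborhood $V \subset U$ of $\la_1$ on which $N \geq M$, hence $N \equiv M$ on $V$, and $V \subset \Om$. Thus $\Om$ meets every non-empty open subset of $\La$.

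Finally, on $\Om$ the number of attracting cycles is locally finite and locally constant, so Theorem~\ref{thm:equiv}, in its implication (iv)$\Rightarrow$(i) available in the dissipative setting, gives weak $J^*$-stability at every parameter of $\Om$. Since the locus of weak $J^*$-stability is open by Definition~\ref{def:stable} and contains the open dense set $\Om$, it is itself open and dense. I do not foresee any genuine obstacle: the substantive work is already packaged in Theorem~\ref{thm:equiv}, and the one remaining point -- lower semi-continuity of $N$ -- is a standard consequence of the structural stability of attracting periodic orbits.
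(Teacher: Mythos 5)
Your proof is correct, and it reaches the conclusion by a route that differs from the paper's, although both ultimately rest on the same equivalence package in Theorem~\ref{thm:equiv}. The paper argues from a bifurcation parameter outward: by Corollary~\ref{attracting par} (which uses item~(ii) of Theorem~\ref{thm:equiv}), any bifurcation parameter can be perturbed so as to gain a fresh attracting cycle while the old ones persist; iterating this and invoking the local uniform bound forces termination at a weakly stable parameter. You instead maximize the lower semi-continuous, integer-valued, locally bounded counting function $N$ on a given open set; at a point realizing the local maximum, semi-continuity yields local constancy of $N$, and you then invoke the implication (iv)$\Rightarrow$(i) of Theorem~\ref{thm:equiv} (restricted to the neighborhood on which $N$ is constant) to get weak $J^*$-stability. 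Both arguments need the structural stability of attracting periodic orbits (your lower semi-continuity of $N$ is implicit in the paper's counting), and both implicitly use that a dissipative family is substantial so that Theorem~\ref{thm:equiv} applies. The paper's route is more constructive and only calls on the (i)$\Leftrightarrow$(ii) part of the theorem through Corollary~\ref{attracting par}; yours is cleaner in structure, avoids Corollary~\ref{attracting par} altogether, and is essentially the classical ``count the attractors and maximize'' argument from one-dimensional $J$-stability theory, at the price of invoking the (iv)$\Rightarrow$(i) implication directly.
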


\begin{proof}  
  By the above corollary, any bifurcation parameter $\la_0\in \La$ can
  be perturbed to a parameter $\la_1$ such that $f_{\la_1}$ with an
  attracting cycle. If $\la_1$ is a also a bifurcation parameter then
  for the same reason, it can be further perturbed to a parameter $\la_2$
  with two attracting cycles, and so on. Since the number of attacting
  cycles is locally uniformly bounded, this process must terminate, hence
  producing a stable  parameter $\la_n$ approximating $\la_0$.  
\end{proof}

In particular we have the following nice corollary in the spirit of the Palis conjectures. 
We say that $f_\la$ is a {\em Newhouse automorphism}  if it possesses  infinitely many sinks. 

\begin{cor}\label{cor:palis}
Let $(f_\la)_{\la\in \La}$ be a  
  family of   dissipative polynomial automorphisms of $\cd$. Then 
$$\set{\text{locally weakly }J^* \text{-stable parameters}} \cup \set{\text{Newhouse parameters}}$$ is dense in $\La$.
\end{cor}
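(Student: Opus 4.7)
The plan is to reduce to Corollary \ref{cor:opendense} via a standard Baire category argument. Given an arbitrary non-empty open subset $U\subseteq\La$, I will show that $U$ contains either a locally weakly $J^*$-stable parameter or a Newhouse parameter.

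For each integer $n\geq 1$, I would consider the set
$$U_n=\set{\la\in U : f_\la \text{ has at least } n \text{ attracting periodic orbits}}.$$
Attracting cycles persist under small holomorphic perturbation, via the implicit function theorem applied to $f_\la^k-\mathrm{id}$ together with the open condition that the differential at the cycle remains contracting. Hence the counting function $N(\la):=\#\set{\text{attracting cycles of }f_\la}$, taking values in $\nn\cup\set{\infty}$, is lower semicontinuous on $\La$, and each $U_n$ is open in $U$.

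The argument then splits into two cases. First, suppose every $U_n$ is dense in $U$. Since $U$ is an open subset of the complex manifold $\La$, it is a Baire space, so $\bigcap_{n\geq 1}U_n$ is dense in $U$ and in particular non-empty. Any parameter in this intersection satisfies $N(\la)=\infty$ and is therefore a Newhouse parameter, and we are done. Otherwise, some $U_n$ fails to be dense in $U$, and then $V:=U\setminus\overline{U_n}$ is a non-empty open subset of $U$ on which $N<n$. Restricting to (each connected component of) $V$ yields a holomorphic family of dissipative polynomial automorphisms whose number of attracting cycles is uniformly bounded by $n-1$, so Corollary \ref{cor:opendense} produces a dense set of weakly $J^*$-stable parameters in $V$, which in particular gives such a parameter inside $U$.

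There is no serious obstacle here, since the dynamical substance has already been packaged into Corollaries \ref{attracting par} and \ref{cor:opendense}; what remains is only the standard dichotomy that converts the local boundedness criterion of Corollary \ref{cor:opendense} into a global density statement. The resulting statement is the two-dimensional analogue of the classical one-dimensional fact that bifurcation parameters can always be approximated either by $J$-stable maps or by maps carrying infinitely many sinks.
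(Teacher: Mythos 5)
Your proof is correct and follows essentially the same strategy as the paper: the paper partitions $\La$ into the open set $B$ where the number of attracting cycles is locally uniformly bounded (where Corollary \ref{cor:opendense} applies) and its complement, where it shows the sets $U_m$ are relatively open and dense and invokes Baire. Your version localizes the dichotomy to an arbitrary open $U$, but the ingredients (lower semicontinuity of the count of attracting cycles, Baire, and Corollary \ref{cor:opendense}) and their roles are identical.
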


\begin{proof}
Let $B\subset\La$ be the open set where the number of attracting cycles is locally 
uniformly bounded. By Corollary \ref{cor:opendense}, weak $J^*$-stability is dense 
in $B$. Now in $B^c$, the set 
$$U_m=\set{\la, f_\la \text{ possesses at least } m\text{ attracting cycles}}$$ is 
relatively open and dense. We conclude by Baire's Theorem.
\end{proof} 

According to the work of Buzzard \cite{buzzard}, it is known that 
the Newhouse region, i.e. the closure of the set of Newhouse parameters, has non-empty
interior in the space of polynomial automorphisms of sufficiently high degree. 
On the other hand,  it is an open question  whether weak $J^*$ 
stability implies that there are only finitely many sinks 
(i.e. whether conditions  (i)-(v) in Theorem \ref{thm:equiv} are equivalent).
(Of course, the Palis Conjecture would imply that this is the case.)  

\medskip

It is also worthwhile to state the following result which will follow from the proof of   Theorem  \ref{thm:equiv} 
(see Proposition \ref{prop:extending correspondence} below).

\begin{cor}\label{cor:tgcies}
Let  $(f_\la)_{\la\in \La}$ be a weakly stable 
 substantial family of polynomial automorphisms of $\cd$ of dynamical degree $d\geq 2$. Then:   
 \begin{itemize}
\itm  the \BHM  of the set $J^*$ 
 is unbranched over the set of periodic,  homoclinic and heteroclinic points;
\itm homoclinic and heteroclinic tangencies are persistent.
\end{itemize}
\end{cor}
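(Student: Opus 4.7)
The plan is to derive both assertions from the explicit construction of the equivariant BHM obtained in the proof of Theorem \ref{thm:equiv}---specifically from Proposition \ref{prop:extending correspondence}---treating periodic points, transverse s/u intersections, and tangential s/u intersections in turn. The hard step will be the last one.

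First I would dispatch the periodic case. Let $p_0\in J^*_0$ be a saddle of period $n$. Weak $J^*$-stability together with Theorem \ref{thm:equiv}(ii) keeps the multipliers of $p_0$ off the unit circle throughout $\La$, so the implicit function theorem applied to $f_\la^n(z)=z$ yields a unique holomorphic continuation $p(\la)$ whose graph $\Gamma_p$ is a section of $\mathcal G$. To rule out a second section $\gamma\in\mathcal G$ through $(\la_0,p_0)$, I would combine the normality of $\mathcal G$ with the equivariance $\widehat f(\mathcal G)=\mathcal G$: the iterates $\widehat f^{kn}(\gamma)$ all pass through $(\la_0,p_0)$, and local hyperbolicity of $p$ near $\la_0$ forces $\gamma(\la)$ to be a fixed point of $f_\la^n$ close to $p(\la)$, so uniqueness in the IFT gives $\gamma\equiv p$.

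Next I would treat s/u intersections. Each saddle continuation $p(\la)$ drags along its local stable and unstable manifolds holomorphically; iterating by $f_\la^{\pm 1}$ and applying Lemma \ref{entire curves} globalizes these into holomorphic families of injectively immersed entire curves $W^{s,u}(p(\la))$. At a transverse intersection $q_0 = W^s(p_0)\pitchfork W^u(p'_0)$ the implicit function theorem yields a unique holomorphic continuation $q(\la)$, and as in the periodic case this $q$ is the only section of $\mathcal G$ through $(\la_0,q_0)$.

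Finally, for a tangential s/u intersection $q_0$, I would work in local coordinates in which $W^s$ and $W^u$ are graphs $y=\phi_\la(x)$ and $y=\psi_\la(x)$, and apply Weierstrass preparation to $\Phi(\la,x)=\phi_\la(x)-\psi_\la(x)$: its zero locus near $(\la_0,x_0)$ is the zero set of a distinguished polynomial $x^m+a_1(\la)x^{m-1}+\cdots+a_m(\la)$ with $a_i(\la_0)=0$. Either all $a_i$ vanish identically---in which case the tangency persists and $\mathcal G$ is unbranched at $q_0$---or else for generic nearby $\la$ the roots are distinct, producing several sections of $J^*_\la$ colliding at $(\la_0,q_0)$ (a splitting of the tangency into transverse intersections). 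The main obstacle is to exclude this second alternative. The route I would take is to observe that such a splitting would create transverse homoclinic intersections in every neighborhood of $q_0$ for $\la$ close to $\la_0$; by the Birkhoff--Smale theorem these force new saddle periodic orbits to accumulate at $q_0$, contradicting the persistence of periodic type guaranteed by Theorem \ref{thm:equiv}(ii). Hence the tangency persists and $\mathcal G$ is unbranched at $q_0$, establishing both bullets simultaneously.
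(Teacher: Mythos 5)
Your handling of the first bullet is essentially the right idea: the paper establishes unbranching at periodic points and transverse s/u intersections via Lemma~\ref{lem:extension/expansion}, which uses normality plus local expansivity on the hyperbolic set; your ``IFT + equivariance of $\GG$'' argument is a reasonable (if slightly under-justified, at the equicontinuity step) version of the same mechanism. The problem is the third paragraph, where your proof of tangency persistence has a genuine gap.

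The Birkhoff--Smale contradiction you invoke does not materialize. The horseshoe periodic orbits produced near a transverse homoclinic intersection are saddles with multipliers bounded away from the unit circle, and their existence is perfectly compatible with weak $J^*$-stability and with condition (ii) of Theorem~\ref{thm:equiv}: condition (ii) forbids \emph{existing} cycles from changing type, it does not forbid saddle cycles from accumulating at a point of $J^*$ --- indeed they always do, since $J^*$ is by definition the closure of the saddle set. Worse, by \cite[Prop.~9.8]{bls} the transverse homoclinic intersections of any saddle are already dense in $J^*$ at $\la_0$ itself, so the putative splitting does not create anything that was not there before; there is nothing for Theorem~\ref{thm:equiv}(ii) to be contradicted by. To push your route through you would have to show that the specific cycles born from the unfolding of the tangency cannot persist as $\la\to\la_0$ without a multiplier crossing the unit circle --- which is precisely the kind of Newhouse-type bifurcation analysis the corollary is meant to avoid.

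The paper's argument for (e) is one-dimensional and sidesteps bifurcation theory entirely: parameterize $W^u(p_2(\la))$ by $\cc$, use the hyperbolic inclination lemma to approximate $q(\la_0)$ intrinsically by a sequence $q_n(\la_0)$ of transverse intersections of $W^s(p_3)\cap W^u(p_2)$ for an auxiliary saddle $p_3$, follow the $q_n$ holomorphically (they form a Montel-normal family inside $\cc\simeq W^u(p_2(\la))$), and conclude $q_n(\la)\to q(\la)$ by Hurwitz. If two distinct transverse intersections $q,q'$ ever collided, Hurwitz would force $q'(\la_2)=q_n(\la_2)$ for some $\la_2$ and $n$, impossible since they lie on different stable manifolds. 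Absence of collisions then gives both the strong unbranching of the s/u motion and the persistence of tangencies. You should replace the Birkhoff--Smale paragraph by this inclination-lemma/Hurwitz argument.
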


An important question that is left open after this analysis  is whether branching  can actually occur. 
Indeed, while the $\la$-lemma clearly fails for general two-dimensional holomorphic motions, 
we do not know any instance of branching in the  dynamical context, or even  any mechanism that may lead to it.   
Thus, it is tempting to believe that weak $J^*$-stability actually implies $J^*$-stability. 
(Again, the Palis Conjecture would imply that this must be true
generically.)  

\subsection{Normality of motions in $S_\la^\pm$}

 
 According to Proposition \ref{prop:henon}, $(f_\la)$ is conjugate to a holomorphic family of composition of Hénon mappings. From 
 this it easily follows that the sets $K_\la$ are locally uniformly bounded in $\cd$. In particular, if $\mathcal{G}$ is a \BHM 
 such that $\mathcal{G}_\la\subset K_\la$ for all $\la$, then it is normal.
The next lemma  shows that it is also true if  $\mathcal{G}_\la\subset S^\pm_\la$.  This will be used in \S \ref{subs:extension}.

  \begin{lem}\label{lem:normal}
Let $(f_\la)_{\la\in \La}$ be a family of polynomial automorphisms  of dynamical degree $d\geq 2$.
 Then any family of holomorphic mappings $\gamma:\La\cv \cd$ such that for every $\la\in\La$, 
$\gamma(\la) \in S^+_\la \cup  S^-_\la$ 
is normal. 
\end{lem}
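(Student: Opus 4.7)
The plan is to show that every sequence $(\gamma_n)$ extracted from the family admits a subsequence that converges locally uniformly on $\La$, either to a holomorphic map with values in $\cd$ or to the constant map at infinity. Since $\pd$ is compact, any family of holomorphic maps $\La\ra\pd$ is automatically normal, so after extracting we may assume the natural extensions $\widehat\gamma_n:\La\ra\pd$ converge in $C(\La,\pd)$ to a holomorphic map $\widehat\gamma:\La\ra\pd$. Writing $L_\infty := \pd\sm\cd$ for the line at infinity, the task is to rule out the ``mixed'' case where $A:=\widehat\gamma^{-1}(L_\infty)$ is a proper, non-empty analytic subvariety of $\La$. Since normality is a local question, we may restrict $\La$ to a small $1$-dimensional disk through some $\la_0\in A$ chosen so that $A=\set{\la_0}$.

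I first observe that each $\gamma_n$ lies entirely in $S^+_\la$ for all $\la$, or entirely in $S^-_\la$ for all $\la$. Indeed the continuous non-negative functions $u(\la):=G^+_\la(\gamma_n(\la))$ and $v(\la):=G^-_\la(\gamma_n(\la))$ satisfy $uv\equiv 0$ on $\La$ (since $G^+=0$ on $S^+$ and $G^-=0$ on $S^-$) and $u+v>0$ on $\La$ (since $\gamma_n(\la)\notin K_\la$), so $\La$ is the disjoint union of the closed sets $\set{u=0}$ and $\set{v=0}$. Connectedness forces one of them to be empty, and after further extraction we may assume $\gamma_n(\la)\in S^+_\la$ for every $n$ and every $\la$.

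I now apply the plurisubharmonic maximum principle to $v_n(\la):=G^-_\la(\gamma_n(\la))$, which is psh on $\La$ because $(\la,z)\mapsto G^-_\la(z)$ is continuous and jointly psh (as the locally uniform limit of $d^{-n}\log^+\|f_\la^{-n}(z)\|$, with holomorphic dependence on $\la$ via Proposition~\ref{prop:henon}). Pick a small circle $C=\fr\dd_r(\la_0)\subset\La\sm\set{\la_0}$: on $C$ the compact set $\widehat\gamma(C)$ lies in $\cd$, so $\gamma_n\ra\widehat\gamma$ uniformly on $C$ in the $\cd$-topology, and hence $v_n|_C$ is uniformly bounded, say $\sup_C v_n\le M$. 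On the other hand, since $\gamma_n(\la_0)\in S^+_{\la_0}\subset K^+_{\la_0}$ and $\|\gamma_n(\la_0)\|\ra\infty$, the standard H\'enon filtration $\cd=V\cup V^+\cup V^-$ (in which $K^+$ is disjoint from the forward escape region $V^+$) forces $\gamma_n(\la_0)$ to lie eventually in the backward escape region $V^-$, where the B\"ottcher comparison $G^-(z)\asymp \log\|z\|$ holds; thus $v_n(\la_0)\ra+\infty$. The maximum principle $v_n(\la_0)\le\sup_C v_n\le M$ yields the desired contradiction, and the case where $\gamma_n\in S^-_\la$ is symmetric.

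The main technical point is the dichotomy $\gamma_n\in S^+$ versus $\gamma_n\in S^-$, which guarantees that the Green function probed by the maximum principle is precisely the one that blows up at $\la_0$; this combined with the identification of the relevant limit point in $L_\infty$ via the H\'enon filtration makes the argument go through with standard tools.
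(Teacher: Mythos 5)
Your proof contains a genuine gap at the very first step: the claim that ``since $\pd$ is compact, any family of holomorphic maps $\La\ra\pd$ is automatically normal'' is false. Compactness of the target manifold does not imply normality. For example, the maps $\gamma_n:\la\mapsto(n\la,0)$, viewed as maps $\dd\ra\pd$, converge pointwise to $[1:0:0]$ away from $\la=0$ and to $[0:0:1]$ at $\la=0$; no subsequence converges locally uniformly near the origin. Normality of maps into a compact target needs extra input (omitted hyperplanes, a Kobayashi-hyperbolic receiving domain, or a renormalization argument). As a consequence, what you actually prove is: \emph{if} a subsequence of $\widehat\gamma_n$ converges locally uniformly in $\pd$, then the limit cannot be of the ``mixed'' type. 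You have not shown that such a convergent subsequence exists, which is precisely what normality asserts.

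The remainder of your argument is correct and takes a genuinely different route from the paper's. Your dichotomy showing that each graph lies entirely in $S^+$ or entirely in $S^-$ parallels the paper's remark that $\widehat S^\pm$ is relatively open in $\widehat S^+\cup\widehat S^-$. Your maximum-principle step on $v_n=G^-_\cdot\circ\gamma_n$, using that $G^-\asymp\log\|\cdot\|$ on the backward escape region of the H\'enon filtration, is a nice direct mechanism. The paper avoids the missing compactness step by invoking the Zalcman renormalization principle: assuming non-normality, it produces rescaled limits $\gamma_n(\la_n+r_nz)$ converging to a \emph{non-constant} entire curve $\zeta:\cc\ra\overline{S^+_{\la_\infty}}\subset\pd$, and then contradicts non-constancy via harmonicity of $G^-\circ\zeta$ on $\cc$ together with compactness of $J^+\cap\{G^-=c\}$. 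If you replace your unsupported compactness claim with Zalcman's lemma, a variant of your maximum-principle / filtration argument could be made to close the gap; as written, the proof does not go through.
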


We will make use of the following lemma, which is known as the {\it Zalcman Renormalization Principle} 
 (see \cite{zalcman} and Berteloot \cite{berteloot} 
 for the version   that we state here).
 
 \begin{lem}\label{lem:zalcman}
 Let $M$ be a compact complex manifold and $(g_n)_{n\geq 1}$ be a sequence of holomorphic mappings from the unit disk to $M$. 
 If $(g_n)$ is not a normal family at $z_0\in \dd$ then there exists a sequence $(z_n)$ converging to $z_0$ and a sequence of 
 scaling factors 
 $r_n >0 $ converging to 0 such that (after possible extraction) the sequence of holomorphic mappings 
 $\zeta \mapsto g_n(z_n + r_n \zeta)$ converges uniformly on compact sets
   to a non constant entire map $g:\cc\cv M$. 
   \end{lem}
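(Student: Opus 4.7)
The plan is to adapt Zalcman's original renormalization argument (for $\pu$-valued meromorphic functions) to the case where the target is an arbitrary compact complex manifold $M$. The three ingredients are: a Marty-type characterization of normality in terms of derivative bounds, Zalcman's ``maximum'' trick to produce the rescaling data, and Arzel\`a--Ascoli on the compact target to extract the limit.

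First I would fix a Hermitian metric $h$ on $M$ and, for a holomorphic $g:\dd\to M$, denote by $\|Dg(z)\|$ the operator norm of $Dg(z)$ with respect to the Euclidean metric on $\dd$ and $h$ on $M$. Since $M$ is compact, any two such metrics are comparable, so the notion of ``locally bounded derivative'' is intrinsic. The Marty-type criterion then reads: $(g_n)$ is normal at $z_0$ iff $\|Dg_n\|$ is locally uniformly bounded near $z_0$. One direction is the Cauchy estimate in local charts on $M$; the other uses that uniformly bounded derivatives imply local equicontinuity, and equicontinuous families with values in a compact metric space are normal by Arzel\`a--Ascoli.

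Next I would run the Zalcman maximum argument. Choose $R>0$ with $\overline{\dd}_R(z_0)\subset\dd$. By non-normality and the Marty criterion, after extraction one has $\sup_{|z-z_0|\le R}\|Dg_n(z)\|\to\infty$. Set
$$M_n = \max_{|z-z_0|\le R}(R-|z-z_0|)\,\|Dg_n(z)\|,$$
and let $z_n$ be a maximizer, which lies in the open disk since the weight vanishes on the boundary. Then $M_n\to\infty$. Repeating this construction on successively shrinking disks $\dd_{\eps_k}(z_0)$ around $z_0$ and then extracting diagonally, I can arrange $z_n\to z_0$. Define $\rho_n := R - |z_n-z_0|$ and $r_n := 1/\|Dg_n(z_n)\|$, so that $r_n/\rho_n = 1/M_n \to 0$; in particular $r_n\to 0^+$.

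Finally, I would consider the rescaled maps $g_n^*(\zeta):= g_n(z_n + r_n\zeta)$, defined for $|\zeta|<\rho_n/r_n = M_n$, a domain that exhausts $\cc$. For $|\zeta|\le T$ and $n$ large, one has $R-|z_n+r_n\zeta-z_0|\ge \rho_n/2$, and the maximality of $M_n$ yields
$$\|Dg_n^*(\zeta)\|=r_n\|Dg_n(z_n+r_n\zeta)\|\le \frac{r_n M_n}{R-|z_n+r_n\zeta-z_0|}\le 2.$$
So $(g_n^*)$ has locally bounded derivatives on $\cc$ and, by the Marty criterion, is normal. A subsequence converges locally uniformly to a holomorphic $g:\cc\to M$, and the normalization $\|Dg_n^*(0)\|=r_n\|Dg_n(z_n)\|=1$ passes to the limit, giving $\|Dg(0)\|=1$ and hence $g$ non-constant. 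The only real subtlety is arranging $z_n\to z_0$ together with the maximality property that feeds the derivative estimate; this is handled by the diagonal extraction over shrinking disks centered at $z_0$ outlined above.
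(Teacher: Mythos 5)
The paper does not actually prove this lemma: it is stated as a cited result, with references to Zalcman and to Berteloot's survey for the version with a compact-manifold target. Your reconstruction follows the same standard route those references use (a Marty-type criterion for the compact target plus Zalcman's weighted maximum trick), and the main body of the argument is sound: the equivalence ``normal at $z_0$ $\Longleftrightarrow$ $\|Dg_n\|$ locally uniformly bounded near $z_0$'' does hold when $M$ is compact (equicontinuity plus Arzel\`a--Ascoli in one direction, Cauchy estimates in local charts in the other), the weighted quantity $M_n=\max(R-|z-z_0|)\|Dg_n(z)\|$ tends to $+\infty$, and your derivative estimate $\|Dg_n^*(\zeta)\|\le 2$ on $|\zeta|\le T$ together with $\|Dg_n^*(0)\|=1$ is exactly what is needed to apply the Marty criterion again and to force the limit to be non-constant.

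The one step that does not quite work as stated is the way you arrange $z_n\to z_0$. If you run the maximum argument on a fixed disk $\overline{\dd}_R(z_0)$, the maximizer $z_n$ can a priori accumulate anywhere in that disk, and your fix --- ``repeat on shrinking disks $\dd_{\eps_k}(z_0)$ and extract diagonally'' --- has a gap: after extracting a subsequence so that the weighted maximum on $\dd_{\eps_1}(z_0)$ blows up, there is no guarantee that the \emph{sub}-family is still non-normal at $z_0$, so you may be unable to continue on $\dd_{\eps_2}(z_0)$ (for instance, the sub-family you extracted might be one whose derivatives blow up only away from $z_0$). The standard and correct way to get $z_n\to z_0$ is to do a single pass: first use the Marty criterion to produce, after one extraction, points $\zeta_n\to z_0$ with $\|Dg_n(\zeta_n)\|\to\infty$; then choose a single sequence of radii $R_n\to 0$ with $R_n\ge 2|\zeta_n-z_0|$ and $R_n\|Dg_n(\zeta_n)\|\to\infty$ (e.g.\ $R_n=\max\bigl(2|\zeta_n-z_0|,\|Dg_n(\zeta_n)\|^{-1/2}\bigr)$), and run your maximum argument over $\overline{\dd}_{R_n}(z_0)$ for the $n$-th map. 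Then $M_n\ge (R_n/2)\|Dg_n(\zeta_n)\|\to\infty$ as before, the maximizer $z_n$ lies in $\overline{\dd}_{R_n}(z_0)$ and hence tends to $z_0$ automatically, and the rest of your estimates go through unchanged. With that replacement the proof is complete.
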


\begin{proof}[Proof of Lemma \ref{lem:normal}] 
It is no loss of generality to assume that $\La$ is the unit disk in 
$\cc$.  Moreover, since the sets 
$$
     \widehat S^\pm:=  \{ (\la, z)\in \La\times \cc^2:\,  z\in  S^\pm_\la \}
$$ 
are relatively open in   their union, each graph $\gamma\in \GG$ is
fully contained  in one of them.  Then we can assume without loss of
generality that all the graphs  $\gamma\in \GG$ are contained in one
of these sets.
For definiteness, let it be $\widehat S^+ $. 
 
Assume by contradiction that the   family of mappings given  in the statement is not normal. 
Then by the Zalcman  Lemma \ref{lem:zalcman}
  there exist  a sequence of holomorphic disks  $\gamma_n: \Lambda \rightarrow \cc^2$ 
such that  $ \gamma_n(\la) \in S^+_\la $, 
a sequence of  parameters $\la_n \cv\la_\infty\in \La  $, 
and a sequence 
$r_n\cv 0$ such that rescaled holomorphic disks $\gamma_n (\la_n+ r_nz) $ converge 
to a non-constant entire curve 
$\zeta: \cc \cv \overline { S^+_{\la_\infty} }$. 
Since     the Zalcman Lemma requires the target manifold to be compact, 
   the closure here is taken in $\mathbb{CP}^2$. Observe that     that 
in $\mathbb{CP}^2$ we have $\overline{J^+} = J^+\cup \set{I^+}$, where $I^+=I^+_\la$ is a single point at infinity.
 
Consider now a sequence of positive harmonic functions $H_n$ defined by 
$$H_n( z)=   G^-_{\la_n+ r_nz}(\gamma_n (\la_n+ r_nz)).$$ 
A first possibility is that the $H_n$ diverge uniformly to $+\infty$. 
Then $\zeta$ would take its values in $I^+$, which is absurd. 
Hence   the $H_n$ are locally uniformly bounded 
and converge to $G^-_{\la_1}(\zeta(z))$. But  a non-negative positive harmonic function on $\cc$ must be  constant. 
On the other hand,  $J^+\cap \set{G^- = c}$ is compact, 
so  again we  arrive at a contradiction. 
\end{proof}

Though normality is what we need, let us also make a slightly stronger statement: 

\begin{lem}\label{Kob hyp}
 Let $f$ be a product of Hénon mappings. 
For any $R>0$,
  the domain 
$$ 
 \Omega= U^+ \cap \big( \{\max(|x|,|y|) <R\} \cup \{ |y|<|x| \} \big) 
$$ 
is Kobayashi hyperbolic (and similarly for $U^-$).
\end{lem}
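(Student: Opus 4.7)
My plan is to show that any holomorphic map $\zeta\colon \cc \to \Omega$ is constant; Kobayashi hyperbolicity of the open set $\Omega \subset \cd$ then follows from Brody's reparametrization lemma applied in the compactification $\pd$. The first observation is that $G^+$ is pluriharmonic and strictly positive on $U^+ \supset \Omega$, so the composition $G^+\circ\zeta$ is a positive harmonic function on $\cc$, hence equals a constant $c > 0$.

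The crux of the argument is the a priori growth estimate
\[
\log^+\|z\|_\infty \leq G^+(z) + C \quad \text{for every } z \in \Omega,
\]
where $C = C(f, R)$. Granting this, applying it along $\zeta$ yields $\log^+\|\zeta\|_\infty \leq c + C$ on all of $\cc$; the two coordinate functions of $\zeta$ are then bounded entire functions, and Liouville's theorem forces $\zeta$ to be constant.

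To establish the estimate, I would choose $R' \geq R$ large enough so that the standard filtration for a composition of H\'enon mappings holds with forward escape sector $V^+ = \{|y| < |x|,\, |x| > R'\}$. The key geometric observation is the inclusion $\Omega \subset \{\max(|x|,|y|) < R'\} \cup V^+$, which is precisely the point of the definition of $\Omega$: it avoids the backward escape sector $\{|y| \geq |x|,\, |y| \geq R'\}$ entirely. On the bounded part of the union, $\log^+\|z\|_\infty$ is uniformly bounded while $G^+ \geq 0$. On $V^+$ one has $\|z\|_\infty = |x|$, and the leading B\"ottcher asymptotic $\varphi^+(x,y) = x + O(1)$ as $|x| \to \infty$ in $V^+$ gives $G^+(z) = \log|\varphi^+(z)| \geq \log|x| - C_1$, which is the required bound on this sector.

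The main technical point is this B\"ottcher asymptotic, which is standard for products of H\'enon maps of total dynamical degree $d$ since the leading term of $\pi_1 \circ f^n$ in $V^+$ is $x^{d^n}$, whence $\varphi^+(x,y) = \lim_n \pi_1(f^n(x,y))^{1/d^n}$ satisfies $\varphi^+/x \to 1$ in $V^+$; once it is in place the estimate is immediate and the rest of the argument is formal. The case of $U^-$ is entirely symmetric.
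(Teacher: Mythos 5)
Your core estimate is correct, and any entire curve $\zeta:\cc\to\Omega$ is indeed constant; that much uses the same filtration and B\"ottcher asymptotics that the paper's proof also relies on. The gap is in the very first reduction: ``no non-constant entire curve $\cc\to\Omega$'' plus Brody reparametrization in $\pd$ does not yield Kobayashi hyperbolicity of $\Omega$. Brody reparametrization of a degenerating sequence of disks $\dd\to\Omega\subset\pd$ produces a non-constant entire curve $g:\cc\to\overline{\Omega}$, where the closure is taken in $\pd$; the limit may escape $\Omega$ and lie partly or entirely in $\partial\Omega$. What one needs is that $\overline{\Omega}$ contains no non-constant entire curve, and your argument does not rule those out. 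The identity $G^+\circ g\equiv c>0$ uses that $g$ maps into $U^+$, where $G^+$ is pluriharmonic and strictly positive; but the finite part of $\partial\Omega$ meets $J^+\subset K^+$, where $G^+$ vanishes and is only plurisubharmonic. If $g(\cc)$ meets both $U^+$ and $K^+$, then $G^+\circ g$ is a nonnegative subharmonic function on $\cc$ that vanishes on a nonempty set, and such a function need not be constant (think of $\max(\Re z,0)$); the growth bound $\log^+\|g\|\leq G^+(g)+C$, which does extend to $\overline{\Omega}\cap\cd$ by continuity, is then consistent with $g$ being unbounded. (The piece of $\overline{\Omega}$ on the line at infinity is the compact disk $\{[1:t:0]:|t|\leq 1\}$ and is harmless; the problem is the finite boundary.)

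The paper's proof is organized precisely to avoid this issue. It enlarges $\Omega$ to $Q=\{z\in U^+:\ |y|<R\,|\varphi^+(z)|\}$ --- essentially the same inequality your estimate produces --- then lifts to the covering $\widetilde U^+$ on which the B\"ottcher function $\Phi^+$ is single-valued, and shows that $Z\mapsto(\Phi^+(Z),\,Y/\Phi^+(Z))$ maps $\widetilde Q$ into the hyperbolic bidomain $(\cc\setminus\overline\dd)\times\dd_R$ with discrete fibers. Kobayashi hyperbolicity is then pulled back through the discrete-fiber map and pushed down the covering (Kobayashi, Prop.\ 3.2.9 and 1.3.12). Packaging the B\"ottcher estimate as a holomorphic map with discrete fibers to a hyperbolic target, rather than as a growth bound ruling out entire curves, is what gives direct control on the Kobayashi metric and removes the need for any Brody-type limit argument.
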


\begin{proof}
Let us consider a domain
$$
    Q = \{ z = (x,y) \in U^+ :\  |y| < R\,  | \varphi^+ (z) | \},
$$
where $\varphi^+$ is the forward B\"ottcher function. 
It is well defined since $|\varphi^+| $ is such, and it contains $\Omega$, by increasing $R$ slightly if needed.
Let $\widetilde U^+$  be the covering of $U^+$ that makes the
B\"ottcher function $\varphi^+$ well defined.  Then
$$
          \widetilde Q = \{ | Y | < R\, |\Phi |^+ \},
$$
where  capitals mean the lifts to $\widetilde U$.
Then $Z \mapsto ( \Phi^+(Z),  Y / \Phi^+ (Z))$
maps $\widetilde Q$ onto the bidisk $(\cc \setminus \dd) \times \dd_R$, with discrete fibers. 
Since the latter is hyperbolic, so are $\widetilde Q$, $ Q$, and $\Omega$ (see \cite[Prop. 1.3.12 and 3.2.9]{kobayashi}).
\end{proof}

\subsection{Motion of saddle and heteroclinic points} \label{subs:motion}


Let us start with a  result that shows that any branched holomorphic
motion $\GG_\la\subset K_\la$ is strongly
unbranched (and hence continuous)  on  hyperbolic sets. 
In particular, it is strongly unbranched at saddles and heteroclinic points. 

\begin{lem}\label{lem:extension/expansion}
Let $\mathcal{G}$ be a 
\BHM  over $\La$, 
such that for every $\la\in \La$, $\mathcal{G}_\la\subset K_\la$ 
or $\mathcal{G}_\la\subset  S^-_\la\cup   S^+_\la$.
 Assume that  $(\gamma_k)$ is a sequence of graphs in $\G$ such that for some $\la_0\in\La$, $\gamma_k(\la_0)
\rightarrow p(\la_0)$  as $k\cv\infty$, where $p(\la_0)$ belongs to some uniformly hyperbolic invariant compact set $E_\lo$.

 Then there exists a unique holomorphic continuation $(p(\lambda))_{\la\in \La}$ 
of $p(\la_0)$  such that $\gamma_k(\la)\cv p(\la)$ as $k\cv\infty$
uniformly on compact subsets of $\La$.
Furthermore $p(\lambda)$  
coincides with the  natural continuation of $p$ near $\lo$ as a point of the 
hyperbolic set $E_\la$ that  dynamically corresponds to  $E_\lo$.  
In particular,  if $(\tilde\gamma_k)$ is any other sequence with
$\tilde\gamma_k(\la_0){\cv} p(\la_0)$,  then $\tilde\gamma_k(\la)\cv
p(\la)$  on the whole $\La$.

This holds in particular when $p(\lo)$ is a saddle periodic point or a
transverse s/u intersection. 
%
%
%
%
%
\end{lem}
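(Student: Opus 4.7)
The plan is to combine normality of the branched motion $\mathcal{G}$ with the hyperbolic expansion/contraction at $p(\lambda_0)$ to pin down any subsequential limit of $(\gamma_k)$ uniquely, and then extend globally by analytic continuation.

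First I will establish normality of $(\gamma_k)$: when $\mathcal{G}_\lambda\subset K_\lambda$ this follows from the local uniform boundedness of $K_\lambda$ (a consequence of Proposition~\ref{prop:henon}), and when $\mathcal{G}_\lambda\subset S^+_\lambda\cup S^-_\lambda$ it is precisely Lemma~\ref{lem:normal}. Extracting a locally uniformly convergent subsequence yields a holomorphic section $\gamma_\infty$ with $\gamma_\infty(\lambda_0)=p(\lambda_0)$. On a neighborhood $\Omega$ of $\lambda_0$, structural stability of uniformly hyperbolic sets (or, for an isolated saddle periodic point of period $N$, the implicit function theorem applied to $F_\lambda(p)=p$ with $F_\lambda=f_\lambda^N$, using that $1\notin\mathrm{spec}\,DF_{\lambda_0}(p(\lambda_0))$) provides a natural holomorphic continuation $p(\lambda)$. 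The crux is to show $\gamma_\infty\equiv p$ on $\Omega$.

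The core is a hyperbolic expansion argument. Set $A(\lambda):=DF_\lambda(p(\lambda))$, with eigenvalues of modulus $<1$ and $>1$ at $\lambda_0$, and form the iterates $g_n(\lambda):=F_\lambda^n(\gamma_\infty(\lambda))$. Each $g_n$ is holomorphic, satisfies $g_n(\lambda_0)=p(\lambda_0)$, and by forward invariance of $K$ (resp.\ $S^\pm$) takes values in the same class as $\gamma_\infty$, so $(g_n)$ is again normal. Writing $\delta(\lambda):=\gamma_\infty(\lambda)-p(\lambda)=\sum_{j\geq 1} v_j(\lambda-\lambda_0)^j$ and Taylor expanding $F_\lambda^n$ about the fixed point $p(\lambda)$ gives
$$ g_n(\lambda)-p(\lambda)=A(\lambda)^n \delta(\lambda)+O(\delta(\lambda)^2). $$
If $j_0$ is the order of vanishing of $\delta$ at $\lambda_0$, then the $j_0$-th Taylor coefficient of $g_n-p$ at $\lambda_0$ equals $A^n v_{j_0}$ (the quadratic error contributes only at order $\geq 2j_0$). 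Along any convergent subsequence of $(g_n)$ this coefficient stays bounded, which by hyperbolicity of $A$ forces $v_{j_0}$ into the stable eigenspace. In the case $\mathcal{G}\subset K$, applying the same argument to $F^{-1}$ (using backward invariance of $K$) places $v_{j_0}$ into the unstable eigenspace as well, so $v_{j_0}=0$, contradicting the choice of $j_0$. Iterating, all $v_j$ vanish and $\gamma_\infty\equiv p$ on $\Omega$. The global conclusion then follows formally: since $\gamma_\infty$ is holomorphic on all of $\Lambda$, setting $p(\lambda):=\gamma_\infty(\lambda)$ provides the desired continuation; by the identity theorem any two subsequential limits of $(\gamma_k)$ agree on $\Omega$ and hence on $\Lambda$, so the whole sequence converges locally uniformly to $p$, and the same applies to any other sequence $(\tilde\gamma_k)$ with $\tilde\gamma_k(\lambda_0)\to p(\lambda_0)$. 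The transverse $s/u$ case $p(\lambda_0)\in W^u(q_1)\cap W^s(q_2)$ reduces to the saddle case by running the expansion argument at $q_1$ and $q_2$ and invoking persistence plus transversality of the intersection.

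The principal obstacle is the one-sided case $\mathcal{G}\subset S^+\cup S^-$: only forward (resp.\ backward) invariance is available, so the expansion argument alone places each $v_j$ in the stable (resp.\ unstable) eigenspace, i.e.\ it only confines $\gamma_\infty$ to $W^s(p(\lambda))$ (resp.\ $W^u(p(\lambda))$) rather than identifying it with $p(\lambda)$. To close this gap one exploits that $S^\pm\subset J^\pm$ and that $J^+$ (resp.\ $J^-$) is locally laminated near $p$ by stable (resp.\ unstable) manifolds of nearby points of $E_\lambda$; this singles out the leaf through $p(\lambda_0)$, and the graph nature of $\gamma_\infty$ along that leaf then forces $\gamma_\infty=p$.
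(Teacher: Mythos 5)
Your Taylor-expansion/eigenspace argument is a genuinely different route from the paper's, which instead uses uniform expansivity of the hyperbolic set $E_\lambda$: the paper considers the full two-sided orbit $(f^n_\lambda(q(\lambda)))_{n\in\mathbb Z}$ of a cluster graph $q$, notes it is normal and agrees with the orbit of $p$ at $\lambda_0$, concludes by equicontinuity that the two orbits stay $\delta$-close on a neighborhood of $\lambda_0$, and invokes expansivity to force $q(\lambda)=p(\lambda)$. Your expansion argument is correct and arguably more self-contained in the case where $p(\lambda_0)$ is a periodic point, but it leaves two real gaps.

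First, the expansion $g_n(\lambda)-p(\lambda)=A(\lambda)^n\delta(\lambda)+O(\delta^2)$ with $A(\lambda)=DF_\lambda(p(\lambda))$ is only available when $p(\lambda)$ is a fixed point of $F_\lambda=f_\lambda^N$. The lemma, however, is asserted for an arbitrary point of a uniformly hyperbolic set $E_{\lambda_0}$, and explicitly for transverse $s/u$ intersections. For such a point there is no return map fixing $p(\lambda)$, so the Taylor coefficient argument simply does not launch. Your proposed reduction (running the expansion at $q_1$ and $q_2$) is not a substitute as stated, since $\gamma_\infty(\lambda_0)=p(\lambda_0)$ equals neither $q_1(\lambda_0)$ nor $q_2(\lambda_0)$; you would at best learn something about graphs landing on the periodic orbits themselves. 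This is precisely why the paper works with expansivity of the whole hyperbolic set rather than a fixed-point Taylor expansion.

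Second, you correctly identify that in the $S^\pm$ case the one-sided argument only confines $v_{j_0}$ to the stable (or unstable) eigenspace, but the remedy via the lamination of $J^\pm$ is too vague: it is not explained why "the graph nature of $\gamma_\infty$ along that leaf" would force $\gamma_\infty\equiv p$, nor how the $\lambda$-dependence of the leaf is controlled. The paper's trick here is different and sharper: for a cluster graph $\gamma$ of $(\gamma_k)$ with $\gamma_k(\la)\in S^+_\la$, the functions $\la\mapsto G^-_\la(\gamma_k(\la))$ are non-negative harmonic, so the limit $\la\mapsto G^-_\la(\gamma(\la))$ is non-negative harmonic; since it vanishes at $\la_0$ it vanishes identically, hence $\gamma(\la)\in K^-_\la$, and combined with $\gamma(\la)\in K^+_\la$ (automatic) one gets $\gamma(\la)\in K_\la$. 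At that point the two-sided argument (yours or the paper's) applies. If you import this harmonicity step and replace the Taylor expansion by the expansivity argument, you recover a proof covering all the stated cases.
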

%

\begin{proof} 
Let $N$ be a neighborhood of $\lo\in \La$ where $E_\la$ persists as a hyperbolic set. Then the point $p$ admits a natural local continuation $(p(\la))_{\la\in N}$. We claim that in $N$, $\gamma_k(\la)\cv p(\la)$ when 
$k\cv\infty$.  Then the other conclusions of the lemma follow.

Indeed
 consider any cluster value of the sequence of holomorphic maps $(\gamma_k(\lambda))_
{\lambda\in \Lambda}$ 
(recall that from  Lemma \ref{lem:normal} and the remarks preceding it that this is a normal family).
By our claim it has to coincide with $p(\lambda)$ in $N$. This in turn 
allows to define a holomorphic continuation $p(\lambda)$ of $p$ throughout $\La$.


\medskip

It remains to prove our claim that $\gamma_k(\la)\cv p(\la)$ in some neighborhood of $\la_0$. 
Let us first deal with the case where $\mathcal{G}_\la\subset K_\la$. 
The observation is that 
for $\la\in N$, the dynamics is locally expansive near $p(\la)$, that is: there exists $\delta>0$, which 
can be chosen to be uniform in $N$ (reducing $N$ if needed), 
such that if $q(\la)$ is such that $d(f_\la^n(q(\la)), f_\la^n(p(\la)))\leq \delta$  for all $n\in\zz$, 
then $p(\la) = q(\la)$. Now let $q$ be any cluster value of the sequence of graphs $\gamma_k(\la)$, and consider the family $
( {f}^n_\la(q(\la)))_{n\in \zz}$. This is a bounded, hence normal, family of graphs (since they are contained in $\bigcup K_\la$), and by 
assumption, ${f}_\lo^n(q)(\la_0) = f_{\la_0}^n(q(\la_0))=f_{\la_0}^n(p(\la_0))$. Therefore by equicontinuity, for $\la$ close to $
\la_0$, $f_\la^n(q(\la))$ remains close to $f_\la^n(p(\la))$ and we are done.

\medskip

Assume now that for all $\la\in \La$, $\mathcal{G}_\la\subset J^+_\la\setminus K_\la$. 
By Lemma \ref{lem:normal}, we can extract a subsequence, still denoted by $\gamma_k$, such that $\gamma_k$ converges to some $\gamma:\La\cv\cd$ with $\gamma(\lo) = p(\lo)$. We claim that for $\la\in \La$, $\gamma(\la)$ is  in $K_\la$. Indeed 
 $\la\mapsto G^-_\la(\gamma_k(\la))$ is a sequence of positive harmonic functions, so its limit  $\la\mapsto
 G^-_\la(\gamma(\la))$ is    harmonic and non-negative, and we conclude by 
 observing that $G^-_{\lo}(\gamma(\lo)) = 0$, whence $\la\mapsto G^-_\la(\gamma(\la))$ vanishes  identically, so that $\gamma(\la)\in K_\la$. 
 Then by applying the reasoning of the previous paragraph we deduce that $\gamma(\la) =p(\la),$ which was the desired result. 
\end{proof}


We now show that in substantial families, 
 saddles do not change their nature under holomorphic motions.

\begin{lem}\label{lem:substantial}
Let $(f_\lambda)_{\la\in \Lambda}$ be a substantial family of polynomial automorphisms of $\cd$ of dynamical degree 
$d\geq 2$.  Let $\PP_0$ be a  set of periodic points for $f_0\equiv f_{\la_0}$ 
which admits a continuation as a branched holomorphic motion $\PP_\la\subset K_\la$ over $\La$.
If $q(\la_0)\in  \PP_0$ is a  non-isolated  
saddle periodic point, 
then its unique  continuation $q(\la)\in  \PP_\la$   remains a  saddle  for all $\la\in \La$.
\end{lem}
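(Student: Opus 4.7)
The plan is to show that the open set $U=\{\la\in\La : q(\la)\text{ is a saddle}\}$, which contains $\la_0$, is also closed in $\La$, so that by connectedness $U=\La$. Suppose for contradiction that $\la^*\in (\partial U)\cap\La$. Writing $\alpha_1(\la),\alpha_2(\la)$ for the eigenvalues of $Df^{n_0}_\la(q(\la))$ (where $n_0$ is the period of $q$), continuity of the characteristic polynomial forces at least one of the $|\alpha_i(\la^*)|$ to equal $1$.

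The first and main step is to exhibit $\la'\in \La$ arbitrarily close to $\la^*$ at which $q(\la')$ is either attracting or repelling. In the dissipative case $|\alpha_1\alpha_2|<1$ forces exactly one $|\alpha_i(\la^*)|=1$, say $\alpha_2$, while $|\alpha_1(\la^*)|<1$; since $|\alpha_2|>1$ at nearby parameters in $U$, the locally defined holomorphic function $\alpha_2$ is non-constant near $\la^*$, and the open mapping theorem produces $\la'$ nearby with $|\alpha_2(\la')|<1$, and hence (by continuity of $\alpha_1$) with $q(\la')$ attracting. In the non-dissipative substantial case, the hypothesis excludes persistent relations $\alpha_1^a\alpha_2^b=c$ with $|c|=1$; in particular no $\alpha_i$ can be a unit-modulus constant near $\la^*$. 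A case analysis according to which of the $|\alpha_i(\la^*)|$ equal $1$, combined with the open mapping theorem applied to the relevant non-constant multipliers, again yields $\la'$ nearby with $q(\la')$ attracting or repelling.

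Next I use the non-isolation hypothesis: choose a sequence $(p_n)_{n\geq 1}\subset\PP_0\setminus\{q(\la_0)\}$ with $p_n\to q(\la_0)$. Since periodic points of any fixed period form a discrete set, the periods $k_n$ of the $p_n$ tend to infinity. The continuations $(p_n(\la))_{\la\in\La}\in\PP$ are graphs of the branched motion lying in $K_\la$, so Lemma~\ref{lem:extension/expansion} applied to the hyperbolic invariant set given by the orbit of $q(\la_0)$ gives $p_n(\la)\to q(\la)$ locally uniformly on $\La$. In particular $p_n(\la')\to q(\la')$.

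To conclude, a standard contraction argument applied to $f^{n_0}_{\la'}$ (respectively its inverse) shows that $q(\la')$ is the unique periodic point of $f_{\la'}$ of any period in some small ball $B\ni q(\la')$. For large $n$, the convergence puts $p_n(\la')\in B$, yet $p_n(\la')\neq q(\la')$: if $n_0\nmid k_n$ this is because $q(\la')$ does not satisfy $f^{k_n}_{\la'}(x)=x$, while if $n_0\mid k_n$ the implicit function theorem applied to the equation $f^{k_n}_\la(x)=x$ at $(\la',q(\la'))$—valid because the linearization $Df^{k_n}_{\la'}(q(\la'))-\mathrm{Id}$ is invertible (its eigenvalues are powers of the $\alpha_i(\la')$, none of modulus $1$)—would force $p_n\equiv q$ near $\la'$ and hence throughout $\La$ by analytic continuation, contradicting $p_n(\la_0)\neq q(\la_0)$. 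This contradicts the isolation of $q(\la')$ as a periodic point of $f_{\la'}$ in $B$. The main obstacle is the first step, particularly in the non-dissipative substantial case, where the configuration of eigenvalues at $\la^*$ requires a careful invocation of the substantial hypothesis together with the open mapping theorem.
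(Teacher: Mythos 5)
Your argument works cleanly in the dissipative case, and also in the non-dissipative sub-case where exactly one $|\alpha_i(\la^*)|$ equals $1$. The gap is in the sub-case where both $|\alpha_1(\la^*)|=|\alpha_2(\la^*)|=1$. You claim that substantiality plus the open mapping theorem then produce a nearby $\la'$ with $q(\la')$ attracting or repelling; but substantiality only prevents each $\alpha_i$ from being a unit-modulus constant, and the open mapping theorem moves each modulus off $1$ separately --- it does not let you push both moduli to the same side of $1$ simultaneously. Concretely, on a one-dimensional slice one can have $\alpha_1(\la)=e^{\la}$ and $\alpha_2(\la)=e^{-\la-i\e\la^2}$ near $\la^*=0$ (the only persistent relation $\alpha_1^a\alpha_2^b=c$ being the trivial one, so this is consistent with substantiality); writing $\la=x+iy$ one gets $|\alpha_1|=e^{x}$ and $|\alpha_2|=e^{x(2\e y-1)}$, so near $0$ the sets $\set{|\alpha_1|<1}=\set{x<0}$ and $\set{|\alpha_2|<1}=\set{x>0}$ are disjoint: $q(\la)$ is a saddle off the imaginary axis and doubly indifferent on it, and no nearby parameter makes it attracting or repelling.

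This is precisely the case the paper's proof has to work hardest for, via a mechanism that has no counterpart in your proposal: restricting to a curve $C$ on which $|\alpha_1|=|\alpha_2|=1$, it uses substantiality to conclude that the arguments $(\theta_1,\theta_2,1)$ are $\mathbb{R}$-linearly independent, invokes Schmidt's metric Diophantine theorem to find a parameter on $C$ at which $(\alpha_1,\alpha_2)$ is Diophantine, hence $q$ is a linearizable elliptic point (the centre of a Siegel ball), and then derives a contradiction because the accumulating saddles $p_n(\la)\to q(\la)$ cannot enter a Siegel ball except at its centre. Your Steps~2 and~3 are sound and the openness/closedness framing is a reasonable organizing device, but without some replacement for the Schmidt--Siegel argument in the two-unit-modulus case, Step~1 does not go through.
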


\begin{proof} 
From Lemma \ref{lem:extension/expansion} we know that $q(\lo)$ admits a unique continuation $q(\la)$ to 
$\La$ which locally 
coincides with its continuation as a saddle point.
By analytic continuation of the identity $f_\la^N(q(\la)) = q(\la)$, $q(\la)$ 
is periodic throughout the family, and we need to show that it remains of saddle type.

To illustrate the idea,  assume  first  that $f_\la$ is dissipative
for any $\la\in \La$.
In this case, if a saddle  bifurcates, it must become a sink for an open set of parameters.
On the other hand,  as $q(\la_0)$ is  non-isolated in $\PP_0$,
there is a sequence of other saddles $p_n (\la_0)\in \PP_0$  converging to $ q(\la_0)$.  
By Lemma \ref{lem:extension/expansion}, $ p_n(\lambda) \to q(\lambda)$
on the whole space $\La$. Moreover, 
outside countably many exceptional parameters in $\Lambda$, the points
$p_n(\la)$  remain different from $q(\la)$.  It follows that there is a parameter
$\la\in \La$ for which $q(\lambda)$ is a sink
that can be approximated by other  periodic points, which is contradictory.

Let us now address the general case\footnote{The argument is similar to that of \cite
[Theorem 3]{bls2} but   the possibility of persistent non-linearizability, e.g. 
persistent resonance between the eigenvalues, was 
overlooked there. This is the 
reason for the additional assumption that $(f_\la)$ is substantial.}.
We start with a 
saddle periodic point $q$ of period $N$, that we can follow holomorphically
as $(q(\lambda))_{\la\in \La}$. We want to show that it cannot change type, i.e. that
neither of the   eigenvalues  of the differential  $Df^N$
at $q(\la)$ crosses the unit circle.
Since these eigenvalues    
 are not locally constant (this is forbidden by the ``substantiality'' assumption),
 at a bifurcating parameter they run through an open arc of the unit circle. Thus,
we may always assume that we the eigenvalues are  far from 1, so that we can follow them  
holomorphically as $\alpha_1(\la)$ and $\alpha_2(\la)$. Without loss of generality 
we replace $\La$ by a one-dimensional submanifold   with the property that no relation of the form 
$\alpha_1^a\alpha_2^b =c$  holds persistently in it. 

If a bifurcation occurs in the locus where $\abs{\Jac f_\la}\neq 1$ then a 
sink or source can be created, and we conclude as before. In the remaining case, 
elliptic points are created.
Recall that the possibility of linearizing a periodic point
 depends on a Diophantine condition on the eigenvalues.
To be specific, a sufficient condition for linearizability is that there exists $\nu>0$ 
such that for $j_1, j_2\geq 1$ and $k=1, 2$, $\abs{\alpha_1^{j_1}\alpha_2^{j_2}-
\alpha_k}\geq \frac{C}{(j_1+j_2)^\nu}$.

Consider a piece $C$ of the curve $ \set{\abs{\Jac f_\la} =1}$ in 
parameter space, and a point $\lambda_1\in C$ where $\abs{\alpha_1}=\abs
{\alpha_2}=1$. Recall that $\alpha_1$ and $\alpha_2$ are holomorphic and non-constant. 
There are two possibilities.
Either $\abs{\alpha_1}=\abs{\alpha_2}=1$ along $C$ or not.  In the latter case there 
is a branch of the curve $\abs{\alpha_1}=1$ having an isolated intersection with $C$, 
so we have bifurcations in the dissipative regime and we are done.
In the first case, we claim  that $q(\lambda)$ cannot be persistently non-linearizable 
along $C$.
Then, at a parameter where $q(\la)$ is linearizable, it is the center of a Siegel ball, 
and we get a
contradiction in the same way as in the dissipative case.

To prove our claim, note that for $\la\in C$ we can write
$\alpha_k(\lambda) = e^{i\theta_k(\lambda)}$, $k=1,2$, where $\theta_k$ is 
real analytic. {\it Since the family is substantial},   
$(\theta_1, \theta_2, 1)$ are linearly independent. It is then a theorem of Schmidt \cite
{schmidt} (solving a conjecture of Sprindzhuk's, see also \cite{km}) that for a.e. $
\lambda$, $(\alpha_1(\lambda), \alpha_2(\lambda))$ is Diophantine. This concludes 
the proof of Lemma \ref{lem:substantial}.
\end{proof}

Let us point out the following consequence of Lemmas \ref{lem:extension/expansion} and \ref{lem:substantial}.

\begin{cor}\label{cor:continuous} 
Let $(f_\lambda)_{\la\in \Lambda}$ be a substantial family of polynomial automorphisms 
of $\cd$ of dynamical degree  $d\geq 2$. 
Let $\PP_0 \subset \Saddles_0$  
be a  set of saddles of $f_0\equiv f_{\la_0}$ without isolated points, that admits a continuation as 
a branched holomorphic motion $\PP$ with  $\PP_\la\subset K_\la$ for  $\la\in \La$.

Then all  points in $\PP_0$ persist as saddles and 
$(\PP_\la)_{\la\in \La}$ is the corresponding  holomorphic motion.  
It  is  strongly unbranched, and hence continuous.
Moreover, it extends to a strongly unbranched   holomorphic motion of all saddles   
 that  belong  to   $\overline {\PP_0}$.
\end{cor}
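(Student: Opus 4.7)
The plan is to deduce the corollary by assembling Lemmas \ref{lem:extension/expansion} and \ref{lem:substantial}, which were tailored for exactly this assembly. Since $\PP_0$ consists of non-isolated saddles of $f_0$ and the \BHM $\PP$ takes values in $\bigcup_\la K_\la$, Lemma \ref{lem:substantial} applies to each $q_0 \in \PP_0$ individually: I would extract the unique continuation $q(\la)\in\PP_\la$ provided by Lemma \ref{lem:extension/expansion} and conclude that $q(\la)$ remains a saddle for every $\la\in\La$. This pins down $\PP_\la$ as the natural dynamical continuation of $\PP_0$ and exhibits a genuine (unbranched) holomorphic motion $h_\la \colon \PP_0 \to \PP_\la$.

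For strong unbranchedness, I would argue as follows. Given $q\in\PP$, suppose $\gamma\in\overline\PP$ is a cluster graph with $\gamma(\la_1)=q(\la_1)$ for some $\la_1\in\La$; write $\gamma=\lim\gamma_k$ for a sequence $\gamma_k\in\PP$. Lemma \ref{lem:extension/expansion}, applied at base parameter $\la_1$ with the (hyperbolic) orbit of $q(\la_1)$ as the reference compact set, forces $\gamma_k(\la)\to q(\la)$ locally uniformly on all of $\La$, so $\gamma=q$. Hence $\overline\PP$ is unbranched along every $\gamma\in\PP$, i.e.\ $\PP$ is strongly unbranched; Lemma \ref{strong unbranching} then upgrades this to continuity of the motion.

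To extend the motion to saddles in $\overline{\PP_0}$, fix such a saddle $p_0$ and pick $q_k(\lo)\in\PP_0$ with $q_k(\lo)\to p_0$. Lemma \ref{lem:normal} (or simply the local boundedness of $K_\la$) yields normality of the graphs $q_k\in\PP$, and Lemma \ref{lem:extension/expansion} applied at $\lo$ forces every cluster value to coincide with the local saddle continuation $p(\la)$ of $p_0$. No extraction is needed: the whole sequence converges locally uniformly to a unique $p\in\overline\PP$ with $p(\lo)=p_0$. Lemma \ref{lem:substantial} again keeps $p(\la)$ a saddle throughout $\La$, and strong unbranchedness of the extended motion is verified exactly as in the previous paragraph.

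The main subtlety here, rather than a genuine obstacle, is to choose the base parameter of Lemma \ref{lem:extension/expansion} appropriately each time a cluster limit is taken, so that convergence at a single parameter is promoted to locally uniform convergence throughout $\La$; once this bookkeeping is in place, the corollary is a direct assembly of the two lemmas.
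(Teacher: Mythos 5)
Your proposal is correct and follows essentially the same route as the paper: it assembles Lemma \ref{lem:extension/expansion} (unique continuation at a hyperbolic base point) with Lemma \ref{lem:substantial} (persistence of saddle type in a substantial family), and then reapplies Lemma \ref{lem:extension/expansion} at arbitrary base parameters to deduce strong unbranchedness. The paper's proof is terser but is structured identically, down to the final remark that the same argument extends to every saddle in $\overline{\PP_0}$; your version merely fills in the bookkeeping around cluster graphs and the choice of base parameter.
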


\begin{proof}
If $p(\lo)\in \PP_0$, Lemma \ref{lem:extension/expansion} implies  that it admits a unique continuation 
 $p(\la) \in \PP_\la$, which is never isolated in $\PP_\la$,  and Lemma \ref{lem:substantial} says that $p(\la)$ is a saddle for all $\la$. 
So we can apply Lemma \ref{lem:extension/expansion} at all parameters, and it follows that $\mathcal P$ is strongly unbranched. The same holds for every saddle point belonging to $\overline {\PP_0}$.
\end{proof}

We will also need the following result. 

\begin{lem}\label{S contains J}
  Let $\PP_\la$ be a holomorphic motion of a set of periodic points
  of $f_\la$, $\la\in \La$, such that $\PP_0\equiv \Saddles_{\la_0}$ is the
  set of all saddles of $f_0\equiv f_{\la_0}$. Then $\overline \PP_\la\supset
  J^*_\la$ for all $\la\in \La$. 
\end{lem}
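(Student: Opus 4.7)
The plan is to exploit the equidistribution theorem of Bedford, Lyubich and Smillie \cite{bls2}: at every $\la\in\La$, the normalized counting measures of the fixed points of $f_\la^n$ (each counted with multiplicity as a zero of $f_\la^n-\mathrm{id}$),
\[
\nu_{\la,n}:=\frac{1}{d^n}\sum_{p:\,f_\la^n(p)=p}\delta_p,
\]
converge weakly to $\mu_\la$, whose support equals $J^*_\la$. The idea is to produce, out of the motion $\PP$, a second sequence of measures supported in $\PP_\la$ which also converges to $\mu_\la$; the desired inclusion $J^*_\la\subset\overline\PP_\la$ then follows at once from the Portmanteau property of weak convergence.

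First I would note that at $\la_0$, saddles are non-degenerate fixed points of $f_0^n$ (since $1$ is not an eigenvalue of $Df_0^n$ at a saddle), hence contribute with multiplicity one to $\nu_{\la_0,n}$. Combined with the equidistribution at $\la_0$ and $\mu_0(\cd)=1$, this yields $|\Saddles_0^{(n)}|/d^n\to 1$, where $\Saddles_0^{(n)}$ denotes the set of saddles of $f_0$ of period dividing $n$; consequently the total multiplicity of non-saddle fixed points of $f_0^n$ is $o(d^n)$. Next, by Lemma \ref{lem:extension/expansion} every such saddle $p\in\Saddles_0^{(n)}$ admits a unique global holomorphic continuation $\gamma_p:\La\to\cd$, which belongs to $\PP$ by the hypothesis $\PP_0=\Saddles_0$; analytic continuation of $f_{\la_0}^n(\gamma_p(\la_0))=\gamma_p(\la_0)$ ensures that $\gamma_p(\la)$ is a fixed point of $f_\la^n$ for every $\la$. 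I would then introduce
\[
\mu_{\la,n}:=\frac{1}{d^n}\sum_{p\in\Saddles_0^{(n)}}\delta_{\gamma_p(\la)},
\]
a positive measure supported in $\PP_\la$, of total mass $|\Saddles_0^{(n)}|/d^n\to 1$.

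The heart of the argument is to establish $\mu_{\la,n}\to\mu_\la$ weakly. I would proceed through the analytic subset $Z_n:=\{(\la,z)\in\La\times\cd:f_\la^n(z)=z\}$, whose projection to $\La$ is finite of algebraic degree $d^n$: the graphs $\gamma_p$ are global sections of this projection and their union $S\subset Z_n$ is an analytic subvariety of degree $|\Saddles_0^{(n)}|$. Pushing the fundamental classes of $S$ and $Z_n\setminus S$ down to the fiber over $\la$ and normalizing by $d^n$ produces a decomposition $\nu_{\la,n}=\mu_{\la,n}+\rho_{\la,n}$, where the remainder $\rho_{\la,n}$ is a positive measure of total mass $(d^n-|\Saddles_0^{(n)}|)/d^n=o(1)$. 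Combined with $\nu_{\la,n}\to\mu_\la$ this forces $\mu_{\la,n}\to\mu_\la$; since each $\mu_{\la,n}$ is supported in $\PP_\la$, the limit is supported in $\overline\PP_\la$, whence $J^*_\la=\supp\mu_\la\subset\overline\PP_\la$.

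I expect the main subtlety to lie in this last comparison. When the motion $\PP$ is genuinely branched, distinct sections $\gamma_p$ may collide at some $\la\neq\la_0$, and the corresponding fixed points of $f_\la^n$ may become degenerate so that individual multiplicities $m_p(\la)$ jump. These phenomena are however controlled entirely by the fact that the algebraic degree of $Z_n\to\La$ is constant equal to $d^n$: the deficit between $\nu_{\la,n}$ and $\mu_{\la,n}$ is therefore exactly accounted for by the components of $Z_n$ emanating from non-saddle periodic points of $f_0$, whose total contribution is $o(d^n)$, which is precisely what makes the limit argument go through.
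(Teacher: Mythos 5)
Your argument is correct and follows essentially the same strategy as the paper's: continue the large set $\Saddles_0^{(n)}$ of saddles of period dividing $n$ through the motion, observe that its cardinality $\sim d^n$ forces equidistribution toward $\mu_\la$ by \cite{bls2}, and conclude $J^*_\la=\supp\mu_\la\subset\overline{\PP_\la}$. The only difference is bookkeeping: you make the comparison of counting measures explicit via the incidence variety $Z_n$ and the fact that $f_\la^n$ has $d^n$ fixed points counted with multiplicity, whereas the paper simply applies the equidistribution theorem of \cite{bls2} directly to the moved set $\PP_{n,\la}$; both rest on the same underlying fact that non-saddle periodic points of period dividing $n$ contribute only $o(d^n)$.
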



\begin{proof}
Note first that the statement  is not obvious since {\it a priori} $f_0$ may have infinitely many sinks 
that could transform into  saddles during the deformation.

However,  by \cite{bls2}, if we denote by $\PP_{n,\la_0} = \Saddles_{n,\la_0}$  the
set of saddle points 
  with  period  dividing $n$,  then
$$
    \frac{   \#\Saddles_{n, \lo} } {d^n}\cv 1\quad {\mathrm{ and}}\quad 
      \frac 1{d^{n} } \sum_{p\in \mathcal{S}_{n,\la_0} } \delta_p\cv \mu_{\la_0}.
$$  
Hence the continuation $\PP_{n, \lambda}$ of $\PP_{n,\la_0}$ is a set of periodic points
with $\# \PP_{n, \lambda}=\#\PP_{n,\la_0} \sim d^n$. Thus, by the   
Equidistribution Theorem of \cite{bls2}, applied to $f_\la$ we obtain that
$$
   \frac 1{d^{n} } \sum_{p\in \PP_{n, \la}} \delta_p\cv \mu_\la,
$$
and the conclusion follows.
\end{proof}   

\subsection{From special  motions to weak $J^*$-stability} \label{subs:special}

The $\la$-lemma allows us to promote  motion of saddles or s/u
intersections to  weak $J^*$-stability:



\begin{prop}\label{prop:extending correspondence}
Let $(f_\lambda)_{\la\in \Lambda}$ be a substantial family of polynomial automorphisms of $\cd$ of dynamical degree 
$d\geq 2$. Assume there  exists a  \BHM 
 $\G$  such that:
\begin{itemize}
\itm $\mathcal{G}_\la\subset K_\la$  for any $\la\in \La$;
\itm $\mathcal{G}_0\equiv \GG_{\la_0}$ is dense in $J^*_0\equiv J^*_{\la_0}$  for some $\la_0\in \La$.
\end{itemize}

Then:
\begin{enumerate}[(a)]

\item $\overline{\mathcal{G}_\lambda}\supset J^*_\lambda$ for every
  $\lambda \in \Lambda$; 

\item no saddle point bifurcates in the family, and 
the motion of saddles of $f_0$ is an equivariant strongly
unbranched (and hence continuous) holomorphic motion; 


\item the family $(f_\lambda)_{\la\in \Lambda}$ is weakly
  $J^*$-stable;  

\item the motion of transverse  s/u  intersections is an equivariant strongly
  unbranched motion; 
\item homoclinic and   heteroclinic tangencies are persistent.
\end{enumerate}
Conversely, each of the conditions  (b), (c), or (d) implies the
others. 
\end{prop}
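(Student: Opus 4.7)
The plan is to derive (a) from the \BHM structure of $\mathcal{G}$ together with Lemmas~\ref{lem:extension/expansion} and~\ref{S contains J}, then bootstrap through (b), (c), (d) in order, and finally tackle (e) via complex-analytic persistence of intersection multiplicity.

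For (a), since $\mathcal{G}_0$ is dense in $J^*_{\la_0} = \overline{\Saddles_{\la_0}}$, every saddle $p \in \Saddles_{\la_0}$ is the limit of some sequence $\gamma_k(\la_0)$ with $\gamma_k \in \mathcal{G}$. Lemma~\ref{lem:extension/expansion} produces a unique holomorphic continuation $p(\la) \in \overline{\mathcal{G}_\la}$ coinciding locally with the natural continuation of $p$ as a saddle; by Lemma~\ref{S contains J} the collection $\{p(\la) : p \in \Saddles_{\la_0}\}$ is dense in $J^*_\la$, so $\overline{\mathcal{G}_\la} \supset J^*_\la$. Statement (b) then follows directly from Corollary~\ref{cor:continuous} applied to $\PP_0 = \Saddles_{\la_0}$ (which has no isolated points since $J^*_{\la_0}$ is perfect), with equivariance automatic. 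For (c), I take the closure $\overline{\mathcal{S}}$ of the \BHM $\mathcal{S}$ of saddles from (b); it is locally uniformly bounded in $\cd$ since so are the $K_\la$ by Proposition~\ref{prop:henon}, hence normal, and its closure is a \BHM by Lemma~\ref{lem:extension}. The sections of $\overline{\mathcal{S}}$ are closed sets of limits of saddles, hence subsets of $J^*_\la$, and contain $J^*_\la$ by Lemma~\ref{S contains J}; hence they equal $J^*_\la$. Equivariance is inherited from $\mathcal{S}$ by continuity of $\widehat{f}$.

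For (d), any transverse s/u intersection $q_0 \in W^u(p_1(\la_0)) \cap W^s(p_2(\la_0))$ lies in a uniformly hyperbolic compact invariant set, namely the union of the orbits of $p_1$, $p_2$ and $q_0$. Applying Corollary~\ref{foliated lambda-lemma} combined with Lemma~\ref{entire curves} to the motion of saddles from (b), the whole immersed curves $W^u(p_1(\la))$ and $W^s(p_2(\la))$ move holomorphically, so their transverse intersection $q(\la)$ extends globally on $\La$; the resulting \BHM of transverse s/u intersections is equivariant, and strongly unbranched by a final application of Lemma~\ref{lem:extension/expansion}.

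The main obstacle is (e). The decisive observation is complex-analytic: the local intersection multiplicity of two holomorphic curves is a persistent invariant under holomorphic perturbation. If $q_0$ is a tangency between $W^u(p_1(\la_0))$ and $W^s(p_2(\la_0))$ of multiplicity $m \geq 2$, then under the motion of these curves from (d) a small bidisc around $q_0$ contains exactly $m$ intersection points counted with multiplicity for all nearby $\la$. Hence either the tangency persists, or it resolves into several intersection points that must collide at $q_0$ as $\la \to \la_0$; the latter would be a branching of the \BHM of transverse s/u intersections, contradicting (d). The set of parameters at which the tangency persists is thus both open and closed in $\La$, and equals $\La$ by connectedness. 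The converse implications are routine: (c) $\Rightarrow$ (b) because the \BHM of $J^*$ restricts to a \BHM of saddles, which are non-isolated; (b) $\Rightarrow$ (c) and (b) $\Rightarrow$ (d) have just been established; and (d) $\Rightarrow$ (b) because each saddle $p$ of period $N$ is itself a transverse s/u intersection, namely the transverse self-intersection of $W^s(p)$ and $W^u(p)$ at the hyperbolic fixed point $p$ of $f^N$.
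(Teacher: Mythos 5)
Your treatment of (a), (b), (c) and the converse implications is essentially the paper's argument: (a) via Lemmas~\ref{lem:extension/expansion} and~\ref{S contains J}, (b) via Corollary~\ref{cor:continuous}, (c) by passing to the closure of the motion of saddles, and the converses by observing that the hypothesis of the proposition is a special case of each of (b), (c), (d). These parts are fine, and your observation that (d)~$\Rightarrow$~(b) by viewing a saddle as a trivial transverse self-intersection is a cute shortcut.

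The serious gap is in (d)--(e), where the two halves of your argument lean on each other circularly. You assert that the \BHM of transverse s/u intersections is strongly unbranched ``by a final application of Lemma~\ref{lem:extension/expansion}''. But that lemma only delivers unbranching through $q(\la_0)$ at the base parameter $\la_0$, where $q(\la_0)$ is known to sit in a uniformly hyperbolic invariant set. Strong unbranching requires that the graph $\gamma_q$ never meet another graph $\gamma'\in\overline{\GG}$ at \emph{any} parameter $\la_1$; to apply Lemma~\ref{lem:extension/expansion} at $\la_1$ you would need $q(\la_1)$ to lie in a uniformly hyperbolic set at $\la_1$, i.e.\ to already know that the continuation is a \emph{transverse} s/u intersection there. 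That persistence of transversality is exactly what item (e) asserts — and you then prove (e) by invoking (d). Your multiplicity-conservation argument for (e), while complex-analytically sound as a local statement, does not escape the circularity: to conclude ``the resolved intersections cannot collide at $q_0$'' you must already know strong unbranching, which you have not independently secured. (You also pass over the step, needed before any of this, that $q(\la)$ remains an s/u intersection at all parameters; the paper obtains this via a normal-family argument applied to $f^{\pm n}_\la(q(\la))$.)

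The paper breaks the circle by a genuinely different device. It parametrizes $W^u(p_2(\la))\simeq\cc$ holomorphically and fixes a third saddle $p_3$. By the hyperbolic inclination ($\la$-)lemma, the transverse intersection $q(\la_0)$ is the intrinsic limit of a sequence $q_n(\la_0)\in W^s(p_3(\la_0))\cap W^u(p_2(\la_0))$ of transverse intersections that can each be followed globally. Working in the intrinsic coordinate of $W^u(p_2(\la))$, Montel's theorem (disjointness from finitely many followed intersections) gives normality of $(q_n)$, Hurwitz then forces $q_n(\la)\to q(\la)$ on all of $\La$, and a final application of Hurwitz shows that two continuations $q(\la), q'(\la)$ starting from distinct transverse intersections cannot collide at any $\la_1$ — else some $q'(\la_2)$ would coincide with some $q_n(\la_2)$, which is impossible since these lie on distinct stable manifolds. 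This single argument establishes both the strong unbranching in (d) and the persistence of tangencies in (e), with no circular dependence.
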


\begin{proof}  
Let us consider the set  $\Saddles_0$ of saddles of $f_0$. By Lemma \ref{lem:extension/expansion},
points of $\Saddles_0$ can be followed holomorphically  along $\La$, 
giving rise by Corollary \ref{cor:continuous}
 to a  strongly unbranched  holomorphic motion of saddles  $\PP=(\PP_\la)$
 over $\La$.
 Moreover, $\PP \subset \overline{\GG}$, while 
by Lemma \ref{S contains J},  
\begin{equation}\label{S contains J eq}
     \overline{\PP_\la} \supset J_\la^*\quad \mathrm{ for \ all} \ \la\in \La,
\end{equation}
 implying {\it  (a)}. 

Since  the motion
$\PP $ satisfies the assumptions of the proposition,
while  $\GG$ is not part of any further assertion, from now on  we
can assume  $\mathcal{G} = \PP$. In particular, $\GG$ is equivariant. 



Since $\GG$ is a motion of saddles, and since every saddle point
belongs to $J^*$, 
we conclude from \eqref{S contains J eq} that 
for every $\la$, $\overline{\GG_\la}=J_\la^*$.
Applying Corollary \ref{cor:continuous} once again
(with different base points), we conclude that 
 no saddle point can change type in the family, and that for any
 $\la\in \La$, 
$\GG_\la$ is the set of all saddles of $f_\la$,  i.e., $\GG_\la=\Saddles_\la$.
This proves {\it (b)}.
Since $\GG$ is an equivariant normal holomorphic motion, 
$\overline \GG $ is an equivariant \BHM  of $J^*$, implying {\it (c)}.


\msk
For {\em (d)}, let  $q$ be a transverse point of intersection of
$W^s(p_1)$ and $W^u(p_2)$ (for some parameter $\lo$).
By Lemma \ref{lem:extension/expansion} 
$q$ admits a unique continuation $q(\la)$ which locally coincides 
with its natural continuation as  a s/u intersection. 
By {\em (c)}, the saddle points $p_1$ and $p_2$ persist in the
family. Since $f^n_\la(q(\la))$  
is a 
normal family and $f^n_\la(q(\la))\cv p_1(\la)$
 for $\lambda$ close to   $\lo$, this convergence  holds throughout $\Lambda$, and similarly for $f^{-n}_\la(q(\la))$.
 In particular $q(\la)\in W^s(p_1(\la))\cap  W^u(p_2(\la))$ for all parameters so it 
remains an s/u intersection.  

Let us now show that this s/u intersection remains transverse or equivalently, that there are no collisions. This 
will establish {\em (e)} and at the same time that the motion of transverse s/u  intersections is strongly unbranched by 
Lemma \ref{lem:extension/expansion}, thus completing the proof of {\em (d)}. 
(Notice that tangencies are not a priori incompatible with the fact that 
intersections are moving holomorphically, due to the possibility of degenerate tangencies).

Consider  a pair $q(\lo)$, $q'(\lo)$ of distinct transverse intersections of   $W^s(p_1(\lo))$ and $W^u(p_2(\lo))$. We know that $q$, $q'$ (as well as $p_1$, $p_2$) can be followed holomorphically. We have to show that $q$ and $q'$ stay distinct. 
 For this we parameterize $W^u(p_2(\la))$ by some $\phi_\la:\cc\cv W^u(p_2(\la))$, depending holomorphically on $\la$
 (see the comments preceding Proposition \ref{prop:holmotion} 
 below),  
 so we may identify $W^u(p_2(\la))$ with  $\cc$. Fix another saddle point $p_3(\la)$. 
 Since $W^s(p_3(\lo))$ intersects transversally $W^u(p_1(\lo))$, by the   Lambda (or inclination) lemma of hyperbolic dynamics
 (see \cite[p. 155]{palis takens})
  we get that $q(\lo)$ is the limit, inside $\cc\simeq W^u(p_2(\lo))$ of a sequence of 
transverse intersection points $q_n(\lo)$ of $W^s(p_3(\lo))\cap W^u(p_2(\lo))$. These intersection points can be followed globally in $\La$. 

We claim that $q_n(\la)$ converges locally uniformly to $q(\la)$ in $\La$  (again here we work in $\cc$). 
Indeed notice first that by Montel's theorem $q_n$ is a normal family, since locally we can follow any finite set of transverse  
intersections of $W^s(p_1(\la))\cap W^u(p_2(\la))$, and $q_n(\la)$ stays disjoint from them. Then we argue that $q_n(\lo)$ converges to 
$q(\lo)$ while $q_n(\la)$ is disjoint from $q(\la)$, so by Hurwitz' Theorem $q_n(\la)$ converges to $q(\la)$. 

To conclude the argument, assume that there exists $\la_1$ such that $q(\la_1)  = q'(\la_1)$, and let $N$ be any neighborhood of $\la_1$. 
Now if for every 
$\la\in N$ and $n\geq 0$, $q'(\la)\neq q_n(\la)$, we have a  contradiction with
 Hurwitz' Theorem. Thus there exists $\la_2\in N$ and an integer $n$
 such that
 $q'(\la_2) =  q_n(\la_2)$ which is impossible because these points  belong to different stable manifolds. 
Hence, item {\em (e)} is established.

Conversely, if one of the conditions 
 {\em (b)}, {\em (c)} or {\em (d)} holds, then the assumption of the proposition is satisfied, and we infer that the other conclusions hold. This completes the proof. 
\end{proof}

Let us 
point out the following consequence of  Proposition \ref{prop:extending correspondence}: 


\begin{cor}\label{cor:dense saddles}
Let $(f_\lambda)_{\la\in \Lambda}$ be a substantial family of polynomial automorphisms of $\cd$ of dynamical degree 
$d\geq 2$.  If there exists a persistent 
set of   saddle points, which is dense in $J^*$ for some parameter, then the family is weakly $J^*$-stable.
\end{cor}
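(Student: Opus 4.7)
The plan is to reduce the corollary directly to Proposition \ref{prop:extending correspondence}. Let $\lo \in \La$ be a parameter where the given persistent set $\PP_0$ of saddles is dense in $J^*_{\lo}$. I read ``persistent'' as asserting that each $p \in \PP_0$ admits a (necessarily unique) holomorphic continuation $\la \mapsto p(\la)$ over all of $\La$ as a periodic point. Such a continuation exists locally by the implicit function theorem applied to $f_\la^N(z) - z = 0$ at the saddle, and by hypothesis extends globally. Collecting these graphs yields a family of holomorphic sections over $\La$, i.e.\ a branched holomorphic motion $\GG$ in the sense of \S\ref{sec:branched}.

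Next I would verify the two hypotheses of Proposition \ref{prop:extending correspondence}. Each $p(\la)$ is periodic for $f_\la$, hence $p(\la) \in K_\la$, so $\GG_\la \subset K_\la$ for every $\la \in \La$. By construction $\GG_{\lo} = \PP_0$, which by assumption is dense in $J^*_{\lo}$. Invoking the proposition, conclusion (c) gives weak $J^*$-stability of the family, which is precisely what the corollary asserts. As a bonus, conclusions (a), (b), (d), (e) of that proposition also follow, producing the entire structural picture (unbranched motion on saddles and on transverse s/u intersections, persistence of homoclinic and heteroclinic tangencies, and $\overline{\GG_\la} \supset J^*_\la$ for every $\la$).

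There is essentially no obstacle, since the heavy lifting has already been performed in Proposition \ref{prop:extending correspondence}, which was designed for exactly this kind of application. One might briefly wonder whether the continuations $p(\la)$ remain of saddle type throughout $\La$, but this is not required to invoke the proposition --- periodicity, and hence containment in $K_\la$, is enough --- and in any case Lemma \ref{lem:substantial} together with the substantiality hypothesis guarantees it a posteriori, consistent with conclusion (b) of the proposition.
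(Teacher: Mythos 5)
Your argument is correct and follows the same route as the paper: verify that the persistent set of periodic points constitutes a \BHM $\GG$ with $\GG_\la\subset K_\la$ and $\GG_{\lo}$ dense in $J^*_{\lo}$, then invoke Proposition~\ref{prop:extending correspondence}(c). Your remark that containment in $K_\la$ suffices (rather than persistence of saddle type a priori) is precisely the point, and matches the proposition's hypotheses.
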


\begin{proof}
  Apply the implication $(b)\Rightarrow (d)$ of 
  Proposition \ref{prop:extending correspondence} 
to the given set of saddles. 
\end{proof}

\begin{rmk}\label{rmk:equivariant}
Notice that  in Proposition
 \ref{prop:extending correspondence}  we  do not assume  any equivariance for  $\G$.
This shows that the equivariance assumption   is   superfluous in Definition  \ref{def:stable}.
\end{rmk}

\begin{rmk}\label{rmk:slip}
It follows from Proposition
 \ref{prop:extending correspondence} and from the 
 density of transverse homoclinic intersections in $J^*$ at every parameter that if all homoclinic intersections can be followed in 
 some $\om\subset\La$, then $(f_\la)$ is weakly $J^*$-stable there. This a priori does  {\em not} imply   that weak $J^*$-stability 
 follows from the absence of homoclinic tangencies. Indeed, if there are no tangencies in $\om$, every homoclinic intersection can be 
 followed locally. However,  intersections may disappear by ``slipping off to infinity" inside stable and unstable manifolds. The  
 methods that  we develop in Part \ref{part:tangencies} of the paper should be seen as a way of circumventing this problem. 
 \end{rmk}

\subsection{Further consequences} 


\begin{cor}\label{cor:semiconj}
Under the assumptions of Proposition \ref{prop:extending correspondence}, if furthermore $(f_\la)$ is $J^*$-stable in a 
neighborhood of $\lo$ (for instance if $f_\lo$ is uniformly hyperbolic on $J_\lo^*$), 
then for $\la\in \La$ there is a semiconjugacy $J^*_\lo\cv J^*_\la$.
\end{cor}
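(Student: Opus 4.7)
The plan is to extract a semiconjugacy directly from the equivariant \BHM provided by Proposition \ref{prop:extending correspondence}, using the extra unbranching information supplied by $J^*$-stability near $\lo$.

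First, I would invoke Proposition \ref{prop:extending correspondence} (applicable here because weak $J^*$-stability is assumed) to obtain an equivariant normal \BHM $\overline{\GG}$ over $\La$ whose sections satisfy $\overline{\GG}_\la = J^*_\la$ for every $\la$ (the two possible meanings of $\overline{\GG}_\la$ coincide by a quick normal-families argument). The hypothesis of $J^*$-stability in a neighborhood of $\lo$ translates, by definition, to the statement that $\overline{\GG}$ is unbranched in that neighborhood; in particular it is unbranched at $\lo$. Hence the natural projection $\Pi_\lo : \overline{\GG} \to J^*_\lo$ is a continuous bijection from a compact Hausdorff space onto a compact Hausdorff space, so it is a homeomorphism.

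Next, I would apply Lemma \ref{continuity} (in combination with Corollary \ref{homeos}-type reasoning at $\lo$) to promote this to continuous maps $h_\la := \Pi_\la \circ \Pi_\lo^{-1} : J^*_\lo \to J^*_\la$, depending holomorphically on $\la \in \La$. Surjectivity is free: for any $x \in J^*_\la = \overline{\GG}_\la$ there exists $\gamma \in \overline{\GG}$ with $\gamma(\la) = x$, whence $h_\la(\gamma(\lo)) = x$. Note that $h_\la$ need not be injective since branching of $\overline{\GG}$ is still allowed at parameters far from $\lo$; this is the reason we only obtain a semiconjugacy rather than a conjugacy.

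Finally, equivariance of the \BHM under the fibered map $\widehat{f}$ of \eqref{fibered map} gives $\widehat{f}(\overline{\GG}) = \overline{\GG}$, which at the level of sections reads $f_\la \circ \Pi_\la = \Pi_\la \circ \widehat{f}$. Composing with $\Pi_\lo^{-1}$ on the right and using $f_\lo \circ \Pi_\lo = \Pi_\lo \circ \widehat{f}$ yields $f_\la \circ h_\la = h_\la \circ f_\lo$ on $J^*_\lo$, which is the desired semiconjugacy. The only point requiring any care is ensuring that $\Pi_\lo$ is genuinely a homeomorphism onto $J^*_\lo$ (not just a continuous bijection onto its image), but this follows from the compactness of $J^*_\lo\subset K_\lo$ together with unbranching at $\lo$. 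The parenthetical remark about uniform hyperbolicity on $J^*_\lo$ is then justified by Jonsson's result that hyperbolicity forces $J^*$-stability in a neighborhood, so the general statement subsumes it.
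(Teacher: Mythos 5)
Your proof is correct and takes essentially the same route as the paper's (quite terse) argument: construct the BHM $\overline{\GG}$ from Proposition~\ref{prop:extending correspondence}, use the $J^*$-stability hypothesis to get unbranching at $\lo$, invoke Lemma~\ref{continuity} for continuity, and use equivariance for the semiconjugation identity. You are somewhat more explicit about the mechanisms, whereas the paper leans on Lemma~\ref{lem:extension/expansion} (which applies directly in the uniformly hyperbolic case) and summarizes the rest as ``analytic continuation.'' One small point worth a sentence in a fully rigorous write-up, present implicitly in both versions: $J^*$-stability near $\lo$ supplies \emph{some} unbranched equivariant BHM $\HH$ of $J^*$ there, and one still needs to transfer this unbranching to the specific motion $\overline{\GG}=\overline{\PP}$ built from saddles in Proposition~\ref{prop:extending correspondence}; in the hyperbolic case this is immediate from uniqueness of continuation of points of the hyperbolic set (Lemma~\ref{lem:extension/expansion} with $E_\lo = J^*_\lo$), which is in fact how the paper phrases it.
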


\begin{proof} 
By Lemma \ref{lem:extension/expansion} we can follow holomorphically all points of $J^*_\lo$. 
The motion is continuous near $\lo$, so by analytic continuation  we easily 
deduce that
it is continuous throughout $\Lambda$. Likewise, the compatibility between the 
motion and the dynamics holds near  $\lo$ so it holds everywhere, hence it 
defines  a global semiconjugacy.
\end{proof}

\begin{cor}\label{cor:continuous to holomorphic}
If a substantial family has the property that its members   are topologically conjugate 
on $J^*$, and the conjugating map depends continuously on $\la$, then it is 
$J^*$-stable.
\end{cor}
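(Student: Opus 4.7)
The plan is to promote the continuous family of conjugacies $h_\la : J^*_0 \to J^*_\la$ into a (genuine, unbranched) holomorphic motion, and then invoke Proposition \ref{prop:extending correspondence} together with the injectivity of $h_\la$ to conclude $J^*$-stability.

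First I would analyze $h_\la$ on saddles. If $p$ is a saddle of $f_0$ of period $N$, then $h_\la(p)$ is a periodic point of $f_\la$ of the same period, depending continuously on $\la$. On the other hand, since $p$ is a saddle, $Df_0^N(p) - \mathrm{Id}$ is invertible, so by the implicit function theorem $p$ admits a local holomorphic continuation $p^{\mathrm{hol}}(\la)$ which is the unique nearby periodic point of $f_\la$ of period dividing $N$, and which remains a saddle. By Corollary \ref{cor:continuous} (using the substantial hypothesis via Lemma \ref{lem:substantial}), $p^{\mathrm{hol}}$ extends as a global holomorphic function on $\La$, staying a saddle throughout. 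The set $\{\la \in \La : h_\la(p) = p^{\mathrm{hol}}(\la)\}$ is clearly closed and contains $\lo$; it is also open, because at any parameter $\la_1$ where the two agree, $p^{\mathrm{hol}}(\la_1)$ is a saddle and hence isolated as a periodic point of period $N$ of $f_{\la_1}$, so the continuous function $h_\la(p)$ must coincide with the holomorphic continuation on a neighborhood. By connectedness of $\La$, the identity $h_\la(p) = p^{\mathrm{hol}}(\la)$ holds globally; in particular, $\la \mapsto h_\la(p)$ is holomorphic.

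Next I would extend this to all of $J^*_0$ by density. Saddles are dense in $J^*_0$, so for any $x \in J^*_0$ choose saddles $p_n \to x$. By joint continuity, $h_\la(p_n) \to h_\la(x)$ locally uniformly on $\La$. The sections $h_\la(p_n) = p_n^{\mathrm{hol}}(\la)$ are holomorphic in $\la$ and take values in $K_\la$, which is a locally bounded family; so the graphs form a normal family, and the locally uniform limit $\la \mapsto h_\la(x)$ is holomorphic. This defines a branched holomorphic motion $\GG$ whose sections $\GG_\la = h_\la(J^*_0) = J^*_\la$ are precisely $J^*_\la$. Weak $J^*$-stability already follows from Proposition \ref{prop:extending correspondence}, but more importantly, for each $\la$ the map $h_\la$ is a homeomorphism of $J^*_0$ onto $J^*_\la$, hence injective; therefore $\GG$ is unbranched at every parameter, i.e., $\GG$ is a genuine (unbranched) equivariant holomorphic motion of $J^*$. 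This is the definition of $J^*$-stability.

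The only technical point worth flagging is ensuring that the local coincidence $h_\la(p) = p^{\mathrm{hol}}(\la)$ propagates globally; this is the step where the substantial hypothesis is genuinely used (through Lemma \ref{lem:substantial}, to prevent $p^{\mathrm{hol}}$ from ceasing to be a saddle somewhere along $\La$ and creating ambiguity in the continuation). Everything else is a straightforward combination of density of saddles, normality of graphs inside $\bigcup_\la K_\la$, and the bijectivity of a topological conjugacy.
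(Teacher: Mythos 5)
Your overall strategy — promote the continuous conjugacies to a holomorphic motion of saddles, extend by density, and note that unbranching is automatic from injectivity of $h_\la$ — is the same as the paper's, and your final two paragraphs (density + normality on $K_\la$; homeomorphisms give an unbranched motion) match the paper word for word in substance.

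There is, however, a genuine circularity in your first paragraph. You argue that the IFT gives a \emph{local} holomorphic continuation $p^{\mathrm{hol}}$, and then invoke Corollary~\ref{cor:continuous} to conclude that $p^{\mathrm{hol}}$ ``extends as a global holomorphic function on $\La$, staying a saddle throughout.'' But the hypothesis of Corollary~\ref{cor:continuous} (inherited from Lemma~\ref{lem:extension/expansion}) is precisely that $\Saddles_0$ already admits a continuation as a branched holomorphic motion over \emph{all} of $\La$, with sections in $K_\la$. That global motion is exactly what you are in the course of constructing; without it you only know $p^{\mathrm{hol}}$ exists near $\lo$, and your subsequent open--closed argument on $\{\la : h_\la(p) = p^{\mathrm{hol}}(\la)\}$ tacitly assumes $p^{\mathrm{hol}}$ is already defined and well-behaved everywhere. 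The paper's proof takes the \emph{globally defined} continuous section $\la\mapsto h_\la(p)$ as the object from the start: it is a periodic point for every $\la$, it is a limit of periodic points for every $\la$ (density of $\Saddles_0$ in $J^*_0$ and the fact that $h_\la$ conjugates onto $J^*_\la$), and wherever it is a saddle the IFT uniqueness identifies it with the local holomorphic continuation. The substantiality argument of Lemma~\ref{lem:substantial} is then what propagates the saddle condition (and hence holomorphicity) across all of $\La$; Corollary~\ref{cor:dense saddles} then applies. If you replace your appeal to Corollary~\ref{cor:continuous} by this argument, the proof is correct and coincides with the paper's.
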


\begin{proof}  
Fix a parameter $\la_0\in\Lambda$.
For every $\la$ there is a conjugacy $h_\la:J^*_0\cv J^*_\la$
depending continuously on $\la$.
If $p$ is a saddle point for $f_0$, then $h_\la(p)=p(\la)$ is a continuously 
moving
periodic point which is  a limit of periodic points for all $\la$. By 
 Lemma \ref{lem:substantial}, $p(\la)$ is a saddle for all parameters. In particular the 
assumptions of 
Corollary \ref{cor:dense saddles} 
hold, and the family $(f_\la)$ is weakly 
$J^*$-stable, thus all saddle points move holomorphically. To conclude, let $q\in J^*$ be 
any point, and let a sequence of saddle points $p_n\cv q$. Then for all $\la$, $h_\la
(p_n)\cv h_\la(q)$, so $\la\mapsto h_\la(q)$ is holomorphic. Finally, it is obvious that 
the motion is injective since the $h_\la$  are homeomorphisms.
\end{proof}

\section{Holomorphic motions in unstable manifolds  \\ and persistent  connectivity
of the Julia set}\label{sec:extension}

In this section we  capitalize on the idea that in a weakly
$J^*$-stable family of polynomial automorphisms, we can apply the
one-dimensional theory of holomorphic motions inside stable and unstable manifolds.  
In \S \ref{subs:connex} we show that connectivity 
properties of Julia sets are preserved in a weakly $J^*$-stable family. Conversely, if in some dissipative family $(f_\la)$, the Julia set 
is persistently connected, then the family is weakly $J^*$-stable
(Theorem \ref{thm:connected}). 
In \S \ref{subs:proof}, we use these techniques to complete the proof of Theorem \ref{thm:equiv}. 
Finally, in \S \ref{subs:extension} 
we prove, using the Bers-Royden $\lambda$-lemma, that in 
a weakly $J^*$-stable family, the equivariant \BHM  of the little Julia set  $J^*$ actually extends to 
the big Julia set $\bJ= J^+\cup J^-$.
Note that in the hyperbolic case, this method was first used by Buzzard and Verma \cite{buzzard verma} to show that 
$\bJ $ moves under an actual  holomorphic motion. 

\subsection{Motion of $\fr (W^u(p)\cap K^+)$}    
Let  $f$ be a polynomial automorphism of $\cd$ of dynamical degree  
$d\geq 2$,  and  let $p$ be a saddle periodic point. Then $W^u(p)$ is 
is  dense in $J^-$ (see \S \ref{basics sec}). 
In particular, there are two distinct topologies on $W^u(p)$: the one induced by the isomorphism with $\cc$, which we refer to as 
the {\it intrinsic topology}, and the topology induced from $\cd$.  
Viewed as subsets of $\cc$, the components of $W^u(p)\cap K^+$ are simply connected 
closed  subsets that may be bounded or unbounded. Notice that, with  our current state of knowledge, 
 nothing prevents   a component of $W^u(p)\cap K^+$
with non-empty interior from being  fully contained in $J$ or even in $J^*$. 

Recall the notation $H(p)$ and  $H^{\rm tr}(p)$  from \S \ref{basics sec}  for  
the sets of homoclinic  intersections associated with a saddle $p$. 
Throughout this section we use the notation $\Int_\intr X$, $\cl_\intr X$, and $\fr_\intr X$ for the
intrinsic  interior, closure, and boundary of a subset $X\subset
W^u(p)$.  

\begin{lem}\label{lem:boundary}
Relative to  the intrinsic topology in $W^u(p)\simeq \cc$ we have that 
$$\fr_\intr (W^u(p)\cap K^+)  = \cl_\intr{H(p)} = \cl_\intr {H^{\rm tr}(p)}.$$
\end{lem}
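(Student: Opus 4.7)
The plan is to establish the chain of inclusions
\[
\cl_\intr H^{\rm tr}(p) \subseteq \cl_\intr H(p) \subseteq \fr_\intr\!\lrpar{W^u(p)\cap K^+} \subseteq \cl_\intr H^{\rm tr}(p),
\]
which together yield all three equalities. The first is immediate from $H^{\rm tr}(p)\subseteq H(p)$. For the middle inclusion, since the intrinsic boundary is closed, it suffices to show $H(p)\subseteq \fr_\intr(W^u(p)\cap K^+)$. The containment $H(p)\subseteq W^u(p)\cap K^+$ follows from $W^s(p)\subseteq K^+$. For the non-inclusion in the intrinsic interior, I would parametrize $W^u(p)$ by an entire injection $\phi:\cc\to\cc^2$ with $\phi(0)=p$ and $f\circ\phi(\zeta)=\phi(\lambda\zeta)$, where $|\lambda|>1$ is the unstable multiplier.

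Suppose for contradiction that some intrinsic disk $\Delta=\phi(D)$ containing $z=\phi(\zeta_0)\in H(p)$ is entirely contained in $K^+$. Then for every $n\geq 0$ we have $\phi(\lambda^n D)=f^n(\Delta)\subseteq K^+$, so $\phi$ is uniformly bounded on disks of radius $|\lambda|^n r\to\infty$, and the Cauchy estimate yields $\|\phi'(\lambda^n\zeta_0)\|\leq M/(|\lambda|^n r)\cv 0$. On the other hand, differentiating the functional equation gives $\phi'(\lambda^n\zeta_0)=\lambda^{-n}\,Df^n(z)\,\phi'(\zeta_0)$, and since $z\in W^s(p)$ the orbit $(f^n(z))$ tends to $p$. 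Continuity of $Df$ combined with the fact that $Df_p$ acts on $T_pW^u(p)$ by multiplication by $\lambda$ implies $\|Df^n(z)|_{T_zW^u(p)}\|\asymp |\lambda|^n$, so $\|\phi'(\lambda^n\zeta_0)\|$ is bounded away from zero, a contradiction.

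The last inclusion is the crux of the lemma. Given $z\in\fr_\intr(W^u(p)\cap K^+)$ and an intrinsic disk $\Delta\ni z$, since $\Delta\not\subseteq K^+$ we find $w\in\Delta$ with $G^+(w)>0$, whence $G^+(f^n(w))=d^n G^+(w)\cv\infty$ while $f^n(z)$ stays bounded; the iterates $f^n(\Delta)$ thus stretch across $W^u(p)$. I would invoke the equidistribution theorem of Bedford-Lyubich-Smillie, asserting that the normalized currents $d^{-n}[f^n(\Delta)]$ converge to a positive multiple of the Green current $T^+$, whose support is $J^+$. Since $W^s(p)$ is dense in $J^+$ and generically transverse to $W^u(p)$, for large $n$ the disk $f^n(\Delta)$ must meet $W^s(p)$ transversely, and pulling back via $f^{-n}$ yields a transverse homoclinic intersection in $\Delta$, giving $z\in\cl_\intr H^{\rm tr}(p)$. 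The main technical obstacle is turning the current-theoretic equidistribution into an honest transverse intersection point inside the particular disk $\Delta$; an alternative route through the inclination ($\lambda$-)lemma applied to a small disk transverse to $W^s(p)$ near $p$ would face the same delicate transfer step.
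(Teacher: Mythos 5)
Your three-inclusion chain is the right skeleton and your proof that $p\in\fr_\intr(W^u(p)\cap K^+)$ (the Cauchy estimate applied at $\zeta_0=0$) is a clean version of the standard normal-families argument. However, there are two genuine gaps, one in each of the two nontrivial inclusions.

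\textbf{The middle inclusion.} Your chain requires $H(p)\subseteq\fr_\intr(W^u(p)\cap K^+)$, but your Cauchy-estimate argument only handles \emph{transverse} homoclinic points. The key claim $\|Df^n(z)|_{T_zW^u(p)}\|\asymp|\lambda|^n$ is false for a tangential homoclinic point: there $T_zW^u(p)=T_zW^s(p)$, so $Df^n(z)|_{T_zW^u(p)}$ decays exponentially and the Cauchy bound $\|Df^n(z)\phi'(\zeta_0)\|=O(1)$ yields no contradiction. (Also, even for $z\in H^{\rm tr}(p)$, invoking only ``continuity of $Df$'' is too weak: you need the inclination lemma to see that the tangent direction of $W^u(p)$ at $f^n(z)$ converges to the unstable eigendirection at $p$, and hyperbolicity at $p$ to get the exponential growth of the transverse component.) The paper avoids this by proving only $H^{\rm tr}(p)\subseteq\fr_\intr(W^u(p)\cap K^+)$ (via the inclination lemma and the already-established fact $p\in\fr_\intr(W^u(p)\cap K^+)$: iterates of a disk transverse to $W^s(p)$ accumulate $C^1$ on a neighborhood of $p$ in $W^u(p)$, which meets $U^+$), and then proving \emph{separately} that $\cl_\intr H(p)=\cl_\intr H^{\rm tr}(p)$ in the intrinsic topology, using the instability of tangential intersections against a sequence of disjoint disks of the invariant manifolds accumulating on $\Delta$. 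You need this separate density step; the Cauchy estimate alone will not reach tangential homoclinics.

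\textbf{The last inclusion.} Two problems here. First, the equidistribution is the wrong way around: $\Delta$ is a piece of the \emph{unstable} manifold, and the normalized pushforwards $d^{-n}(f^n)_*[\Delta]$ converge to a multiple of $T^-$ (supported on $J^-$), not $T^+$. Second, as you yourself flag, weak convergence of currents does not by itself produce a transverse intersection point inside a specified disk. The paper bridges this with laminar-current machinery: it pairs $d^{-n}(f^n)_*(\psi[\Delta])$ against a uniformly laminar sub-current $S^+\leq T^+$ built from local Pesin stable manifolds (which has continuous potential, so the wedge products converge), obtaining a transverse intersection with some local Pesin stable leaf; then Poincar\'e recurrence and the inclination lemma upgrade this to a transverse intersection with $W^s(p)$ itself. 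This is exactly the ``delicate transfer step'' you mention; it is needed and not supplied in your sketch.
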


\begin{proof} 
This is very similar to \cite[\S 9]{bls}  (see also  the proof of \cite[Cor. 1.9]{connex}). We freely use the formalism of laminar 
currents and Pesin boxes, the reader is  referred to \cite{bls} for details. 

Let $x \in  \fr_\intr  (W^u(p)\cap K^+)$  and let us show that  $x\in\cl_\intr {H^{\rm tr}(p)}$. Since the 
dynamical Green function $G^+$ admits a non-trivial minimum at $x$, if  $\Delta\subset W^u(p)$ is a disk
containing $x$, then $G^+$ is not harmonic in $\Delta$,
i.e. $T^+\wedge [\Delta]>0$. Let $\psi$ be a cut-off function in $\Delta$, with $\psi = 1$ near $x$. 
By \cite[Thm 1.6]{bs3}, $d^{-n} (f^n)_*(\psi\,  [\Delta])\cv c T^-$ as $n\cv\infty$, with $c=\int \psi[\Delta]\wedge T^+>0$. 
We now argue exactly as  in \cite[Lemma 9.1]{bls}. 
Let $P$ be a Pesin box of positive $\mu$-measure, and $S^+$ be the uniformly laminar current made of the local stable manifolds 
$W^s_{\rm loc} (z)$, $z\in P$,  
with transverse measure given by the unstable conditionals of $\mu$. 
Then $0<S^+\leq T^+$ so $S^+$ has continuous potential \cite[Lemma 8.2]{bls}. 
It follows  that $d^{-n} (f^n)_*(\psi[\Delta])\wedge S^+ \cv cT^-\wedge S^+>0$ and we 
conclude that for large $n$, $f^n(\Delta)$ admits transverse  intersection points with  $W^s_{\rm loc}(z_0)$, for {some}  $z_0\in P$
(the transversality comes from \cite[Lemma 6.4]{bls}).

We claim that iterating a bit further, the iterates of $\Delta$
intersect $W^s_{\rm loc}(z)$ transversely,  
 for {\em every} $z\in P$.     
Indeed, for $y$ sufficiently close to $z_0$, $f^n(\Delta)$ intersects
$W^s_{\rm loc}(y)$ transversely. 
By Poincaré recurrence and the Pesin Stable Manifold Theorem, for
typical $y$ like this,
there exists an infinite sequence $(n_j)_{j\geq 1}$, such that 
$f^{n_j}(y)\in P$ and
$f^{n+n_j}(\Delta)$ contains a disk close to $W^u_{\rm
  loc}(f^{n_j}(y))$, and the claim follows. 

We now  apply exactly the same argument to a neighborhood  of $p$ in $W^s(p)$.  
In this way we obtain disks in $W^s(p)$, arbitrary close to 
$W^s_{\rm loc} (z)$ for some $z\in P$, from which we conclude that $f^{n+n_j}(\Delta)$, hence $\Delta$, intersects $W^s(p)$ 
transversely, and we are done. 

\medskip

As was mentioned in \S \ref{basics sec},  
$\overline{H(p)} = \overline{H^{\rm tr}(p)}$ in $\cd$  \cite[Prop. 9.8]{bls}.
In fact,  the proof works in the intrinsic topology of $W^u(p)$ as well. 
For convenience,  let us  recall the argument: 
since $W^u(p)$ admits non-trivial transverse intersections with
$W^s(p)$, it follows from the Hyperbolic $\lambda$-lemma that every disk $ \Delta \subset W^u(p)$ 
is the limit in $\cd$
 of an infinite sequence of disjoint disks $\Delta_n\subset W^u(p)$. 
 Hence  the result follows  from the instability of non-transverse intersections 
 \cite[Lemma~6.4]{bls}. 
 
To conclude the proof, let us 
show that $H^{\rm tr}(p)\subset \fr_\intr  (W^u(p)\cap K^+)$. 
Observe first that $p\in \fr_\intr (W^u(p)\cap K^+)$ (which is well known). 
Otherwise $p$ would lie in  the interior of $W^u(p)\cap K^+$, 
hence the family of forward iterates $f^n$ would be normal in some intrinsic neighborhood $V \subset W^u(p)$ of $p$,
contradicting the growth of the derivatives $Df^n (p)$ in the direction of $W^u(p)$. 

Let now $x\in H^{\rm tr}(p)\subset W^u(p)\cap K^+$ 
and let $\Delta$ be a small disk around $x$, 
which is thus transverse to the stable manifold of $p$. 
By the Hyperbolic $\lambda$-lemma, 
$f^n(\Delta)$ contains graphs arbitrary close in $\cd $ to  
a neighborhood of $p$ in $W^u(p)$, 
so $f^n(\Delta)$ must intersect $U^+$, and the conclusion follows. 
\end{proof}

Let now $(f_\la)_{\la\in\La}$ be a 
holomorphic family of polynomial automorphisms      
with a holomorphically moving saddle point $(p(\la))$.
Then there exists  a holomorphic family  of
parametrizations $\psi^u_\la:\cc\cv W^u(p(\la))$  
with $\psi^u_\la (0)  = p(\la)$. Indeed,
since the eigenvectors of $Df_\la(p(\la))$ depend holomorphically on
$\la$, we can normalize the family so that $p(\la)=0$ and the
eigenbasis is equal to the standard basis at $0$, 
with the vertical direction unstable. Then normalize $\psi^u_\la$ so that
$ (\pi_2 \circ \psi_\la ^u)'  (0) =1$. The Graph Transform construction of
the local unstable manifold provides an explicit formula for the
normalized parametrization:
$$
      \psi_\la (z) =   \lim_{n\to  +\infty} \frac 1{\mu_n}  f_\la^n (0,z),\quad
\mathrm{where}\  \mu_n=  
\left. \frac {d (\pi_2\circ f_\la^n (0,z)) } {d  z} \right|_{z=0}, 
$$
which is manifestly holomorphic in $\la$. 
 

The   following easy proposition is the starting point for most of the
results in this section. 

\begin{prop} \label{prop:holmotion}
Let $(f_\la)$ be a weakly $J^*$-stable  family of polynomial automorphisms, 
  $(p_\la)$ be   a holomorphically moving saddle point  and $(\psi^u_\la)$ 
  be a holomorphic family of parameterizations of $W^u(p_\la)$, as above. Then
  $(\psi^u_\la)^{-1}(\fr_\intr (W^u(p_\la)\cap K^+_\la))$ moves
  holomorphically in $\cc$.
\end{prop}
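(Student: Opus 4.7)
By Lemma~\ref{lem:boundary}, $\fr_\intr(W^u(p_\la)\cap K^+_\la)=\cl_\intr H^{\mathrm{tr}}(p_\la)$, so it suffices to produce a holomorphic motion in $\cc$ of $(\psi^u_\la)^{-1}(\cl_\intr H^{\mathrm{tr}}(p_\la))$. The plan is to first move $H^{\mathrm{tr}}(p_\la)$ holomorphically inside $\cd$ by means of the weak $J^*$-stability hypothesis, then pull this motion back to $\cc$ via the injective parametrizations $\psi^u_\la$, and finally extend to the intrinsic closure using the classical one-dimensional Sł odkowski $\la$-lemma.

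For the first step, fix $q_0\in H^{\mathrm{tr}}(p_0)$. Together with $p_0$ it generates a hyperbolic horseshoe, which is a uniformly hyperbolic compact invariant subset of $J^*_0\subset K_0$. Since the family is weakly $J^*$-stable, the sets $J^*_\la\subset K_\la$ move under an equivariant \BHM, and Lemma~\ref{lem:extension/expansion} provides a unique holomorphic continuation $\la\mapsto q(\la)\in\cd$ coinciding near $\lo$ with the natural hyperbolic continuation. Analytic continuation of the relations $f_\la^{\pm n}(q(\la))\in W^{s/u}_{\mathrm{loc}}(p_\la)$ forces $q(\la)\in W^u(p_\la)\cap W^s(p_\la)$ throughout $\La$, and transversality persists since by the uniqueness part of Lemma~\ref{lem:extension/expansion} distinct starting points $q_0\neq q'_0$ give distinct continuations at every parameter (otherwise a collision would contradict uniqueness applied at the collision parameter).

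In the second step, we pull back. Since $\psi^u_\la:\cc\to W^u(p_\la)$ is injective (unstable manifolds of saddles are injectively immersed lines), the formula $\la\mapsto(\psi^u_\la)^{-1}(q(\la))$ defines a holomorphic function on $\La$, and distinct $q$'s give distinct such functions by Step~1 together with injectivity of $\psi^u_\la$. This yields a bona fide holomorphic motion of $(\psi^u_0)^{-1}(H^{\mathrm{tr}}(p_0))$ in $\cc$. The Sł odkowski $\la$-lemma (the same input used in Lemma~\ref{entire curves}) extends it to a holomorphic motion of the closure $\overline{(\psi^u_0)^{-1}(H^{\mathrm{tr}}(p_0))}=(\psi^u_0)^{-1}(\cl_\intr H^{\mathrm{tr}}(p_0))$. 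Continuity of the extended motion, applied symmetrically at $0$ and at $\la$, identifies its image with $(\psi^u_\la)^{-1}(\cl_\intr H^{\mathrm{tr}}(p_\la))$, and Lemma~\ref{lem:boundary} concludes.

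The main potential obstacle is the identification step at the end, i.e.\ checking that the Sł odkowski extension, which is purely topological inside $\cc$, really lands on the geometric set $(\psi^u_\la)^{-1}(\cl_\intr H^{\mathrm{tr}}(p_\la))$ at each parameter. This is however forced by continuity of the motion together with the symmetric extension argument; the other delicate issue, injectivity of $\psi^u_\la$, is standard for saddle points in this setting.
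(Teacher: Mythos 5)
Your overall architecture matches the paper's: move the homoclinic intersections holomorphically, pull back via the injective parametrizations $\psi^u_\la$, extend by the one-dimensional $\la$-lemma, and conclude with Lemma~\ref{lem:boundary}. In fact the paper's proof is just a one-line deduction from Proposition~\ref{prop:extending correspondence} (d),(e) (``homoclinic intersections move holomorphically and without collisions'') plus Lemma~\ref{lem:boundary} and the $\la$-lemma. What you have done is to re-derive the content of those two items of Proposition~\ref{prop:extending correspondence}, and it is precisely there that your argument has a gap.

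The gap is in the ``no collisions / transversality persists'' step. You argue that if $q_0\neq q'_0\in H^{\rm tr}(p_0)$ had continuations with $q(\la_1)=q'(\la_1)=:r$, then the uniqueness in Lemma~\ref{lem:extension/expansion} applied at $\la_1$ would force $q\equiv q'$, a contradiction. But that uniqueness only applies when $r$ belongs to a \emph{uniformly hyperbolic} invariant compact set, which in practice means $r$ is a transverse s/u intersection. If the collision happens at a tangency (exactly what could go wrong), $r$ lies in no such hyperbolic set and the lemma is silent; the argument is circular, since you need transversality at $\la_1$ to invoke the lemma, yet transversality at $\la_1$ is what you are trying to prove. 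The authors explicitly flag this issue in the proof of Proposition~\ref{prop:extending correspondence}: ``tangencies are not a priori incompatible with the fact that intersections are moving holomorphically, due to the possibility of degenerate tangencies.'' Their actual proof of (d),(e) takes a genuinely different route: after identifying $W^u(p_2(\la))$ with $\cc$ via $\phi_\la$, they pick an auxiliary saddle $p_3$, use the inclination lemma to produce a sequence $q_n(\lo)\in W^s(p_3(\lo))\cap W^u(p_2(\lo))$ of transverse intersections converging to $q(\lo)$, prove via Montel and Hurwitz that $q_n(\la)\to q(\la)$ locally uniformly, and then derive a contradiction with Hurwitz' theorem near a hypothetical collision parameter. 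That mechanism (or simply citing Proposition~\ref{prop:extending correspondence} (d),(e)) is the missing ingredient; without it, Step~1 of your proposal does not yield an injective motion, and the subsequent $\la$-lemma application cannot be launched. The remainder of your Step~2 (pullback, extension to the intrinsic closure, identification of the image) is fine.
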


\begin{proof}
By Proposition 
\ref{prop:extending correspondence} {\em (e)} and {\em (f)},
homoclinic intersections move holomorphically and without collisions. 
Therefore the result follows directly from Lemma \ref{lem:boundary}
and the ordinary one-dimensional  $\lambda$-lemma.
\end{proof}

\subsection{Preservation of connectivity under branched motions}\label{subs:connex}


Let us start with a few general comments on connectivity of Julia sets of polynomial automorphisms.

Of course, if $J$ is totally disconnected, then so is $J^*$. Moreover, 
by \cite{bs3}, every component of $K$ intersects $J^*$, so in particular if $K$ is totally disconnected, then $J=J^*=K$. 
It follows from general topology that if $J$ is totally disconnected then so is $K$: indeed the boundary of a non-trivial continuum cannot be totally disconnected.
On the other hand it is unclear whether 
total disconnectedness of $J^*$  implies that $J$ is totally disconnected. 

Following \cite{bs6}, we say  that $f$ is said to be {\em unstably connected} if  $U^\pm\cap W^u(p)$ is simply connected for some
(and then any) saddle point $p$, and {\em unstably disconnected} otherwise. 
By \cite{bs6}, $J$ is disconnected iff $f$ is stably and unstably disconnected.
It is not difficult to see that $K$ 
is disconnected in this case  
 (an inclination lemma argument). Likewise, 
it follows from \cite{bs6} that $J$ is connected iff $J^*$ is connected. 
We also remark that a dissipative map is always stably disconnected  \cite[Cor. 7.4]{bs6}.

\medskip

We start by 
observing that  the connectedness  of $J$ 
  is preserved in weakly $J^*$-stable families. 
 
 \begin{prop}\label{connectivity preserved}
Let  $(f_\la)_{\la\in \La}$  be a weakly $J^*$-stable substantial family 
of polynomial automorphisms of $\cd$ of dynamical degree $d\geq 2$.  
Then stable and unstable connectivity are preserved in the family.   
In particular  if for some parameter $\la_0$, 
$J_{\la_0}$ is connected, then $J_\la$ is connected for all $\la$. 
\end{prop}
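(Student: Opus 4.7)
The plan is to leverage Proposition \ref{prop:holmotion}, which asserts that the intrinsic boundary $\fr_\intr(W^u(p_\la)\cap K^+_\la)$ moves holomorphically in $\cc$ via the parametrization $\psi^u_\la$. Applying the S\l odkowski $\lambda$-lemma extends this to a family of quasi-conformal homeomorphisms $h_\la\colon\cc\to\cc$ sending $E_{\la_0}$ onto $E_\la$, where $E_\la:=(\psi^u_\la)^{-1}(\fr_\intr(W^u(p_\la)\cap K^+_\la))$. After normalizing $h_\la$ to fix $\infty$, we obtain a bijection between connected components of $\cc\setminus E_\la$ that preserves boundedness.

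The central dynamical observation is that $G^+_\la\circ\psi^u_\la$ is subharmonic on $\cc$ and vanishes on $E_\la$. The maximum principle then forces it to vanish identically on every bounded component of $\cc\setminus E_\la$, so every such component is contained in $K^+_\la\cap W^u(p_\la)$. A short connectivity argument shows moreover that each component of $\cc\setminus E_\la$ is entirely in $U^+_\la$ or entirely in $K^+_\la$: indeed $\{G^+=0\}$ and $\{G^+>0\}$ are both relatively open in each component, the former because a point of $\cc\setminus E_\la$ where $G^+$ vanishes must lie in the intrinsic interior of $K^+_\la\cap W^u(p_\la)$, hence in an open set where $G^+$ is identically zero. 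Simple connectedness of $U^+_\la\cap W^u(p_\la)$ therefore becomes a purely topological property of the pair $(\cc,E_\la)$ together with the $U^+/K^+$ labelling of complementary components.

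It remains to check that the labelling is preserved by $h_\la$. Bounded components carry the $K^+$-label automatically; for an unbounded component $\Omega^{(0)}$, I would pick $z_0\in\Omega^{(0)}$ and track the holomorphic family $\zeta(\la):=\psi^u_\la(h_\la(z_0))$ in $\cd$. The map $\la\mapsto G^+_\la(\zeta(\la))$ is plurisubharmonic, and strict positivity (the $U^+$-label) is an open condition. Identical vanishing of $G^+_\la$ on the whole image $\psi^u_\la(h_\la(\Omega^{(0)}))$ (the $K^+$-label) translates into the persistence of the attracting basins whose intersections with $W^u(p_\la)$ form $h_\la(\Omega^{(0)})$, and this persistence is granted by Theorem \ref{thm:equiv}. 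Combined with the connectedness of $\La$, both conditions are locally and hence globally constant.

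Unstable connectivity is then preserved. Stable connectivity follows by applying the same argument to $f_\la^{-1}$, and the final assertion on the connectedness of $J_\la$ is immediate from the Bedford--Smillie characterization recalled at the beginning of this subsection: $J_\la$ is connected iff $f_\la$ is both stably and unstably connected. The main obstacle will be this last step: since the S\l odkowski extension is not canonically dynamical, preserving the $U^+/K^+$ labelling on unbounded components requires carefully combining plurisubharmonicity of $G^+_\la$, continuous dependence of $K^+_\la$ on $\la$, and the persistence of attracting cycles guaranteed by weak $J^*$-stability.
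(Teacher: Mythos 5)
Your route departs substantially from the paper's, and the departure opens a genuine gap. The paper's proof of Proposition~\ref{connectivity preserved} is much more direct: given a compact component $C_\lo$ of $W^u(p_\lo)\cap K^+_\lo$, it tracks only the boundary $\fr_\intr C_\lo$ under the holomorphic motion of $E_\la := \fr_\intr(W^u(p_\la)\cap K^+_\la)$ furnished by Proposition~\ref{prop:holmotion}. Injectivity of this motion (no collisions with the rest of $E_\la$) shows that $(\fr_\intr C)_\la$ still bounds a compact component of $W^u(p_\la)\cap K^+_\la$, and that is all that is needed, because unstable disconnectedness is precisely the existence of such a compact component. There is no need to extend the motion to $\cc$ or to track the $U^+/K^+$ labels of unbounded complementary components.

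Your proposal instead extends the motion of $E_\la$ to all of $\cc$ by S\l odkowski and tries to prove that the $U^+/K^+$ labelling of \emph{every} complementary component is preserved. The step you yourself flag as the ``main obstacle'' is indeed a gap. For the $U^+$ label, plurisubharmonicity of $(\la,z)\mapsto G^+_\la(z)$ gives openness, as you say. But for the $K^+$ label you need identical vanishing of $\la\mapsto G^+_\la(\zeta(\la))$, and a non-negative subharmonic function can perfectly well vanish at an interior point without vanishing identically (take $\la\mapsto|\la|^2$), so subharmonicity alone gives nothing. The appeal to ``persistence of attracting basins'' via Theorem~\ref{thm:equiv} is also not justified: an unbounded $K^+$-labelled component of $\Int_\intr(W^u(p)\cap K^+)$ need not lie in an attracting basin (under mere dissipativity there is no classification of Fatou components, and even under moderate dissipativity the component could be parabolic, a rotation basin, or contained in $J$), and Theorem~\ref{thm:equiv} concerns the types of periodic points, not the persistence of the $K^+$ label along an arbitrary Fatou orbit. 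Note also that the S\l odkowski extension is not dynamically natural; wherever the paper needs a label-respecting extension it uses the canonical Bers--Royden motion (cf.\ Lemma~\ref{entire curves}).

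Finally, your plan is circular at the point where it would be completed. The paper does prove a statement close to the labelling preservation you want, namely Lemma~\ref{BR motion}, which shows the Bers--Royden extension respects the decomposition $\cd = K^+\sqcup U^+$. But the treatment of unbounded components in that lemma relies on Proposition~\ref{prop:solenoid} and, through it, on the preservation of unstable connectedness — i.e.\ on Proposition~\ref{connectivity preserved} itself. So one cannot base the proof of Proposition~\ref{connectivity preserved} on preserving the labelling of unbounded components; the logical order forces the opposite: track only the compact components directly, as the paper does, and only afterwards deduce the full $U^+/K^+$ preservation.
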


\begin{proof}
Let us show that disconnectedness of $J$ is preserved in a weakly $J^*$-stable family. 
So assume that  for some $\la$, $f_\la$ is stably and unstably disconnected, so for every 
saddle point $p$, $W^u(p)\cap K^+$ and $W^s(p)\cap K^-$ admit intrinsic compact components.
 By Proposition \ref{prop:holmotion},  if  $C_\lo$ is any compact component of  $W^u(p_\lo)\cap K_\lo^+$, 
$\fr_i C_\lo$ moves holomorphically as the parameter evolves, without colliding with the other components, 
so its  continuation $(\fr_i C)_\la$ bounds a compact component of   $W^u(p_\la)\cap K^+_\la$. 
Hence $f_\la$ is unstably disconnected at all parameters, and the same argument shows 
that it remains stably disconnected as well. We conclude that $J_\la$ is  disconnected for every $\la\in \La$.
\end{proof}

In the same way we obtain  that if     $(f_\la)_{\la\in \La}$  is weakly $J^*$-stable and if for some parameter $\la_0$, 
$J_{\la_0}$ is totally disconnected, then for all $\la$, $f_\la$ is stably and unstably totally disconnected. 
However, it is unclear    
whether  stable and unstable total disconnectedness implies that $J$ or   $J^*$ is totally disconnected.

\subsection{Traces of attracting basins in unstable   manifolds} \label{subs:traces}


Here we will establish the following useful property (perhaps,  known  to experts): 

\begin{lem}\label{lem:basin in unstable manifold}
Let $f$ be a  polynomial automorphism possessing  an attracting
periodic point $q$. 
Then for every saddle point $p$, 
there is a non-empty component of ${\operatorname{Int}}_i ( W^u(p)\cap K^+)$ contained
in the basin  ${\mathcal B} (q ) $.  
 
If in addition $f$ is unstably disconnected, then there exists such a component that  is relatively compact in the intrinsic topology. 
 \end{lem}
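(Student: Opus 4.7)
\emph{First assertion.} Because $f$ is dissipative, $K^-$ has empty interior and $K^-=J^-$; in particular $q\in J^-$. Since $W^u(p)$ is dense in $J^-$, the open neighborhood $\mathcal{B}(q)$ of $q$ meets $W^u(p)$, and the resulting non-empty open set $\mathcal{B}(q)\cap W^u(p)$ is contained in $K^+$, hence in $\operatorname{Int}_i(W^u(p)\cap K^+)$. Choose $N\geq 1$ with $f^N(p)=p$ and $f^N(q)=q$. Let $D$ be any connected component of $\mathcal{B}(q)\cap W^u(p)$, and let $C\supset D$ be the component of $\operatorname{Int}_i(W^u(p)\cap K^+)$ containing it. The family $\{f^{Nn}\}_{n\geq 0}$ is uniformly bounded on $C$ (its values lying in the bounded set $K^+$) and therefore normal; on $D$ it converges locally uniformly to the constant $q$, and by the identity principle for limits in normal families this convergence propagates to all of $C$. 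Hence $C\subset \mathcal{B}(q)$, which forces $C=D$, yielding the desired component.

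\emph{Second assertion.} Assume $f$ is unstably disconnected; if the component $C$ above is already relatively compact we are done, so assume it is unbounded in $W^u(p)\simeq \cc$. Let $\psi^u:\cc\to W^u(p)$ be the intrinsic parameterization and consider the open set
\[
   \mathcal O=(\psi^u)^{-1}(\mathcal{B}(q))\subset \cc.
\]
Any bounded connected component $D$ of $\mathcal O$ satisfies $\psi^u(D)\subset \mathcal{B}(q)\subset K^+$ and so $D\subset \operatorname{Int}_i(W^u(p)\cap K^+)$; rerunning the identity-principle argument of Part 1 shows that $D$ actually coincides with the component of $\operatorname{Int}_i(W^u(p)\cap K^+)$ containing it. Being bounded in $\cc$, this component is relatively compact in the intrinsic topology, which is what we want.

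The proof therefore reduces to exhibiting a bounded component of $\mathcal O$, and this is the step I expect to be the main obstacle. The unstable disconnectedness hypothesis furnishes bounded components of $(\psi^u)^{-1}(K^+)$ in $\cc$, but such a bounded component could a priori be contained in the preimage of a Fatou component of $f$ different from $\mathcal{B}(q)$ (e.g.\ the basin of another attracting cycle, or a rotation domain), so one must refine. The natural tool is value-distribution theory: the entire map $\psi^u:\cc\to\cd$ is of finite order, and an Ahlfors-islands-type argument applied to its coordinate projections, in the spirit of the Denjoy--Carleman--Ahlfors and Wiman framework used later in \S\ref{sec:critical}, should show that for a sufficiently small polydisk $V\Subset \mathcal{B}(q)$ centered near $q$, the preimage $(\psi^u)^{-1}(V)$ admits bounded ``islands'' in $\cc$. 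Each such island is a bounded component of $\mathcal O$, completing the argument.
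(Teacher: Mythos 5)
Your proof of the first assertion is correct, and is in fact more elementary than the paper's: since $q$ is attracting, the map is dissipative, so $q\in K^-=J^-=\overline{W^u(p)}$, and the open basin $\mathcal B(q)$ must therefore meet $W^u(p)$; your normal-family/identity-principle step then forces the whole intrinsic component into $\mathcal B(q)$, exactly as in the paper. The Ahlfors-current argument the paper uses for this step is what is actually needed for the generalization in Proposition~\ref{prop:basin unstable} (arbitrary Fatou components carrying an entire curve, for which one cannot simply say the component is a neighborhood of a point of $J^-$), but for an attracting basin your shortcut is sufficient.

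For the second assertion there is a genuine gap, which you yourself flag. Producing a bounded component of $(\psi^u)^{-1}(\mathcal B(q))$ is indeed the crux, and the Ahlfors-islands/DCA route you sketch is not clearly viable: those results concern asymptotic values (or carry multiplicity/combinatorial hypotheses), and finite order of $\psi^u$ alone does not produce bounded islands over a small polydisk in $\mathcal B(q)$. The paper's resolution is purely dynamical, via the hyperbolic inclination ($\lambda$-) lemma, and is much simpler than a value-distribution argument. Unstable disconnectedness furnishes an intrinsically bounded disk $\Delta\subset W^u(p)$ with $\Delta\cap K^+\neq\emptyset$ and $\fr_i\Delta\subset U^+$. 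By Lemma~\ref{lem:boundary}, $\Delta$ contains a transverse homoclinic point of $p$, so by the $\lambda$-lemma, $f^n(\Delta)$ eventually contains disks $C^1$-close to any prescribed disk in $W^u(p)$; choosing such a target disk through a point of $W^u(p)\cap\mathcal B(q)$ (which you already have, and which trivially persists under $C^1$-small perturbations since $\mathcal B(q)$ is open) gives $f^n(\Delta)\cap\mathcal B(q)\neq\emptyset$ for large $n$, hence $\Delta\cap\mathcal B(q)\neq\emptyset$ by invariance. Since $\fr_i\Delta\subset U^+$ is disjoint from $K^+\supset\mathcal B(q)$, the component of $W^u(p)\cap\mathcal B(q)$ inside $\Delta$ is compactly contained in $\Delta$, which is exactly the bounded component you were after.
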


\begin{proof} 
Since the basin ${\mathcal B} ( q ) $ is
biholomorphic to $\cd$,   it contains entire curves ${\mathcal E}: {\mathbb C}\rightarrow \cd$. 
To any such  curve ${\mathcal E}$, corresponds a positive closed
{\it Ahlfors current}   produced  by averaging  currents of
integration over holomorphic disks ${\mathcal E} : {\mathbb D}_r
\rightarrow \cd$  
 (see e.g. \cite[\S 7.4 pp. 349-350]{nevanlinna}). 
Since $T^+$ is  a unique  positive closed current of mass
1 supported on $K^+$ \cite{fornaess sibony}, 
the Ahlfors current must be equal to $T^+$.  

We can now proceed  as in the proof of Lemma \ref{lem:boundary}:  
fix a Pesin box  and construct a local laminar current  $S^-\leq T^-$
made of local Pesin stable manifolds. 
Since $S^-$ has continuous potential, the 
entire curve ${\mathcal E}$  must intersect it;
 more precisely we get a transverse intersection with some local   unstable leaf $W^u_{\rm loc}(x)$.  
Since $W^u(p)$ contains disks arbitrary close to  $W^u_{\rm loc}(x)$,
$W^u(p)$ intersects ${\mathcal E}$ transversely as well. Thus,
 $W^u(p)\cap {\mathcal B} ( q ) \not= \emptyset$.

Finally, note that any component $C$ of ${\operatorname{int}}_i\, ( W^u(p) \cap K^+)$ is either entirely
contained in the basin ${\mathcal B} ( q ) $ or disjoint from it.
This follows easily from normality of the family of restrictions  
$ f^n :  C\rightarrow \cd$, ${n\geq 0}$. 

\medskip

Now assume that $f$ is unstably disconnected, or equivalently, that  $K^+\cap W^u(p)$ 
admits a compact component.
 Thus there exists 
 an intrinsically bounded  topological disk $\Delta \subset W^u(p)$ such that $\Delta\cap K^+\neq \emptyset$ and $\fr_i \Delta \subset U^+$. We claim that there exists a component of $\mathcal B(q) \cap W^u(p)$ that is contained in $\Delta$. 
By Lemma \ref{lem:boundary},  $H^{\rm tr}(p)\cap \Delta\neq
\emptyset$, so by the Hyperbolic $\lambda$-lemma, 
for large $n$, 
$f^n(\Delta)$  contains disks arbitrary $C^1$-close to any given   disk in $W^u(p)$. 
Now the first part of the proof shows that there is a point of transverse intersection between $\mathcal E$ and $W^u(p)$. Therefore 
$f^n(\Delta)$ intersects $\mathcal E$, hence $\mathcal{B}(q)$ for large $n$. By invariance, $\Delta$ intersects 
$\mathcal{B}(q)$ as well, and since  $\fr_i \Delta \cap K^+ = \emptyset$, we conclude that there is a component of 
$W^u(p)\cap \mathcal{B}(q)$ which  is compactly contained in $\Delta$. 
\end{proof}


In fact, the above  proof gives a more general statement: 

\begin{prop}\label{prop:basin unstable}
 Let $\mathcal D$ be a component of $\Int K^+$ containing an entire curve $\EE:
 \cc\rightarrow \mathcal D$. Then for every saddle point $p$, 
there is a non-empty component of ${\operatorname{Int}}_i ( W^u(p)\cap K^+)$ contained
in  $\mathcal D  $.  
 If in addition $f$ is unstably disconnected, then there exists such a component that  is relatively compact in the intrinsic topology. 
\end{prop}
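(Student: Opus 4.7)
The plan is to run the proof of Lemma \ref{lem:basin in unstable manifold} essentially verbatim, replacing the attracting basin $\mathcal{B}(q)$ throughout by $\mathcal{D}$. Inspecting that argument, the role of $\mathcal{B}(q)$ enters only through two properties: the existence of an entire curve $\EE:\cc\to\mathcal{B}(q)$, and the full invariance $f(\mathcal{B}(q))=\mathcal{B}(q)$. The first is now part of the hypothesis; the second will need a small adjustment, relevant only in the unstably disconnected part.

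For the first assertion, the Ahlfors current $T_\EE$ attached to $\EE$ is a positive closed current of mass $1$ supported in $\overline{\EE(\cc)}\subset\overline{\mathcal{D}}\subset K^+$, and by the Fornaess--Sibony uniqueness theorem \cite{fornaess sibony} it coincides with $T^+$. I would then import the rest of the argument of Lemma \ref{lem:basin in unstable manifold}: fix a Pesin box $P$, form the local laminar current $S^-\le T^-$ from the Pesin unstable manifolds $W^u_{\rm loc}(z)$, $z\in P$, observe that $S^-$ has continuous potential so $T^+\wedge S^->0$, deduce a transverse intersection of $\EE(\cc)$ with some $W^u_{\rm loc}(z_0)$, and use that $W^u(p)$ contains disks $C^1$-close to $W^u_{\rm loc}(z_0)$ (by the hyperbolic $\lambda$-lemma applied at a transverse homoclinic point of $p$, which exists by Lemma \ref{lem:boundary}) to produce a point $\zeta\in W^u(p)\cap\EE(\cc)\subset W^u(p)\cap\mathcal{D}$. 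Since $\mathcal{D}\subset\Int K^+$ is open in $\cd$, a small intrinsic neighborhood of $\zeta$ in $W^u(p)$ lies in $K^+$, so $\zeta\in\Int_\intr(W^u(p)\cap K^+)$; the component $C$ of this intrinsic interior containing $\zeta$ then satisfies $C\subset\mathcal{D}$ by the same normality/connectedness dichotomy as in Lemma \ref{lem:basin in unstable manifold}.

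For the unstably disconnected case I would pick, exactly as in the original proof, an intrinsically bounded topological disk $\Delta\subset W^u(p)$ with $\Delta\cap K^+\neq\emptyset$ and $\fr_\intr\Delta\subset U^+$. Lemma \ref{lem:boundary} ensures that $\Delta$ contains transverse homoclinic points of $p$, and the hyperbolic $\lambda$-lemma then gives, for $n$ large, that $f^n(\Delta)$ contains disks $C^1$-close to a neighborhood of $\zeta$ in $W^u(p)$, hence $f^n(\Delta)\cap\mathcal{D}\neq\emptyset$. This yields $w\in\Delta$ with $f^n(w)\in\mathcal{D}$; since $\fr_\intr\Delta\cap K^+=\emptyset$, the component of $W^u(p)\cap f^{-n}(\mathcal{D})$ through $w$ is relatively compact inside $\Delta$.

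The main point to address (and the only place where the original argument does not adapt directly, because $\mathcal{D}$ is not assumed $f$-invariant) is that this relatively compact component a priori lies in the pulled-back Fatou component $f^{-n}(\mathcal{D})$ rather than in $\mathcal{D}$ itself. For this I would invoke that any Fatou component of a polynomial automorphism of $\cd$ which carries an entire curve is pre-periodic, so there exists $m\geq 1$ with $f^m(\mathcal{D})=\mathcal{D}$; constraining $n$ in the previous paragraph to be a multiple of $m$ then gives $f^{-n}(\mathcal{D})=\mathcal{D}$ and completes the proof.
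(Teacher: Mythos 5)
Your adaptation of the first assertion is correct and matches the intent of the paper, which says that "the above proof gives a more general statement": the Ahlfors current attached to $\EE$ coincides with $T^+$, and the laminar-current argument then produces a transverse intersection $\zeta\in W^u(p)\cap\EE(\cc)\subset W^u(p)\cap\mathcal{D}$, from which the first conclusion follows as you describe.

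The problem is your handling of the unstably disconnected case. You correctly notice that the original proof exploits the full invariance $f^{-n}(\mathcal{B}(q))=\mathcal{B}(q)$, which has no analogue for a general Fatou component $\mathcal{D}$. But your proposed repair --- invoking that ``any Fatou component of a polynomial automorphism of $\cd$ carrying an entire curve is pre-periodic'' --- is not a small adjustment: it is a non-existence-of-wandering-domains statement, which is \emph{not} established in this paper, is not part of the cited literature (Lyubich--Peters~\cite{lyubich peters} classify \emph{periodic} Fatou components, under a moderate dissipativity hypothesis that Proposition~\ref{prop:basin unstable} does not assume), and is, to my knowledge, open for polynomial automorphisms of $\cd$ in this generality. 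As written, this step is a genuine gap.

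The fix is much simpler and the invariance of $\mathcal{D}$ is never needed. Choose $n$ in the inclination-lemma step to be a large multiple of the period of $p$, so that $f^n(W^u(p))=W^u(p)$ and $f^n(\Delta)\subset W^u(p)$. Since $f^n|_{W^u(p)}$ is a biholomorphism of $W^u(p)\simeq\cc$ (hence affine in the linearizing parameter), $f^n(\Delta)$ is again an intrinsically bounded topological disk, and $\fr_\intr f^n(\Delta)=f^n(\fr_\intr\Delta)\subset f^n(U^+)=U^+$, so $\fr_\intr f^n(\Delta)\cap K^+=\emptyset$. Your argument already shows $f^n(\Delta)\cap\mathcal{D}\neq\emptyset$. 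Since $\mathcal{D}\subset K^+$, the component of $W^u(p)\cap\mathcal{D}$ meeting $f^n(\Delta)$ cannot reach $\fr_\intr f^n(\Delta)$ and is therefore compactly contained in $f^n(\Delta)$, hence intrinsically relatively compact. In other words, rather than trying to pull $\mathcal{D}$ back into $\Delta$, push $\Delta$ forward into $\mathcal{D}$; this also shows, incidentally, that the invariance of $\mathcal{B}(q)$ used in Lemma~\ref{lem:basin in unstable manifold} is a convenience rather than a necessity.
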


\subsection {Persistent connectivity and moving Bedford-Smillie solenoids }\label{persistent connectivity} 
We will now  show that  in  the dissipative case,   the preservation    of connectivity
properties of the Julia set implies stability.  Our first statement is that persistent Cantor Julia sets are stable.
 

\begin{prop}\label{prop:totally disconnected}
Let $(f_\la)_{\la\in \La}$ be a family of dissipative polynomial automorphisms of $\cd$ of dynamical degree $d\geq 2$.
If $J^*_\la$ is totally disconnected for all $\la$, then $(f_\la)$ is weakly $J^*$-stable.
\end{prop}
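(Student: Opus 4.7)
The plan is to argue by contradiction, exploiting the interplay between bifurcations, attracting cycles, and the topology of unstable slices of $K^+$. The key is to translate total disconnectedness of $J^*_\la$ into a strong geometric constraint on unstable manifolds, which will be incompatible with the existence of an attracting basin.

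Suppose that $(f_\la)$ fails to be weakly $J^*$-stable, so there exists a bifurcation parameter $\la_0\in\La$. Since the family is dissipative (hence substantial), Corollary \ref{attracting par} produces a nearby parameter $\la_1\in\La$ at which $f_{\la_1}$ has an attracting periodic point $q$, with basin $\mathcal{B}(q)$. The next step is to observe that at $\la_1$ the map must be unstably disconnected: indeed, since $J^*_{\la_1}$ is infinite and totally disconnected it is disconnected, and by the Bedford--Smillie results recalled in \S\ref{subs:connex} this propagates to disconnectedness of $J_{\la_1}$, which is equivalent to $f_{\la_1}$ being both stably \emph{and} unstably disconnected.

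Now I would fix any saddle $p$ of $f_{\la_1}$ and invoke the second half of Lemma \ref{lem:basin in unstable manifold}: unstable disconnectedness together with the presence of the sink $q$ yields a component $D$ of $\Int_i(W^u(p)\cap K^+_{\la_1})$ that is \emph{intrinsically relatively compact} and entirely contained in $\mathcal{B}(q)$. Identifying $W^u(p)$ with $\cc$ via a parametrization, $D$ becomes a non-empty bounded open connected subset of the plane.

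The contradiction will come from planar continuum theory. Let $V$ denote the unbounded component of $\cc\setminus\overline D$. Since $D$ is a non-empty bounded open set, the complement of $V$ in the Riemann sphere is a continuum with more than one point, so $\fr V$ is a non-trivial continuum in $\cc$. Points of $\fr V$ are limits of points outside $\overline D$, hence lie in $\fr_i D$; and because $D$ is a component of $\Int_i(W^u(p)\cap K^+_{\la_1})$, one has $\fr_i D\subset \fr_i(W^u(p)\cap K^+_{\la_1})$. Lemma \ref{lem:boundary} identifies the latter set with $\cl_i H^{\mathrm{tr}}(p)$, which is contained in $J^*_{\la_1}$ since transverse homoclinic points belong to the homoclinic class. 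Thus $\fr V$ is a non-trivial connected subset of the totally disconnected set $J^*_{\la_1}$, which is absurd, completing the proof.

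I expect the only delicate point to be the verification that $\fr V$ is a genuine continuum of more than one point; this is a routine application of the fact that the unbounded complementary component of a non-degenerate full compact set in $\cc$ has connected boundary strictly larger than a point. Everything else is bookkeeping with results already established in \S\S\ref{sec:stable}--\ref{subs:traces}.
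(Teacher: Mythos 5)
Your argument is correct and is essentially the paper's proof: both derive a contradiction by placing a bounded component $D$ of $\Int_i(W^u(p)\cap K^+)$ inside the attracting basin (via Lemma~\ref{lem:basin in unstable manifold}) and noting that $\fr_i D$ is then a non-degenerate continuum sitting inside $J^*\cap W^u(p)$. The difference is purely presentational — the paper asserts at each parameter that no sink can exist and then concludes, whereas you contrapose through Corollary~\ref{attracting par} — and you usefully spell out the step the paper leaves implicit, namely that total disconnectedness of $J^*$ forces unstable disconnectedness (via the Bedford--Smillie equivalences), which is exactly what is needed to invoke the relative-compactness clause of Lemma~\ref{lem:basin in unstable manifold}.
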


%
%

\begin{proof}
If $J^*$ is totally disconnected, 
then for any saddle $p$,   $W^u(p)\cap J^*$ is totally disconnected as
well. Together with  Lemma \ref{lem:basin in unstable manifold}, 
this implies that $f$ does not have sinks. If this happens persistently
over $\Lambda$, 
then no saddle point can bifurcate and the family $(f_\la)_{\la\in \La}$ is weakly $J^*$-stable by
Proposition \ref{prop:extending correspondence}. 
%
\end{proof}

Our next result  asserts that persistent connectivity of $J$ also implies stability. 
Similarly to  the analogous statement for polynomials in $\cc$, 
the argument  is ultimately   based on the absence of ``escaping critical points". 

\begin{thm}\label{thm:connected}
Let $(f_\la)_{\la\in \La}$ be a   family of dissipative 
polynomial automorphisms of dynamical degree $d\geq 2$.
If for every $\la\in \La$, the Julia set $J_\la$ is connected 
 then the family  $(f_\la)$ is weakly $J^*$-stable.
\end{thm}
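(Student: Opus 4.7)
The plan mirrors the one-dimensional fact that a polynomial family with persistently connected Julia set is $J$-stable on the interior of the Mandelbrot set: in both cases persistent connectedness encodes non-escape of suitable ``critical'' data, which can be exploited to manufacture a holomorphic motion of saddles. The goal is to produce a persistent holomorphic motion of saddle periodic points that is dense in $J^*_{\la_0}$ for some base parameter $\la_0$, and then invoke Corollary~\ref{cor:dense saddles}.

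First, since every $f_\la$ is dissipative, each is stably disconnected by \cite[Cor.~7.4]{bs6}. Combined with persistent connectedness of $J_\la$ and the Bedford--Smillie dichotomy recalled above ($J$ is disconnected iff $f$ is both stably \emph{and} unstably disconnected), one obtains persistent \emph{unstable} connectivity: for every saddle $p$ of $f_\la$ the set $W^u(p)\cap U^+_\la$ is simply connected, and the restriction of $\varphi^+_\la$ there is a biholomorphism onto $\set{|z|>1\,}$ intertwining $f_\la$ with $z\mapsto z^d$. Fix $\la_0\in\La$ and a saddle $p_0$ of $f_{\la_0}$. On a neighborhood $N\subset \La$ of $\la_0$, $p_\la$ continues holomorphically as a saddle and the inverse B\"ottcher parametrizations $\Phi^+_\la:\set{|z|>1}\to W^u(p_\la)\cap U^+_\la$ can be chosen to depend holomorphically on $\la$. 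For each angle $\theta$ periodic under multiplication by $d$, the external ray $R_\theta(\la)=\Phi^+_\la(\set{re^{2\pi i\theta}:r>1})$ should land, in the intrinsic topology of $W^u(p_\la)$, at a periodic point $x_\theta(\la)\in \fr_\intr(W^u(p_\la)\cap K^+_\la)$; this is a one-dimensional statement transported through the B\"ottcher conjugation. By periodicity, $x_\theta(\la)\in W^s(p_\la)\cap W^u(p_\la)$ is a homoclinic (hence transverse, after an application of the strong unbranching argument of Lemma~\ref{lem:extension/expansion}) saddle, and $\la\mapsto x_\theta(\la)$ is holomorphic on $N$. As $\theta$ runs through periodic angles, the points $x_\theta(\la_0)$ are dense in $\fr_\intr(W^u(p_0)\cap K^+_0)$ by the landing step, and this set is dense in $J^*_{\la_0}$ by Lemma~\ref{lem:boundary} together with the density of the homoclinic class.

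It then remains to extend the locally defined collection $\set{x_\theta}$ globally over $\La$. Since the family is substantial (being dissipative), Lemma~\ref{lem:substantial} combined with Lemma~\ref{lem:extension/expansion} permits a bootstrap: starting from the local \BHM over $N$, each $x_\theta$ admits a unique analytic continuation along any path in $\La$ and must remain a saddle throughout, because a change of type would eventually contradict either persistent unstable connectivity (the nearby B\"ottcher uniformization would force $x_\theta$ to reappear as a saddle), or Hurwitz-type non-collision of distinct $x_\theta$'s (their B\"ottcher coordinates are globally distinct near $\la_0$). This produces an equivariant \BHM $\GG$ of saddles over $\La$ with $\GG_\la\subset K_\la$ and $\GG_{\la_0}$ dense in $J^*_{\la_0}$, and Corollary~\ref{cor:dense saddles} then yields weak $J^*$-stability of $(f_\la)$.

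The main obstacle lies in the landing step. The classical one-dimensional landing theorem for periodic external rays is stated for polynomials with \emph{compact} connected filled Julia set, whereas $W^u(p_\la)\cap K^+_\la$ inside $W^u(p_\la)\simeq\cc$ need not be compact in the unstably connected case (it may consist of several unbounded components). Some two-dimensional input, most naturally from Bedford--Smillie \cite{bs6}, is needed to confirm that periodic rays do land and that the limit points are genuine saddles depending holomorphically on $\la$. A secondary, less delicate, difficulty is global extension: one must rule out that the $x_\theta$ ``slip off to infinity'' inside $W^u(p_\la)$ as $\la$ varies (cf.\ Remark~\ref{rmk:slip}); this again follows from the B\"ottcher identification, which pins down each $x_\theta$ by a combinatorial address $\theta$ that is preserved under any continuation within the persistently connected locus.
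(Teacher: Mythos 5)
The first step---persistent connectedness of $J_\la$ plus dissipativity implies persistent unstable connectivity, via \cite[Cor.~7.4]{bs6} and the Bedford--Smillie dichotomy---is correct and agrees with the paper. But the next sentence contains a factual error that undermines the rest of the argument: in the unstably connected case, $\varphi^+_\la$ restricted to a component of $W^u(p_\la)\cap U^+_\la$ is \emph{not} a biholomorphism onto $\{|z|>1\}$. By \cite{bs6} (recalled in \S\ref{persistent connectivity}), $W^u(p)\setminus K^+$ consists of finitely many leaves of the Bedford--Smillie solenoid $S^-$, each conformally isomorphic to the upper half-plane $\Hyp$, and $\varphi^+$ restricted to such a leaf is the infinite-degree \emph{universal covering} onto $\cc\setminus\overline{\dd}$. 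Consequently $f_\la$ on a leaf lifts $z\mapsto z^d$ rather than being conjugate to it, and there is no canonical one-dimensional external-ray combinatorics: each periodic angle has countably many lifts in the leaf, and one would have to organize them coherently in $\la$ before one could even state a landing theorem. This is not a technical detail: it is precisely the mechanism that makes $S^-$ a solenoid with Cantor fibers rather than a copy of $\cc\setminus\overline\dd$.

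Because of this, the ``landing step'' you flag as the main obstacle is genuinely the wrong way around the problem. Even after the covering issue is sorted out, periodic ``rays'' would be expected to accumulate on a whole Cantor fiber $F(c)$ of the solenoid rather than land at a single periodic point, so the desired dense family of holomorphically moving saddles would not drop out. The paper dissolves this difficulty rather than resolving it: instead of moving landing points, Proposition~\ref{prop:solenoid} shows that the \emph{entire} solenoid $S^-_\la = J^-_\la\setminus K^+_\la$ moves under an equivariant holomorphic motion respecting the B\"ottcher fibers; the construction lifts a local motion through the leafwise universal coverings, checks that it is normal using the Kobayashi hyperbolicity of $\Om_\la$ (Lemma~\ref{Kob hyp}), and glues via the foliated $\la$-lemma (Corollary~\ref{foliated lambda-lemma}). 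One then extends by Lemma~\ref{lem:extension} to an equivariant \BHM of $\overline{S^-_\la}$, which contains $J^*_\la$ by Lemma~\ref{lem:boundary}, and by equivariance the relevant graphs stay in $K_\la$; Proposition~\ref{prop:extending correspondence} finishes. Your high-level instinct (unstable connectivity $\Rightarrow$ B\"ottcher-based motion $\Rightarrow$ moving saddles dense in $J^*$ $\Rightarrow$ Proposition~\ref{prop:extending correspondence}) matches the paper's strategy, but the ray-landing implementation as written does not go through and would require substantially new input on landing in the solenoidal setting.
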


\begin{rmk}
 Note that  this is the only moment in our argument that requires dissipativity.
\end{rmk}

As a preparation to the proof, let us recall the notion of 
{\it Riemann surface lamination}. It  is a topological space $S$
endowed with local charts $g_i : U_i\rightarrow D_i\times T_i$,
where the $U_i$ are open,  $D_i$ are domains in $\cc$, and $T_i$ are topological  spaces 
({\it  transversals}) , such that the transit maps 
$g_i\circ g_j^{-1}$ (wherever they are defined) have form $(z,t)\mapsto (\gamma(z,t), h(t))$,
where $\gamma(z,t)$ is conformal in $z$.  
Preimages of $D_i\times \{t\}$ in $U_i$ are called {\it plaques} or
{\it local leaves} ;
they patch together to form  global {\it leaves} endowed with
a natural conforml structure. So, $S$ is decomposed into Riemann
surfaces, which is reflected in its name. 
We denote $L(z)$ the global leaf through a point $z\in S$. 

If all the leaves are dense in $S$ then $S$ is called {\it minimal}. 
It is equivalent to saying that for any transversal $T$ and any leaf
$L$, the intersection $L\cap T$ is dense in $T$.

A minimal Riemann surface lamination  $S$  with Cantor transversals  is called
a {\it solenoid}. The leaves of a solenoid $S$ can be topologically
recognized  as {\it path connected components} of $S$. 
It follows that any homeomorphism between solenoids $h: S\ra S'$  
maps  homeomorphically leaves to leaves, $h: L(z) \ra L(hz)$.
If this leafwise map is conformal then $h$ itself is called a 
{\it conformal} solenoidal homeomorphism. 
More generally, a {\it conformal solenoidal map} $h: S\ra S'$ is a
continuous map that induces, for any $z\in S$,  a conformal isomorphism
between the leaves $L(z)$ and $L (hz)$.

Let us now go back to 
Theorem \ref{thm:connected}.
By Proposition \ref{prop:henon},   
we can normalize the family $(f_\la)$ so that $f_\la$ is a product a Hénon mappings depending holomorphically on $\la$. 
This puts us in a position to apply the following  Structure Theorem
due to Bedford and Smillie \cite{bs6}.

Let   $f$ be a  composition of Hénon maps 
that is unstably connected.  
Then the set
$S^- = J^-\setminus K^+$ is a  solenoid
 whose leaves are 
conformally equivalent to  the upper half plane $\Hyp=\{ \operatorname{Im}\,  z>0\}$.
Furthermore, the B\"ottcher function $\varphi^+$ admits a holomorphic
extension to a neighborhood of $S^-$ \cite[Thm~6.3]{bs6},
and the map $\varphi^+ :  S^-\rightarrow \cc\setminus \overline \dd$
is a locally trivial fibration with Cantor fibers $F(c) = F_f(c) := \{ \varphi^+=c\} $.
Moreover, the restriction of $\varphi^+$ to each leaf of $S^-$ is the universal
covering over $\cc \setminus \overline \dd$.
 We will refer to $S^-=S_f^-$ as the  {\em Bedford-Smillie solenoid 
 of} $f$. 

In particular, given a saddle $p$, any component $L$  of
$W^u(p)\setminus K^+$ provides us with a leaf of the solenoid $S^-$.
It follows that $L$ intersects  any fiber $F_c$,
$c\in \cc\setminus \overline \dd$,  by a countable dense subset.  Note
also that by \cite[Theorem 4.11]{bs6}, 
$W^u(p)\setminus K^+$ consists of   only finitely many leaves.

The map $f$ restricts to a conformal  homeomorphism  of  $S^-$  
that maps fibers to the fibers,
$f(F(c)) \subset F(c^d )  $ (according to the  B\"ottcher equation).  

\comm{*******
A good model for $S^-$ is provided by the {\it natural esxtension}  of
the map $\tau: z\mapsto z^d$ on $\cc\sm \bar \dd$. 
Namely, let 
$$
    \Sigma = \set{u=(u_j)\in (\cc\setminus\dd)^\zz: \ \tau(u_j)= u_{j+1}}.
$$  
It is a solenoid with leaves consisting of  backward asymptotic
orbits:
$$
    L(u) = \{ v=(v_j)\in \Sigma:\ |v_j-u_j|\to 0,\ \mathrm{as} \ j\to
    -\infty \}.
$$
Each of the leaves is conformally equivalent to the upper half plane
$\Hyp$. 

Moreover, the  projection
$\pi: \Sigma\ra \cc\sm \bar\dd$, $\pi(u)= u_0$, is a locally trivial
fibration whose leafewise  restrictions  $\pi: L(z)\ra \cc\sm \bar\dd$
are holomorphic universal coverings.  The map $\tau$ naturally lifts to a conformal
homeomorphism $\hat\tau: \Sigma\ra \Sigma$ which is semi-conjugate to $\tau$
by means of  $\pi$. 

Let us  define   a   map 
\begin{equation}\label{Phi}
  \Phi\equiv \Phi_f :   S \cv \Sigma,\quad 
   \Phi_\la(x) = (\varphi^+_\la(f^n(x)))_{n\in \zz}
\end{equation} 
semiconjugating $f$ to $\hat \tau$.  
Bedford and Smillie have proved \cite{bs6}  that  $\Phi$ is a
conformal solenoidal map.
**********}

\medskip
Theorem \ref{thm:connected} will follow from the following result of independent interest. In the hyperbolic case, it was already established 
by P. Mummert \cite{mummert}.

\begin{prop}\label{prop:solenoid}
Let $(f_\la)_{\la\in \La}$ be a   family of unstably connected   
polynomial automorphisms of dynamical degree $d\geq 2$.
Then the
Bedford-Smillie solenoid  $S_\la^-$ of $f_\la$ moves  under an equivariant 
 holomorphic motion that  preserves the fibers of the Böttcher function $\varphi^+_\la$.
\end{prop}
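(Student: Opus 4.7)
By Proposition \ref{prop:henon} we may reduce to the case where $(f_\la)$ is a holomorphic family of compositions of H\'enon maps. My plan is to construct $h_\la:S^-_{\la_0}\to S^-_\la$ by first producing an equivariant holomorphic motion on a neighbourhood of infinity, and then pulling back this motion along the backward orbit of $f_\la$.

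For the first step, fix $R\gg 1$ and set $V_\la := U^+_\la\cap\{|w|<|z|,\ |z|>R\}$. This region is forward $f_\la$-invariant and contains a neighbourhood of the line at infinity; on $V_\la$ the B\"ottcher function $\varphi^+_\la$ is single valued and jointly holomorphic in $(z,w,\la)$, satisfying $\varphi^+_\la\circ f_\la = (\varphi^+_\la)^d$. I would build a holomorphic family of biholomorphisms
\[
\Psi_\la = (\varphi^+_\la,\eta_\la)\colon V_\la \to \Omega
\]
onto a fixed model domain $\Omega\subset (\cc\sm\overline\dd)\times \dd$, chosen so that the conjugated map $\tau := \Psi_\la\circ f_\la\circ \Psi_\la^{-1}$ is independent of $\la$: its first coordinate is automatically $\zeta\mapsto\zeta^d$, and the transverse coordinate $\eta_\la$ is obtained by a B\"ottcher-type limit procedure so as to absorb the $\la$-dependence of $f_\la$ in the ``vertical'' direction. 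Setting $\widetilde h_\la := \Psi_\la^{-1}\circ\Psi_{\la_0}$ then yields a holomorphic motion $V_{\la_0}\to V_\la$ that preserves $\varphi^+$ and commutes with $f$.

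For the second step, given $x\in S^-_{\la_0}$, choose $N$ large enough that $f_{\la_0}^N(x)\in V_{\la_0}$ (always possible since $G^+_{\la_0}(x)>0$ and $G^+\circ f=dG^+$), and define
\[
 h_\la(x) := f_\la^{-N}\bigl(\widetilde h_\la(f_{\la_0}^N(x))\bigr).
\]
Equivariance of $\widetilde h_\la$ makes this independent of $N$ (for $N$ large), so $h_\la$ is a well-defined, $f$-equivariant holomorphic motion of $S^-_{\la_0}$ that preserves fibers of $\varphi^+$ by construction. Injectivity in $x$ is clear from injectivity of the three factors. To check $h_\la(x)\in S^-_\la$: the identity $\varphi^+_\la(h_\la(x))=\varphi^+_{\la_0}(x)$ forces $h_\la(x)\in U^+_\la$, and the function $\la\mapsto G^-_\la(h_\la(x))$ is non-negative and pluriharmonic (on the open set where $h_\la(x)\in U^-_\la$), vanishes at $\la_0$ because $x\in J^-_{\la_0}$, and hence vanishes identically on the connected parameter space, giving $h_\la(x)\in J^-_\la\sm K^+_\la = S^-_\la$ (the argument mirrors the one used in Lemma \ref{lem:extension/expansion}).

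The main obstacle is the first step: the construction of $\eta_\la$ ensuring that the conjugated dynamics $\tau$ is $\la$-independent amounts to solving a cohomological equation in the transverse direction. I expect this to be feasible by a standard iterative scheme exploiting the strong contraction of $f_\la^{-1}$ transverse to the B\"ottcher foliation near infinity, in the spirit of the classical B\"ottcher normalization in one variable, but it is the step that requires the bulk of the technical work.
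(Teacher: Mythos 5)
Your strategy is genuinely different from the paper's. You want to conjugate $f_\la$ near infinity to a $\la$-independent model $\tau$ on a fixed domain and then drag this motion back along orbits; the paper instead works inside a single unstable manifold $W^u(p_\la)$, which comes with a canonical holomorphically varying parametrization $\psi^u_\la$, exploits the fact that $\varphi^+_\la$ restricted to each leaf $L_\la$ of $S^-_\la$ is a universal covering of $\cc\sm\overline\dd$, solves $\varphi^+_\la\circ\psi^u_\la(t_\la)=c$ locally by the implicit function theorem to move a single leaf, and then extends to the solenoid closure via Lemma \ref{Kob hyp} and the foliated $\la$-lemma (Corollary \ref{foliated lambda-lemma}). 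Your second step is sound: equivariance of $\widetilde h_\la$ does make $f_\la^{-N}\circ\widetilde h_\la\circ f_{\la_0}^N$ independent of $N$, and the harmonicity argument for $G^-_\la$ correctly places $h_\la(x)$ in $S^-_\la$.

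The gap is exactly the step you flagged, and I do not think it can be treated as a technicality. In the chart $(u,v)=(1/z,w/z)$ the point at infinity is a degenerate superattracting fixed point, $f_\la(u,v)=(u^d+\cdots,\,u^{d-1}+\cdots)$, whose Jacobian vanishes at the origin; there is no Böttcher-type normalization for such germs, and it is far from clear that the germ has a $\la$-independent conjugacy class. Concretely, any proposed model $\tau(\zeta,\eta)=(\zeta^d,\sigma(\zeta,\eta))$ must be injective (since $f_\la$ is), which rules out the naive $\sigma(\zeta,\eta)=\zeta$; yet the genuine transverse behaviour of $f_\la$ is governed by $\Jac f_\la$, which varies with $\la$, so $\sigma$ would have to absorb a $\la$-dependent transverse multiplier through a conjugacy $\Psi_\la$ that is necessarily singular at the boundary. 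That is not a routine cohomological equation — it is the entire content of the statement. Bedford--Smillie \cite{bs6} do produce a conformal solenoidal isomorphism $\Phi_\la$ from $S^-_\la$ onto the abstract natural-extension solenoid of $\zeta\mapsto\zeta^d$, which is morally your $\Psi_\la$, but the holomorphic $\la$-dependence of $\Phi_\la^{-1}$ is precisely what needs to be proved; the paper's leaf-by-leaf argument is designed to make that dependence manifest without ever writing down a global model at infinity. To salvage your approach you would need either an actual construction of $\eta_\la$ with the required properties, or a replacement for step 1 whose holomorphic $\la$-dependence is built in.
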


\begin{proof} 
Fix some $\la_0\in \La$; the objects corresponding to this parameter
 will be labeled by ``$0$'', e.g., $S_0^-\equiv S_{\la_0}^-$. 
It is enough to show that the solenoid $S^-_\la$ moves holomorphically in 
 some neighborhood of $\la_0$. 

Pick a saddle point $p_0$ for  $f_0\equiv f_{\lo}$, and let 
 $p_\la$ be its holomorphic continuation to some neighborhood of $\la_0$.
 The unstable manifold $W^u(p_\la)$  is parameterized by the (normalized)
 linearizing coordinate  $\psi^u_\la : \cc\cv W^u(p_\la)$, which   
depends holomorphically on $\la$.
 
Let us consider a leaf  $L_0 \subset W^u(p_0) $ of $S^-_0$
and a  point   $z  = \psi^u_0 (t )\in L_0$. 
The map $  g_0:= \phi^+_0 \circ \psi^u_0$ is univalent in some
neighborhood of $t$, and so is its perturbation  
$$ 
 g_\la := \varphi^+_\la  \circ \psi^u_\la \quad 
  |\la-\la_0|<\de=\de(z).
$$ 
Hence the map $g_\la^{-1}$ is well
defined in some neighborhood of $c \equiv c(t) := g_0 (t)$ 
and depends holomorphically on $\la$. Let 
$$
   t_\la \equiv t_\la(c):= g_\la^{-1} (c), 
\quad  z_\la \equiv z_\la(c) :=  \psi_\la^u (t_\la ), \quad
|\la-\la_0|<\de(z).
$$
 Then for $\de(z)$ small enough, we have

\ssk\nin (i)
$z_\la\in W^u(p_\la)\sm K^+$,  
 hence $z_\la$  belongs to some leaf  $ L(z_\la )  \subset W^u(p_\la)$ of $S^-_\la $;

\ssk\nin (ii)
$\varphi^+ (z_\la) = c$, so $z_\la$ belongs to the fiber $F_c$
independently of $\la$;

\ssk\nin (iii)
 $z_\la $ depends holomorphically on $\la$.  

\ssk
Pick now a base point $z^* =\psi^u_0 (t^*) \in L_0$, and let  $c^*:=
\phi^+_0 (z^*)$, 
 $z^*_\la$ be its motion as
above, $L_\la^*\equiv L(z_\la^*)$.
Since the maps  $\varphi_\la^+:  (L_\la^* , z^*_\la) \ra (\cc\sm
\bar\dd,\,  c^*) $  are holomorphic
universal coverings, for $|\la-\la_0|< \de^*\equiv \de(z^*)$
 there exist conformal isomorphisms 
\begin{equation}\label{lifts}
h_\la: (L_0, z^* )\ra (L_\la^*, z_\la^*)\quad \mathrm{ such\ that} \
\varphi_\la^+ \circ h_\la = \varphi_0^+.
\end{equation}

Let us show that the maps $h_\la: L_0\ra \cc^2$ form a holomorphic motion. 
Note that $\de(z)$ can be selected so that it is lower semi-continuous
(since the same $\de=\de(z)$ serves as $\de(\zeta)$ for points $\zeta$ near $z$).
Hence it is bounded away from $0$ on compact subsets of $L_0$. 
Let us take a relative domain $D_0\Subset L_0$ containing $z^*$, and let 
$ \displaystyle {\de=\inf_{z\in  D_0 } \de(z) } $. Then for
$|\la-\la_0| < \de$,  the maps 
$$
 H_\la: D_0 \ra \cc^2,\quad   z\mapsto z_\la,\ z\in D_0 , 
$$
form a holomorphic motion of $D_0$. 
Since the solenoid $S_\la$ is contained in the Kobayashi hyperbolic domain
$\Om_\la$ from Lemma \ref{Kob hyp},  this motion of $D_0$ is normal.   
By Lemma \ref{continuity},  it is  continuous in $z$. 
Since $D_0$ is connected, $H_\la(D_0)$ belongs to some leaf of
$S_\la$, which must be  $L(z_\la^*) \equiv L_\la^*$. Also, by definition,
$$
\varphi_\la^+ \circ H_\la|\, D_0  =\varphi^+_0|\, D_0\quad\mathrm{ and} \quad  H_\la (z^*) = z^*_\la.
$$
  Comparing this with (\ref{lifts}), we conclude that $H_\la= h_\la|\, D_0$. 

Consequently,  there is $\de_1=\de_1 (z)>0 $ such that
$h_\la(z)$ depends holomorphically on $\la$ for
$ | \la-\la_0|< \de_1 (z)$.  Finally, replacing $\la_0$ by any other
parameter $\la$ in the $\de^*$-neighborhood of  $\la_0$,
 we conclude that $h_\la(z)$ depends holomorphically on $\la$ 
for all $\la$ in this neighborhood,


\msk
Applying Lemma \ref{Kob hyp} once again, 
we conclude that the motion $h_\la : L_0\ra \cc^2$ is normal.   
By the $\la$-lemma (Lemma \ref{lem:extension}),  $h_\la$ extends to a \BHM of 
$\overline L_0 \supset S^-_0$. To see that it gives an actual holomorphic
motion  of $S^-_0$,   consider the B\"ottcher foliation $\FF_\la^+$
 of $U_\la^+$, notice that $\cl_{\FF_\la} (L_\la)\supset S^-_\la$,   and 
 apply the foliated $\la$-lemma   (Corollary~\ref{foliated lambda-lemma}).

Let us use the same notation $h_\la: S^-_0\ra S^-_\la$ for the extended
holomorphic motion.
By continuity, it satisfies the covering property $\varphi_\la^+\circ
h_\la= \varphi_0^+$. Since $\varphi_\la^+$ is a leafwise covering over
$\cc\sm \bar\dd$,  this property determines $h_\la$ uniquely. 
Due to  the B\"ottcher
equation, this identity  is inherited by the motion $\tilde h_\la:= f_\la\circ
h_\la\circ f_0^{-1}$,
$$
     \varphi_\la^+ \circ \tilde h_\la = (\varphi_\la^+ \circ  h_\la\circ
     f_0^{-1} )^d = (\varphi_0^+\circ f_0^{-1})^d= \varphi_0^+,
$$
implying $\tilde h_\la= h_\la$, which by definition means that  
 the motion $h_\la$ is equivariant. 
\comm{***************

Let now  $c_0:= \varphi^+_\lo(z_0)  $ and consider the corresponding fiber
$\mathcal{O}_\la\cap F_{ c_0 }$ in $\OO_\la$. 
As  we will show below, it  moves holomorphically 
as $\la$ ranges over some neighborhood of $\la_0$.
So, by applying the ordinary 
$\la$-lemma in the holomorphic curve $\{\varphi^+_\la = c_0\}$
(holomorphically depending on $\la$),
we deduce that the fiber $F_\la(c_0) $ of $S_\la$  moves 
holomorphically  in $\la$ as well.  
As this reasoning can be applied to any fiber 
$F_\la(c)$, $c\in \cc\sm \overline \dd$,  we conclude that
the whole solenoid $S_\la$ moves holomorphically, as asserted.  

The equivariance of this holomorphic motion follows from the fact that
it was constructed in a dynamically natural way. More precisely,
consider as before a point 
$z_0\in \mathcal{O}_\lo\cap F_\lo (c_0) $,  the leaf $\OO_0$ 
of $S_0$ containing $z_0$, and its holomorphic extension $\OO_\la$.
Then the motion  $(z_\la)$ of $z_0$  is obtained by  taking the 
intersection  between $\mathcal{O}_\la$ and $F_\la(c_0)$. 

Consider  the image $\zeta_0  := f(z_0) $; 
it is contained in the leaf  $f(\OO_0)$ and the fiber $ F_\lo (c_0^2)$  of $S_0$.
Since $f(\OO_\la)$ is a holomorphic family of leaves originated at $f(\OO_0)$,
it is the one that defines the motion  $\zeta_\la $ of $\zeta_0 $, i.e.,
$$
     \zeta_\la = f(\OO_\la) \cap F_\la(c_0^2) = 
   f(\OO_\la) \cap f(F_\la  (c_0))  = f(z_\la),  
$$
so the motion is equivariant on $F_\la(c_0) \cap \OO_\la$. 
%
By the density of the latter set in the fiber $F_\la(c_0)$, we conclude that the motion is equivariant,  as asserted. 

\medskip

It remains to prove our claim that $\mathcal{O}_\la\cap \{\varphi^+_\la = \varphi^+_\lo(z_0)\}$ moves holomorphically.
 At the parameter $\la_0$ write 
 $\mathcal{O}\cap \{\varphi^+ = \varphi^+_\lo(z_0)\} =
 \set{z_n}$. Transversality implies that for every $n$, $z_n$ can be
 followed holomorphically as a solution of $\varphi^+_\la =
 \varphi^+_\lo(z_0)$ in some neighborhood of $\la_0$. The point is to
 show that  this neighborhood can be chosen to be  uniform with  $n$
 (see also Remark \ref{rmk:slip}). 
Following \cite{bs6},
introduce the symbolic 
solenoid $$\Sigma = \set{u=(u_j)\in (\cc\setminus\dd)^\zz, \ u_j^d= u_{j+1}},$$  which is naturally a foliated space. 
We  define        a mapping
$\Phi_\la:J^-_\la\setminus K^+_\la\cv \Sigma$   by the formula
$$\Phi_\la(x) = (\varphi^+_\la(f^n(x)))_{n\in \zz},$$ which clearly depends holomorphically on $\la$.  Bedford and Smillie prove that  $\Phi_\la$ is a holomorphic bijection  onto a leaf $L$ of $\Sigma$. 

Let now $z_n\in \mathcal{O}\cap \set{\varphi^+ = \varphi^+_\lo(z_0)}$, and let $U$ be  a neighborhood of $\la_0$ where $z_n$ can be followed holomorphically as $z_n(\la)$. Notice that for $\la\in U$, $\Phi_\la(z_n(\la))$ is constant. Indeed, $\varphi_\la(z_n(\la)) = \varphi^+_\lo(z_0)$ is constant by definition, therefore  the set of possible values of $\Phi_\la(z_n(\la))$ is discrete in the leaf $L$, hence the result.  We thus see that we can extend holomorphically the map $\la\mapsto z_n(\la)$ throughout $\om$ by simply putting $z_n(\la) =\Phi_\la^{-1} \Phi_{\lo}(z_n)$. This completes the proof. 
******}
\end{proof}

\begin{proof}[Proof of Theorem \ref{thm:connected}]   
Since for every $\la\in \La$, $f_\la$ is dissipative and $J_\la$ is connected, it follows from \cite{bs6} that $f_\la$ is unstably connected.
As was already mentioned in the proof of the above proposition,
 Lemma \ref{lem:extension} implies that the equivariant  holomorphic motion
of the Bedford-Smillie solenoid $S_\la$  extends to an equivariant
\BHM  $\GG$
of the closure $\bar S_\la$.  The latter contains all saddle points, which are dense in $J_\la^*$.
Moreover, by equivariance, these saddles remain being periodic under
the motion, so  they stay in $K_\la$. Now Proposition \ref{prop:extending correspondence} implies the
desired.  
\comm{***
By Proposition \ref{prop:solenoid}, each fiber $J_\la^-\cap \set{\varphi_\la^+=\varphi_\la^+(z_0)}$ 
moves holomorphically with $\la$. 
Now we observe that for every parameter $\la$,  
$f^{-n}_\la(\{\varphi^+_\la =  c\}\cap J^-_\la)$ clusters on the whole of $J^*_\la$ as $n\cv\infty$: 
indeed, for $\mu$-a.e. $x\in J^*$, $W^u(x)$ intersects $\{\varphi^+ = c\}$. 
In addition, $(f^{-n}_\la(\{\varphi_\la^+ =  \varphi_\la^+(z_0)\}\cap J_\la^-))_{n\geq 0}$
is locally uniformly bounded in $\cd$. 
We conclude that the set of cluster values of the preimages of the holomorphically moving points 
$\{\varphi^+_\la = \varphi^+_\lo(z_0)\}\cap J^-_\la$ 
form a branched holomorphic motion relating the $J^*_\la$ and we are done 
(recall that by Remark 
\ref{rmk:equivariant} we needn't   check that  this motion is equivariant). 
******}
\end{proof}


\subsection{Bers-Royden motion of unstable manifolds}

Let us consider  a weakly $J^*$-stable substantial  
family  $(f_\la)_{\la\in \La}$
of   polynomial automorphisms. Fix a holomorphically moving
saddle point $p_\la$, and consider a holomorphic family 
of parameterized unstable manifolds 
$\psi^u_\la: \cc\cv W^u(p_\la)$. By Proposition \ref{prop:holmotion},   the
intrinsic boundary  $\fr_\intr (W^u(p_\la)\cap K^+) $  
moves holomorphically.   
Then  locally in $\la$, 
this motion extends to the  Bers-Royden holomorphic motion  of the
whole unstable manifold
$W^u(p_\la)\isom \cc$, see Lemma \ref{entire curves}. Being canonical, it is 
automatically equivariant (where the dynamics is just multiplication by the unstable multiplier), 
so in this way we obtain an equivariant holomorphic motion of  $W^u(p_\la)$ in $\cd$. 


\begin{lem}\label{BR motion}
Let $(f_\la)_{\la\in \La}$ be a weakly $J^*$-stable  substantial family  of 
polynomial automorphisms, and let $(p_\la)$ be a holomorphically
moving saddle point as above. Then the Bers-Royden holomorphic motion of  
$W^u(p_\la)$ preserves the decomposition $\cd = K^+ \sqcup U^+$. 
\end{lem}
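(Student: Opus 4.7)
The plan is to reduce the lemma to a statement about subharmonic functions on $\cc$ via the parameterizations $\psi^u_\la$. Setting $K_\la := (\psi^u_\la)^{-1}(W^u(p_\la) \cap K^+_\la)$, Proposition \ref{prop:holmotion} says that $\fr K_\la \subset \cc$ moves holomorphically, and the Bers-Royden extension provides a family of quasiconformal homeomorphisms $h_\la : \cc \to \cc$ mapping $\fr K_0$ bijectively onto $\fr K_\la$. These induce a bijection between connected components of $\cc \setminus \fr K_0$ and those of $\cc \setminus \fr K_\la$, and the task is to show that the type (inside $K_\la$ versus outside $K_\la$) is preserved throughout $\La$. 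The function $G_\la := G^+_\la \circ \psi^u_\la$ is subharmonic, non-negative, with zero set exactly $K_\la$, so on each component of $\cc \setminus \fr K_\la$ we have the dichotomy: either the component lies in $K_\la$ (where $G_\la \equiv 0$) or it is disjoint from $K_\la$ (where $G_\la > 0$).

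First I would dispose of bounded components via the maximum principle: on a bounded component $C$ of $\cc \setminus \fr K_\la$, the non-negative subharmonic function $G_\la$ vanishes on $\fr C \subset \fr K_\la$, hence $G_\la \equiv 0$ on $\overline C$, i.e., $C \subset K_\la$. Extending $h_\la$ to $\pu$ by fixing $\infty$, boundedness is preserved, so bounded components at $\la_0$ (automatically in $K_0$) are mapped to bounded components at $\la$ (automatically in $K_\la$).

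For unbounded components, fix such a component $C_0$, pick $t_0 \in C_0$, and for each $t \in \cc$ set $U_t(\la) := G^+_\la(\psi^u_\la(h_\la(t)))$, which is a non-negative, continuous, plurisubharmonic function of $\la$. By the dichotomy, the set $V := \{\la : h_\la(C_0) \subset K_\la\}$ equals $\{\la : U_{t_0}(\la) = 0\}$ and is therefore closed, while $\La \setminus V$ is open. The crux is to show $V$ is also open, so by connectedness of $\La$ either $V = \emptyset$ or $V = \La$, forcing the type to be preserved. I would establish this by exploiting the equivariance $h_\la(\alpha_{\la_0} t) = \alpha_\la h_\la(t)$ of the Bers-Royden motion under the unstable dynamics, combined with the functional equation $G^+_\la \circ f_\la = d\, G^+_\la$. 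These yield the scaling identity $U_{\alpha_{\la_0} t}(\la) = d \cdot U_t(\la)$, which allows propagating the vanishing of $U_t$ along the $\alpha_{\la_0}$-orbit of $t_0$; together with $U_t \equiv 0$ for $t \in \fr K_0$, this should force $U_t$ to vanish on all of $C_0$ at nearby parameters.

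The main obstacle will be precisely this openness step, since plurisubharmonicity plus continuity do not by themselves preclude a pathological $\la$ at which $h_\la(C_0)$ only just touches $K_\la$. A cleaner alternative I would fall back on is the finite-order theory: $G_{\la_0}$ has order $\rho_0 = \log d / \log|\alpha_{\la_0}|$, so by the Denjoy-Carleman-Ahlfors theorem its unbounded tracts (the unbounded components of $\cc \setminus K_{\la_0}$) are finite in number, and each is characterized dynamically by the B\"ottcher coordinate. Since the B\"ottcher foliation $\FF^+_\la$ of $U^+_\la$ depends holomorphically on $\la$, the foliated $\la$-lemma (Corollary \ref{foliated lambda-lemma}) provides a canonical holomorphic motion of each tract, and uniqueness of the Bers-Royden extension then forces it to coincide with this dynamical motion on every unbounded component, completing the proof.
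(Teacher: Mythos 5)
Your setup---pulling everything back to $\cc$ via $\psi^u_\la$, setting $K_\la := (\psi^u_\la)^{-1}(W^u(p_\la)\cap K^+_\la)$, using the dichotomy for components of $\cc\setminus \fr K_\la$, and disposing of bounded components by the maximum principle applied to the non-negative subharmonic function $G_\la$---matches the paper's proof exactly. You also correctly diagnose that your first attempt (the scaling relation $U_{\alpha_{\la_0} t}(\la)=d\,U_t(\la)$ plus plurisubharmonicity of $U_t$) cannot close the openness gap for the set $V$: a non-negative plurisubharmonic function can vanish at a point and be strictly positive on a punctured neighborhood, and scaling along the $\alpha$-orbit does not change that.

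The gap is in your fallback. First, the foliated $\la$-lemma (Corollary \ref{foliated lambda-lemma}) with the B\"ottcher foliation $\FF^+_\la$ does not ``provide a holomorphic motion of each tract'': the leaves of $\FF^+_\la$ are transverse to $W^u(p_\la)$, so that corollary moves points transversally to the unstable manifold along B\"ottcher level sets, not within a tract of $W^u(p_\la)\cap U^+_\la$. Second, ``uniqueness of the Bers-Royden extension'' is not the right principle: the Bers-Royden motion is the unique extension with a harmonic Beltrami coefficient, and there is no a priori reason it should agree with a dynamically-defined motion. What you actually want---and what the paper uses---is the weaker observation that any two homeomorphic extensions of the same boundary motion induce the \emph{same} bijection on the set of connected components of the complement, so it suffices to exhibit \emph{some} holomorphic motion that carries tracts to tracts. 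The paper manufactures one, but only after a crucial reduction you skip: if an unbounded component $\om_0\subset K_0$ had its image $\om_{\la_1}$ leave $K_{\la_1}$, then $\om_{\la_1}$ would be a simply connected component of $W^u(p_{\la_1})\cap U^+_{\la_1}$, which by Bedford--Smillie forces $f_{\la_1}$ (hence, by Proposition \ref{connectivity preserved}, every $f_\la$) to be \emph{unstably connected}. Only in this regime does the Bedford--Smillie solenoid $S^-_\la$ exist, and Proposition \ref{prop:solenoid} then gives the equivariant holomorphic motion of tracts (compatible with $h_\la$ on the boundary) needed to derive the contradiction. Your finite-order/DCA bound on the number of tracts is true but does not by itself produce this motion; without the reduction to unstable connectivity and Proposition \ref{prop:solenoid}, the argument does not close.
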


\begin{proof}   
For $\lo\in \La$, let $C_0\equiv C_\lo$ be an intrinsic connected component of 
$W^u(p_0 )\cap K^+_0$,
and let $C_\la$  be its image under the Bers-Royden   motion $h_\la$. 
We have to show that  for every $\la\in \La$, $C_\la$ is an intrinsic  connected component of 
$ W^u(p_\la)\cap K^+_\la  $.

By the one-dimensional $\la$-lemma, the maps $h_\la$ 
are intrinsic homeomorphisms, so they preserve intrinsic topological
properties of the subsets of $W^u(p_\la)) $, e.g., 
$\fr_i C_\la = h_\la(\fr_i(C_0))$,   $C_\la$ is intrinsically bounded
iff $C_0$ is, etc. 

 From Lemma \ref{lem:boundary} we know that $\fr_i C_0\subset J^*_0$,
hence  for every $\la$,
 $\fr_\intr C_\la \subset J^*_\la\subset K^+_\la$. 
 So, we need to show that for every $\la\in \La$, $\Int_\intr
 (C_\la)\subset K^+_\la$.
 
 Let  $\om_0 $ be a connected component of  $\Int_\intr C_0 $,
and let $\om_\la= h_\la(\om_0 )$.
If $\om_0$ is intrinsically  bounded in $W^u(p_0)$  then 
 the Maximum Principle applied to the non-negative subharmonic
 function $G_\la^+|\, W^u(p_\la) $ implies   that $\om_\la \subset K^+_\la$. 

Assume $\om_0 $   is intrinsically unbounded.
Then the Maximum Principle implies 
that  $\om_0 $ is simply connected, so the same holds for $\om_\la$.

 Given a parameter $\la_1$, we will label the corresponding objects
 with ``1'', e.g.,   $f_1\equiv f_{\la_1}$,  $U_1\equiv U_{\la_1}$. 
 Assume  by contradiction that $\om_1 \cap U^+_1
 \neq \emptyset$ for some  $\la_1$. 
We first claim that   $\om_1 \subset U^+_1 $.  Otherwise  $\om_1$
would  intersect 
 $\fr_\intr (W^u(p_1)\cap K_1^+) = \cl_\intr ( H(p_1 ) )$. Since the 
 holomorphic motion preserves $H(p)$, $\om_0$ 
would intersect  $\cl_\intr ( H(p_0 ) )$
 contradicting the fact that $\om_0 $ is contained in $K^+$. 

It follows that $\om_1$ is a simply connected intrinsic component of
 $W^u(p_1 )\cap U_1^+$. 
The existence of such a component implies   
 that $f_1 $ is unstably connected \cite[Theorem 0.1]{bs6},
 so $J_1 $ is connected. Since unstable  connectivity  
is preserved in weakly $J^*$-stable families (see Proposition
\ref{connectivity preserved}), 
$f_0 $ is unstably connected, too. 

 On the other hand, for families of unstably connected polynomial automorphisms 
 we have shown  in Proposition \ref{prop:solenoid} that 
every intrinsic  component of  $W^u(p_\la)\setminus K_\la^+$ can 
 be followed   by some holomorphic motion $\tilde h_\la$
 coinciding with $h_\la$ on $\fr_\intr (W^u (p_0) \cap K^+_0)$. 
But then the action of  $\tilde h_\la$ on the space of
connected components of $W^u(p_0)\sm K^+_0$ 
 must agree with that of $h_\la$, which is impossible for  
the component $\tilde h_1^{-1} (\om_1)$.
This contradiction completes the proof.  
 \end{proof}

\subsection{Proof of Theorem \ref{thm:equiv}}  \label{subs:proof}
Let us show that  $\mathrm { (i) } \Leftrightarrow \mathrm {(ii)}$. 
First, it follows from Proposition~\ref{prop:extending correspondence}  that 
$\mathrm { (i) }  $ is equivalent to the statement $$\mathrm {(ii')} \quad \text{\it  Saddle points stay of saddle type throughout the family}.$$ 
Obviously, (ii) implies $\mathrm {(ii') }$.  
The reverse is also obvious in the dissipative case, since every bifurcation of
a sink gives rise to a saddle. 
In  general, 
we have to rule out the possibility that  in a weakly $J^*$-stable substantial family,  
  a periodic point $q$ bifurcates from attracting  to repelling
  through indifferent without ever turning into  a saddle. 

Assume  by contradiction that such a scenario happens.
 Fix a parameter domain $\La'$  
over which  
$q$ can be followed holomorphically, and its eigenvalues cross the
unit circle. 
So, inside $\La'$ there is a region $\La^-$ where 
  $q(\la)$ is a sink and a region $\La^+$ where   $q(\la)$ is a source. 
Fix a (necessarily persistent) saddle point $p(\la)$.  For $\la\in \La^+$, since   $\abs{\Jac f_\la}>1$, 
 $f_\la$ is   unstably disconnected by  \cite[Cor. 7.4]{bs6}. 
By the weak $J^*$-stability, the same is true for every $\la\in \La$ (see Proposition \ref{connectivity preserved}). 
Then  Lemma \ref{lem:basin in unstable manifold} implies that 
 for $\la\in \La^-$,  there   is a non-trivial bounded component
 $\om_\la$ of $W^u(p(\la)) \cap \mathcal{B}(q(\la))$. 

By Lemma \ref{BR motion}, 
 we infer that  under the Bers-Royden motion  of $W^u(p(\la))$,
 $\om_\la$ 
persists throughout $\La$  as a bounded component  of 
 $W^u(p(\la))\cap K^+_\lambda$. 
Let us consider  the Bers-Royden orbit  $z(\la)$ of some point of $\om_\la$.
Then the family of maps $\La\ra \cc^2$, 
 $\la\mapsto f^n_\la(z(\la)) $,   $n=0,1,\dots$, is locally bounded and hence 
 normal over $\La$.
 Hence by analytic continuation the convergence $f^n_\la(z(\la)) \cv q(\la)$ persists throughout $\La$. But if $\la\in \La^+$, 
 $q(\la)$ is repelling, so we arrive at a contradiction, which finishes the proof of $\mathrm {(ii)} \Leftrightarrow \mathrm {(ii')} $.

\medskip


Condition  (i) implies  (iii) by Lemma \ref{Hausdorff cont}.  
%
Conversely, (iii) implies (ii). Indeed if a periodic point changes type, then arguing as in Lemma \ref{lem:substantial}, we see that for some $\la$, a multiplier of the cycle must cross the unit circle at a linearizable parameter. So at this parameter a Siegel ball or Siegel/attracting basin is created, and the corresponding 
periodic orbit jumps outside $J^*$, thus preventing continuity of $J^*$.

\medskip

 To conclude the proof we show the (rather obvious) chain of implications: 
$$ 
  \mathrm{ (iv)}  \Rightarrow  \mathrm{ (v)}  \Rightarrow  
   \mathrm {(i)}  + \text{ the number of non-saddle cycles is finite }  
  \Rightarrow \mathrm { (iv) } .
$$
Indeed $ \mathrm{(iv)} \Rightarrow \mathrm{ (v)} $ is clear. 
Next, if (v) holds, then all periodic points of sufficiently high prime period are (necessarily persistent) saddles, so by Corollary \ref{cor:dense saddles}, the family is weakly $J^*$-stable. 
Therefore, all periodic points are of constant type, hence (iv) holds.

 The Theorem is proved. \qed

\subsection{Motion of the big Julia set  
$\bJ= J^+\cup  J^-$}\label{subs:extension} 

Let us start with a simple observation:

\begin{lem}\label{respect to K}
   Any equivariant normal \BHM   $\GG$  preserves the sets  $K^\pm$
and hence preserves $K$.  
\end{lem}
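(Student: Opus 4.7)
The plan is to exploit the equivariance $\widehat f(\GG)=\GG$ to turn the forward orbit of a single graph $\gamma\in\GG$ into a sub-family of $\GG$, and then invoke normality.

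First I would fix $\gamma\in\GG$ with $\gamma(\la_0)\in K^+_{\la_0}$ for some $\la_0\in\La$, and form the graphs $\gamma_n:=\widehat f^{\,n}(\gamma)$ for $n\ge 0$. Equivariance guarantees $\gamma_n\in\GG$, and by the very definition of $\widehat f$ one has $\gamma_n(\la)=f_\la^n(\gamma(\la))$ for every $\la\in\La$. Being a subfamily of the normal family $\GG$, the sequence $\{\gamma_n\}_{n\ge 0}$ is itself a normal family of graphs $\La\to\cc^2$: every subsequence admits a further sub-subsequence which either converges locally uniformly on $\La$ to a holomorphic graph, or diverges locally uniformly to $\infty$.

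The crucial step is to rule out the second ``escape'' alternative, and this is really where all the content lies. At $\la=\la_0$ one has $\gamma_n(\la_0)=f^n_{\la_0}(\gamma(\la_0))\in K^+_{\la_0}$, a bounded sequence in $\cc^2$ because points of $K^+$ have bounded forward orbit. Hence no subsequence of $\{\gamma_n\}$ can tend to $\infty$ even at the single parameter $\la_0$, so the second alternative never occurs. By the normal-family dichotomy applied to an arbitrary subsequence, $\{\gamma_n\}$ is locally uniformly bounded on the whole of $\La$. In particular, for every $\la\in\La$ the forward orbit $\{f^n_\la(\gamma(\la))\}_{n\ge 0}$ is bounded, i.e.\ $\gamma(\la)\in K^+_\la$, as required.

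Running the same argument with $\widehat f^{-1}$ in place of $\widehat f$ (the identity $\widehat f(\GG)=\GG$ trivially gives $\widehat f^{-1}(\GG)=\GG$) shows that $\GG$ preserves $K^-$. Since $K=K^+\cap K^-$, the last assertion follows at once. I do not foresee a substantial obstacle here: the only delicate point is the correct invocation of the normality dichotomy (recorded in the discussion following Definition~\ref{def:stable}, where ``normal'' means exactly that each subsequence has a sub-subsequence which is either locally equicontinuous or locally uniformly divergent to $\infty$), and no Hurwitz-type issue interferes because we are only tracking the dynamical containment of a single graph at a time.
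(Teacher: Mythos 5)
Your proof is correct and follows essentially the same route as the paper: use equivariance to see that the forward iterates $\widehat f^n(\gamma)$ remain in $\GG$, then use normality together with the boundedness of $\gamma_n(\la_0)$ to rule out local uniform escape to infinity and deduce local boundedness on all of $\La$. The only difference is cosmetic: you spell out the normal-family dichotomy explicitly, whereas the paper compresses this step into a single sentence.
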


\begin{proof}
For definiteness, let us treat  the case of $K^+$.
Let  $\gamma  = (\la, z(\la))$ be a graph of $\GG$
 such that $z(\lo)\in K^+_0$  for some $\lo\in \La$. 
Then the forward orbit  $(f^n_\la(z(\la_0)))_{n\geq 0}$ is bounded. 
By the equivariance,
all the graphs ${\widehat f}^n(\gamma) = (\la, (f^n_\la(z(\la)))$,
${n\geq 0}$,  belong to $\GG$ as well.
By normality of $\GG$, 
the ${\widehat f}^n (\gamma)$ form a normal family. 
Consequently,  this family is locally uniformly bounded in $\La$,  
implying  that  $z(\la)\in K^+_\la$ for all $\la \in \La$. 
\end{proof}

 Recall that a family $(f_\la)$ is said to be weakly $X$-stable if the sets $X_\la$ move under an equivariant \BHM. 
 We now prove the equivalence of several notions of weak stability.
 
\begin{thm}\label{thm:extension}
Let $(f_\la)_{\la\in \La}$ be a substantial 
family of polynomial automorphisms of dynamical degree $d\geq 2$. The following properties are equivalent:
\begin{enumerate}[{\rm (i)}]
\item $(f_\la)_{\la\in \La}$ is weakly $J^*$-stable. 
\item $(f_\la)_{\la\in \La}$ is weakly $J^-$-stable.
\item $(f_\la)_{\la\in \La}$ is weakly $J^+$-stable.
\item $(f_\la)_{\la\in \La}$ is weakly $K$-stable.
\item $(f_\la)_{\la\in \La}$ is weakly $S^-$-stable (resp. $S^+$-stable).
\end{enumerate}

 If $(f_\la)_{\la\in \La}$ is  weakly $J$-stable, then the properties {\rm (i)-(v)} hold. 

In items {\rm{ (i), (iv)}} and {\rm {(v)}}, the motions in question  are automatically
normal, while in items  {\rm{(ii)}}  and {\rm{ (iii) }}  they can be  selected to be so. 
Moreover, in the latter items the motions preserve respectively the unstable and
stable manifolds of  all saddles.
\end{thm}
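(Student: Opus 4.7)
The plan is to organize the equivalences around (i) as a hub, using two main tools: Proposition~\ref{prop:extending correspondence} (a \BHM with dense section in $J^*_{\la_0}$ contained in $K_\la$ yields weak $J^*$-stability) and the Bers-Royden extension of Lemma~\ref{BR motion}.

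For the easy direction I would note that $\Saddles_\la\subset J^*_\la\subset J^-_\la\cap J^+_\la\cap K_\la$. Thus if any of (ii), (iii), (iv) holds via a \BHM $\GG$, the subfamily of graphs whose $\la_0$-section lies in $\Saddles_{\la_0}$ is a \BHM contained in $K_\la$ (using Lemma~\ref{respect to K}) with section dense in $J^*_{\la_0}$, and Proposition~\ref{prop:extending correspondence} yields (i). Weak $J$-stability implies (i) in the same way. For $(v)\Rightarrow(i)$ I would first argue that $\overline{S^-_\la}\supset J^*_\la$ at every parameter: every saddle $p$ has unstable manifold $W^u(p)$ unbounded in $\cd$, so it must meet $U^+_\la$ (otherwise $W^u(p)\subset K^+_\la$, contradicting expansion of $f^n\rest{W^u(p)}$); hence $W^u(p)\cap S^-_\la\neq\emptyset$, and approximating $p$ along the s/u intersections accumulating at $p$ as in the proof of Lemma~\ref{lem:boundary} places $p\in\overline{S^-_\la}$. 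Taking closure via Lemma~\ref{lem:extension} then puts $J^*_\la$ inside a \BHM, and Proposition~\ref{prop:extending correspondence} applies.

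For the forward direction $(i)\Rightarrow(ii),(iii),(iv),(v)$ I would fix a persistent saddle $p_\la$ with linearizing parametrization $\psi^u_\la:\cc\to W^u(p_\la)$. Lemma~\ref{BR motion} supplies an equivariant Bers-Royden motion $h_\la$ of $\cc\simeq W^u(p_{\la_0})$ preserving the decomposition $K^+\sqcup U^+$ on the unstable manifold. The composed graphs $\gamma_z:\la\mapsto\psi^u_\la(h_\la(z))$, $z\in\cc$, form a normal equivariant \BHM in $\cd$: graphs with $\gamma_z(\la_0)\in K^+_0$ remain in the uniformly bounded set $K_\la$ by Lemma~\ref{respect to K}, while those with $\gamma_z(\la_0)\in U^+_0$ remain in $S^-_\la$ and are normal by Lemma~\ref{lem:normal}. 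Since $\overline{W^u(p_0)}=J^-_0$, taking closure in $\cd$ via Lemma~\ref{lem:extension} yields an equivariant normal \BHM of $J^-_\la$, establishing (ii); item (iii) is symmetric using $W^s(p_\la)$. Item (iv) follows by selecting the subfamily of (ii) with $\gamma_z(\la_0)\in K_{\la_0}$ (or by intersecting the motions of (ii) and (iii)); item (v) follows by selecting the subfamily with $\gamma_z(\la_0)\in S^-_{\la_0}$ and invoking the contrapositive of Lemma~\ref{respect to K} to keep these graphs outside $K^+_\la$.

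Normality is automatic for (i) and (iv) since $K_\la$ is locally uniformly bounded in $\cd$ (Proposition~\ref{prop:henon}), for (v) by Lemma~\ref{lem:normal}, and for (ii), (iii) by the decomposition appearing in the construction above; moreover, the explicit motions produced manifestly preserve $W^u(p_\la)$ and $W^s(p_\la)$ for each persistent saddle, and Corollary~\ref{cor:continuous} ensures that the resulting motion of $J^\pm_\la$ is independent of the particular saddle chosen. The main obstacle I anticipate is the density $\overline{S^-_\la}\supset J^*_\la$ needed for $(v)\Rightarrow(i)$, which must be established without presupposing stability; a secondary subtlety is verifying equivariance of the extended \BHM in items (ii), (iii), which follows from the canonicity of Bers-Royden under the affine action of $f_\la$ on the linearizing chart.
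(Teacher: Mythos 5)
The forward implications $(i)\Rightarrow(ii),(iii),(iv),(v)$ match the paper's argument: use Lemma~\ref{BR motion} to push the Bers--Royden motion of $W^u(p_\la)$ to a normal equivariant \BHM of $J^-_\la$ (normality from the decomposition $K^+\sqcup U^+$, Lemma~\ref{lem:normal}, and boundedness of $K_\la$), then split off $K$ and $S^-$ via Lemma~\ref{respect to K}. Your treatments of $(iv)\Rightarrow(i)$, $(v)\Rightarrow(i)$ and of weak $J$-stability are also essentially the paper's, although for $(v)\Rightarrow(i)$ you should make explicit that after taking the closure of the normal \BHM of $S^-_\la$ (Lemma~\ref{lem:extension}) you must still invoke Lemma~\ref{respect to K} to place the saddle graphs in $K_\la$ before Proposition~\ref{prop:extending correspondence} applies.

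There is a genuine gap in $(ii),(iii)\Rightarrow(i)$. You claim the subfamily of graphs with $\la_0$-section in $\Saddles_{\la_0}$ ``is a \BHM contained in $K_\la$ (using Lemma~\ref{respect to K})''. But Lemma~\ref{respect to K} requires the \BHM to be \emph{normal}, and a \BHM witnessing weak $J^+$- or $J^-$-stability is not assumed normal: $J^\pm_\la$ are unbounded and the paper explicitly remarks, right after Definition~\ref{def:stable} and again in a footnote inside the proof of Theorem~\ref{thm:extension}, that normality is not part of the hypothesis for these sets. Nothing forces the graph through a saddle $p(\la_0)$ to stay bounded as $\la$ varies — it is only known to stay in $J^\pm_\la$. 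The paper circumvents this by iterating: for $(iii)\Rightarrow(i)$, fix $R$ so that $J^+_\la\cap\dd_R^2$ is forward invariant on a neighborhood of $\la_0$ (possible after normalizing to products of H\'enon maps via Proposition~\ref{prop:henon}); then the graphs $\widehat f^n(\gamma_0)$, $n\geq 0$, lie in this bounded forward-invariant region, hence form a normal family, and a cluster graph $\gamma$ has $\gamma(\la)\in K_\la$ for all $\la$ while $\gamma(\la_0)$ is a saddle. Running this over all saddles supplies the \BHM in $K_\la$ with dense $\la_0$-section needed for Proposition~\ref{prop:extending correspondence}. For $(ii)\Rightarrow(i)$ one iterates backward. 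Without this iteration step, your argument for $(ii),(iii)\Rightarrow(i)$ does not go through.
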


From now on a family satisfying the equivalent conditions  (i)-(v) of 
this theorem will  be simply referred to as  {\em weakly stable.} 

\begin{rmk}\label{J-comp}
 We do not know if weak stability implies weak $J$-stability
since we cannot rule out a scenario where under a \BHM  of $K = K^+\cap J^-$, 
a point in $J$ moves out to $({\mathrm{Int}}\, K^+)\cap J^-$.  
\end{rmk}  

\begin{proof}   
We start by proving  that (i)$\implies$(ii) (of course (i)$\implies$(iii) for the same reason). 
So, assume  $(f_\la)_{\la\in \La}$ is  weakly $J^*$-stable.
Take a holomorphically moving saddle $p(\la)$, and consider the
Bers-Royden equivariant  holomorphic motion of its unstable manifold $W^u(p(\la))$.
By  Lemma \ref{BR motion}, it respects the decomposition 
$K_\la^+\sqcup U_\la^+$ inside $W^u(p(\la))$.
The motion of $K_\la^+ \cap W^u(p(\la))$ is obviously normal, while  
the motion of $ W^u(p(\la) )\setminus K_\la^+$ is normal by 
Lemma~\ref{lem:normal}.
Hence the motion of the whole unstable manifold $W^u(p(\la))$ is normal
as well. By the $\la$-lemma, it extends to an equivariant  normal
\BHM   of $\overline{W^u } (p(\la)) = J_\la^-$, as desired. 

By Lemma \ref{respect to K}, this motion preserves the
decomposition $J^-= K\sqcup S^-$ (and similarly, for $J^+$), so
(i) implies (iv) and (v) as well.

\medskip
Let us show that (iii)$\implies$(i). 
Consider an arbitrary saddle $p(\la_0)$ and its the graph 
$\gamma_0 = (\la, p(\la))$ of its motion. 
By Proposition \ref{prop:henon}, we may normalize our family so that each
$f_\la$ is a product of Hénon mappings. Then  on a given compact subset of 
$\La$, for sufficiently large $R$,  the set   $J^+\cap \dd_R^2$ is forward
invariant. 
Hence the family of graphs 
$(f^n(\gamma_0))_{n\geq 0}$  is normal%
\footnote{Recall that we do
  not assume any normality in the definition of the weak
  $J^+$-stability}. 
Let $\gamma$ be a cluster graph for this family. 
Then $\gamma(\la)\in K_\la$  for any $\la\in \La$,
while $\gamma(\lo)=p(\lo)$ is an arbitrary saddle.
Proposition \ref{prop:extending correspondence} 
implies $J^*$-stability once again.  Moreover, this argument shows 
that the  motions in question preserve the stable manifolds of  all saddles.

Of course, (ii)$\implies$(i) for the same reason.
Furthermore, it follows directly from Proposition \ref{prop:extending
  correspondence} that (iv)$\implies$(i).  Likewise,
  weak $J$-stability implies (i) as well.

Let us  show that  (v)$\implies$(iv). 
Assume $(f_\la)_{\la\in \La}$ is weakly $S^-$-stable.
By Lemma \ref{lem:normal}, the corresponding BHM is normal,
so it extends to a normal BHM $\GG$ of $\overline{S^-}$.
By Lemma \ref{lem:boundary},  $J^*\subset \overline{S^-}$.
By Lemma \ref{respect to K}, the graphs of $\GG$ that begin in $J^*$
for some $\la_0\in \La$ remain in $K$ for all $\la\in \La$.
Applying Proposition \ref{prop:extending correspondence}  once again,
we obtain the desired.

\medskip The last assertion on normality and preservation of s/u manifolds has been established along the
lines of the proof.
\end{proof}


\begin{cor}
If $(f_\la)_{\la\in \La}$ is weakly stable, there exists an equivariant 
normal  \BHM of $\bJ$   
that preserves  the 
stable and unstable manifolds of saddle periodic points. In particular $\bJ$ 
moves continuously in the Hausdorff topology. 
\end{cor}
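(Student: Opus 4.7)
The plan is to obtain the desired motion of $\bJ_\la$ by simply taking the union of the two motions furnished by Theorem \ref{thm:extension}. Since the family is weakly stable, that theorem provides equivariant normal \BHM s $\GG^-$ and $\GG^+$ whose sections at each parameter coincide with $J_\la^-$ and $J_\la^+$ respectively, and which can be chosen so that $\GG^-$ preserves the unstable manifolds and $\GG^+$ preserves the stable manifolds of every saddle periodic point. I would then set $\GG := \GG^+\cup \GG^-$, viewed as a family of holomorphic graphs $\La\to \cc^2$.

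The verification that $\GG$ is an equivariant normal \BHM\ with sections $\bJ_\la$ is then essentially a bookkeeping exercise. The sections are $\GG_\la = \GG_\la^+\cup \GG_\la^-=J_\la^+\cup J_\la^-=\bJ_\la$. Equivariance of each of $\GG^\pm$ under the fibered map $\widehat f$ gives $\widehat f(\GG)=\GG$. For normality, given any sequence $(\gamma_n)\subset \GG$, at least one of the subsequences $(\gamma_n)\cap \GG^+$ or $(\gamma_n)\cap \GG^-$ is infinite, and normality of the corresponding family yields a subsequence that either converges locally uniformly or tends to infinity locally uniformly. The preservation of stable and unstable manifolds of saddles is inherited directly from the two constituent motions.

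Once $\GG$ is known to be a normal \BHM, Hausdorff continuity of $\la\mapsto \bJ_\la=\GG_\la$ is immediate from Lemma \ref{Hausdorff cont}.

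No serious obstacle is expected: the construction is just assembling results already in hand. The only point worth emphasizing is that, because the definition of a \BHM\ requires only a family of graphs with no injectivity or foliation structure, there is no tension in combining $\GG^+$ and $\GG^-$ even when distinct graphs cross at points of $J_\la\subset J_\la^+\cap J_\la^-$; such crossings are absorbed into the branching. In particular the question raised in Remark \ref{J-comp} about weak $J$-stability does not intervene here.
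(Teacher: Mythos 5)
Your proposal is correct and follows the same route the paper intends: the paper's proof is simply ``the former assertion follows directly from the theorem, the latter from Lemma \ref{Hausdorff cont}," and taking $\GG=\GG^+\cup\GG^-$ is the natural way to unwind that, with your verifications of normality, equivariance, and the absorption of $\GG^+$--$\GG^-$ crossings into branching being exactly the routine checks left implicit.
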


\begin{proof}
The former assertion follows directly from the theorem.
The latter follows from Lemma~\ref{Hausdorff cont}.
\end{proof}

\begin{rmk}
It is not difficult to show that 
$(f_\la)$ is weakly stable 
if and only if $\la\mapsto J^+_\la$ is continuous  for the Hausdorff 
topology (for $J^*$ this was done in Theorem \ref{thm:equiv}). On the other hand this is false for $J^-$. Indeed   
in the dissipative setting $K^-= J^-$, and it is classical that $K^-$ moves upper semi-continuously while $J^-$ moves 
lower semi-continuously (see e.g. \cite[Prop. 4.7]{bsu}).  In particular $\la\mapsto J^-_\la$ is always continuous. 
\end{rmk}

%

According to \cite{lyubich peters}, non-wandering components of 
${\mathrm{Int}}\,( K^+)$ of a moderately dissipative polynomial
automorphism
 $f: \cc^2\rightarrow \cc^2$ can be classified
as attracting, parabolic, or rotation  basins.
(For components of ${\mathrm{Int}}_i\, (W^u(p)\cap K^+)$,  
there is one more theoretical  option:
they can be contained in the small Julia set $J$.) We cannot rule out
that some of these components change type under a branched holomorphic
motion of the Julia set
(compare Remark \ref{J-comp}).
Arguing as in the proof of Theorem \ref{thm:equiv} we get: 

\begin{prop}
  Under a branched holomorphic motion over a weakly stable domain, if for  some parameter $\lo$, 
   $z_0$ is a point  in  $J^-_0\cap K^+_0$ belonging to the basin of attraction of a sink $q_0$ then  for every $\la$, $z(\la)$ stays in the basin of $q(\la)$.  
\end{prop}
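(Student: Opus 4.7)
The plan is to adapt the analytic-continuation argument used in the proof of Theorem~\ref{thm:equiv} (the implication $(\mathrm{ii}')\Rightarrow (\mathrm{ii})$). Weak stability together with Theorem~\ref{thm:extension} provides an equivariant normal \BHM of $\bJ = J^+\cup J^-$, along which we follow $z_0$ as $z(\la)$. By Lemma~\ref{respect to K} this motion preserves $K^+$, so $z(\la)\in K^+_\la$ for every $\la\in\La$. Moreover, by Theorem~\ref{thm:equiv}, the sink $q_0$ persists as a sink $q(\la)$ over all of $\La$.

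Let $\gamma_0 = (\la, z(\la))$ denote the graph of the motion of $z_0$. By equivariance, each iterated graph $\widehat f^n(\gamma_0) = (\la, f^n_\la(z(\la)))$ also belongs to the \BHM. After normalizing so that each $f_\la$ is a product of H\'enon mappings (Proposition~\ref{prop:henon}), the filled Julia sets $K^+_\la$ are locally uniformly bounded in $\cd$. Consequently, the sequence of holomorphic maps $\bigl(\la\mapsto f^n_\la(z(\la))\bigr)_{n\geq 0}$ is locally uniformly bounded on $\La$, hence normal.

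For $\la$ in some small neighborhood $\La_0\subset\La$ of $\lo$, the sink $q(\la)$ admits a holomorphically varying trapping neighborhood $V(\la)$ contained in $\mathcal{B}(q(\la))$. Pick $n_0$ such that $f^{n_0}_\lo(z_0)\in V(\lo)$; by continuity of the graph $\la\mapsto f^{n_0}_\la(z(\la))$, the same containment persists in $\La_0$ (shrinking it if necessary). It follows that $f^n_\la(z(\la))\to q(\la)$ locally uniformly on $\La_0$.

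Let now $g$ be any cluster map of the normal family $(f^n_\la(z(\la)))_{n\geq 0}$. Then $g:\La\to\cd$ is holomorphic and coincides with $q(\la)$ on $\La_0$, hence $g\equiv q$ on all of $\La$ by analytic continuation. Since the set of cluster maps reduces to $\{q\}$, the whole sequence $f^n_\la(z(\la))$ converges to $q(\la)$ locally uniformly on $\La$, which yields $z(\la)\in\mathcal{B}(q(\la))$ for every $\la\in\La$. The one substantive point of the argument is the normality of the iterated orbits $f^n_\la(z(\la))$, which is secured by combining equivariance of the \BHM with Lemma~\ref{respect to K} and the local uniform boundedness of $K^+_\la$.
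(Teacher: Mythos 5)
Your argument follows the paper's intended approach: the paper itself points to the proof of Theorem~\ref{thm:equiv} (specifically, the step that rules out a periodic point passing from attracting to repelling without becoming a saddle), and you reproduce exactly that scheme — locate the orbit in a normal family of graphs, establish local convergence to $q(\la)$ near $\lo$ via a trapping neighborhood, and globalize by analytic continuation.

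There is, however, a factual slip in the normality step: the sets $K^+_\lambda$ are \emph{not} locally uniformly bounded in $\cd$ (for instance they contain the unbounded stable manifolds of saddle points). What is true, and what the paper actually uses, is that $K_\lambda = K^+_\lambda\cap K^-_\lambda$ is locally uniformly bounded. The fix is immediate from what you already have: Lemma~\ref{respect to K} preserves $K^-$ as well as $K^+$, hence preserves $K$; since $z_0\in J^-_0\cap K^+_0\subset K_0$ (recall that dissipativity gives $J^-=K^-$), the motion keeps $z(\lambda)\in K_\lambda$, and by forward invariance of $K_\lambda$ the whole orbit $(f^n_\lambda(z(\lambda)))_{n\geq 0}$ stays inside $K_\lambda$, which \emph{is} locally uniformly bounded. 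With that correction the normality claim is justified and the remainder of your argument goes through as written.
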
 
  


\part{Semi-parabolic implosion and homoclinic tangencies}\label{part:tangencies}

 \section{Semi-parabolic dynamics}\label{sec:semi parabolic}
 
 In this paragraph we collect some basic facts about semi-parabolic dynamics: basins, petals, etc., following the work of  Ueda 
 \cite{ueda, ueda2},   Hakim \cite{hakim} (see  also \cite{bsu}). 
  Let $f$ be a  polynomial automorphism of $\cd$. A periodic point $p$ is {\em semi-parabolic} if its multipliers are 1 (or more generally a root of unity) 
 and $b$ with $\abs{b}<1$. Notice that this forces $f$ to be dissipative. Replacing $f$ by $f^q$ for some $q\geq 1 $  we can assume that $p$ is fixed. 
 Then, if we denote by $k+1$ the multiplicity at 0 of  $f  - \mathrm{id}$ 
 (which is finite because $f$ has no curve of fixed points), 
  there exist  
  local coordinates $(x,y)$ in the neighborhood of $p$ such that $p=(0,0)$ and $f$ is of the form 
  \begin{equation}\label{eq:ueda hakim}
 (x,y)\mapsto (x + x^{k+1} + Cx^{2k+1} + x^{2k+2} g(x,y), b y + xh(x,y)),
 \end{equation}
 where $g$ and $h$ are holomorphic near the origin and $C$ is a complex number (see \cite[Prop.~2.3]{hakim}). 
  Notice that in these coordinates, $\set{x=0} = W^{ss}_{\rm loc}(0)$ 
  is the local (strong) stable manifold of $0$, and $f\rest{\set{x=0}}$ is linear. 
  For $r>0$, the flower-shaped  open set $\set{x, \ \abs{x^k+ r^k}<r^k}$ admits $k$ connected components, which will be denoted by $P^\iota_{r,j}$,  $0\leq j\leq k-1$. Then for small $\eta>0$, 
  the domains $\mathcal{B}_{r,j,\eta}:= P^\iota_{r,j}\times \dd_\eta$ are attracted to the the origin under iteration. Finally, let
  $\mathcal{B}_j = \bigcup_{n\geq0} f^{-n}(\mathcal{B}_{r,j,\eta})$. The open sets $\mathcal{B}_j$ are biholomorphic to $\cd$ and are the components of the basin of attraction of $p$ in $\cd$. 
  
To be more specific, in   $\mathcal{B}_{r,j,\eta}$ we change coordinates by letting $(z,w) = ((kx^k)^{-1}, y)$, 
so that in the new coordinates, $f$ assumes a form
$$(z,w)\longmapsto \left(z-1+ \frac{c}{z} + O\lrpar{\unsur{\abs{z}^{1+1/k}}}, bw +   O\lrpar{\unsur{\abs{z}^{ 1/k}}}\right) ,$$
  where $c$ is a complex number depending on $C$. Notice that 
   in the new coordinates, 
   $\mathcal{B}_{r,j,\eta}$ corresponds to a region of the form $\set{\Re(z)<-M} \times \dd_\eta$, 
   with $M = (2kr^k)^{-1}$. 
  Therefore if we set $$w^\iota(x,y)=w^\iota(x) = \frac{1}{kx^k}+ c \log \unsur{kx^k}$$ we infer that the limit $$\varphi^\iota(x,y)  = \lim_{n\cv\infty} \lrpar{w^\iota(f^n(x,y)) +n}$$ 
  exists and satisfies the functional equation 
  $\varphi^\iota\circ f  = \varphi^\iota-1$ (beware that this normalization 
  differs from   the  references mentioned above). 
 In addition, $\varphi^\iota - {w^\iota}$ is a bounded holomorphic function in $\mathcal{B}_{r,j,\eta}$. In the paper, the letter 
  $\iota$  will stand for ``incoming" and $o$ for ``outgoing", following a 
convenient notation from \cite{bsu}. 
 
 It easily follows that in the original coordinates, if $(x,y)\in \mathcal{B}$ then $f^n(x,y) = (x_n, y_n)$ with 
 ${x_n}\sim   (kn)^{-1/k}$ and  ${y_n} = O(n^{-1/k})$, see \cite[Prop. 3.1]{hakim} (beware that $y_n$  needn't be exponentially small). 
 
Fix a component $\mathcal{B} = \mathcal{B}_j$ of the basin of attraction.  By the iteration
 we can extend $\varphi^\iota$ to $\mathcal{B}$. It turns out that $\varphi^\iota: \mathcal{B} \cv \cc$ is a fibration 
 \cite[Thm 1.3]{hakim}, and that there 
 exists a function $\phi_2: \mathcal{B} \cv \cc$ such that $\Phi = (\varphi^\iota, \phi_2):   \mathcal{B} \cv \cd$ is a biholomorphism. 
 
 The following result is similar to \cite[Thm 1.2]{bsu}, and its proof will be left to the reader. 
 
\begin{prop}
If $p_1$ and  $p_2$ are points in $\mathcal{B} $ such that $\varphi^\iota(p_1) = \varphi^\iota(p_2)$ then 
$$\lim_{n\cv +\infty} \unsur{n}\log \dist(f^n(p_1), f^n(p_2) )  = \log \abs{b}<0.$$
 On the other hand, if  $\varphi^\iota(p_1) \neq \varphi^\iota(p_2)$
 then   $ \dist(f^n(p_1), f^n(p_2))$ decreases  like  $n^{-(1+1/k)}$. 
 \end{prop}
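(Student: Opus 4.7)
The plan is to adapt the proof of \cite[Thm 1.2]{bsu} (which handles the multiplicity-two case $k=1$) to arbitrary multiplicity $k+1$ by direct asymptotic estimates in the normal form \eqref{eq:ueda hakim}. Since $f^n(p_j)\cv 0$, for all large $n$ both orbits lie in the local chart $(x,y)$ and in the rescaled chart $(z,w)=((kx^k)^{-1},y)$, in which the map reads
$$
(z,w)\longmapsto \Bigl(z-1+\tfrac{c}{z}+O(|z|^{-1-1/k}),\ bw+O(|z|^{-1/k})\Bigr).
$$
Throughout, I will write $f^n(p_j)=(x_n^j,y_n^j)=(z_n^j,w_n^j)$ and use the basic asymptotics $|x_n^j|\sim(kn)^{-1/k}$ and $|y_n^j|=O(n^{-1/k})$.

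\textbf{First case: $\varphi^\iota(p_1)=\varphi^\iota(p_2)$.} The functional equation $\varphi^\iota\circ f=\varphi^\iota-1$ implies that the two orbits stay on a common fiber of the fibration $\varphi^\iota$ at every iteration. By Hakim's construction, the coordinate $\phi_2$ can be normalized so that $\phi_2(0,w)=w$ on the local strong stable manifold $W^{ss}_{\mathrm{loc}}=\{x=0\}$ and $\phi_2\circ f=b\phi_2$ throughout $\mathcal B$, whence $\phi_2(x,y)=y+x\tilde\phi(x,y)$ near $p$. In the chart $(z,w)$ the differential of the biholomorphism $\Phi=(\varphi^\iota,\phi_2)$ tends to the identity as $|z|\cv\infty$, so the Euclidean distance in $\mathcal B$ is comparable to the distance in $\Phi$-coordinates. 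Since $\Phi(f^n(p_j))=(Z_0-n,b^n\phi_2(p_j))$ with $\phi_2(p_1)\neq \phi_2(p_2)$ (as $p_1\neq p_2$ lie on the same fiber), one concludes $\dist(f^n(p_1),f^n(p_2))\asymp|b|^n$, which gives the exponential rate $\log|b|$.

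\textbf{Second case: $\varphi^\iota(p_1)\neq\varphi^\iota(p_2)$.} Set $\Delta=\varphi^\iota(p_1)-\varphi^\iota(p_2)\neq 0$, which is preserved under iteration. A refinement of the bound $\varphi^\iota-w^\iota\in L^\infty$ (taking the limit of the bounded holomorphic remainder along the orbit, which exists because $w_n^j\cv 0$) yields $z_n^1-z_n^2\cv\Delta$; the chain rule for $x=(kz)^{-1/k}$ then gives $|x_n^1-x_n^2|\sim|\Delta|\,n^{-(1+1/k)}$. For the $y$-components, I will use the holomorphic $f$-invariant curve $y=\gamma(x)$ near $p$ obtained as $\{\phi_2=0\}$ (its slope at $0$ is $\gamma'(0)=h(0,0)/(1-b)$, matching the cohomological equation from \eqref{eq:ueda hakim}). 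Writing $y_n^j=\gamma(x_n^j)+u_n^j$, the recursion inherited from \eqref{eq:ueda hakim} reads $u_{n+1}^j=b\,u_n^j+O((x_n^j)^{k+1})+O(u_n^j\,x_n^j)$, so $u_n^j=O(n^{-(1+1/k)})$, and therefore
$$
y_n^1-y_n^2=\gamma'(0)(x_n^1-x_n^2)+O\bigl(n^{-(1+1/k)}\bigr),
$$
again of order $n^{-(1+1/k)}$. Summing the two coordinate contributions gives $\dist(f^n(p_1),f^n(p_2))\asymp n^{-(1+1/k)}$.

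The decisive technical point, and the reason the rate is $n^{-(1+1/k)}$ rather than the naive $n^{-1/k}$ suggested by $|y_n^j|=O(n^{-1/k})$, is the \emph{slaving} of $y_n$ to the invariant curve $y=\gamma(x)$: both orbits track this curve to order $n^{-(1+1/k)}$, so their $y$-difference reduces to $\gamma'(0)$ times their $x$-difference. Solving the cohomological equation for $\gamma$ and controlling the recursion for $u_n=y_n-\gamma(x_n)$ constitutes the main technical work; these estimates are entirely analogous to those of \cite[Thm 1.2]{bsu}, with minor bookkeeping for the logarithmic term $c\log z$ in the Fatou coordinate and for the error terms of order $|z|^{-1/k}$ that arise for general $k$.
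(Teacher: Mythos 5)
The paper declares this proposition ``similar to [BSU, Thm 1.2]'' and leaves the proof to the reader, so there is no internal argument to compare against; your BSU-style adaptation is exactly the route the authors gesture at, and the plan is sound: the Fatou coordinate asymptotics give $z_n^1-z_n^2\cv\Delta$ and hence $x_n^1-x_n^2\asymp |\Delta|\,n^{-(1+1/k)}$, while the $y$-coordinate is slaved to the $x$-coordinate by the contraction, so the $y$-difference is of the same order. Two of your claims, however, need repair.

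First, in the case $\varphi^\iota(p_1)=\varphi^\iota(p_2)$, the assertion that ``the Euclidean distance in $\mathcal B$ is comparable to the distance in $\Phi$-coordinates'' because $D\Phi\to\mathrm{Id}$ in the $(z,w)$-chart is not correct: the chart $z=(kx^k)^{-1}$ distorts the Euclidean metric of $\cd$ by the unbounded factor $|dz/dx|=|x|^{-(k+1)}$, so closeness of $D\Phi$ to the identity \emph{in that chart} says nothing about the ambient Euclidean metric. What you actually need (and what does hold) is a fiber-wise statement: since $\partial_w\varphi^\iota=O(|z|^{-1-1/k})$, two points on a common fiber $\{\varphi^\iota=c\}$ with $|z|\asymp n$ satisfy $|z^1-z^2|\lesssim n^{-1-1/k}|w^1-w^2|$, hence $|x^1-x^2|\ll|y^1-y^2|$ and the distance is governed by the $y$-difference, which in turn is comparable to $|\phi_2(f^np_1)-\phi_2(f^np_2)|=|b|^n|\phi_2(p_1)-\phi_2(p_2)|$ once the restriction of $\phi_2$ to each fiber has derivative bounded above and below. (For the weaker asserted conclusion $\tfrac1n\log\dist\to\log|b|$ one can also dispense with $\phi_2$ entirely and read off the rate from the recursion $\delta_{n+1}=(b+O(n^{-1/k}))\delta_n$ with $\delta_n=y_n^1-y_n^2$.)

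Second, in the case $\varphi^\iota(p_1)\ne\varphi^\iota(p_2)$, taking $\gamma$ to be the exactly invariant curve $\{\phi_2=0\}$ creates a domain issue: that curve lives only in $\mathcal B$, is a graph over $x$ only on a sector, and its holomorphic extendability and the boundedness of $\gamma'$ up to the cusp are not given. The cleaner route, consistent with the $O(x^{k+1})$ error you already allow for in the recursion $u_{n+1}=bu_n+O(x_n^{k+1})+O(x_nu_n)$, is to take $\gamma$ to be a degree-$k$ polynomial solving the invariance (cohomological) equation to order $k$; this is defined on a full neighborhood of $0$ and makes the recursion rigorous. Alternatively, the slaving curve can be bypassed altogether by estimating $\delta_n=y_n^1-y_n^2$ directly: from the normal form, $\delta_{n+1}=(b+O(x_n))\delta_n+O(\epsilon_n)$ with $\epsilon_n=x_n^1-x_n^2\asymp n^{-(1+1/k)}$, and summing with geometric weights yields $\delta_n=O(n^{-(1+1/k)})$, while the lower bound already follows from $\dist\ge|\epsilon_n|$. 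With these repairs the argument is complete.
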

 
From now on we will refer to the foliation $\set{\varphi^\iota = C^{ st}} $ as the {\em strong stable foliation} in $\mathcal{B}$, and it will 
be denoted by $\mathcal{F}^{ss}$. Its structure near the origin is easy to describe.   Indeed since $\varphi^\iota - {w^\iota}$ is 
bounded near the origin, it follows from Rouché's theorem that  the leaf of $\mathcal{F}^{ss}$ 
through $(x_0,0)$ in 
 $\mathcal{B}_{r,j,\eta}$ is   graph over the vertical direction, whose distance   to the line $\set{x=x_0}$ 
 tends to 0 as $x_0\cv 0$. In particular 
 $\mathcal{F}^{ss}$ extends continuously 
 to  $\mathcal{B}_{r,j,\eta}\cup \set{x=0} =  \mathcal{B}_{r,j,\eta}\cup W^{ss}_{\rm loc}(0)$
  by adding $W^{ss}_{\rm loc}(0)$ as a leaf. 
  

   \medskip
   
 On the ``outgoing" side, there is also a notion of ``repelling petal'', which is defined as follows.  With coordinates as in 
 \eqref{eq:ueda hakim},  denote by $P^o_{r,j}$, $0\leq j\leq k-1$,  the connected components of 
  $\set{x, \ \abs{x^k- r^k}<r^k}$. Then for small $r, \eta>0$ the set $\Sigma_{j, {\rm loc}} $ defined by 
  $$\Sigma_{j, {\rm loc}}  = \set{ q \in  P^o_{r,j}\times \dd_\eta: \ \forall n\geq 0, f^{-n}(q)\in P^o_{r,j}\times \dd_\eta \text{ and } f^{-n}(q)\underset{n\cv\infty}{\longrightarrow} 0}.$$ 
  Then $\Sigma_{j, {\rm loc}}$ is a graph $\set{y=\psi (x)}$ over the first coordinate, 
  which   extends continuously to the origin
  by putting $\psi(0) = 0$ (this extension cannot be made holomorphic, see \cite[Prop. 1.3]{bsu}).  This is stated in 
  \cite[Thm 11.1]{ueda2} only for $k=1$, however the proof relies on a more general result \cite[Lemma 11.2]{ueda2} 
  which allows to treat the  general case as well. 
  As usual we extend the petal globally to $\cd$ by letting 
$$
  \Sigma_j = \bigcup_{n\geq 0} f^n\left(\Sigma_{j, {\rm loc}} \right).
$$ 
  Then $\Sigma_j$  is biholomorphic to $\cc$ 
(this is stated  only for $k=1$ in \cite[Thm 11.6]{ueda2} but the
adaptation to the general case  is straightforward).  
The union  $\bigcup_j \Sigma_j$ is the set of points converging to the
semi-parabolic point  $p$ under backward iteration, 
referred to as the {\em asymptotic curve} in \cite{ueda2} and \cite{bsu}.  
We will also call it the {\it repelling petal} (or simply  {\it unstable manifold}) of $p$.

 \section{Critical points in basins}\label{sec:critical}
 
  In this section we prove the existence of critical points in semi-parabolic basins for 
  sufficiently dissipative maps. Let $f$ be a   
  polynomial automorphism with a semi-parabolic basin $\mathcal{B}$. 
  Recall that by a critical point, we mean a point of tangency between 
  the strong stable foliation in $\mathcal{B}$ and the unstable manifold of some saddle periodic point. 
 The argument is based on  a refined version of some classical
 properties of  entire functions of finite order: see \S\ref{subs:entire}.  
 The proof of Theorem \ref{theo:critical} comes in \S\ref{subs:critical}.
  These results will be generalized  to attracting basins in Appendix \ref{sec:attracting}.

 \subsection{Entire functions of finite order}  \label{subs:entire}
Let $f:\cc\cv\cc$ be an entire function. The {\em order} of $f$ is  defined as
$$\rho(f) = \limsup_{r\cv\infty} \frac{\log^+\log^+ {M(r,f)}}{\log r}, \text{ where } M(r,f) = \max\set{\abs{f(z)}, \ \abs{z} =r}.$$   
The class of entire functions of finite order is well-known to display a number of remarkable  properties, some of which we recall now. 
We say that $a\in \cc$ is an {\em asymptotic value} of $f$ if there exists a continuous   path $\gamma:[0, \infty)\cv \cc$ 
tending to infinity such that   $f(\gamma(t))\cv a$ as $t\cv\infty$.  
The famous Denjoy-Carleman-Ahlfors Theorem asserts that if $f$ is an entire function of order $\rho<\infty$, then 
it admits at most  $2\rho$ distinct asymptotic values 
(see e.g. \cite[Chap. 5]{goldberg ostrovskii} or \cite{langley}). 
Another essentially equivalent formulation is that for every $R>0$, the open set 
$\set{z, \abs{f(z)}>R}$ admits at most $\max(2\rho, 1)$ connected components. 

When $\rho<\frac12$, we see that $f$ has no asymptotic values. One can actually be more precise in this case. Indeed, 
Wiman's theorem \cite[Chap. 5, Thm 1.3]{goldberg ostrovskii}
asserts that there exists a sequence of circles $\set{\abs{z}=r_n}$  with radii $r_n\cv\infty$ such that 
$\min \set{\abs{f(z)}, \ \abs{z} =r_n} \cv \infty$.

To prove the existence of critical points in semi-parabolic basins we will require a slight generalization of the 
Denjoy-Carleman-Ahlfors theorem on asymptotic values. We say that $a$ is an {\em $\e$-approximate asymptotic value} of $f$ if 
there exists a continuous 
path $\gamma:[0, \infty)\cv \cc$ 
tending to infinity such that  $\limsup_{t\cv\infty} \abs{f(\gamma(t)) - a}<\e$. The statement is as follows:

\begin{thm}\label{thm:fuzzy DCA}
 Let $f$ be an entire function of finite order. Assume that $f$ admits $n$ distinct $\e$-approximate asymptotic values 
 $(a_i)_{i=1, \ldots, n}$, with $\e< \min_{i\neq j}\frac{\abs{a_i-a_j}}{5}$.

Then the order of $f$ is at least $n/2$. 
\end{thm}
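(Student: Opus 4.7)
My plan is to reduce Theorem \ref{thm:fuzzy DCA} to the classical Denjoy-Carleman-Ahlfors argument by constructing, for each $\e$-approximate asymptotic value $a_i$, a genuine unbounded ``tract'' on which $f$ is bounded, in such a way that the tracts associated to distinct values are pairwise disjoint. This puts us in a position to apply the Ahlfors length-area bound, which is precisely the geometric input underlying the classical DCA theorem.

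Concretely, for each $i$ pick a path $\gamma_i:[0,\infty)\to\cc$ tending to infinity with $\limsup_{t\to\infty}|f(\gamma_i(t))-a_i|<\e$, so that for $t$ large enough, $\gamma_i(t)$ lies in $f^{-1}(D(a_i,2\e))$. Let $\Omega_i$ be the connected component of $f^{-1}(D(a_i,2\e))$ that contains this tail of $\gamma_i$; since $\gamma_i(t)\to\infty$, $\Omega_i$ is unbounded. The standing hypothesis $5\e<\min_{i\neq j}|a_i-a_j|$ gives in particular $4\e<|a_i-a_j|$, so the disks $D(a_i,2\e)$ are pairwise disjoint and hence the domains $\Omega_i$ are pairwise disjoint as subsets of $\cc$. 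By construction $|f|\leq |a_i|+2\e$ on $\Omega_i$, so $f$ is bounded on each $\Omega_i$; in fact $|f(z)-a_i|=2\e$ on $\partial\Omega_i\cap\cc$ by the definition of ``connected component'' of an open preimage.

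Once we have $n$ pairwise disjoint unbounded domains $\Omega_i\subset\cc$ on which an entire $f$ is bounded, we are exactly in the setting of the Ahlfors length-area theorem (see e.g.\ Goldberg-Ostrovskii \cite{goldberg ostrovskii}, Chap.\ 5), which yields
\begin{equation*}
\log M(r,f)\;\geq\;C\,r^{n/2}
\end{equation*}
for some $C>0$ and all sufficiently large $r$, and hence $\rho(f)\geq n/2$. This is precisely the geometric step used in the proof of the classical DCA theorem; the passage from honest asymptotic values to $\e$-approximate ones does not affect this step, since its only use of the asymptotic values was to extract the tracts $\Omega_i$.

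The only genuinely delicate point is the second step, namely verifying that the domains $\Omega_i$ really are disjoint and unbounded; everything else is an invocation of a classical estimate. One small subtlety is that a given $\Omega_i$ need not be simply connected, but this is harmless: if $V$ is a bounded component of $\cc\setminus\Omega_i$, the maximum principle applied to $f-a_i$ on $V$ shows $|f-a_i|\leq 2\e$ there, so filling in such holes enlarges $\Omega_i$ without affecting the bound on $f$ and, by the disjointness of the target disks $D(a_j,2\e)$, without destroying disjointness of the tracts. After this harmless reduction to the simply connected case, the Ahlfors bound applies verbatim and the theorem follows.
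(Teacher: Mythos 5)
Your reduction to "$n$ disjoint unbounded domains on which $f$ is bounded, mapping to disjoint disks" is a sound first step, and your remarks on filling in holes and on disjointness of the $\Omega_i$ are correct. The gap is in the central claim that this already puts you ``exactly in the setting of the Ahlfors length-area theorem.'' It does not. The Ahlfors/Denjoy growth estimate behind the classical DCA theorem takes as input either (a) nonnegative subharmonic functions $u_i$, supported in disjoint tracts, vanishing on the tract boundaries and \emph{unbounded} inside, or (b) $n$ unbounded components of a sublevel-type set $\{\lvert f\rvert > R\}$. Your $\Omega_i$ furnish neither. The natural candidate for (a), namely $u_i=\log\bigl(2\e/\lvert f-a_i\rvert\bigr)$ on $\Omega_i$ extended by zero, can be \emph{bounded} — an $\e$-approximate asymptotic value only requires $\limsup\lvert f(\gamma_i)-a_i\rvert<\e$, so $f$ need never approach $a_i$ — and it can also fail to be subharmonic, since nothing prevents $f-a_i$ from vanishing inside $\Omega_i$, which creates $+\infty$ poles. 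And for (b), you have only exhibited tracts where $f$ is \emph{bounded}; that produces no components of $\{\lvert f\rvert>R\}$. This is exactly the point where the paper invokes the quantitative Lindel\"of/Phragm\'en--Lindel\"of lemma (the ``version of a classical Lindel\"of theorem'' that precedes the statement): with the paths $\gamma_1,\dots,\gamma_n$ arranged cyclically, $\lvert f\rvert$ remains $\lesssim \max_j\lvert a_j\rvert+\e$ along each $\gamma_i$, yet $f$ is shown to be \emph{unbounded} in each of the $n$ regions between consecutive paths, because if $f$ were bounded in such a region the two-constants argument applied to $(f-a_i)(f-a_{i+1})$ would yield a contradiction with the $5\e$-separation of $a_i$ and $a_{i+1}$. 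That is what produces $n$ distinct unbounded components of $\{\lvert f\rvert>R\}$ for large $R$, after which the classical DCA bound in the form ``number of components of $\{\lvert f\rvert>R\}\le\max(2\rho,1)$'' finishes the proof. Your final sentence, asserting that the passage from honest to $\e$-approximate asymptotic values ``does not affect this step,'' is precisely where the argument breaks: it affects exactly the step that makes the Ahlfors estimate available, and some Lindel\"of-type input is needed to bridge it.
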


In order to prove the theorem let us first recall the classical Phragmen-Lindelöf Principle. 

\begin{prop}
Let $D$ be an unbounded domain in $\cc$. 
 Let $f$ be a bounded holomorphic function on $D$, such that $\limsup_{  \fr D \ni z\cv\infty } \abs{f}\leq \delta$. Then $\limsup_{D\ni  z\cv\infty } \abs{f}\leq \delta$. 
\end{prop}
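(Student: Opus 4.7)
The plan is to apply the classical Phragm\'en-Lindel\"of technique, based on the two-constants theorem (a form of the maximum principle for subharmonic functions with two-valued boundary data). Fix an arbitrary $\eta>0$. By hypothesis, there exists $R_0>0$ such that $\abs{f(z)}\leq \delta+\eta$ for every $z\in \fr D$ with $\abs{z}\geq R_0$; denote $M:=\sup_D \abs{f}$, which is finite.

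For $r>R_0$ I would apply the maximum principle to the subharmonic function $\log \abs{f}$ on the bounded open set $\Omega_r:=D\cap\set{\abs{z}<r}$. Its boundary decomposes into a ``good'' part $\Gamma_r^g:=\fr D\cap\set{R_0\leq \abs{z}\leq r}$, on which $\log \abs{f}\leq \log(\delta+\eta)$, and a ``bad'' part $\Gamma_r^b$ consisting of $\fr D\cap\set{\abs{z}<R_0}$ together with $\set{\abs{z}=r}\cap \overline D$, on which only the weaker bound $\log \abs{f}\leq \log M$ is available. Writing $\omega_r(z)$ for the harmonic measure of $\Gamma_r^b$ at $z$ in $\Omega_r$, the two-constants theorem yields
\[
\log\abs{f(z)} \leq (1-\omega_r(z))\log(\delta+\eta)+\omega_r(z)\log M, \quad z\in\Omega_r.
\]

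The heart of the argument is to show that $\omega_r(z)\cv 0$ as $\abs{z}\cv\infty$ inside $D$, for a suitable choice of $r$ growing sufficiently fast with $\abs{z}$ (taking $r=e^{\abs{z}}$ suffices). By monotonicity of harmonic measure under domain inclusion, $\omega_r(z)$ is bounded above by the sum of two model harmonic measures: that of a bounded set containing $\Gamma_r^b\cap\set{\abs{\zeta}\leq R_0}$ at the point $z$ in its complement, which is $O(1/\log \abs{z})$ by standard logarithmic potential theory; and that of the outer circle $\set{\abs{\zeta}=r}$ at $z$ in the model annulus $\set{R_0<\abs{\zeta}<r}$, which equals $\log(\abs{z}/R_0)/\log(r/R_0)\asymp \log\abs{z}/\abs{z}$ when $r=e^{\abs{z}}$. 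Both contributions vanish, yielding $\limsup_{\abs{z}\cv\infty,\,z\in D}\abs{f(z)}\leq \delta+\eta$; since $\eta>0$ is arbitrary, the conclusion follows. The principal obstacle is controlling the ``bad'' boundary, which combines a bounded piece of $\fr D$ near the origin with the distant circle $\set{\abs{z}=r}$; this is accomplished by choosing $r$ to grow much faster than $\abs{z}$ and invoking monotonicity of harmonic measure, neither of which requires any Dirichlet regularity of $\fr D$.
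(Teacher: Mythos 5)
The paper states this proposition as ``the classical Phragm\'en--Lindel\"of Principle'' and gives no proof, so there is nothing to compare against; assessing your sketch on its own terms, both harmonic-measure bounds break down. For the bounded bad piece $K := \fr D \cap \set{\abs{\zeta}\leq R_0}$, you compare $\omega(z,K,\Omega_r)$ with the harmonic measure of $K$ at $z$ ``in its complement.'' The inclusion $\Omega_r\subset \cc\setminus K$ is fine, but in the plane Brownian motion is neighborhood-recurrent, so the harmonic measure of a non-polar compact $K$ in $\cc\setminus K$ is identically $1$, not $O(1/\log\abs{z})$. The estimate $O(1/\log\abs{z})$ holds for $\omega(z,K,\dd_r\setminus K)$ only when $r$ is comparable to $\abs{z}$; with your choice $r=e^{\abs{z}}$ the annulus formula gives $\omega\approx 1-\log\abs{z}/\abs{z}$, which tends to $1$. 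For the outer circle you invoke domain monotonicity against the model annulus $\set{R_0<\abs{\zeta}<r}$, but that requires $\Omega_r$ to be \emph{contained} in the annulus, which in general fails since $D$ may contain points with $\abs{\zeta}<R_0$. Moreover, even granting both model formulas, no single choice of $r$ makes both small at once: $r\asymp\abs{z}$ kills the inner estimate but sends the outer one to $1$, while $r=e^{\abs{z}}$ does the opposite.

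The underlying problem is structural: the bad-boundary harmonic measure tending to $0$ cannot be deduced from comparison with a model domain that ignores $\fr D$. It is precisely the ``good'' boundary $\fr D\cap\set{R_0\leq\abs{\zeta}<r}$ that must shield $z$ from both the inner set $K$ and the outer circle, and each of your monotonicity steps enlarges $\Omega_r$ to a domain in which that good boundary has been erased --- discarding exactly the mechanism that makes the statement true. A correct argument has to use where $\fr D$ actually sits; for the situation the paper applies it to (a simply connected region between two curves tending to infinity) one can, for example, uniformize $D\cap\set{\abs{\zeta}>R_0}$ and observe that the inner circle has harmonic measure $\to 0$ at $z$ because its image on the unit circle is separated from the prime end at infinity. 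As stated for a completely arbitrary unbounded domain the claim is more delicate than your reduction to annulus and complement models suggests.
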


The following result is a version of a  classical Lindelöf Theorem 
(the argument below is adadpted from   \cite[Thm 12.2.2]{langley}). 

\begin{thm}
 Let $D$ be a simply connected unbounded domain in $\cc$, whose boundary  consists of
two simple curves $\gamma_1$, $\gamma_2$ both tending to infinity, and disjoint apart from their common starting point.
Let $f$ be holomorphic on $D$ and continuous on $\fr D$, and assume that when $z$ goes to infinity along 
$\gamma_i$, $f$ has the property that  $\limsup_{t\cv\infty} \abs{f(\gamma_i(t)) - a_i}<\e$, with  $\e<  \frac{\abs{a_1-a_2}}{5}$. Then $f$ is unbounded on $D$. 
\end{thm}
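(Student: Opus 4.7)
My approach is by contradiction: assume that $f$ is bounded on $D$, with $\sup_D |f| \leq M$, and derive a contradiction from the hypothesis $|a_1 - a_2| > 5\e$. The plan is to normalize the geometry by conformal mapping, then exploit a Phragm\'en--Lindel\"of-type argument applied to the product $G = (f - a_1)(f - a_2)$, and finally use connectedness of a continuous curve inside a disconnected lemniscate.

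First I would apply the Riemann Mapping Theorem to map $D$ conformally onto the upper half-plane $\Hyp$, sending the common starting point of $\gamma_1, \gamma_2$ to the origin and the ``point at infinity'' of $D$ to $\infty$; the two boundary curves become the positive and negative halves of $\re$, with escaping to infinity along $\gamma_i$ corresponding to $x \to +\infty$ and $x \to -\infty$ respectively. A further map $z \mapsto \log z$ sends $\Hyp$ onto the strip $S = \{0 < \im z < \pi\}$, both boundary rays being sent to $\re$ and $\re + i\pi$, and both ends ``at infinity of $\gamma_i$'' are now identified with $\re z \to +\infty$. Pulling back, we obtain a bounded holomorphic $F$ on $S$, continuous on $\overline S$, with $\limsup_{x \to +\infty}|F(x) - a_1| < \e$ and $\limsup_{x \to +\infty}|F(x+i\pi) - a_2| < \e$.

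Next, consider $G(z) := (F(z) - a_1)(F(z) - a_2)$, which is bounded on $S$ and satisfies $\limsup_{x\to+\infty}|G(x+iy_0)| \leq \eta^*$ on each horizontal boundary ($y_0 = 0, \pi$), where $\eta^* := \e(|a_1 - a_2| + \e)$. The main technical step is to upgrade this to a \emph{uniform} estimate $\limsup_{x\to+\infty}\sup_{y\in[0,\pi]}|G(x+iy)| \leq \eta^*$. For this I would apply the two-constants inequality in the half-strip $\Omega_T = \{\re z > T, 0<\im z < \pi\}$: for $T$ large, $|G|$ is bounded by $\eta^* + \delta$ on the two horizontal sides and by $M^2 + O(1)$ on the vertical edge $\{\re z = T\}$, so
\[
|G(z)| \leq (\eta^* + \delta)^{1-\omega(z)} \bigl(M^2+O(1)\bigr)^{\omega(z)},
\]
where $\omega(z)$ is the harmonic measure of $\{\re z = T\}$ in $\Omega_T$. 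An explicit Fourier expansion gives $\omega(x+iy) \leq C e^{-(x-T)}$ uniformly in $y\in[0,\pi]$, whence the claim.

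Finally, observe that since $\e < |a_1-a_2|/5$, a direct computation yields $\eta^* < |a_1-a_2|^2/4$, so the lemniscate $L = \{w \in \cc : |w-a_1||w-a_2| \leq \eta^*\}$ splits into two disjoint connected components $L_1 \ni a_1$ and $L_2 \ni a_2$; moreover one checks that $D(a_1,\e) \subset L_1$ and $D(a_2,\e) \subset L_2$. For $x$ sufficiently large, the continuous curve $y \mapsto F(x + iy)$, $y \in [0,\pi]$, has its image in $L_1 \cup L_2$ by the uniform bound, starts inside $L_1$ (since $F(x) \in D(a_1,\e)$) and ends inside $L_2$ (since $F(x+i\pi) \in D(a_2,\e)$), which contradicts connectedness. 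Thus $f$ cannot be bounded. The main obstacle is precisely the uniformity of the bound in $y \in [0,\pi]$: on the vertical edge only the trivial estimate $|G| \leq (M+|a_1|)(M+|a_2|)$ is available, so a direct maximum principle is useless, and it is the uniform exponential decay of the harmonic measure of this edge --- up to and including the horizontal boundary where $\sin y = 0$ --- that is the real content of the argument and makes the final connectedness contradiction valid all the way to $y = 0$ and $y = \pi$.
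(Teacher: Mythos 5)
Your argument is correct, and the core strategy is the same as the paper's: form $g = (f-a_1)(f-a_2)$, observe that $|g|$ is small near the boundary at infinity, promote this to an interior bound by a Phragm\'en--Lindel\"of-type argument, and then derive a contradiction because a curve crossing $D$ from $\gamma_1$ to $\gamma_2$ must visit a point where $|g|$ is forced to be large. The packaging, however, differs quite a bit. The paper simply invokes the abstract Phragm\'en--Lindel\"of principle, stated as a proposition just before (for bounded holomorphic functions on an arbitrary unbounded domain with small boundary behaviour at infinity), directly on $D$, and then takes a crosscut $\Gamma \subset D$ joining $\gamma_1$ to $\gamma_2$ far from the origin; by the intermediate value theorem there is $z_0 \in \Gamma$ with $|f(z_0)-a_1| = |f(z_0)-a_2| \geq \tfrac12|a_1-a_2|$, whence $|g(z_0)| \geq \tfrac14|a_1-a_2|^2 > \tfrac{6}{5}|a_1-a_2|\e$, a contradiction. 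You instead conformally normalize $D$ to a half-plane and then to the strip $\{0<\im z<\pi\}$, and reprove the needed Phragm\'en--Lindel\"of estimate by hand via the two-constants inequality and an explicit exponential bound on harmonic measure in a half-strip; your disconnected-lemniscate argument is equivalent to the paper's equidistant-point argument (both are the intermediate value theorem applied to $|f-a_1|-|f-a_2|$ along a crosscut). What your route buys is self-containedness and an explicit rate of decay; what it costs is reliance on Carath\'eodory-type boundary extension of the Riemann map (so that escaping along $\gamma_i$ genuinely corresponds to $\re z \to +\infty$ on the two strip edges), a point you should justify --- the hypotheses do make $\partial D \cup \{\infty\}$ a Jordan curve in $\widehat{\cc}$, so this works, but it is exactly the kind of technicality that the paper's direct use of the abstract Phragm\'en--Lindel\"of principle avoids.
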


\begin{proof}
 Assume by contradiction that $f$ is bounded, and let $g(z) = (f(z)-a_1)(f(z)-a_2)$. 
 Then $g$ is bounded on $D$, and $\limsup\abs{g(z)}\leq \delta$, 
  as $z\cv\infty$ along $\fr D$, for some  $\delta<\frac65 \abs{a_1-a_2} \e$. It follows that 
$\limsup_{ D\ni z\cv\infty } \abs{g}\leq \delta$.  

Now for every $R>0$ there exists a curve $\Gamma$ in $D$ joining 
$\gamma_1$ and $\gamma_2$ and  staying at distance at least $R$ from the origin. If $R$ is large enough, we then have that 
$\abs{g}< \frac65 \abs{a_1-a_2} \e$ along $\Gamma$. Furthermore at  $ \Gamma\cap \gamma_1$ (resp. 
$ \Gamma\cap \gamma_2$), $f$ is $\e$-close to $a_1$ (resp. $a_2$). So there exists $z_0\in \Gamma$ such that 
$\abs{f(z_0)-a_1} = \abs{f(z_0)-a_2} \geq \frac{\abs{a_1-a_2}}{2}$. We infer that 
$$\abs{g(z_0)} \geq  \frac{\abs{a_1-a_2}^2}{4} \geq \frac54 \abs{a_1-a_2} \e,$$
a contradiction.
\end{proof}

\begin{proof}[Proof of Theorem \ref{thm:fuzzy DCA}] (compare \cite[Cor. 14.2.3]{langley})  
By assumption  there are $n$ curves $\gamma_i$ going to infinity along which $f$ $\e$-approximately
 converges to $a_i$. We may assume that all these curves  are simple, start from 0, 
  and intersect only at 0. We reassign the indices so that the curves are arranged in clockwise order. By the previous theorem $f$ must be unbounded in the domain comprised between $\gamma_i$ and $\gamma_{i+1}$ (here we put $\gamma_{n+1} = \gamma_1$). Therefore the order of $f$ is at least $n/2$ by the ordinary Denjoy-Carleman-Ahlfors Theorem.
\end{proof}

\begin{rmk}\label{rmk:eremenko}
  Alex Eremenko has pointed out to us the following version of the 
Denjoy-Carleman-Ahlfors  Theorem. Let $f$ be an entire function bounded in the left-half
 plane and outside horizontal  strips. Let 
$$
  M(x) = 
  \max_{{\mathrm {Re}}\, z  = x} \abs{f(z)}
\text{ and } 
  \rho=  \rho(f) = \limsup_{x\to +\infty}\frac{\log M(x)} {x}. 
$$
Then $f$ admits at most $2\rho$ asymptotic values. 
[This follows from the subharmonic version of the DCA Theorem applied to the function
$u(z)= \log^+ (f(\log z)/ M)  $, where $M$ is the supremum of $|f|$ outside the half-strip
$ \Pi=  \{ {\mathrm {Re}} \, f > 1, \; |{\mathrm {Im}}|\, f <\pi \} ,$
and $\log z$ is the principal value of the logarithm in $\cc\setminus {\mathbb R}_-$.]

This Theorem admits an $\e$-approximate version similar to 
Theorem~\ref{thm:fuzzy DCA}. 
\end{rmk}

 \subsection{Semi-parabolic basins}\label{subs:critical}
In this paragraph we prove Theorem \ref{theo:critical}.
%
We first recall that stable and unstable manifolds of saddle points, as well as    strong stable 
manifolds of semi-parabolic points are entire curves, whose parameterizations are defined dynamically. More precisely, if 
$p$ is a fixed  point with  an expanding  eigenvalue  $\kappa^u$, 
then the associated stable manifold is parameterized by an entire function 
$\psi^u:\cc\cv\cd$ satisfying  $\psi^u(0) = p$ and  $f\circ \psi^u(t) = \psi^u(\kappa^u t)$ for every $t\in \cc$, 
and similarly for a contracting  eigenvalue. 
Let us now make an easy but important observation 
(see \cite{EL} in the one-dimensional setting,  
\cite{bs6,jin} in our setting, and \cite{cantat croissance} for automorphisms of compact projective surfaces).

\begin{lem}\label{thm:jin}
 Let $q$ be a fixed  point of a polynomial automorphism of dynamical degree $d\geq 2$
with  an expanding  eigenvalue  $\kappa^u$, and  $\psi^u : \cc\cv\cd$ is a parameterization of the associated 
unstable manifold as above. Then the coordinates of $\psi^u$ are   entire functions of finite order 
$$\rho = \frac{\log d}{\log \abs{\kappa^u}}.$$
\end{lem}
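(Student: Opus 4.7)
The plan is to obtain matching upper and lower bounds on $M(r,\psi^u)$ using the functional equation $\psi^u(\kappa^u t) = f \circ \psi^u(t)$, combined with the growth properties of the Green function $G^+$.

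For the upper bound, I would start from the basic fact that
$$G^+(z) = \lim_{n\to\infty}\frac{1}{d^n}\log^+\|f^n(z)\|,$$
with the convergence uniform on compacts, so there is a constant $C_1$ (depending on the compact set $\{|t|\leq |\kappa^u|\}$) such that $\log^+\|f^n(\psi^u(t))\|\leq C_1\, d^n$ for $|t|\leq |\kappa^u|$ and all $n\geq 0$. Given $r\geq 1$, choose the integer $n$ with $|\kappa^u|^n\leq r<|\kappa^u|^{n+1}$ and write any $s$ with $|s|=r$ as $s=(\kappa^u)^n t$ with $|t|\leq |\kappa^u|$. Iterating the functional equation gives $\psi^u(s)=f^n(\psi^u(t))$, so $\log^+\|\psi^u(s)\|\leq C_1\, d^n\leq C_2\, r^{\rho}$ with $\rho=\log d/\log|\kappa^u|$. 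Hence $\rho(\psi^u)\leq \rho$.

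For the lower bound, the key observation is that $W^u(q)$ cannot be entirely contained in $K^+$. Indeed, since $J^-\subset K^-$, one would have $W^u(q)\subset K^+\cap K^-=K$, which is bounded; but $\psi^u:\cc\to\cd$ is a non-constant entire map, so its image is unbounded by Liouville. Therefore there exists $t_0\in\cc$ with $\psi^u(t_0)\in U^+$, i.e.\ $G^+(\psi^u(t_0))=:\alpha>0$. The dynamical identity $G^+\circ f=d\cdot G^+$ combined with the functional equation yields
$$G^+\bigl(\psi^u((\kappa^u)^n t_0)\bigr)=d^n\alpha.$$
Since $G^+(z)=\log\|z\|+O(1)$ at infinity, this forces $\log^+\|\psi^u((\kappa^u)^n t_0)\|\geq d^n\alpha-C_3$, so $M(|\kappa^u|^n|t_0|,\psi^u)$ grows at least like $\exp(c\, r^{\rho})$ along this sequence of radii, giving $\rho(\psi^u)\geq \rho$.

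The routine part is the upper bound, which is essentially a formal consequence of the fact that iterates of a polynomial automorphism grow like exponentials of $d^n$ on compacts. The only genuine point is the lower bound, whose main substantive input is the non-triviality statement $W^u(q)\not\subset K^+$; this in turn is immediate from the basic structural facts listed in \S\ref{basics sec} together with Liouville's theorem, so no serious obstacle is expected.
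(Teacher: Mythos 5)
Your argument is essentially correct and is the standard one; the paper itself gives no proof of this lemma and only cites \cite{EL,bs6,jin,cantat croissance}, so there is no in-paper proof to compare against. Both directions are sound: the upper bound follows from $\psi^u((\kappa^u)^n t)=f^n(\psi^u(t))$ together with the elementary inequality $\log^+\|f^n(z)\|\leq C\,d^n$ on compacts (which one can get directly from $\|f(z)\|\leq C\max(1,\|z\|)^d$ without invoking uniform convergence of the escape-rate limit), and the lower bound correctly combines $W^u(q)\not\subset K^+$ (Liouville plus $W^u(q)\subset J^-\subset K^-$ and compactness of $K$) with the functional equation $G^+\circ f = dG^+$ and the global bound $G^+(z)\leq \log^+\|z\|+O(1)$.

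There is one small point you should tighten. What you actually prove is that both coordinates of $\psi^u$ have order $\leq\rho$ and that $\max_{|s|\leq r}\|\psi^u(s)\|$ grows like $\exp(cr^{\rho})$ along a sequence of radii, so that \emph{at least one} coordinate has order exactly $\rho$. The lemma asserts this for \emph{both} coordinates. The remedy is cheap: after normalizing $f$ to a composition of H\'enon maps $(z,w)\mapsto(p(z)-bw,z)$ (Proposition \ref{prop:henon}), the second coordinate of $f^n$ equals the first coordinate of $f^{n-1}$, so the two coordinate functions of $\psi^u$ are related by $\psi^u_2(\kappa^u s)=\psi^u_1(s)$ (or, for a longer composition, differ by an application of finitely many H\'enon factors), and hence have the same order. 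You should add this line to match the statement as written.
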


Notice that any other parameterization has the same order, since two
parameterizations differ from an affine map of $\cc$. 
So we may speak of the order of the unstable manifold $W^u(q)$.

 
In the semi-parabolic case the result specializes as follows:

\begin{cor}
If $p$ is a semi-parabolic  
periodic point for a polynomial automorphism of dynamical degree $d\geq 2$, then the order of $W^{ss}(p)$ is 
$$ \frac{\log d}{\log \abs{\Jac f}^{-1}}.$$
\end{cor}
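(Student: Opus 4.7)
The idea is simply to derive this corollary from Lemma~\ref{thm:jin} by passing to the inverse map. First I would reduce to the fixed case: replacing $f$ by an iterate $f^{q}$ turns the semi-parabolic periodic point $p$ into a fixed point whose eigenvalues are exactly $1$ and $b^{q}$, with $|b^{q}|<1$. This reduction is harmless because the ratio $\frac{\log d}{\log |\Jac f|^{-1}}$ is invariant under iteration: both the dynamical degree and $-\log|\Jac|$ are multiplied by $q$.

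Next I would consider $f^{-1}$, which is again a polynomial automorphism whose dynamical degree equals $d$ (the dynamical degree is an inversion-invariant of polynomial automorphisms, cf.\ \cite{fm,bls}). At the fixed point $p$ the eigenvalues of $Df^{-1}(p)$ are $1$ and $1/b$, so $\kappa^{u}:=1/b$ is an expanding eigenvalue of $f^{-1}$. The unstable manifold of $p$ for $f^{-1}$, tangent to the $1/b$-eigenspace, is exactly the strong stable manifold $W^{ss}(p)$ of $f$: indeed, its dynamical parameterization $\psi:\cc\to\cd$ is characterized by $\psi(0)=p$ and $f^{-1}\circ\psi(t)=\psi(\kappa^{u}t)$, which is equivalent to $f\circ\psi(t)=\psi(bt)$, the standard strong stable parameterization.

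Applying Lemma~\ref{thm:jin} to $f^{-1}$ at $p$ with expanding eigenvalue $\kappa^{u}=1/b$, we conclude that the order of $W^{ss}(p)$ is
$$
\frac{\log d}{\log |\kappa^{u}|}=\frac{\log d}{\log(1/|b|)}=\frac{\log d}{\log |\Jac f|^{-1}},
$$
where in the last step I use $\Jac f=1\cdot b=b$, so $|\Jac f|^{-1}=1/|b|$. There is no substantive obstacle here; the only thing to check carefully is the identification between $W^{ss}(p)$ for $f$ and $W^{u}(p)$ for $f^{-1}$ together with their dynamical parameterizations, which is immediate from the definitions. The fact that the order is intrinsic to the curve $W^{ss}(p)$ (independent of the choice of parameterization) follows because two dynamical parameterizations differ by an affine rescaling of $\cc$, which preserves the order.
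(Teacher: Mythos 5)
Your proof is correct and is essentially the paper's own argument: the paper's one-line proof is ``Apply Lemma~\ref{thm:jin} to $f^{-k}$, where $k$ is the period of $p$,'' which is exactly your composite step of first passing to an iterate $f^{q}$ fixing $p$ and then inverting. The invariance of $\frac{\log d}{\log|\Jac f|^{-1}}$ under iteration, the identification of $W^{ss}(p;f)$ with $W^{u}(p;f^{-1})$ via their dynamical parameterizations, and the invariance of the dynamical degree under $f\mapsto f^{-1}$ are exactly the ingredients the paper's proof implicitly relies on, so your elaboration matches the intended argument.
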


\begin{proof} Apply  Lemma \ref{thm:jin} to $f^{-k}$, where $k$ is the period of $p$. \end{proof}

 Our use of this corollary will be the following.

\begin{cor}\label{cor:point}
Let $f$ be a polynomial automorphism of $\cd$ of dynamical degree
$d\geq 2$, with a semi-parabolic 
periodic point $p$. 
 Assume that the   Jacobian of $f$   satisfies 
 $\abs{\Jac f}<\unsur{d^2}$. Then   the connected component of $p$ in 
 $W^{ss}(p)\cap J^-$ is $\set{p}$. 
 \end{cor}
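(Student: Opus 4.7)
The plan is to exploit the fact that, under the moderate dissipativity hypothesis, the entire parameterization of $W^{ss}(p)$ has order strictly less than $1/2$, and to combine Wiman's theorem with the dynamical self-similarity of $J^-$ along $W^{ss}(p)$ to rule out any non-trivial connected subset of $W^{ss}(p)\cap J^-$ passing through $p$.

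I first replace $f$ by its $N$-th iterate, where $N$ is the period of $p$, and assume $p$ fixed with strong stable eigenvalue $b$ of $Df(p)$, so that $|b|=|\Jac f|<1/d^2$. Let $\psi : \cc \to \cd$ be the dynamically defined parameterization of $W^{ss}(p)$ satisfying $\psi(0)=p$ and $f\circ \psi(t) = \psi(bt)$. The preceding corollary gives that each coordinate of $\psi$ is entire of order at most $\rho = \log d/\log |\Jac f|^{-1}$, and the Jacobian assumption forces $\rho < 1/2$. Moreover, $\psi$ is globally injective: a hypothetical collision $\psi(t_1)=\psi(t_2)$ with $t_1\neq t_2$ iterates, via $f^n\circ\psi(\cdot)=\psi(b^n\cdot)$, to an equality $\psi(b^n t_1)=\psi(b^n t_2)$ with both points arbitrarily close to $p$, where $\psi$ is locally injective, so $t_1=t_2$. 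In particular, at least one coordinate $\psi_i$ of $\psi$ is a non-constant entire function of order $<1/2$.

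I then apply Wiman's theorem to $\psi_i$: it yields a sequence of radii $r_n\to \infty$ along which $\min_{|t|=r_n}|\psi_i(t)|\to\infty$, and therefore $\|\psi(t)\| \to \infty$ uniformly on the circles $\{|t|=r_n\}$. Normalizing the family as a composition of H\'enon mappings through Proposition~\ref{prop:henon} and using the standard filtration $\cd = V\cup V^+\cup V^-$ with $V$ bounded, one has $K^+\setminus V \subset V^-\subset U^-$. Since $\psi(\cc)\subset W^{ss}(p)\subset K^+$, it follows that for $n$ large the image $\psi(\{|t|=r_n\})$ lies entirely in $U^- = \cd\setminus J^-$ (using $J^-=K^-$ in the dissipative regime). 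Equivalently, the closed set $A:=\psi^{-1}(J^-)\subset\cc$ is disjoint from the circles $\{|t|=r_n\}$ for all sufficiently large $n$.

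The last step is a dynamical scaling argument. The full invariance $f^{-1}(J^-)=J^-$ combined with the identity $\psi(t/b)=f^{-1}\circ\psi(t)$ shows that $A$ is invariant under the expanding dilation $t\mapsto t/b$. Suppose for contradiction that the component of $0$ in $A$ contains some $t_0\neq 0$; pick a connected $C\subset A$ with $\{0,t_0\}\subset C$. Then $\widetilde C := \bigcup_{n\geq 0} b^{-n}C$ is a connected subset of $A$ (each $b^{-n}C$ is connected and all contain $0$), and it is unbounded since it contains $b^{-n}t_0$ with $|b^{-n}t_0|\to\infty$. But any unbounded connected subset of $\cc$ containing $0$ must meet every circle centered at $0$, contradicting the separating circles $\{|t|=r_n\}$. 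Hence the component of $0$ in $A$, i.e., of $p$ in $W^{ss}(p)\cap J^-$, is $\{p\}$. The only technical subtlety I foresee is passing from Wiman's theorem, a statement about scalar entire functions, to the vector-valued $\psi$; this is resolved by restricting to a single non-constant coordinate, whose modulus alone forces $\|\psi\|\to\infty$ on the good circles.
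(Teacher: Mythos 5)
Your proof is correct and follows essentially the same route as the paper's: bound the order of $W^{ss}(p)$ below $1/2$, invoke Wiman's theorem to produce circles $\{|t|=r_n\}$ on which $\psi$ leaves every compact set, observe that $\psi^{-1}(J^-)$ therefore avoids these circles, and kill a hypothetical nontrivial component of $0$ by its invariance under the expanding dilation $t\mapsto t/\kappa^s$. The only cosmetic differences are that the paper gets the separating circles directly from boundedness of $K$ (since $W^{ss}(p)\cap J^-\subset K^+\cap K^-=K$) rather than routing through the H\'enon filtration, and states the final scaling step more tersely.
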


\begin{proof}
The assumption on the Jacobian together with the previous corollary imply that the order of $W^{ss}(p)$ is smaller than $\frac12$. 
Then by Wiman's theorem there exists a sequence of circles $\set{\abs{t} = r_n}$ such that the second
 coordinate (say) of $\psi^s$ satisfies $\min_{\set{\abs{t} = r_n}} \abs{\psi^s(t)} \cv\infty$. Since $K$ is bounded, these circles must be 
 eventually disjoint from $K$. So  we infer that the connected component of $p$ in 
 $(\psi^s)^{-1}(J^-)$ is bounded in $\cc$.  
 Since in addition the component is invariant under multiplication by $\kappa^s$, we are done. 
 \end{proof}

Theorem \ref{theo:critical} now clearly follows from the previous corollary, together with the following result.

 \begin{prop}\label{prop:critical}
 Let $f$ be a polynomial automorphism of $\cd$ of dynamical degree $d\geq 2$,  possessing a semi-parabolic periodic point  $p$
 with basin of attraction $\mathcal{B}$. Assume that the connected component of $p$ in $W^{ss}(p)\cap J^-$ is reduced to $\set{p}$. 
 Then for every saddle periodic
  point $q$, every component of $W^u(q)\cap \mathcal{B}$ contains a 
 critical point. 
 \end{prop}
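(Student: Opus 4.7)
The plan is to argue by contradiction, using an entire-function argument of Eremenko-Levin type based on Theorem~\ref{thm:fuzzy DCA}. Replacing $f$ by an iterate, I may assume $q$ is fixed with expanding eigenvalue $\kappa^u$, and parameterize $W^u(q)$ by an entire injective immersion $\psi^u:\cc\to W^u(q)$ with $\psi^u(0)=q$ and $f\circ\psi^u(t)=\psi^u(\kappa^u t)$. By Lemma~\ref{thm:jin}, $\psi^u$ has finite order $\rho=\log d/\log|\kappa^u|$. Set $U=(\psi^u)^{-1}(\Omega)\subset\cc$ (a domain, since $\psi^u$ is an injective immersion and $\Omega$ is connected) and $F=\varphi^\iota\circ\psi^u|_U:U\to\cc$. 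Critical points of $\varphi^\iota|_\Omega$ are precisely the zeros of $F'$, so I will assume toward a contradiction that $F$ has no critical point and is hence a local biholomorphism. The key symbolic tool is the functional equation $F(\kappa^u t)=F(t)-1$, inherited from $\varphi^\iota\circ f = \varphi^\iota-1$ and the conjugation $f\circ\psi^u=\psi^u(\kappa^u\,\cdot)$.

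Next I would translate the topological hypothesis into a supply of disjoint loops around $p$. Since the component of $p$ in $W^{ss}(p)\cap J^-$ is $\{p\}$, there are arbitrarily small Jordan curves $\gamma\subset W^{ss}(p)$ around $p$ with $\gamma\subset U^-$. By continuity of $\mathcal{F}^{ss}$ up to the limit leaf $W^{ss}(p)$ and by openness of $U^-$, the leaves $L_{x_0}$ of $\mathcal{F}^{ss}$ with $|x_0|$ small, which lie inside $\mathcal{B}$, contain perturbed loops $\gamma_{x_0}\subset U^-$; the $\varphi^\iota$-value on $L_{x_0}$ is asymptotic to $(kx_0^k)^{-1}$ and thus diverges along $k$ essentially distinct petal directions in $\cc$ as $x_0\to 0$. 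The density of $W^u(q)$ in $J^-$ (through the laminar current machinery used in Lemma~\ref{lem:boundary}) ensures that for every such $\gamma_{x_0}$, pieces of $W^u(q)$ cross the neighborhood of $\gamma_{x_0}$ transversely to $\mathcal{F}^{ss}$ (this transversality being guaranteed by the assumption that $F$ has no critical points). Combined with backward iteration of $f$, the functional equation for $F$ converts each such family of crossings into a continuous path in $U$ escaping to infinity along which $F$ remains within $\eps$ of the corresponding diverging value of $\varphi^\iota$ on $L_{x_0}$.

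Varying $x_0\to 0$ and the petal index $j\in\{0,\dots,k-1\}$ produces an infinite family of $\eps$-approximate asymptotic values of $F$, spaced arbitrarily far apart once $x_0$ is small. To apply Theorem~\ref{thm:fuzzy DCA} one must regard $F$ (or a companion function built from $\psi^u$) as an entire function of finite order at most $\rho$; here one uses that $\varphi^\iota-w^\iota$ is bounded on each attracting petal and $w^\iota=(kx^k)^{-1}+c\log(kx^k)^{-1}$ grows only polynomially in the local coordinate, so that composition with $\psi^u$ does not increase the order. The absence of critical points (making $F$ an unramified local biholomorphism) is what enables a single-valued entire extension compatible with $F(\kappa^u t)=F(t)-1$. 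Since Theorem~\ref{thm:fuzzy DCA} bounds the number of pairwise $5\eps$-separated approximate asymptotic values of such an entire function by $2\rho$, the contradiction yields the desired critical point in $\Omega$. The hardest part will be this last step, namely (i) verifying that the asymptotic accesses produced by different pairs $(x_0,j)$ are genuinely distinct rather than collapsed by the dynamics, and (ii) legitimately packaging the local biholomorphism $F$ as an entire function of controlled order to which Theorem~\ref{thm:fuzzy DCA} applies; the topological hypothesis on $W^{ss}(p)\cap J^-$ is what simultaneously provides the loops $\gamma_{x_0}$ and the necessary geometric separation between their $\varphi^\iota$-images.
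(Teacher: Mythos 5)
Your setup (the map $F=\varphi^\iota\circ\psi^u$, the contradiction hypothesis that $F$ has no critical point, the functional equation $F(\kappa^u t)=F(t)-1$, and the eventual appeal to Theorem~\ref{thm:fuzzy DCA}) is in the right spirit, but the body of your argument has a genuine gap that the paper's proof is specifically designed to avoid.

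The central problem is that you want to apply Theorem~\ref{thm:fuzzy DCA} to $F$ (or ``a companion function''), but $F$ is only defined on $U=(\psi^u)^{-1}(\mathcal{B})$, which is a proper subdomain of $\cc$ (each component $\omega$ is biholomorphic to a disk, by the Maximum Principle). Local univalence on $U$ in no way produces ``a single-valued entire extension''; for instance a Riemann map of a disk is a local biholomorphism with no entire extension whatsoever. The Denjoy--Carleman--Ahlfors mechanism, fuzzy or not, needs an entire function of known finite order. The paper circumvents this precisely by never applying DCA to $F$: it applies it to $\psi^u$ itself, more precisely to a linear coordinate function $\mathrm{pr}_1\circ\psi^u$, which \emph{is} entire of finite order $\log d/\log|\kappa^u|$ by Lemma~\ref{thm:jin}. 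The role of $F$ is only to produce an asymptotic path: since $F:\omega\to\cc$ is locally univalent but cannot be a covering of $\cc$, it has an asymptotic path $\gamma\to\partial\omega$. You do not invoke this covering argument, and instead attempt to manufacture asymptotic paths from loops in $W^{ss}(p)\cap U^-$ and crossings of $W^u(q)$, but that construction is not substantiated: the loops $\gamma_{x_0}$ lie in $U^-$ while $W^u(q)\subset J^-$, so they do not intersect, and the passage from ``density of $W^u(q)$ in $J^-$'' to ``a continuous path in $U$ escaping to infinity along which $F$ stays near a fixed value'' is a large unproved leap.

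There is a second missing ingredient: the dichotomy the paper draws on the behaviour of $\phi_2\circ\psi^u$ along the asymptotic path $\gamma$. When $\phi_2\circ\psi^u(\gamma)$ is \emph{unbounded}, the iterates $f^n(\psi^u(\gamma))$, suitably truncated inside $\mathcal{B}_{r,j,\eta}$, cluster (in the Hausdorff topology) to a nontrivial continuum in $W^{ss}_{\rm loc}(p)\cap J^-$ through $p$, directly contradicting the hypothesis that the component of $p$ in $W^{ss}(p)\cap J^-$ is trivial. Only in the \emph{bounded} case does one pass the cluster set $K$ of $\psi^u(\gamma)$ forward to produce many $\epsilon$-separated approximate asymptotic values of $\mathrm{pr}_1\circ\psi^u$ and contradict Theorem~\ref{thm:fuzzy DCA}. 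Your proposal collapses these two separate roles of the topological hypothesis and of the fuzzy DCA theorem into a single mechanism, which is why item (ii) in your ``hardest part'' paragraph remains irreducible: there is no legitimate way to package $F$ as an entire function, and one must instead restructure the argument to land on an entire function from the start, as the paper does.
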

 
 Notice that in the situation of the proposition,    Proposition \ref{prop:basin unstable} guarantees that  
$W^u(q)\cap \mathcal{B}$ is never empty.

\begin{rmk}
Proposition \ref{prop:critical} remains true in the  case where  $q$ is a semi-parabolic point rather than
a saddle (in particular, when $q=p$). 
 The proof is the same except it makes use of the version of the Denjoy-Carleman-Ahlfors Theorem stated in 
Remark \ref{rmk:eremenko}
\end{rmk}

 \begin{proof}
 Let $q$   be as in the statement of the proposition, and $\psi^u$ as above be a parameterization of $W^u(q)$. Translating the
 coordinates and  iterating if needed we may assume that the semi-parabolic point is fixed and equal to $0\in \cd$. 
 Also we may assume that $q$ is fixed.
 Let $\om\subset \cc$ be a component of $(\psi^u)^{-1} (\mathcal{B})$ --which must be non-empty by Proposition \ref{prop:basin unstable}. By the maximum principle, $\om$ is biholomorphic to a disk. Recall that the incoming Fatou function $\varphi^\iota$ was defined 
 in Section \ref{sec:semi parabolic}. 
 Observe first that $\varphi^\iota\circ\psi^u:\om\cv\cc$ cannot be constant for otherwise 
 $W^u(q)$ would coincide with a strong stable leaf, which cannot happen since it would then be contained in the compact set $K$. We argue by contradiction, so assume that $\om$ contains no critical point. 
 Then $\varphi^\iota\circ\psi^u:\om\cv\cc$ is a locally univalent map. Since 
  $\varphi^\iota\circ\psi^u$ cannot be a covering, it 
  must possess asymptotic values, that is, there exists a path $\gamma:[0, \infty)\cv \om$, tending to infinity in $\om$ (that is, converging to $\fr\om$ or to infinity in $\cc$) such that 
  $\varphi^\iota\circ\psi^u(\gamma(t))$ has a well defined limit in $\cc$ as $t\cv\infty$ (see \cite[p. 284]{nevanlinna}
   or \cite[Lemma 1.2]{goldberg keen} for a modern presentation). 
   Notice that by definition, $\psi^u(\gamma)$, as well as all its iterates and cluster values,  are contained in $K^+\cap J^-$. 
 
Recall that there exists a function $\phi_2:\mathcal{B}\cv\cc$ such that $\Phi:=(\varphi^\iota, \phi_2):\mathcal{B}\cv\cd$ is a
 biholomorphism. At this point the proof splits into two   cases. 

\medskip

\noindent{\bf Case 1:}  $\phi_2\circ\psi^u(\gamma )$ is unbounded. 

\smallskip

Observe that this case must occur when $\Phi\circ \psi^u :\om\cv\cd$ is proper, which happens for instance when $\om$ is relatively compact in $\cc$. Consider a domain $\mathcal{B}_{r,j,\eta}$ as in \S \ref{sec:semi parabolic}, 
corresponding to the basin $\mathcal{B}$, and  
in which the strong stable foliation is made of vertical graphs, clustering at $W^{ss}_{\rm loc}(0) = \set{x=0}$. 
For sufficiently large $n$, the connected component of 
$f^n(\psi^u(\gamma))\cap \mathcal{B}_{r,j,\eta}$ containing $f^n(\psi^u(\gamma(0)))$  is a   path 
 which by our unboundedness assumption goes up to the horizontal  boundary $P^\iota_{r,j}\times \fr \dd_\eta$ 
 of $\mathcal{B}_{r,j,\eta}$. When $n$ is large, this connected component is contained in a small neighborhood of 
 $W^{ss}_{\rm loc}(0)$. In addition  $f^n(\psi^u(\gamma(0)))$ converges to 0 when $n\cv\infty$. Thus, taking 
 a cluster value of this sequence of paths for the Hausdorff topology, we obtain a closed connected subset of 
 $W^{ss}_{\rm loc}(0)\cap J^-$, containing 0, and touching the boundary, hence not reduced to a point. This contradicts 
 Corollary \ref{cor:point}, and finishes the proof in this case. 
 
 \medskip

 \noindent{\bf Case 2:}  $\phi_2\circ\psi^u(\gamma )$ is bounded. 

\smallskip

A first observation is that under this assumption, the path $\gamma$ must go to infinity in $\cc$. Indeed otherwise let $(t_n)$ be a 
sequence such that $\gamma(t_n)$ converges to $\zeta\in \fr\om\subset \cc$. Then $\psi^u(\gamma(t_n))$ converges to 
$\psi^u(\zeta) \notin \mathcal{B}$, contradicting the fact that $\Phi(\psi^u(\gamma(t_n)))$ stays bounded in $\cd$. In particular $
\gamma$ is an asymptotic path for the entire mapping $\psi^u$

\medskip

For the sake of explanation,  assume first  that $\phi_2\circ\psi^u(\gamma(t ))$   admits a limit  as $t\cv\infty$. Thus  
$\Phi\circ \psi^u(\gamma(t ))$ converges in $\cd$, and $\psi^u(\gamma(t ))$ converges to some limiting point 
$\omega\in \mathcal{B}$, which 
must be an asymptotic value of $\psi^u$ (that is, both coordinates are asymptotic values of the coordinate functions of $\psi^u$). 
By the invariance of  $W^u(p)$, all iterates $f^n(\omega)$, $n\in\zz$, are asymptotic values of $\psi^u$. Since $\psi^u$ has finite order, this contradicts the Denjoy-Carleman-Ahlfors theorem.   

\medskip
 
 In the general case we   use Theorem \ref{thm:fuzzy DCA} instead. Let $K$ be the cluster set of $\psi^u(\gamma)$, which is a compact subset of $\mathcal{B}$, contained in a leaf $\set{\varphi^\iota = C^{st}}$ of the strong stable foliation. 
Let us study the shape of $f^n(K)$. When $n$ is   large enough, 
 $f^n(K)$ is contained in a small neighborhood of the origin, inside a local strong stable leaf. Perform a {\em linear} change of coordinates so that in the new coordinates, $f$ expresses as $f(x,y) = (x, by)+ h.o.t.$.  These coordinates are tangent (at 0), but not equal, to the adapted local coordinates of \S \ref{sec:semi parabolic}.
 In a bidisk near 0, the strong stable foliation is made of vertical graphs and 
  $W^{ss}_{\rm loc}(0)$ is a vertical graph tangent to the $y$-axis. Let $(x_0, y_0)\in K$ and 
  $(x_n, y_n) = f^n(x_0, y_0)$. Then we infer that $x_n\sim (kn)^{-1/k}$ and $f^n(K)$ is a subset of the leaf 
  $\mathcal{F}^{ss}(x_n,y_n)$ of size exponentially small with $n$. Denoting by $\mathrm{pr}_1: (x,y)\mapsto y$ 
    the first projection, we deduce  that  $\mathrm{pr}_1(f^n(K))$ is a set of exponentially small diameter about $x_n$. It follows that for 
   every integer $k$ there exists  $\e>0$ and  integers $n_1, \ldots , n_k$ 
  such that the sets  $\mathrm{pr}_1(f^{n_j}(K))$, $1\leq j\leq k$ are of 
  diameter smaller than $\e$, and $5\e$-apart from each other.  
  Notice that the sets $\mathrm{pr}_1(f^{n_j}(K))$ are 
 $\e$-approximate asymptotic values of $\mathrm{pr}_1\circ\psi^u$ in the sense of Theorem \ref{thm:fuzzy DCA}. 
 Now since $\mathrm{pr}_1$ is linear, $  \mathrm{pr}_1\circ\psi^u$
  is an entire function of finite order.  
  Thus   we obtain a  contradiction with Theorem  \ref{thm:fuzzy DCA}, and the proof is   complete. 
 \end{proof}
 

 \section{Semi-parabolic bifurcations and transit mappings}\label{sec:tv}
 
 In this section  we develop an analogue of  the ``tour de valse" of Douady and Sentenac \cite{tv} 
 in  the context of  semi-parabolic implosion.  When the family  can be put in  the form 
 $$ f_\e(x,y) = ( x+ (x^2+\e^2) \alpha_\e(x,y), b_\e (x)y +
 (x^2+\e^2)\beta_\e (x,y)),$$ 
  (this corresponds to the case $k=1$ in \eqref{eq:flambdaq} below), we may directly 
  appeal to the results of  Bedford, Smillie and Ueda \cite{bsu}. 
 In our setting, however, 
 we  have to deal with periodic points and bifurcations of a more general nature, and it is unclear  
  how to extend the results of \cite{bsu}.  
  
\medskip

Consider  a family $(f_\la)_{\la\in \La}$ of dissipative polynomial automorphisms of $\cd$, with a periodic point changing type, that is,
 one multiplier crosses the unit circle. It is no loss of generality to assume that $\La$ is the unit disk. 

Let us start with a few standard reductions. Recall that for polynomial automorphisms all periodic points are isolated.  Replacing $f_\la
$ by some iterate, we may assume the bifurcating periodic point is fixed. Passing to a branched cover of $\La$ if necessary, the fixed 
point moves holomorphically, so we assume  it is equal to $0\in \cd$. Since $f_\la$ is dissipative the multipliers depend
 holomorphically on $\la
$, and we denote them by $\rho_\la$ and $b_\la$, with $\rho_0 =
e^{2\pi i \frac{p}{q}}$ and $\abs{b_\la}<1$ for all $\la\in \La$.  
We further assume that $\rho_\la$
crosses $\fr\dd$ with non-zero 
  speed, i.e. $\frac{\fr \rho}{\fr\la} \big
\vert_{\la=0} \neq 0$.  

 \subsection{Good local coordinates}
 
 The first step is to find adapted local coordinates. 
 which is a parameterized version of the discussion in \S\ref{sec:semi parabolic}. This  is analogous to Proposition 1 in \cite{tv}.

\begin{prop} \label{prop:good coord}
If $(f_\la)$ is as above, then
for $\la$ sufficiently close to 0, 
there exists a local change of coordinates $(x,y) = \varphi_\la(z,w)$ in which $f_\la^q$ takes the form
\begin{equation}\label{eq:flambdaq}
f_\la^q(x,y) = (\rho_\la^q x + x^{k+1} + x^{k+2}g_\la(x,y), b_\la^q y + x h_\la(x,y))
\end{equation}
with $g_\la$ and $h_\la$ holomorphic and $h_\la(0,0) = 0$. %
%
Moreover $k = \nu q$ for some integer $\nu\geq 1$. 
\end{prop}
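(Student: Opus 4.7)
The plan is to bring $f_\la^q$ to the desired form by a finite sequence of holomorphic coordinate changes depending holomorphically on $\la$ near $0$. First, since the multipliers $\rho_\la^q$ (close to $1$) and $b_\la^q$ (of modulus less than $1$) are distinct and depend holomorphically on $\la$, their eigenvectors do as well, and a linear change of coordinates diagonalizes $Df_\la^q(0) = \mathrm{diag}(\rho_\la^q, b_\la^q)$. Next, the uniform hyperbolicity in the $b$-direction lets us apply the parameterized strong stable manifold theorem to produce a local holomorphic manifold $W^{ss}_{\mathrm{loc}, \la}$ tangent to the $y$-axis, which we straighten to $\set{x = 0}$. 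On this curve, the restriction of $f_\la^q$ is a holomorphic contraction with multiplier $b_\la^q$, so by the parameterized Koenigs linearization theorem we may further assume $f_\la^q(0, y) = (0, b_\la^q y)$. In particular, the first coordinate of $f_\la^q$ becomes divisible by $x$, and, using that $Df_\la^q(0)$ is diagonal, the second coordinate automatically takes the form $b_\la^q y + x h_\la(x, y)$ with $h_\la(0, 0) = 0$; the second-coordinate requirement of the proposition is therefore already satisfied.

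The heart of the argument is then to normalize the first coordinate to $\rho_\la^q x + x^{k+1} + x^{k+2} g_\la(x, y)$. We iteratively kill, via finitely many near-identity changes of coordinates preserving $\set{x = 0}$ together with the linearization on it, all the monomials $x^a y^c$ in the first coordinate of $f_\la^q$ with either (i) $c \geq 1$ and $1 \leq a \leq k+1$, or (ii) $c = 0$ and $2 \leq a \leq k$. For such a monomial the associated homological equation solves up to the resonance factor $(\rho_\la^q)^{a-1} (b_\la^q)^c - 1$. In case (i) this factor equals $(b_0^q)^c - 1 + O(\la)$ with $c \geq 1$ and $\abs{b_0^q} < 1$; it is therefore bounded away from $0$ uniformly for $\la$ near $0$, and the equation solves holomorphically. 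Monomials of type (ii) are resonant at $\la = 0$: the factor $(\rho_\la^q)^{a-1} - 1$ vanishes to order $1$ in $\la$ since $\partial_\la \rho_\la^q \big\vert_{\la = 0} \neq 0$. However, initializing the procedure from Hakim's normal form \eqref{eq:ueda hakim} at $\la = 0$, we may assume that these coefficients themselves vanish at $\la = 0$, so the ratio between the Taylor coefficient and the resonance factor extends holomorphically across $\la = 0$. This is exactly the parameterized parabolic normal form mechanism of Douady and Sentenac \cite{tv}, applied here in the $x$-direction with $(y, \la)$ as additional parameters. A final linear rescaling $\tilde x = \mu_\la x$, with $\mu_\la$ a holomorphic $k$-th root of the coefficient of $x^{k+1}$, normalizes the latter to $1$.

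Finally, the claim $k = \nu q$ for some $\nu \geq 1$ follows from the classical Leau-Fatou flower theorem: at $\la = 0$, $f_0$ has an isolated fixed point with multiplier $\rho_0 = e^{2\pi i p/q}$, a primitive $q$-th root of unity; and since $f_0$ has positive topological entropy, $f_0^q \neq \mathrm{id}$. The multiplicity $k+1$ of $f_0^q - \mathrm{id}$ restricted to a center manifold must then be congruent to $1$ modulo $q$, hence $k = \nu q$ with $\nu \geq 1$. The main obstacle lies in the middle paragraph, namely the treatment of the resonant monomials $x^2, \ldots, x^k$: a naive solution of the homological equation is singular at $\la = 0$, and must be compensated by initializing the iteration at Hakim's normal form so that numerator and denominator vanish to matching order. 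All the other resonances, in both coordinates, are tamed by the dissipativity $\abs{b_\la^q} < 1$.
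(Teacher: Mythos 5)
Your proof follows the same overall strategy as the paper (diagonalize, straighten and linearize on the strong stable manifold, then kill the remaining low-order terms in the first coordinate by near-identity changes, exploiting dissipativity for the $y$-dependent monomials and the vanishing of the resonant coefficients at $\la=0$). However, there is a genuine gap in the treatment of the resonant monomials of type (ii). You write that ``initializing the procedure from Hakim's normal form at $\la=0$, we may assume that these coefficients themselves vanish at $\la=0$.'' This is true \emph{before} you start the inductive elimination, but it must be \emph{re-verified after each step}. Indeed, the change of variables at step $a$ is $X = x + \alpha_\la(y)x^a$ with $\alpha_0$ a nontrivial $0/0$ limit, so at $\la=0$ it is a genuine, non-identity transformation. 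One must therefore check that applying it to a map of the form $x\mapsto x+x^{k+1}+O(x^{k+2})$ again produces a map of the same form, i.e.\ that no new monomials $x^{a'}$ with $2\le a'\le k$ are created at $\la=0$. The paper devotes an explicit computation to exactly this point, and it is where the hypothesis $a\le k$ enters crucially (schematically, $(x+\alpha x^a)^j$ contributes only in degree $\ge a+k\ge k+2$ to the corrections, because $a\ge 2$). Without this verification the induction does not close, since at step $a+1$ you could no longer conclude that the numerator vanishes at $\la=0$. You should add this computation.

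Two smaller issues. First, the set of monomials in your case (i) is infinite (all $c\ge 1$), so ``finitely many near-identity changes'' is not accurate as stated; the paper sidesteps this by normalizing the whole coefficient $a_1(\la,y)$ in a single change $x\mapsto x\,\varphi_\la(y)$ with $\varphi_\la$ given by a convergent infinite product (convergence being exactly where $\abs{b_\la}<1$ is used), and then treating $a_j(\la,y)$ for $2\le j\le k$ as a single $y$-dependent coefficient per step. If you want to stay monomial-by-monomial you would have to address the convergence of the composition of infinitely many changes, which is not automatic because the individual changes do not get small as $c\to\infty$. Second, it is worth recording explicitly that the case (i) changes are the identity at $\la=0$ (the numerators vanish while the denominators do not), since this is what lets you assert the Hakim form is still present at $\la=0$ when the case (ii) elimination begins.
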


\begin{proof} 
Since we are working locally near 
$(0, 0)\in \La\times \cd $,
 we   freely reduce the domains of definition in $(\la,x,y)$ when
 necessary. 
We  will also feel free to use to the same symbols 
for coordinates  after successive changes of variable.

 Recall from \S \ref{sec:semi parabolic}  that there exists an integer $k$ and local coordinates in which $f_0^q$ is of  the form 
 \begin{equation}\label{eq:ueda}
 (x,y)\mapsto (x + x^{k+1} + x^{k+2}g(x,y), b_0^qy + xh(x,y)).
 \end{equation}
  Then $f^q$ admits $k$ (open) attracting petals and 
 $k$  repelling petals, which are  permuted by $f$. 
These petals approach  $0$ at certain directions permuted by the
differential $D_0 f$,
 so necessarily $k=\nu q$ for some nonzero integer $\nu$.  



From now on for notational ease we replace $f^q$ by $f$, that is we assume   $\rho_0=1$. 
Since the differential $D_0  f_\la$ 
is diagonalizable, there exists a ($\la$-dependent) linear change of coordinates
so that $f_\la(x,y)$ takes the form  $(\rho_\la x, b_\la y)+ h.o.t.$ 
There exists a local strong stable manifold tangent to the $y$-axis;
we change coordinates so that it becomes $\set{x=0}$. 
By the Schröder theorem we can linearize $f_\la\rest{\set{x=0}}$, holomorphically in $\la$. Hence in 
the new coordinates, $f_\la(0,y ) = ( 0, b_\la y)$, so that 
$$
 f_\la (x,y) = (\rho_\la x(1+ O(x)) , b_\la y + xh_\la(x,y)).$$ Of
 course $h_\la(0,0) = 0$ since the linear part  is $(\rho_\la x, b_\la
 y)$. 
From now on all changes of variables will be ``horizontal'',  i.e. of the
type $(x,y)\mapsto (x(1+O(x,y)), y)$,   
so the form of  the  second coordinate persists and  we focus on the first one.

Express $f_\la(x,y)$ as 
\begin{equation}\label{eq:aj} f_\la(x,y) = 
(\rho_\la a_1(\la, y) x + a_2(\la, y) x^2 +\cdots + a_j(\la,y)x^j + \cdots , b_\la y + O_{\la,y}(x)),
\end{equation}
where the $a_j$ are holomorphic and $a_1(\la, 0) = 1$.
To start with, for $\la = 0$, we   put $f_0$ in form \eqref{eq:ueda}, so that
 for $j\leq k$, $a_j(0, y) =0$ and $a_{k+1}(0,y) = 1$.

The first task is to arrange so that $a_1\equiv 1$. This is similar to \cite{ueda}. For this we look for a change of coordinates of the form  
$(X,Y) = (x \varphi_\la(y), y)$, with $\varphi_\la(0) =1$. Using the notation $f_\la(x,y) = (x_1, y_1) $ (and similarly in the $(X,Y)$ variables), we infer that 
$$X_1 = \rho_\la a_1(\la, Y) \frac{\varphi_\la(Y_1)}{\varphi_\la(Y)}  X + O(X^2) = 
\rho_\la a_1(\la, Y) \frac{\varphi_\la(b_\la Y)}{\varphi_\la(Y)}  X + O(X^2) . 
$$
 Therefore we see that to obtain the desired form, it is enough to choose 
 $$\varphi_\la(y)  = \prod_{n=0}^\infty a_1(\la, b_\la^ny),$$
 which is locally a convergent product since $a_1(\la, y) = 1+O_\la(y)$ and $\abs{b_\la}<1$. 
 Notice also that for $\la=0$, $\varphi_0(y) =1$ so the change of variables is the identity. In particular $f_0$ remains of the form \eqref{eq:ueda}.

 \medskip
 
We then argue by induction.  So assume that we have found coordinates such that 
  for some $j\leq k$, $a_2(\la,y)= \cdots = a_{j-1}(\la,y)=0$, and 
  $f_0$ remains under the form \eqref{eq:ueda} .
  Put  $$(X,Y) = \left(x+\frac{a_j(\la,y)}{\rho_\la - \rho^j_\la} x^j, y\right).$$   
 Notice that  since $a_j(0) = 0$ and $\rho_\la-1$ has a simple root at the origin, the change of coordinates  
 is  also well defined at $\la=0$. 
 Now, for $\la\neq 0$, since 
the term $a_j(\la,y)$ is non-resonant,   
 a classical explicit  computation (see e.g.  \cite[Thm 6.10.5.]{beardon iteration}) shows that   it disappears
in the new coordinates. 
Hence by continuity the same holds for $\la=0$.  

Moreover, for $\la=0$ the change of coordinates is of the  form 
 $(X,Y) = (x+A_j(y)x^j, y)$, so
$(x,y) = (X-A_j(Y)X^j + h.o.t., Y)$. 
 In the new coordinates we obtain 
\begin{align*}
X_1 &= x_1+A_j(y_1)x_1^j  = x+x^{k+1} + O(x^{k+2}) + A_j(y)(x+x^{k+1} + O(x^{k+2}))^j\\
&= x+A_j(y)x^j + x^{k+1} + O(x^{k+2}) \\
&= X + X^{k+1} + O(X^{k+2}),
\end{align*} 
 so $f_0$ remains of  form \eqref{eq:ueda} (observe that $j\leq k$ is used here).  
 
 Hence by induction we arrive at a situation where  the first coordinate of 
$  f_\la(x,y)$  is of  the form  $\rho_\la x + a_k(\la, y)x^k + O(x^{k+1})$, with   
$a_k(0,y)=1$, and the desired form follows by putting $(X,Y) = (a_k(\la, y)^{\frac{1}{k-1}} x,y)$. 
\end{proof}

\begin{rmk}
 Observe that the normal form \eqref{eq:ueda hakim} is more precise than
 the one that we obtain here for $f_0$. 
 Indeed, as opposed to the case $
\la=0$,   we cannot in general kill the terms $x^{k+2}, \cdots , x^{2k}$ in the first coordinate of 
\eqref{eq:flambdaq}. 

In fact,  the vanishing of these terms for $\la=0$ is incompatible with keeping $(f_\la)$ 
in form \eqref{eq:flambdaq}. Indeed, 
the change of variables required to kill these terms at $\la=0$ is of the  form
 $(x,y)\mapsto (x+\alpha_\la (y) x^j, y+h.o.t.)$, where $\alpha_0(0)\not=0$ and  $j\leq k$ 
(compare \cite[Thm 6.5.7]{beardon iteration}).  If $\rho_\la \neq 1$ for some $\la$, 
it brings back a non-zero term $x^j$ in the first coordinate of $f_\la$.

On the other hand, 
if by chance the terms $x^j$, $j=k+2 \leq k \leq  2k$, vanish, we can reduce  ourselves to \cite{bsu}  by letting 
$(x',y') = (k\rho_\la^{k-1} x^k,y)$ and then $(x'', y'') =(x' + (1-\rho^k)/2, y')$. 
It is the presence of these extra non-vanishing terms that prevents us from 
using  \cite{bsu} directly.
\end{rmk}

\subsection{Transit mappings in the one-dimensional case: un tour de valse}

To fix the ideas, let us establish the   statement we need  in the
one-dimensional case first. 
This is a  
refined version  of \cite{tv}.
Consider a holomorphic family $(f_\la)_{\la\in \La}$ of 
mappings defined in some neighborhood of the origin in $\cc$, of the form 
\begin{equation}\label{eq:flambda dim1}
f_\la(x)  = \rho_\la x + x^{k+1} + x^{k+2}g_\la(x),
\end{equation} with $g$ holomorphic.  As before $\La$ is the unit disk. We assume that $\rho_0=1$ and 
$\frac{\fr\rho}{\fr\la}(0)\neq 0$ 
(this amounts to replacing $f_\la$ by its    $q^{\rm th}$ iterate in \eqref{eq:flambdaq}). 

Recall that for $\la=0$ the repelling and attracting directions are respectively defined by the property 
 that
$(1+x^k)\in \re^{+/-}$. We fix two consecutive such directions with respective angles $0$ and 
$\frac{\pi}{k}$ , and non-overlapping sectors about them 
by putting $$S^\iota = \set{\arg x\in \lrpar{-\frac{5\pi}{4k}, -\frac{3\pi}{4k}} }\text{ and }
S^o = \set{\arg x\in \lrpar{-\frac{\pi}{4k}, \frac{\pi}{4k}}}.$$

The result is as follows.

\begin{thm}\label{thm:tv dim1} 
Let $f_\la$ be as in \eqref{eq:flambda dim1} and $S^{\iota/o}$ be as above. 
There exists a neighborhood $V$ of the origin in $\cc$ with the following property: if $Q^\iota$ and $Q^o$ are open topological disks  with $Q^\iota\Subset S^\iota\cap V$ and $Q^o\Subset S^o\cap V$, then for every  neighborhood $W$ of $0$ in $\La$, there exists an integer $N$ and a radius 
$r$  such that if $n\geq N$ there exists a holomorphic map $\la_n:Q^\iota\times Q^o \cv W$ such that for every $(z^\iota, z^o)\in Q^\iota\times Q^o$, $f^n_{\la_n(z^\iota, z^o)}$ is a well defined univalent  function on $B(z^\iota, r)$, with $f^n_{\la_n}(z^\iota)=z^o$ and $ \abs{(f^n_{\la_n})'-1}
\leq \frac15$.
\end{thm}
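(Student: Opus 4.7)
The plan is to adapt the classical ``tour de valse'' of Douady--Sentenac \cite{tv} (and the more recent parametric refinements of \cite{bsu,lavaurs}) to the normal form \eqref{eq:flambda dim1}. The core idea is to transfer the equation $f_\la^n(z^\iota)=z^o$ into Fatou coordinates, where iterating $f_\la$ amounts, up to a small error, to a translation, and then solve for $\la$ by an implicit function argument.

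The first step is to construct, for $\la$ in a small pointed disk $W\setminus\{0\}$, holomorphic Fatou coordinates $\varphi^\iota_\la:S^\iota\cap V\to \cc$ and $\varphi^o_\la:S^o\cap V\to \cc$ depending holomorphically on $\la$, satisfying the Abel equations $\varphi^{\iota/o}_\la\circ f_\la=\varphi^{\iota/o}_\la-1$, matching at $\la=0$ the Fatou coordinates of Section~\ref{sec:semi parabolic}, and with uniform asymptotics $\varphi^{\iota/o}_\la(x)=\unsur{kx^k}+c\log\unsur{kx^k}+O(1)$. In the coordinate $w=1/(kx^k)$, $f_\la$ takes the form $w\mapsto w-1+O(|w|^{-1/k})$ on regions of the form $\{|\Re w|\geq R\}$, uniformly in small $\la$, and a standard telescoping $\varphi^{\iota/o}_\la=\lim_n(w\circ f_\la^{\pm n}\pm n)$ converges and depends holomorphically on $\la$.

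The crux of the argument is to produce a holomorphic function $\tau:W\setminus\{0\}\to\cc$, with $\tau(\la)\to\infty$ as $\la\to 0$, such that whenever $x\in Q^\iota$ satisfies $f^n_\la(x)\in Q^o$ one has
\[
\varphi^o_\la(f^n_\la(x))=\varphi^\iota_\la(x)-n+\tau(\la)+\delta_n(\la,x),
\]
with $\delta_n\to 0$ uniformly on compact sets as $n\to\infty$ and $\la\to 0$ consistently. Heuristically $\tau(\la)\sim 2\pi i/(\rho_\la^k-1)$, so by the non-degeneracy of $\rho_\la$ at the origin, $\tau$ is a biholomorphism from a pointed neighborhood of $0$ onto a neighborhood of $\infty$. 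This is the tour de valse phenomenon: traversing the ``gate'' between the $k+1$ fixed points of $f_\la$ (which collapse to the semi-parabolic point at $\la=0$) introduces exactly this phase shift. Granted this, the condition $f^n_\la(z^\iota)=z^o$ rewrites as $\tau(\la)=n+\varphi^o_\la(z^o)-\varphi^\iota_\la(z^\iota)-\delta_n(\la,z^\iota)$, and since $\tau^{-1}$ is defined and strongly contracting near $\infty$, a Banach fixed point argument produces a unique holomorphic solution $\la_n(z^\iota,z^o)\in W$, with $\la_n\to 0$ as $n\to\infty$. Univalence of $f^n_{\la_n}$ on a uniform ball $B(z^\iota,r)$ and the derivative estimate then follow because $f^n_{\la_n}$ is uniformly close, on such a ball, to the Lavaurs-type local biholomorphism $(\varphi^o_{\la_n})^{-1}\circ T_{n-\tau(\la_n)}\circ \varphi^\iota_{\la_n}$, where $T_s(w)=w+s$.

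The hard part is the construction of $\tau(\la)$ with its asymptotic behavior and the error estimate $\delta_n\to 0$. For $k=1$ one can appeal more or less directly to \cite{bsu}, but for higher $k$ the extra resonant terms $x^{k+2},\dots,x^{2k}$ in \eqref{eq:flambdaq} (which cannot be removed while preserving the parametric form, as explained after Proposition~\ref{prop:good coord}) prevent such a reduction, and $\tau$ must be constructed by hand as a limit of integrals of the approximate translation field along closed paths encircling the gate region in the $w$-coordinate.
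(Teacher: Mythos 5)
Your plan reproduces the Lavaurs/Shishikura/Douady--Sentenac route: parametric Fatou coordinates $\varphi^{\iota}_\la,\varphi^o_\la$ plus a phase-shift function $\tau(\la)\sim 2\pi i/(\rho_\la^k-1)$, then a fixed-point argument. This is genuinely different from the paper, which never constructs Fatou coordinates for $\la\neq 0$: it works directly in the coordinate $z=\rho_\la^{k+1}/(kx^k)$, treats $f_u(z)=(1+u)z-1+\eta_u(z)$ as a perturbation of the affine map $\ell_u(z)=(1+u)z-1$, compares $\lfloor n/2\rfloor$ forward iterates of $z^\iota$ with $\lceil n/2\rceil$ backward iterates of $z^o$, and solves for $u\in W_n=B(-2\pi i/n,n^{-1-\gamma/2})$ by the Argument Principle, with the derivative bound coming from explicit products (Lemma \ref{lem:fu} and Lemma \ref{lem:classical}). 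Your route would be preferable if it worked, since it is more structural, but the core of it is not actually carried out.

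The gap is concrete and is one you yourself flag in your final paragraph: for $k\geq 2$ the resonant terms $x^{k+2},\dots,x^{2k}$ cannot be removed from \eqref{eq:flambdaq} holomorphically in $\la$ (this is precisely the remark after Proposition~\ref{prop:good coord}, where the paper explains why \cite{bsu} cannot be invoked directly), and this obstructs the three non-trivial steps your plan relies on. First, the Abel equation $\varphi_\la\circ f_\la=\varphi_\la-1$ determines $\varphi^{\iota/o}_\la$ only up to postcomposition with a period-1 function; one must choose a normalization depending holomorphically on $\la$ and matching the $\la=0$ coordinate, and with the unremovable resonances it is not clear the asymptotic expansion $\varphi^{\iota/o}_\la(x)=\frac{1}{kx^k}+c\log\frac{1}{kx^k}+O(1)$ holds with a single $\la$-holomorphic $c$. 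Second, once $\varphi^{\iota/o}_\la$ are fixed, one has to prove that $\tau(\la):=\varphi^o_\la-\varphi^\iota_\la$ (suitably interpreted across the gate) is a well-defined holomorphic function with the asymptotics you state and that the error $\delta_n\to 0$ uniformly; writing ``$\tau$ must be constructed by hand as a limit of integrals of the approximate translation field'' names the idea without supplying the estimates. Third, the univalence of $f^n_{\la_n}$ on a ball of \emph{uniform} radius $r$ and the bound $\bigl|(f^n_{\la_n})'-1\bigr|\leq 1/5$ do not ``simply follow'' from uniform closeness to a Lavaurs-type map; in the paper this requires propagating derivative estimates along the whole orbit with a carefully chosen $R\geq(10^5kM)^k$ (see \eqref{eq:exp}--\eqref{eq:exp2}). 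In its current form, the proposal is a plausible outline of an alternative strategy, but the analytical content — which is exactly what the paper's elementary, coordinate-explicit method is designed to substitute for — is missing.
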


This statement being quite technical, a few words of explanation are in order.  
 What this theorem says is that if a parabolic bifurcation of 
the form \eqref{eq:flambda dim1} occurs, then by carefully selecting the parameters $\la_n$,   taking high iterates $f^n_{\la_n}$
we can map any point $z^\iota$ from an attracting sector to any point $z^o$ in a consecutive  repelling sector, with uniform control on the 
derivative $(f^n_{\la_n})'$ in the neighborhood of $z^\iota$. 

\medskip

To prove the theorem we begin with some background and intermediate results.
We work in the new coordinate  $z= \frac{\rho_\la ^{k+1}}{kx^k}$. Notice that for $\la=0$,
 the change of variables maps 
the sector $\set{\arg x \in \lrpar{-\frac{3\pi}{2k}, \frac{\pi}{2k}}}$ onto $\cc\setminus i\re^-$, hence for small enough $\la$, $S^\iota$ and $S^o$ are contained in 
$\lrpar{\frac{\rho_\la ^{k+1}}{kx^k}}^{-1} (\cc\setminus i\re^-)$. 
In the new coordinates, $S^\iota$ and $S^o$ are respectively perturbations of the sectors 
$\set{\arg z\in \lrpar{\frac{3\pi}{4}, \frac{5\pi}{4}}}$ and 
$\set{\arg z\in \lrpar{-\frac{\pi}{4}, \frac{\pi}{4}}}$.

Using the  the classical notation
 $x_1 = f_\la(x)$  (and similarly for $z$), we infer that 
 \begin{align*}
 z_1 &= \frac{\rho_\la ^{k+1}}{kx_1^k} = \frac{\rho_\la ^{k+1}}{k(f_\la(x))^k} 
= \frac{\rho_\la ^{k+1}}{k \rho_\la^k x^k \lrpar{1+ \frac{x^k}{\rho_\la} + O(x^{k+1})}^k} \\
&= \frac{\rho_\la ^{k+1}}{k \rho_\la^k x^k}\lrpar{1- \frac{ k x^k}{\rho_\la} + O(x^{k+1})}
= \frac{\rho_\la}{kx^k} -1 + O (x )\\
&= \rho_\la^{-k} z - 1 + \eta_\la(z), \text{ with } \eta_\la (z)  = O\lrpar{\unsur{\abs{z}^{1/k}}} \text{ as } z\cv\infty \text{, uniformly in }\la\in \La.
\end{align*}

The exponent $1/k$ will play a special role in the estimates to come, so for notational ease, 
from now on we put $\gamma = 1/k$. 
We also change coordinates in the parameter space by putting $u =\rho_\la^{-k}-1$, 
so that $u$ now ranges in some neighborhood $W$ of the origin, 
 and our mapping   writes  as 
$$f_u(z) = (1+u)z-1+ \eta_u( z), \text{ with }  \eta_u( z)  = O\lrpar{\unsur{\abs{z}^{\gamma}}}.$$ 

In these coordinates, $f_u$ is defined in an open set $\om_R$ of the form 
\begin{equation}\label{eq:omegaR}
\om_R = \set{z, \ \abs{z}>R, \arg(z) \in \lrpar{\frac{-3\pi}{8},  \frac{11\pi}{8}}},
\end{equation} for some $R=R_0$. 
Its complement is shaded on Figure \ref{fig:tourdevalse} and will be
referred to as the  ``forbidden region". 
 We also pick two bounded open topological disks $Q^\iota$ and $Q^o$ such that    
$Q^\iota\Subset S^\iota\cap \Omega_{R}$ and $Q^o\Subset S^o\cap\Omega_{R}$, where $R\geq R_0$ is  to be fixed later (this corresponds to the choice of the neighborhood $V$ in the statement of the theorem).

We fix a constant $M$ such that for every parameter $u\in W$  and  every $z\in \Omega_{R_0}$, 
\begin{equation}\label{eq:eta u}
\abs{\eta_u(z)}\leq 
 \frac{M}{\abs{z}^{1/k}} =  
 \frac{M}{\abs{z}^{\gamma}} \text{ and } 
\abs{\eta'_u(z)}\leq \frac{M}{\abs{z}^{1+\gamma}}.
\end{equation} 

We will let $u$ vary in a small subset of $W$, of the form 
$W_n = B\lrpar{-\frac{2\pi i}{n}, \frac{1}{n^{1+ \gamma/2}}}$. 
Notice that for $u\in W_n$ we have that $$\abs{1+u} = 1+ \frac{2\pi^2}{n^2} + o\lrpar{\unsur{n^2}}
\text{ and } \arg(1+u) = -\frac{2\pi}{n} + O\lrpar{\unsur{n^{1+ \gamma/2}}} .$$ 
In the following we always consider  $n$ so large that $n^{ \gamma/2}>100$, $1-\frac{30}{n^2}\leq 
\abs{1+u} \leq 1+\frac{30}{n^2}$ and $\abs{\arg(1+u)+\frac{2\pi}{n} }\leq \unsur{100n}$.

\medskip

To understand the argument better, it is instructive to think of $f_u$ as a perturbation of the affine map $
\ell_u:z\mapsto (1+u)z - 1$.   
When $u\in W_n$ and $n$ is large, $\ell_u$ is approximately a rotation by 
angle $-\frac{2\pi}{n}$  centered at $\frac{1}{u}$. Notice also  that $\frac{1}{u}$ is close to 
$\frac{in}{2\pi}$ (see Figure  \ref{fig:tourdevalse}). 
 
To fix the ideas,  let us first analyze the linear case,
dealing with $\ell_u$ instead of $f_u$.

\begin{prop}\label{prop:linear}
With notation as above, there exists an integer $N$, and a radius $r$  
 such that if $n\geq N$
then  for every $(z^\iota, z^o)\in Q^\iota\times Q^o$, there exists a parameter $ u=u(z^\iota, z^o)\in W_n$, depending holomorphically on $(z^\iota, z^o)$ and such that
\begin{itemize}
\itm $\ell_u^n(z^i) =z^o$;
\itm for every $z\in B(z^\iota, r)$ the iterates $\ell_u^j(z)$, $j = 1,\ldots, n$ do not enter the forbidden region;
\itm $  \abs{(\ell_u^n)'-1}\leq  \frac15$ on $B(z^\iota, r)$. 
\end{itemize} 
\end{prop}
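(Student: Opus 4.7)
The plan is to exploit the explicit conjugacy formula
$$\ell_u^n(z)=\frac{1}{u}+(1+u)^n\Bigl(z-\frac{1}{u}\Bigr),$$
which reduces both the derivative estimate and the parameter selection to elementary asymptotic computations in $(1+u)^n$ and $1/u$. First I would verify the derivative condition: the expansion $n\log(1+u)=nu-nu^2/2+O(nu^3)$ evaluated on $W_n=B(-2\pi i/n,n^{-1-\gamma/2})$ gives $(1+u)^n=1+O(n^{-\gamma/2})$ uniformly in $u$. Since $(\ell_u^n)'\equiv (1+u)^n$ is independent of $z$, for $N$ large enough one obtains $\abs{(\ell_u^n)'-1}<1/5$ on every fixed ball $B(z^\iota,r)$.

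Next I would solve the equation $\ell_u^n(z^\iota)=z^o$ in $u$ by analyzing $\Phi_n:W_n\to\cc$, $\Phi_n(u):=\ell_u^n(z^\iota)$. A direct computation at $u_0=-2\pi i/n$ gives $\Phi_n(u_0)=z^\iota-i\pi+O(1/n)$, while
$$\Phi_n'(u)=\frac{(1+u)^n-1}{u^2}+n(1+u)^{n-1}\Bigl(z^\iota-\frac{1}{u}\Bigr)=-\frac{in^2}{2\pi}+O(n^{2-\gamma/2})$$
uniformly for $u\in W_n$ and $z^\iota\in Q^\iota$. A Rouch\'e-type argument (or a quantitative inverse function theorem) then shows that $\Phi_n\rest{W_n}$ is univalent and its image contains a disk of radius $\asymp n^{1-\gamma/2}$ around $z^\iota-i\pi$. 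Since $Q^o$ is bounded and fixed, this disk contains $Q^o$ for all large enough $n$, so I can define $u_n(z^\iota,z^o)\in W_n$ as the unique solution to $\Phi_n(u)=z^o$; its holomorphic dependence on $(z^\iota,z^o)\in Q^\iota\times Q^o$ is automatic from the implicit function theorem.

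Finally I would verify that the orbit $(\ell_u^j(z))_{1\leq j\leq n}$ never enters the forbidden region, for every $z\in B(z^\iota,r)$. Passing to the coordinate $\zeta=z-1/u$, the iteration becomes $\zeta_j=(1+u)^j\zeta_0$, a near-rotation on a circle of radius $\approx n/(2\pi)$; in the original coordinate this gives
$$z_j\approx \frac{in}{2\pi}\bigl(1-e^{-2\pi ij/n}\bigr)+e^{-2\pi ij/n}z^\iota, \qquad \im z_j\approx \frac{n}{\pi}\sin^2\!\Bigl(\frac{\pi j}{n}\Bigr)+O(1).$$
Hence the orbit lies essentially in the upper half plane, a uniform distance away from the downward sector $\arg z\in(11\pi/8,13\pi/8)$ that forms the angular part of the forbidden region. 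The modulus bound $\abs{z_j}>R$ is immediate for intermediate $j$ since the main term has modulus $\asymp n$; for $j$ near $0$ or $n$ the orbit is close to $Q^\iota$ or $Q^o$, which by hypothesis lie in $\Omega_R$, and a small radius $r$ together with the continuity of $\ell_u^j$ transfers the conclusion to every $z\in B(z^\iota,r)$.

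The main obstacle is not any single computation but the simultaneous balancing of scales dictated by the radius of $W_n$: it must be small enough for $(1+u)^n$ to stay near $1$ and for the orbit to remain close to its idealized rotation, yet large enough for $\Phi_n(W_n)$ to cover the fixed target $Q^o$. The choice $n^{-1-\gamma/2}$ is precisely tuned so that $\abs{\Phi_n'}\cdot\abs{W_n}\asymp n^{1-\gamma/2}\to\infty$ while $\abs{(1+u)^n-1}\asymp n^{-\gamma/2}\to 0$; this same window will be essential when passing from the linear model $\ell_u$ to the full map $f_u$ in Theorem \ref{thm:tv dim1}, where the nonlinear remainder $\eta_u$ introduces extra perturbations of order $n^{-\gamma}$ that must be compared against these scales.
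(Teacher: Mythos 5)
Your proof is correct, and it takes a genuinely different (more explicit) route to the parameter selection than the paper does. The paper splits the orbit at $l = \lfloor n/2 \rfloor$, pushes $z^\iota$ forward $l$ times and $z^o$ backward $m = n-l$ times, writes the equality $\ell_u^l(z^\iota)=\ell_u^{-m}(z^o)$ in the form
$$\frac{\ell_u^{l} (z^\iota)- \unsur{u}}{\ell_u^{-m} (z^o)- \unsur{u}} =(1+u)^n\frac{z^\iota- \unsur{u}}{z^o- \unsur{u}} = 1 + \frac{v}{n^{\gamma/2}} + O\lrpar{\unsur{n}},$$
and invokes the Argument Principle on this ratio as $v$ winds around $\fr\dd$. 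You instead compute $\Phi_n(u)=\ell_u^n(z^\iota)$ directly from the conjugacy formula, verify $\Phi_n(u_0)=z^\iota-i\pi+O(1/n)$ and $\Phi_n'(u)=-in^2/(2\pi)+O(n^{2-\gamma/2})$ uniformly on $W_n$, and apply a quantitative inverse function theorem (in the spirit of Lemma \ref{lem:classical}) to show $\Phi_n(W_n)$ covers a disk of radius $\asymp n^{1-\gamma/2}$ around $z^\iota-i\pi$, which swallows the fixed target $Q^o$. These are equivalent statements (setting the paper's ratio equal to $1$ is exactly your equation $\Phi_n(u)=z^o$), but the paper's packaging via the splitting at $l$ and the product estimate is what survives into Theorem \ref{thm:tv dim1}: once $\eta_u\neq 0$ there is no closed form for $f_u^n$, and one is forced to compare $f_u^l(z^\iota)$ and $f_u^{-m}(z^o)$ through the telescoping products of Lemma \ref{lem:fu}, which perturb the ratio above by a factor $1+O(n^{-\gamma})$. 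Your computation of the scales ($|\Phi_n'|\cdot|W_n|\asymp n^{1-\gamma/2}$ vs.\ $|(1+u)^n-1|\asymp n^{-\gamma/2}$) is exactly what one needs there, so your closing remark is on point. The geometric verification that the orbit avoids the forbidden region is handled at about the same level of rigor as the paper (which defers to Douady--Sentenac with "simple geometric considerations"); if you wanted to be fully precise, the key point is that for small $j$ the orbit drifts linearly leftward from $z^\iota$ (so both $|z_j|$ and the distance to the downward forbidden sector increase, since $\Re z^\iota<0$), symmetrically at the other end, and for intermediate $j$ the imaginary part is $\asymp n$.
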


\begin{figure} [h]
\centering 
\def\svgwidth{7cm}
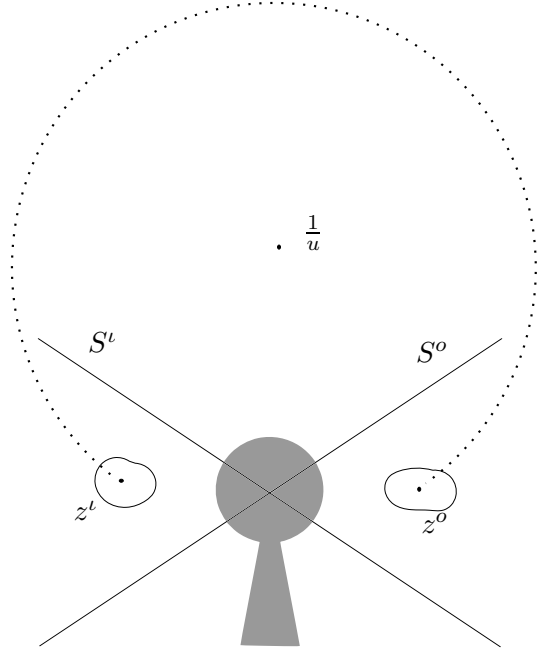
 \caption{Schematic view of the 
orbit connecting $z^\iota$ to $z^o$.
It  shadows the arc of a
circle passing through the ``gate'' 
between the fixed points $1/u$ and $\infty$.
The forbidden region is shaded.}
 \label{fig:tourdevalse}
\end{figure}

\begin{proof}
Let $l = \lfloor n/2 \rfloor$ and $m=n-l$. 
 As said above, $\ell_u(z)  = (1+u) \lrpar{z-\unsur{u}} + 
\unsur{u}$  has its fixpoint at $\unsur{u}$. Write $u = \frac{-2\pi i}{n} + \frac{v}{n^{1+ \gamma/2}}$, 
with $v\in \dd$. For $j\leq n$ we have that 
\begin{align}
(1+u)^j \notag &= \exp(j\log(1+u))  = \exp \lrpar{j\log\lrpar{1 - \frac{2\pi i}{n} + \frac{v}{n^{1+ \gamma/2}}}}\\
& = \exp \lrpar{j \lrpar{  - \frac{2\pi i}{n} + \frac{v}{n^{1+ \gamma/2}}  + O\lrpar{\unsur{n^2} }}} \notag\\
&= \exp\lrpar{    - \frac{2j\pi i}{n}  +  \frac{jv}{n^{ 1+  \gamma/2}}    + O\lrpar{\frac{j}{n^2}}},\notag
  \end{align}
in particular for $j=n$
\begin{equation}\label{eq:DL}
(1+u)^n = 1+   \frac{v}{n^{  \gamma/2}}    + O\lrpar{\unsur{n}},
\end{equation} where the $O(\cdot)$ is uniform with respect to  
$v\in \dd$.

Simple geometric considerations (see \cite{tv}) 
then show that for 
$j\leq \lceil n/2\rceil$ $\ell_u^j( z^\iota)$ (resp. $\ell^{-j}_u(z^o)$) do not enter the forbidden area. 

Let us prove that there exists $u(z^\iota, z^o)$, depending holomorphically on 
$(z^\iota, z^o)\in Q^\iota\times Q^o$ and such that $\ell^{l}_u(z^\iota) = \ell_u^{-m}(z^o)$. Then for such a parameter, 
by connecting the two pieces of orbits $1, \ldots , l$ and $l+1, \ldots , n$, we infer
  that the iterates $\ell^j_u(z^\iota)$ do not enter the forbidden area for $1\leq j\leq n$ and since $\ell_u$ is affine, 
the control of the derivative follows from \eqref{eq:DL}. 

To prove this,    consider the expression 
$$\frac{\ell_u^{l} (z^\iota)- \unsur{u}}{\ell_u^{-m} (z^o)- \unsur{u}} =
(1+u)^n\frac{z^\iota- \unsur{u}}{z^o- \unsur{u}}.$$ A simple computation shows that 
$$\frac{z^\iota- \unsur{u}}{z^o- \unsur{u}} = 1 + (z^\iota-z^o)\frac{2\pi i}{n} + 
O\lrpar{\unsur{n^{1+ \gamma/2}}}.$$ Therefore by \eqref{eq:DL}, we infer that 
\begin{equation}\label{eq:lun}
\frac{\ell_u^{l} (z^\iota)- \unsur{u}}{\ell_u^{-m} (z^o)- \unsur{u}} =
1+   \frac{v}{n^{  \gamma/2}}    + O\lrpar{\unsur{n}},
\end{equation} 
where  the $O(\cdot)$  is uniform with respect to $v\in \dd$, $z^\iota\in Q^\iota$ and $z^o\in Q^o$.
Thus  when $n$ is large enough the quantity in \eqref{eq:lun} 
winds once around 1 as $v$ turns once around $\fr \dd$, and the result follows from the Argument Principle.
\end{proof}

We now turn to $f_u$. 
Let us start with a technical lemma. 

\begin{lem}\label{lem:fu} Fix $R\geq (10^5 kM)^k$.
With notation as above, there exists an integer $N=N(R)$ 
depending only on $R$ such that if $n\geq N$, $u\in W_n$ and 
if $z^\iota\in Q^\iota$ 
then for every  $1\leq j\leq \lceil \frac{n}{2}\rceil$:   
\begin{enumerate}[{\rm (i)}]
\item $f_u^j(z^\iota)$ stays outside the forbidden area; 
\item $\abs{f_u^j(z^\iota) -\unsur{u}} \geq \frac{n}{10}$;
\item writing $f^j_u(z^\iota)=z_j = x_j+iy_j$ we have that either $x_j\leq x_0 - \frac{j}{10}$ 
or $y_j\geq \frac{n}{10}$. In particular $\abs{z_j}\geq \min\lrpar{\frac{\abs{z_0}}{2}+\frac{j}{10},\frac{n}{10}}$.
\end{enumerate}
The same results holds 
for $f^{-j}_u(z^o)$, when $z^o\in Q^o$ (in that case the last condition needs to be replaced by 
``either $x_j\geq x_0 + \frac{j}{10}$  or $y_j\geq \frac{n}{10}$")
 \end{lem}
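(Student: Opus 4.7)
The plan is to treat $f_u$ as a perturbation of the affine map $\ell_u(z) = (1+u)z - 1$, and to transport to the nonlinear setting the geometric properties of $\ell_u$-orbits that underlie Proposition \ref{prop:linear}. For $u \in W_n$, the map $\ell_u$ is approximately a rotation of angle $-2\pi/n$ centered at $1/u \approx in/(2\pi)$, so the linear orbit $\tilde z_j := \ell_u^j(z^\iota)$ lies on an almost-circular arc of radius $\asymp n$ around $1/u$. A direct computation shows that for $0 \le j \le \lceil n/2 \rceil$, strengthened forms of (i)--(iii) hold for $\tilde z_j$ with an $n/10$ safety margin: namely $|\tilde z_j - 1/u| \ge n/5$, $\tilde z_j$ remains at distance $\ge n/5$ from the forbidden region, and either $\mathrm{Re}(\tilde z_j) \le x_0 - j/5$ or $\mathrm{Im}(\tilde z_j) \ge n/5$. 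These are exactly the statements used (implicitly) in the proof of Proposition \ref{prop:linear} to connect $z^\iota$ to $z^o$ by the half-orbits of length $\lceil n/2 \rceil$.

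I would then compare the nonlinear orbit $z_j := f_u^j(z^\iota)$ to $\tilde z_j$ by setting $e_j = z_j - \tilde z_j$. The recursion $z_{j+1} = (1+u) z_j - 1 + \eta_u(z_j)$ gives $e_{j+1} = (1+u) e_j + \eta_u(z_j)$, hence, using \eqref{eq:eta u} and $|1+u|^{n/2} \le 2$ for $n$ large,
\[
|e_j| \;\le\; \sum_{i=0}^{j-1} |1+u|^{j-1-i} |\eta_u(z_i)| \;\le\; 2 M \sum_{i=0}^{j-1} \frac{1}{|z_i|^{\gamma}} .
\]
Conclusions (i)--(iii) would then be established by induction on $j$, together with the added bound $|e_j| \le C' M n^{1-\gamma}$. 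For the inductive step, (iii) at steps $0, \dots, j-1$ combined with $|z^\iota|\ge R$ yields the lower bound $|z_i| \ge \min(R + i/10,\ n/10)$ (the two cases of (iii) both contribute to $|z_i|$ via $x_i^2 + y_i^2$, and $x_0 \le -R$ in the relevant sector). Splitting the sum at $i \simeq n - 10R$,
\[
\sum_{i=0}^{j-1} \frac{1}{|z_i|^{\gamma}} \;\le\; \int_0^{n-10R} \frac{di}{(R+i/10)^{\gamma}} + \int_{n-10R}^{n/2} \frac{di}{(n/10)^{\gamma}} \;\le\; C(k)\, n^{1-\gamma} ,
\]
whence $|e_j| \le 2 C(k) M\, n^{1-\gamma}$. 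The numerical hypothesis $R \ge (10^5 kM)^k$, i.e.\ $R^{\gamma} \ge 10^5 M/\gamma$, is exactly what is needed to ensure that the contribution of the initial segment (where $|z_i| \simeq R$ rather than $\simeq n$) is swallowed by an absolute constant. For $n \ge N(R)$ large, the resulting error $|e_j| = o(n)$ is much smaller than the $n/10$ safety margin of the linear orbit, so (i)--(iii) propagate to step $j$ as claimed.

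The statement for the backward orbit $f_u^{-j}(z^o)$ starting at $z^o \in Q^o$ is entirely symmetric: the inverse $f_u^{-1}$ again has the form $\ell_u^{-1}(z) + O(|z|^{-\gamma})$ and rotates around $1/u$ in the opposite direction, so the argument above applies verbatim after reversing time and exchanging the roles of $S^\iota$ and $S^o$ (with the real-part monotonicity in (iii) flipped accordingly). The main subtlety, and what forces the particular shape of the hypotheses, is the bootstrap: the lower bound on $|z_i|$ that controls $|e_j|$ is itself a consequence of (iii), which one only knows \emph{a posteriori} once $e_j$ has been shown to be small. The induction scheme resolves this circularity, and the explicit constants $R \ge (10^5 kM)^k$ and $n \ge N(R)$ in the statement are chosen precisely so that the accumulated perturbation error stays strictly within the linear safety margin at every step of the iteration.
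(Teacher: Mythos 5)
Your proof takes a genuinely different route from the paper's. The paper works throughout in the translated coordinate $z-1/u$ and exploits the multiplicative recursion $(z_{j+1}-1/u) = (1+u)(z_j-1/u) + \eta_u(z_j)$, taking the modulus and argument of the telescoping product $\prod_j\bigl(1 + \eta_u(z_j)/((1+u)(z_j-1/u))\bigr)$; because $|z_j-1/u|\asymp n$ uniformly over $0\le j\le \lceil n/2\rceil$, every factor is $1+O((R^\gamma n)^{-1})$ and the estimates are uniform in $j$. You instead compare the nonlinear orbit $z_j$ \emph{additively} to the linear orbit $\tilde z_j = \ell_u^j(z^\iota)$ and control the cumulative error $e_j$. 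This is a legitimate alternative strategy, but it introduces a $j$-dependence that the paper's argument is designed to bypass.

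Two of your claims are false near the beginning of the orbit, and the issue is not cosmetic. First, $\tilde z_0 = z^\iota$ lies in the fixed compact set $Q^\iota \Subset \Omega_R$, so its distance to the forbidden region is $O(1)$ (depending only on $Q^\iota$, $R$), not $\ge n/5$; likewise $|z^\iota - 1/u| \approx n/(2\pi) < n/5$, so even the strengthened form of (ii) with margin $n/5$ already fails at $j=0$. Second, the conclusion "$|e_j| = o(n)$, hence smaller than the $n/10$ safety margin" is only meaningful for $j$ comparable to $n$. For small $j$, the safety margin for (i) is $\sim j/10$ (it starts at $O(1)$ and grows roughly linearly, as conclusion (iii) itself records), so what you must show is that the \emph{step-dependent} bound $|e_j|\lesssim M(R+j/10)^{1-\gamma}$, not merely the terminal $O(M n^{1-\gamma})$, stays a small fraction of $j/10$ at \emph{every} step; this is exactly what the hypothesis $R^{\gamma}\ge 10^5 k M$ is for. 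You gesture at this (the remark that $R^\gamma \ge 10^5 M/\gamma$ lets the initial-segment contribution be "swallowed"), but you never make the $j$-uniform comparison, so the induction you set up is not actually closed. The additive scheme can be repaired by carrying the $j$-dependent error bound through the induction, but the paper's $1/u$-centered multiplicative recursion avoids the difficulty entirely, since the controlled quantity is already $\asymp n$ at every step.
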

%
%

\begin{proof}
We first deal with the  assertions  (i) and  (ii).
We argue by induction so assume the result holds for $j\leq k-1$, for some 
$k\leq \lceil \frac{n}{2}\rceil$. Let us write 
$$\frac{f_u^j(z^\iota)-\frac1u}{f_u^{j-1}(z^\iota)-\frac1u} = (1+u) +
 \frac{ \eta_u(f_u^{j-1}(z^\iota))}{f_u^{j-1}(z^\iota)-\frac1u},$$
so that
\begin{equation}
     \label{eq:prod fu}
     \frac{f_u^{k}(z^\iota)-\frac1u}{ z^\iota-\frac1u} = (1+u)^{k} \prod_{j=0}^{k} \lrpar{1+  
    \frac{ \eta_u(f_u^{j}(z^\iota))}{(1+u)(f_u^{j}(z^\iota)-\frac1u)} }.
\end{equation} 
Considering the modulus of this expression, we see that 
\begin{align*}
    \abs{f_u^{k}(z^\iota)-\frac1u} &\geq \abs{ z^\iota-\frac1u}
\lrpar{1- \frac{30}{n^2}}^{\lceil \frac{n}{2}\rceil}  \prod_{j=0}^{k-1}
\lrpar{1- \frac{M}{(0.9)R^{\gamma} \abs{f_u^{j}(z^\iota)-\frac1u} }} \\
&\geq \abs{ z^\iota-\frac1u}
\lrpar{1- \frac{30}{n^2}}^{\lceil \frac{n}{2}\rceil} 
     \lrpar{1- \frac{10M}{(0.9)R^{\gamma} n }}^{\lceil \frac{n}{2}\rceil},
\end{align*}
where 
the first estimate follows from bound \eqref{eq:eta u} on $\eta_u$ and 
the second estimate follows from the induction hypothesis.  
Since $\lrpar{1- \frac{30}{n^2}}^{n/2}\cv 1$ as $n\cv\infty$, 
by our choice of $R$ we see that when $n\geq N(R)$, 
$$\abs{f_u^{k}(z^\iota)-\frac1u} \geq \frac{9}{10} \abs{ z^\iota-\frac1u}\geq \frac{9}{10} 
d(\frac1u, S^\iota)\geq  \frac{9}{10}  \frac{n}{2\sqrt2\pi}\geq
\frac{n}{10}, $$ 
which proves (ii).

To prove that $f^k_u(z^\iota)$ does not enter the forbidden region, we
look at the argument of $f^k_u(z^\iota)-\frac1u$. Recall that
$\abs{\arg(1+u)+\frac{2\pi}{n} }\leq \unsur{100n}$ 
so by (\ref{eq:prod fu}) 
$$
      \abs{\arg\lrpar{\frac{f_u^{k}(z^\iota)-\frac1u}{ z^\iota-\frac1u}} - \lrpar{\frac{-2k\pi}{n}} }
    \leq \frac{k}{100n} + \sum_{j=0}^{k-1} \abs{\arg\lrpar{1+  
   \frac{ \eta_u(f_u^{j}(z^\iota))}{(1+u)(f_u^{j}(z^\iota)-\frac1u)} }}.
$$ 
     With our choice of $R$, 
$\abs{\frac{ \eta_u(f_u^{j}(z^\iota))}{(1+u)(f_u^{j}(z^\iota)-\frac1u)} }\leq \frac{1}{200n}$, so since 
$\log(1+z) = z + h.o.t.$, when $n$ is large enough, we infer that 
$$ 
      \abs{\arg\lrpar{1+  
     \frac{ \eta_u(f_u^{j}(z^\iota))}{(1+u)(f_u^{j}(z^\iota)-\frac1u)}   }} \leq \frac{1}{100n}.
$$ 
Thus we obtain that 
$$
  \abs{\arg\lrpar{\frac{f_u^{k}(z^\iota)-\frac1u}{ z^\iota-\frac1u}} - \lrpar{\frac{-2k\pi}{n}} } \leq 
  \frac{k}{100n} +  \frac{k}{100n} \leq  \frac{k}{50n},
$$  
therefore arguing geometrically we see that 
 $f_u^{k}(z^\iota)$ stays outside the forbidden region. The induction
 step is complete proving  (i).
 
 \medskip
 
 To establish (iii), let us first observe that due to the the above estimate on the argument, 
when $j\leq \lceil \frac{n}2 \rceil$,   
$\displaystyle{\arg\lrpar{\frac{f_u^{j}(z^\iota)-\frac1u}{
      z^\iota-\frac1u}}}$  
is equal to ${\frac{-2j\pi}{n}} $, up to an error of at most $\unsur{50}$. 
Expressing in coordinates, we see that
$$x_{j+1} = x_j -1 + \frac{2\pi}{n} y_j + \e_j \text{ and } y_{j+1} = y_j -  \frac{2\pi}{n} x_j + \e'_j,$$
 with 
 $$\abs{\e_j} , \abs{\e'_j}\leq \max \lrpar{\unsur{n^{1+ \gamma/2}}, \frac{M}{\abs{z_j}^{\gamma}}}\leq  
\unsur{1000}$$ 
because $n^ {\gamma/2}\geq 100$ and by the previous step, $z_j\in \om_R$.  We see that, as soon as 
$y_j\leq \frac{n}{10}$, we have that $x_{j+1} \leq x_j - \frac{1}{10}$. Now when  $y_j$ reaches 
$\frac{n}{10}$, and until $j$ is as large as $ \frac{n}{4}$ (a time at which 
 $y_j$ is approximately equal to $\frac{n}{2\pi}$), 
since $\abs{z_j - \frac1u}\geq \frac{9}{10} \abs{z_0 -\frac1u}$, by expressing the distance in coordinates 
 and using the estimate on the 
 argument, we infer that 
$x_j\leq -\frac{n}{100}$ 
therefore $y_{j+1}\geq y_j$. The result follows. 

The argument  for $f^{-j}_u(z^o)$, 
$1\leq j\leq \lceil \frac{n}{2}\rceil$ is similar, and is left to the reader.
 \end{proof}

%
%
%
\begin{proof}[Proof of Theorem \ref{thm:tv dim1}]
We argue as in 
Proposition \ref{prop:linear}. As before let $l = \lfloor n/2 \rfloor$ and $m=n-l$.
Using  \eqref{eq:prod fu} with $k=l$, we obtain
 $$
     \frac{f_u^{l}(z^\iota)-\frac1u}{ z^\iota-\frac1u} = (1+u)^{l} \prod_{j=1}^{l} \lrpar{1+  
    \frac{
      \eta_u(f_u^{j-1}(z^\iota))}{(1+u)(f_u^{j-1}(z^\iota)-\frac1u)}}.
$$
  Hence, using  Lemma \ref{lem:fu} 
together with the  inequality 
$\abs{\prod(1+x_j)-1}\leq \exp {\sum \abs{x_j}}-1$,   
we infer that 
$$
  \abs{\frac{f_u^{l}(z^\iota)-\frac1u}{ (1+u)^{l}\lrpar{z^\iota-\frac1u}} -1}
   = \abs{\frac{f_u^{l}(z^\iota)-\frac1u}{\ell_u^{l}(z^\iota)-\frac1u} -1} 
    \leq \exp \lrpar{\sum_{j=0}^{l} \frac{10 M}{(0.9) n \min(\frac{\abs{z^\iota}}{2}+ \frac{j}{10} , \frac{n}{10}) ^{\gamma}}} -1   \\
$$
\begin{equation}\label{X}
\leq \exp \lrpar{\sum_{j=0}^{\lceil\frac{n}{2}\rceil} \frac{100 M}{9 n  \min(\frac{j}{10}+1, \frac{n}{10})^{\gamma}}} -1
\leq \exp\lrpar{\frac{1000M}{n^{\gamma}}}-1 = 
 O\lrpar{\unsur{n^{\gamma}}},
 \end{equation}
 where in the last inequality we use an elementary estimate
 $$\sum_{j=0}^{\lceil\frac{n}{2}\rceil} \frac{1}{ \min(\frac{j}{10}+1, \frac{n}{10})^{\gamma}} \leq 50 n^{1-\gamma}.$$
Doing the same with $f^{-m}_u(z^o)$ we get  that 
$$ \frac{f_u^{l}(z^\iota)-\frac1u}{f_u^{-m}(z^o)-\frac1u }  
\lrpar{\frac{\ell_u^{l}(z^\iota)-\frac1u}{\ell_u^{-m}(z^o)-\frac1u }}^{-1} = 1 + O\lrpar{\unsur{n^{\gamma}}}.$$
Thus, from \eqref{eq:lun} we deduce that
$$\frac{f_u^{l}(z^\iota)-\frac1u}{f_u^{-m}(z^o)-\frac1u }   = 1+   \frac{v}{n^{  \gamma/2}}    + 
O\lrpar{\unsur{n^{\gamma}}}, $$ where the $O(\cdot)$ is uniform with
respect to  $(v,z^\iota, z^o)\in \dd\times Q^\iota\times Q^o$. 
Therefore  we conclude that if $n$ is large enough,  when $u$ winds
once around $\fr W_n$ 
(i.e. $v$ winds once around $\fr\dd$),  the curve  
$u\mapsto \displaystyle{\frac{f_u^{l}(z^\iota)- 1/u}{f_u^{-m}(z^o)- 1/u }}$ winds once around 1, so by the Argument Principle, there exists a unique 
 $u = u(z^\iota, z^o)\in W_n$ (thus, depending holomorphically on $(z^\iota, z^o)$), such that 
 $ f_u^{l}(z^\iota) = f_u^{-m}(z^o) $. Given such a $u$, we see that the iterates 
  $ f_u^{j}(z^\iota) $ $1\leq j\leq n$ stay outside the forbidden region, and 
  $f_u^{n }(z^\iota) = z^o$. 

\medskip

Let us now estimate the derivative   $(f_u^{l})'(z)$, for $z\in Q^\iota$ (this is place where we  
 need to be precise on the value of $R$).  
 Recall that for $1\leq j\leq l$, 
$f_u^{j}(z)$ is well-defined, and write  
$$(f_u^{l})'(z)  = \prod_{j=1}^{l} (f_u)'(f_u^{j-1}(z)) \text{, where } (f_u)'(z)  = 1+u + \eta_u'(z), \ 
\abs{\eta_u'(z)} \leq \frac{M}{\abs{z}^{1+\unsur{k}}}.$$ So we get that 
$$(f_u^{l})'(z)  = (1+u)^{l} \prod_{j=1}^{l}\lrpar{1 + \frac{\eta_u'(f_u^{j-1}(z))}{1+u}} .$$
 Our choice of $u$ and $l$ implies that $0.99 \leq \abs{ (1+u)^{l}}\leq 1.01$ for large $n$, while 
 \begin{equation}\label{eq:exp}
 \abs{\prod_{j=1}^{l}\lrpar{1 + \frac{\eta_u'(f_u^{j-1}(z))}{1+u}}-1} \leq 
 \exp\lrpar{ \sum_{j=1}^{\lceil\frac{n}2\rceil} \frac{10M}{9(\min(\frac{R}{2}+\frac{j}{10},\frac{n}{10}))^{1+ \gamma} }}-1 .
 \end{equation}
For large $n$ (depending only on $R$) we have that 
 \begin{equation}\label{eq:exp2}
  \sum_{j=1}^{\lceil\frac{n}2\rceil} \unsur{(\min(\frac{R}{2}+\frac{j}{10},\frac{n}{10}))^{1+ \gamma} } \leq 
 \sum_{j=\lfloor \frac{R}{2} \rfloor}^{\infty} \frac{10^{1+\gamma}}{j^{1+\gamma}} + 
 \sum_{j=\lfloor \frac{n}{10}\rfloor}^{\lceil\frac{n}2\rceil}\lrpar{ \frac{n}{10}}^{-(1+\gamma)}  \leq 200k R^{-\gamma}.
 \end{equation}
 Since $R\geq (10^5kM)^{k}$, by \eqref{eq:exp2} 
 we infer that the right hand side of \eqref{eq:exp} 
 is smaller than $\frac{1}{400}$, and finally we conclude that 
  when  $z\in Q^\iota$ and $n$ is large enough, 
 $  \abs{(f_u^{l})'(z)-1 }\leq  \frac{1}{50}$.
 
 The following lemma is classical, for convenience we recall the proof below. 
 
 \begin{lem}\label{lem:classical}
 Let $f$ be a holomorphic function on 
 $\dd_r$ 
 such that $\abs{f'-1}\leq a<1$ on  $\dd_r$. 
 Then $f$ is injective on 
 $\dd_r$ and 
  $$ D(f(0), (1-a)r)\subset f(\dd_r)\subset D(f(0), (1+a)r).$$
 \end{lem}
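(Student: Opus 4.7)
My approach would be to extract all three conclusions from a single identity obtained by integrating $f'-1$ along line segments in $\dd_r$. For $z_1,z_2\in\dd_r$, writing
\[
f(z_1)-f(z_2)-(z_1-z_2)=\int_0^1\bigl(f'(z_2+t(z_1-z_2))-1\bigr)(z_1-z_2)\,dt
\]
and invoking the hypothesis $|f'-1|\le a$ immediately yields the comparison
\[
|f(z_1)-f(z_2)-(z_1-z_2)|\le a\,|z_1-z_2|,
\]
which is the tool I would use throughout.

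From this inequality, the reverse triangle inequality gives $|f(z_1)-f(z_2)|\ge(1-a)|z_1-z_2|$, which is nonzero for $z_1\ne z_2$ and so proves injectivity; specializing to $z_2=0$ and using the ordinary triangle inequality yields $|f(z)-f(0)|\le(1+a)|z|<(1+a)r$, which is the outer inclusion $f(\dd_r)\subset D(f(0),(1+a)r)$.

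The inner inclusion, which I expect to be the (mild) main obstacle, I would handle by a fixed-point argument. Given $w\in D(f(0),(1-a)r)$, I would reformulate $f(z)=w$ as the fixed-point equation $T(z)=z$ for the auxiliary map $T(z):=z-f(z)+w$. Since $T'=1-f'$ satisfies $|T'|\le a<1$, the map $T$ is an $a$-contraction. To invoke the Banach fixed point theorem I would choose $r'<r$ with $|w-f(0)|<(1-a)r'$, which is possible thanks to the strict inequality on $|w-f(0)|$. Then the estimate
\[
|T(z)|\le|T(z)-T(0)|+|T(0)|\le a r'+|w-f(0)|<r'
\]
shows that $T$ maps $\overline{\dd}_{r'}$ into itself, and the resulting fixed point $z\in\dd_{r'}\subset\dd_r$ is the desired preimage of $w$. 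As an alternative to Banach, a Rouch\'e comparison between $f(z)-w$ and $z-(w-f(0))$ on the circle $|z|=r'$ (where the integration estimate above makes $|f(z)-z-f(0)|\le ar'<r'-|w-f(0)|$) would yield exactly one zero of $f-w$ in $\dd_{r'}$, giving the same conclusion.
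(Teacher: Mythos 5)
Your proof is correct and is essentially the paper's own argument: the paper also deduces injectivity from the fact that $z\mapsto z-f(z)$ is an $a$-contraction and obtains the inner inclusion by applying the Contraction Mapping Principle to $z\mapsto z-f(z)+w$. The only (welcome) difference is that you take slightly more care about completeness by passing to a closed subdisk $\overline{\dd}_{r'}$ before invoking Banach, where the paper applies the principle directly ``in $\dd$''.
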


 From this lemma we deduce that  there exists $r>0$ independent on $n$
  such that  $f_u^{l}$ is univalent on $B(z^\iota, r)$, and its image contains $B(f^{l}(z^\iota), r)$. 
 Likewise, there exists $r>0$ such that $f_u^{-m}$ is univalent on $B(z^o, r)$, with  derivative  $\frac{1}{50}$-close to 1.  
 Thus we  conclude that $f_u^{l}$ maps  
 univalently $B(z^\iota, \frac{r}{2})$ into $  B(z^o, r)$, and its derivative satisfies 
 $ \abs{(f_u^{n})'(z)-1 }\leq  \frac{1}{5}$ This completes the proof of the theorem. 
\end{proof}
 
 \begin{proof}[Proof of Lemma \ref{lem:classical}]
 Replacing $f$ by $z\mapsto r^{-1}f(rz) -f(0)$ it is no loss of generality to assume that $f(0) = 0$ and $r=1$. 
Since $z\mapsto z-f(z)$ is contracting, if $f(z) =f(z')$ we get that 
$$\abs{(z-f(z)) - (z'-f(z'))} =\abs{z-z'}\leq a \abs{z-z'},$$  whence $z=z'$. Thus $f$ is injective. That $f(\dd) \subset \dd_{1+a}$ readily follows from  the mean value inequality. Finally, to prove that for any $w\in \dd_{1-a}$, the equation $f(z) = w$ admits a solution, it is enough to apply the Contraction Mapping Principle to $g: z\mapsto z-f(z) + w$ in $\dd$.
\end{proof}

 \subsection{Transit mappings in dimension 2}  
 We return to the two-dimensional setting. 
The treatment will be based on the observation that in a      
two-dimensional thickening of the domain $\Omega_R$, the  maps $f_\la$ 
admit a {\it dominated splitting}, i.e., they have a horizontal
cone field invariant under the forward dynamics, and moreover, they
are contracting in the vertical direction.

 Let us first fix some notation.
As before the parameter space  $\La$ is the unit disk. 
 Changing coordinates and passing to an iterate if needed, by Proposition \ref{prop:good coord} 
 we may assume that 
 $(f_\la)_{\la\in \La}$ is  a holomorphic family of germs of diffeomorphisms in $(\cd, 0)$ of the form 
\begin{equation}\label{eq:flambdaqbis}
f_\la(x,y ) =   (\rho_\la x+ x^{k+1} + x^{k+2}g_\la(x,y), b_\la y + xh_\la(x,y)),
\end{equation} where $\rho_0=1$, $\frac{d\rho}{d\la}(0)\neq 0$, and $\abs{b_\la}\leq b<1$ for all $\la$. 

As in the one-dimensional case, we consider two consecutive sectors 
 $S^\iota = \set{\arg x\in \lrpar{-\frac{5\pi}{4k}, -\frac{3\pi}{4k}} }$ and 
$S^o = \set{\arg x\in \lrpar{-\frac{\pi}{4k}, \frac{\pi}{4k}}}$. For $\la=0$, 
consider  a bidisk  $V=V_1\times V_2$ around  the origin  such that: 
\begin{itemize}
\itm $(S^\iota\cap V_1) \times V_2$ is attracted to the origin under forward iteration;
\itm there exists a local repelling petal $\Sigma\subset V_1\times V_2$, which is a graph over $S^o\cap V_1$,
   defined by the property that every orbit converging to 0 under backward iteration 
in $(S^o\cap V_1) \times V_2$ belongs to $\Sigma$. 
\end{itemize}

The next theorem is the key technical mechanism which will allow us to create a homoclinic tangency from a 
critical point in a semi-parabolic basin. 
It is the  counterpart of Theorem \ref{thm:tv dim1} in the dissipative 2-dimensional setting. 
Assuming that  a semi-parabolic bifurcation of the form 
\eqref{eq:flambdaqbis} occurs, 
it  select parameters $\la_n$ such that the iterates $f^n_{\la_n}$ map a given point $p^\iota$ in some  semi-parabolic 
 basin (almost) onto a given target $p^o$ located in a repelling petal $\Sigma$,   with a good control on the geometry of 
 $f^n_{\la_n}$ near $z^\iota$. This geometric control   is expressed in terms of the pull-back action on a foliation  transverse to 
 $\Sigma$ near $p^o$.  
 
 \begin{thm}\label{thm:tv}
Let $(f_\la)_{\la\in \La}$ be as above.
There exists a bidisk $V=V_1\times V_2$ around  $0\in \cd$  such that  if $Q^\iota\Subset (S^\iota\cap V_1)\times V_2$, 
 $Q^o\Subset \Sigma$, and $\mathcal{F}$ is a germ of holomorphic foliation   
  transverse to $\Sigma$ along $Q^o$, 
   then for every  neighborhood $W$ of $0$ in $\La$, there exists an integer $N$ and a radius 
$r$  such that if $n\geq N$, there exists a holomorphic map $\la_n:Q^\iota\times Q^o\cv W$ such that 
for every $(p^\iota, p^o)\in Q^\iota\times Q^o$, for 
$\la_n= \la_n(p^\iota, p^o)$, 
there exists a  bidisk $D^2(p^\iota, r)$ around $p^\iota$, and a neighborhood $D_\Sigma(p^o,r) = B(p^o, r)\cap \Sigma$ 
of $p^o$ in $\Sigma$,   such that the following properties hold: 
 \begin{itemize}
\itm $f^n_{\la_n} (p^\iota)$ belongs to $\mathcal{F}(p^o)$, the leaf of $\mathcal{F}$ through $p^o$;
 \itm the preimage of $\mathcal{F}$ under $f^n_{\la_n}$ 
 defines a holomorphic foliation  $\mathcal{F}^{-n}$ of $D^2(p^\iota, r)$ by vertical  graphs
along which  $f^n_{\la_n}$ contracts by a factor $b^n$;
\itm the  derivative of $f^n_{\la_n}$ along any horizontal line  in  $D^2(p^\iota, r)$ satisfies 
$\displaystyle \abs{\frac{\fr  f^n_{\la_n}}{\fr z} -1}\leq \frac15.$
\end{itemize}
\end{thm}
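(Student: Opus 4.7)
The plan is to upgrade Theorem \ref{thm:tv dim1} by exploiting the strong (vertical) contraction present in the normal form \eqref{eq:flambdaqbis}, which forces a dominated splitting throughout the orbit segments considered in the one-dimensional argument. First, I would work in the $z$-coordinate $z=\rho_\la^{k+1}/(kx^k)$ introduced before Proposition \ref{prop:linear}, keeping $w=y$, so that in $(z,w)$ the map takes the form
$$\widetilde f_\la(z,w)=\bigl((1+u)z-1+\eta_\la(z,w),\ b_\la w+\xi_\la(z,w)\bigr),$$
where $\eta_\la=O(|z|^{-\gamma})$ and $\xi_\la$ vanishes on the invariant strong stable manifold $\{x=0\}$ and is small on $\om_R$ (its size is controlled by $|z|^{-1/k}$, with derivatives uniformly bounded). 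On a two-dimensional thickening $\widetilde\om_R=\om_R\times\dd_\eta$ this yields a horizontal invariant cone field and a vertical contracting cone field: for sufficiently small $\eta$ and large $R$, the horizontal derivative is $1+u+O(|z|^{-1-\gamma})$ while the vertical derivative is $b_\la+O(|z|^{-1/k})$, so vertical vectors are contracted by a factor arbitrarily close to $b$ while horizontal vectors essentially follow the one-dimensional dynamics. This domination will persist along every orbit segment that Lemma \ref{lem:fu} forces to remain in $\widetilde\om_R$.

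Next, I would construct the preimage foliation $\mathcal{F}^{-n}$ by a standard backward graph transform. Start with $\mathcal{F}$ transverse to $\Sigma$ near $p^o$; near $p^o$ the leaves of $\mathcal{F}$ are close to vertical lines (since $\Sigma$ is a horizontal graph). Pull back by $f_\la^{-1}$: by the vertical contraction and horizontal cone invariance, each preimage leaf stays a vertical graph of small slope over its own horizontal extent, and iterating $n$ times produces a foliation of a fixed-size bidisk $D^2(p^\iota,r)$ by vertical graphs. The contraction rate along these graphs is exactly the product of vertical derivatives along the orbit, which is $\prod b_{\la}(1+O(|z_j|^{-1/k}))=b^n(1+o(1))$ (the correction being summable thanks to the estimate $|z_j|\geq \min(|z_0|/2+j/10,n/10)$ from Lemma \ref{lem:fu}(iii)). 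This yields the second conclusion of the theorem.

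The third step is to reduce the selection of parameters to the one-dimensional problem. Projecting along the vertical foliation $\mathcal{F}^{-n}$ on the source side (resp.\ along $\mathcal{F}$ on the target side) gives a one-dimensional holomorphic dynamical system on the leaf spaces, whose leading-order behavior is exactly the $x$-component of \eqref{eq:flambdaqbis}. More concretely, I would write down the product formula \eqref{eq:prod fu} for the quantity
$$\Phi_\la(p^\iota,p^o):=\frac{\pi_z f^{l}_\la(p^\iota)-1/u}{\pi_z f^{-m}_\la(p^o)-1/u},$$
where $\pi_z$ denotes the $z$-coordinate and $l+m=n$. The $w$-dependence only enters through the error terms $\eta_\la,\xi_\la$, which are $O(|z|^{-\gamma})$; the same estimates as in \eqref{X} give $\Phi_\la(p^\iota,p^o)=1+v/n^{\gamma/2}+O(n^{-\gamma})$ uniformly in $(v,p^\iota,p^o)$ when $u=-2\pi i/n+v/n^{1+\gamma/2}$ runs over $W_n$. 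The Argument Principle then produces a unique $\la_n(p^\iota,p^o)\in W_n$ with $\pi_z f^l_{\la_n}(p^\iota)=\pi_z f^{-m}_{\la_n}(p^o)$; this forces $f^n_{\la_n}(p^\iota)$ to lie on the leaf $\mathcal{F}(p^o)$, since by construction this leaf is the $\mathcal{F}$-saturation of $p^o$, and $\pi_z$ (after straightening $\mathcal{F}$) identifies points on the same $\mathcal{F}$-leaf. Holomorphic dependence of $\la_n$ on $(p^\iota,p^o)$ is automatic.

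Finally, the horizontal derivative estimate reduces to an essentially one-dimensional computation: differentiating $\pi_z\widetilde f_\la$ in the horizontal direction yields $(1+u)+O(|z|^{-1-\gamma})$, and the product over the orbit is controlled exactly as in \eqref{eq:exp}--\eqref{eq:exp2}, giving $|\partial f^n_{\la_n}/\partial z-1|\leq 1/5$ on $D^2(p^\iota,r)$ provided $R$ is chosen large enough (and then $N$ large enough) at the outset. The main obstacle I anticipate is the bookkeeping at the target end: $\Sigma$ is only continuous at the parabolic point, so to turn the equality of $z$-coordinates into membership in $\mathcal{F}(p^o)$ one has to keep the whole analysis strictly inside the region where $\mathcal{F}^{-j}$ remains genuinely transverse to horizontals; this is where the vertical contraction by $b^n$ is essential, and it is the reason Lemma \ref{lem:fu} must be extended to the two-dimensional setting with uniform control on both coordinates.
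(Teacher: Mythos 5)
Your overall outline — change to the $(z,w)$ coordinates, observe dominated splitting, build $\mathcal{F}^{-n}$ by backward graph transforms, then select $\la_n$ by the Argument Principle applied to a ratio of the form $(\cdot - 1/u)/(\cdot - 1/u)$, and finally control the horizontal derivative as in the one-dimensional case — is exactly the route the paper takes. However, there is a genuine gap in the ``more concrete'' form you give for the Argument Principle step, and it is worth isolating because it is precisely the point where the two-dimensional argument differs from the one-dimensional one.

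You propose matching $\pi_z f^l_\la(p^\iota)$ against $\pi_z f^{-m}_\la(p^o)$. The latter quantity is not controlled, and in fact is not even usable: $\Sigma$ is only $f_0$-invariant, so for $\la$ in the gate region $W_n$ the point $p^o\in\Sigma$ is not on the (perturbed or destroyed) repelling petal of $f_\la$, and under $f_\la^{-1}$ its vertical coordinate is expanded by roughly $1/b$ per step. After $m\sim n/2$ backward iterates the vertical deviation is of order $|u|/b^{n/2}$, which blows up, so $f^{-m}_\la(p^o)$ leaves $\om_R\times D$ long before time $m$ and $\pi_z f^{-m}_\la(p^o)$ carries no useful information. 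The object that \emph{does} stay controlled is the backward graph transform of the \emph{leaf} $\mathcal{F}_0=\mathcal{F}(p^o)$: at each step one takes $\mathcal{F}_{-j-1}=f_\la^{-1}(\mathcal{F}_{-j})\cap(\om_R\times D)$, which by a Rouch\'e/implicit-function argument remains a vertical graph of small slope over the $w$-disk (this is the content of Lemma~\ref{lem:graph}), and whose $z$-trace $\zeta_{-j}=\mathcal{F}_{-j}\cap\{w=0\}$ satisfies the same recursion \eqref{eq:z-j} as the one-dimensional backward orbit (Lemma~\ref{lem:fu pull}). The correct quantity to feed into the Argument Principle is therefore $(z^\iota_l-1/u)/(\psi(w^\iota_l)-1/u)$, where $z=\psi(w)$ is the equation of $\mathcal{F}_{-m}$ and $(z^\iota_l,w^\iota_l)=f_\la^l(p^\iota)$; one then observes that $(\psi(w^\iota_l)-1/u)/(\zeta_{-m}-1/u)=1+O(1/n)$ because the graph has small slope. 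Matching $z^\iota_l$ against $\psi(w^\iota_l)$ is exactly the condition $f^l_\la(p^\iota)\in\mathcal{F}_{-m}(p^o)$, whence $f^n_\la(p^\iota)\in\mathcal{F}(p^o)$; matching $z$-coordinates at two unrelated points, as your formula does, would not give this.

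A secondary point: your derivative estimate treats $\partial_z f_1^n$ as if it were the one-dimensional product $\prod(1+u+O(|z_j|^{-1-\gamma}))$, but there are cross-terms $\partial_w f_1\cdot\partial_z f_2^{j}$ that must be shown to be lower order; this requires simultaneously propagating a bound $|\partial_z f_2^j|\lesssim K/|z_{j-1}|^{1+\gamma}$ by induction (the paper's Lemma~\ref{lem:derivative}). This is doable and does not change the scheme, but it is not ``exactly'' the one-dimensional computation.

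Finally, note that you implicitly assume $\mathcal{F}$ consists of near-vertical graphs over $\om_R\times D$. For a general germ of foliation transverse to $\Sigma$ one first iterates backward a bounded number of times (using $f_0$, no parameter dependence) to straighten it into that form; this is a short preliminary reduction that should be recorded.
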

 
To prove the theorem, 
we consider the dynamics of $f_\la$ in a domain 
of the form
$$\set{\arg(x)\in \lrpar{\frac{-3\pi}{2k}, \frac{\pi}{2k}}}\times \dd_{s_0}$$ and 
as in the previous section  
we change coordinates by putting 
$(z,w) = (\frac{\rho_\la^{k+1}}{kx^k}, y)$, and $u  = \rho_\la^{-k} -1$. In the new coordinates, $u$ ranges in some small
 neighborhood $W$ of    the origin and $f_u$ is defined in a 
domain of the form $\om_{R_0} \times \dd_{s_0}$, where $\om_R$ is as in \eqref{eq:omegaR}, 
$R_0\geq 1$, $s_0\leq 1$, 
 and its expression   becomes 
\begin{equation}
f_u(z,w)  = ((1+u) z -1 + \eta_u(z,w), b_u w + \theta_u(z,w)), \label{eq:fubis}
\end{equation} where 
$\eta_u(z,w)$ and $\theta_u(z,w)$ are of the form $\unsur{z^{1/k}}\varphi_u(\unsur{z^{1/k}},w)$, 
 with $\varphi_u$ holomorphic in the neighborhood of the origin. In the new coordinates, 
$$S^\iota = \set{z,\ \arg(z)\in \lrpar{\frac{3\pi}{4}, \frac{5\pi}{4}}}\text{ and }
S^o = \set{z, \ \arg(z)\in \lrpar{-\frac{\pi}{4}, \frac{\pi}{4}}}.$$
As above  we let 
$W= W_n = D\lrpar{-\frac{2\pi i}{n}, \frac{1}{n^{1+ \gamma/2}}}$ (recall that $\gamma=1/k$).  

We will gradually adjust the parameters $R$ and $s$.  
We fix $M$ such that for $(z,w)\in \om_{R_0}\times \dd_s$ and $u\in W$, 
$$\abs{\eta_u(z,w)},\ 
\abs{ \frac{\fr\eta_u}{\fr w}(z,w)},  \ \abs{\theta_{u}(z,w)} , \ \abs{ \frac{\fr\theta_u}{\fr w}(z,w)} \leq \frac{M}{\abs{z}^{\gamma}},
\text{ and }\abs{ \frac{\fr\eta_u}{\fr z}(z,w)} \leq \frac{M}{\abs{z}^{1+\gamma}} 
.$$
 
 Due to dissipation, there is now an asymmetry between positive and negative iterates. 
 The idea of the construction of the transition mapping  is now
to  pull back   $n/2$ times a leaf of the foliation $\mathcal{F}$ from the ``outgoing'' region $Q^o$ 
  and to push forward $n/2$ times  a point from the ``incoming" region $Q^i$, and use the Argument Principle to make the image of the point belong to the preimage of the leaf. 
 
 \medskip

 We will first prove Theorem \ref{thm:tv} 
under a seemingly stronger assumption that the foliation
$\mathcal{F}$ is composed of graphs over the second coordinate
 in $\om_R\times \dd_s$,   
with slope bounded by $1/100$. 
We start by showing that 
the  backward graph transform is well defined for  
such vertical graphs  on an appropriate subregion of 
 $\om_{R_0}\times D_{s_0}$ 
(as long as  $R_0$ is large and $s_0$ is small).
This is a  standard technique for maps with dominated splitting, which is 
e.g. 
used to construct the strong stable  foliation on forward
invariant regions  
(this  is not the case
we are dealing with here). 

%
%

 \begin{lem}\label{lem:graph}
 Let $\widetilde \om_R = \set {\zeta \in \om_R :\, 
 D(\frac{1+\zeta   }{1+ u}, 1)\subset  \om_R}$. 
 There exists $R_0$ and $s_0$ such that if $R\geq R_0$ and $s\leq s_0$, then if $\Gamma$ is a vertical graph of slope $\leq 1/100$ in $\widetilde{\om}_R \times \dd_s$ then
 $f_u^{-1}(\Gamma) \cap({\om_R} \times \dd_s)$ is a vertical graph
 in ${\om_R} \times \dd_s$
  of slope $\leq 1/100$.
 \end{lem}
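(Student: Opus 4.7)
The plan is to solve the implicit equation defining $f_u^{-1}(\Gamma)$ using a standard graph transform argument, exploiting the fact that $f_u$ is expanding in $z$ (by a factor $\approx 1+u$) and contracting in $w$ (by a factor $\approx b<1$), with small perturbations that become negligible as $R\to\infty$. Write $\Gamma=\{(\psi(w'),w'):w'\in\dd_s\}$ with $\psi:\dd_s\to\widetilde\om_R$ holomorphic and $|\psi'|\le 1/100$. A point $(z,w)\in\om_R\times\dd_s$ lies in $f_u^{-1}(\Gamma)$ if and only if
\begin{equation}
(1+u)z-1+\eta_u(z,w)=\psi\bigl(b_u w+\theta_u(z,w)\bigr). \tag{$\ast$}
\end{equation}

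First I would solve $(\ast)$ for $z$ as a function of $w$. Rewriting it as a fixed point equation
$$z=\Phi_w(z):=\frac{1}{1+u}\Bigl(1+\psi\bigl(b_u w+\theta_u(z,w)\bigr)-\eta_u(z,w)\Bigr),$$
the bounds $|\partial_z\eta_u|,|\partial_z\theta_u|\le M|z|^{-1-\gamma}$ together with $|\psi'|\le 1/100$ give
$$\bigl|\partial_z\Phi_w\bigr|\le \frac{1}{|1+u|}\Bigl(\tfrac{1}{100}\cdot\tfrac{M}{|z|^{1+\gamma}}+\tfrac{M}{|z|^{1+\gamma}}\Bigr)\le \tfrac12$$
once $R_0$ is large. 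Fixing $w\in\dd_s$, apply the Banach fixed point theorem on the closed disk $\overline{D}\bigl((1+\psi(b_u w))/(1+u),1\bigr)$: invariance of this disk under $\Phi_w$ follows from $|\eta_u|\le M/R^\gamma\ll 1$ and from $|\psi(b_u w+\theta_u)-\psi(b_u w)|\le (1/100)\cdot M/R^\gamma$. One obtains a unique solution $z=F(w)$, depending holomorphically on $w$ by the analytic implicit function theorem.

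Next I would check that $F(w)\in\om_R$. Set $w':=b_u w+\theta_u(F(w),w)$; by choosing $s_0$ small and $R_0$ large we have $|w'|\le b\,s+M/R^\gamma\le s$, so $w'\in\dd_s$ and $\psi(w')\in\widetilde\om_R$. Then
$$F(w)=\frac{1+\psi(w')}{1+u}-\frac{\eta_u(F(w),w)}{1+u},$$
and by the very definition of $\widetilde\om_R$ the disk $D\bigl((1+\psi(w'))/(1+u),1\bigr)$ lies in $\om_R$; since the perturbation has modulus $\le M/R^\gamma\ll 1$, we conclude $F(w)\in\om_R$. Hence $f_u^{-1}(\Gamma)\cap(\om_R\times\dd_s)$ is precisely the graph $\{(F(w),w):w\in\dd_s\}$.

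Finally I would control the slope by implicit differentiation of $(\ast)$, which yields
$$F'(w)=\frac{\psi'(w')\bigl(b_u+\partial_w\theta_u\bigr)-\partial_w\eta_u}{(1+u)+\partial_z\eta_u-\psi'(w')\,\partial_z\theta_u}.$$
With $|\psi'|\le 1/100$, $|b_u|\le b<1$, and the bounds $|\partial_w\eta_u|,|\partial_w\theta_u|\le M/R^\gamma$, $|\partial_z\eta_u|,|\partial_z\theta_u|\le M/R^{1+\gamma}$, the denominator stays arbitrarily close to $1+u$ while the numerator is bounded by $b/100+O(R^{-\gamma})$. For $R_0$ sufficiently large (depending on the fixed $b<1$) and $s_0$ small, this gives $|F'(w)|\le 1/100$, as required. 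The only mildly delicate point is verifying that the candidate solution stays inside $\om_R$ rather than drifting into the forbidden region under the perturbation; this is precisely the purpose of shrinking the source region from $\om_R$ to $\widetilde\om_R$ in the hypothesis, and the rest is routine dominated-splitting bookkeeping.
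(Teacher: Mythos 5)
Your proof is correct and follows essentially the same route as the paper's: both solve the implicit equation $(1+u)z-1+\eta_u(z,w)=\psi(b_uw+\theta_u(z,w))$ using the dominated splitting estimates $\partial_z\Psi\approx 1+u$, $|\partial_w\Psi|\lesssim b/100$, the paper invoking Rouch\'e at $w=0$ plus the implicit function theorem where you invoke the Banach fixed point theorem plus implicit differentiation. If anything, you are more explicit than the paper about why the solution $z=F(w)$ remains in $\om_R$ for \emph{every} $w\in\dd_s$ (not just $w=0$), which is exactly what the shrinkage to $\widetilde\om_R$ is for.
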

 
 \begin{proof} Take $\Gamma = \set{z=\psi(w)}$ with 
 $\psi (0)\in  \widetilde\om_R$ and $\abs{\psi'}\leq 1/100$. Then $f^{-1}(\Gamma)$ admits an equation of the form $\Psi(z,w) =0$, where 
 $$\Psi(z,w) = (1+u)z-1 + \eta_u(z,w)  - \psi(b_u w + \theta_u(z,w)).$$ For $w=0$, Rouché's theorem implies that for $R\geq R_0$, there exists $z$ such that 
 \begin{equation}\label{eq:psi}
 \Psi(z, 0)=0  \text{ and }
 \abs{z- \frac{1+\psi(0)}{1+u}}\leq \frac{2M}{\abs{z}^{\gamma}}.
 \end{equation}
In addition we have
$$
    \frac{\fr\Psi}{ \fr z}  = 1+u+ O(R^{-\gamma})\text{ and }  
 \abs{ \frac{\fr\Psi}{ \fr w}}  \leq \frac{b}{100} + O(R^{-\gamma}).
$$  
Thus, the result follows from the Implicit Function Theorem. 
 \end{proof}
 
From now on the parameter $s=s_0$ will be fixed, and for notational
simplicity we   denote the second factor $\dd_{s_0}$ by $D$. 
Let $Q^\iota\Subset S^\iota\times D$. 
We will now state two different  counterparts of Lemma \ref{lem:fu}: one for push-forwards, and the other one for pullbacks. 
For $p^\iota=(z^\iota, w^\iota)\in Q^\iota$, we denote by $p^\iota_j = (z^\iota_j,w^\iota_j)$ its $j^{\rm th}$ iterate under $f_u$. 

\begin{lem}\label{lem:fu push} With notation as above,
fix $R\geq \max(M^k (1-b)^{-k} s_0^{-k}, (10^5kM)^k)$.
Then there exists an integer $N=N(R)$ 
such that if $n\geq N$, 
$p^\iota\in Q^\iota\times D$ and $u\in W_n$   
 then for every  $1\leq j\leq \lceil \frac{n}{2}\rceil$ we have that 
\begin{enumerate}[{\rm (i)}]
\item $f_u^j(p^\iota)$ belongs to $\om_R\times D$ 
\item $\displaystyle \abs{z^\iota_j-\unsur{u}} \geq \frac{n}{10}$;
\item  $\displaystyle \abs{z^\iota_j}\geq \min\lrpar{\frac{\abs{z_0}}{2}+\frac{j}{10},\frac{n}{10}}$. 
\end{enumerate}
 \end{lem}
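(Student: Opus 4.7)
The plan is to carry out a joint induction on $j$ that simultaneously propagates bounds on both coordinates $z^\iota_j$ and $w^\iota_j$. The estimates on the first coordinate (items (ii) and (iii)) will follow essentially verbatim from the one-dimensional Lemma \ref{lem:fu}, since the bound $|\eta_u(z,w)| \leq M/|z|^\gamma$ is exactly of the same shape as in dimension one, uniformly in $w \in D = \dd_{s_0}$. The genuinely new step, and the reason for the precise choice of $R$, is to show that the $w$-coordinate stays in $D$ along the entire orbit; this is where dissipation enters.

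For the $z$-coordinate, I would mimic the proof of Lemma \ref{lem:fu} line by line: writing
\[
\frac{z^\iota_{k}-1/u}{z^\iota_{0}-1/u} = (1+u)^{k} \prod_{j=0}^{k-1}\left(1+\frac{\eta_u(z^\iota_j,w^\iota_j)}{(1+u)(z^\iota_j-1/u)}\right),
\]
and applying the inductive bound $|z^\iota_j - 1/u| \geq n/10$ together with $|\eta_u| \leq M/|z^\iota_j|^\gamma \leq M/R^\gamma$ yields (ii) for step $k$ via the same modulus estimate as before. The argument-bound computation, which gives (i) and controls that the orbit avoids the forbidden sector, is identical: the extra variable $w$ only enters through $\eta_u$ and thus contributes no new terms. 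Finally, the coordinate-wise estimate $x_{j+1} \leq x_j - 1 + (2\pi/n)y_j + \varepsilon_j$, $y_{j+1} \geq y_j - (2\pi/n)x_j + \varepsilon'_j$ from the proof of Lemma \ref{lem:fu} goes through verbatim with $|\varepsilon_j|, |\varepsilon'_j| \leq 1/1000$, yielding (iii).

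The new ingredient is the bound $|w^\iota_j| \leq s_0$. From $w^\iota_{j+1} = b_u w^\iota_j + \theta_u(z^\iota_j, w^\iota_j)$ with $|b_u| \leq b < 1$ and $|\theta_u(z,w)| \leq M/|z|^\gamma$, iteration gives
\[
|w^\iota_j| \leq b^j |w^\iota_0| + \sum_{i=0}^{j-1} b^{j-1-i} \frac{M}{|z^\iota_i|^\gamma}.
\]
By item (iii), $|z^\iota_i| \geq |z^\iota_0|/2 \geq R/2$ (noting $Q^\iota \Subset S^\iota \cap V$ and taking $V$ contained in $\{|z| \geq R\}$), hence the sum is bounded by $2^\gamma M/(R^\gamma(1-b))$. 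The condition $R \geq M^k(1-b)^{-k}s_0^{-k}$ is equivalent to $M/(R^\gamma(1-b)) \leq s_0$, so (absorbing the factor $2^\gamma$ into $N$ by taking $N$ large if needed, or tightening the constants in the hypothesis) we obtain $|w^\iota_j| \leq s_0$ for all $1 \leq j \leq \lceil n/2 \rceil$. This closes the induction: the orbit remains in $\om_R \times D$, validating the use of the estimates on $\eta_u$ and $\theta_u$ at each step.

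The main obstacle, such as it is, is really only a matter of tracking constants consistently: the key quantitative balance is the interplay between the threshold $R$ (which controls how small the perturbation $\eta_u, \theta_u$ is), the size $s_0$ of the second disk, and the dissipation rate $b$. Once the choice $R \geq M^k(1-b)^{-k}s_0^{-k}$ is made to guarantee vertical stability, the horizontal estimates from dimension one transfer with no substantive change, since they only use $|\eta_u(z,w)| \leq M/|z|^\gamma$ uniformly in $w$.
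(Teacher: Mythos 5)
Your proof is correct and follows essentially the same approach as the paper: treat the horizontal coordinate by transplanting the estimates from the one-dimensional Lemma~\ref{lem:fu} (which only ever use $\abs{\eta_u}\leq M/\abs{z}^\gamma$, uniformly in $w$), and use the dissipation $\abs{b_u}\leq b<1$ together with $\abs{\theta_u}\leq M/R^\gamma$ to keep the vertical coordinate in $D$. The one inessential difference is that the paper establishes vertical invariance as a one-step fact (if $(z,w)\in\om_R\times D$ then $\abs{b_u w+\theta_u(z,w)}\leq b s_0 + M/R^\gamma\leq s_0$), which avoids your geometric-series computation and the attendant loss of the factor $2^\gamma$ that you then have to absorb.
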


\begin{proof} 
It follows from  expression \eqref{eq:fubis} for $f_u$  that if
$\frac{M}{R^{\gamma}}< (1-b) s_0$ and 
$(z,w) \in\om_R\times D$,  then the second coordinate of $f_u(z,w)$ belongs to $D$.
 So we only need to focus on the first coordinate. 
By \eqref{eq:fubis} we have that 
\begin{equation}\label{eq:zj+1}
\frac{z^\iota_{j+1} - \unsur{u}}{z^\iota_j - \unsur{u}} = 1+u+ \frac{\eta_u(f_u^j(p^\iota))}{z^\iota_j - \unsur{u}}, \end{equation} with 
$ \abs{\eta_u(f_u^j(p^\iota))}
\leq \frac{M}{R^{\gamma}}$ as soon as $f_u^j(p^\iota)\in \om_R\times D$. 
Then   the proof is identical to that of Lemma \ref{lem:fu}. 
\end{proof}

We now deal with pullbacks. Given $p^o\in Q^o$, we consider a holomorphic foliation 
$\mathcal{F}$ by  vertical graphs of slope bounded by $1/100$  
 in a neighborhood of $p^o$, and 
  by $\mathcal{F}(p)$  the leaf  through 
$p$. Starting from 
$\mathcal{F}_0 = \mathcal{F}(p)$, by applying  successive graph transforms 
we   inductively define $\mathcal{F}_{-j-1}  = f_u^{-1}(\mathcal{F}_{-j})\cap (\om_R\times D)$. 
 We also let $\zeta_{-j} = \mathcal{F}_{-j} \cap \set{w=0}$. 
 
 \begin{lem}\label{lem:fu pull} Let $R$ be as in Lemma \ref{lem:fu push}.
There exists an integer $N=N(R)$ 
such that if $n\geq N$, if $p\in Q^o$  and $u\in W_n$   
 then for every  $1\leq j\leq \lceil \frac{n}{2}\rceil$ we have  
\begin{enumerate}[{\rm (i)}]
\item $\mathcal{F}_{-j}(p)$ is a well-defined vertical graph in
  $\om_R\times D$,
       with  slope bounded by  $ 1/100$; 
\item $\displaystyle \abs{\zeta_{-j}-\unsur{u}} \geq \frac{n}{10}$;
\item  $\displaystyle \abs{\zeta_{-j}}\geq \min\lrpar{\frac{\abs{z_0}}{2}+\frac{j}{10},\frac{n}{10}}$. 
\end{enumerate}
 \end{lem}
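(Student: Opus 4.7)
The plan is to run an induction on $j$ from $0$ up to $\lceil n/2\rceil$ that mirrors the proof of Lemma \ref{lem:fu push}, but in backward time. At each step, I will use Lemma \ref{lem:graph} to produce the next leaf $\mathcal{F}_{-(j+1)}$, and then derive a recursion for $\zeta_{-j}$ of the same form as the pushforward recursion \eqref{eq:zj+1}, after which the modulus and argument estimates carry over verbatim.

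Concretely, once $\mathcal{F}_{-j}$ is known to be a vertical graph of slope $\le 1/100$ through $(\zeta_{-j}, 0) \in \om_R \times D$, I would apply Lemma \ref{lem:graph} to it. This requires checking that $\mathcal{F}_{-j}$ lies in $\widetilde\om_R \times D$. But a vertical graph of slope at most $1/100$ parameterized by $w \in \dd_{s_0}$ stays within $s_0/100$ of $\zeta_{-j}$ in the $z$-coordinate, and by the inductive version of (iii) we have $|\zeta_{-j}| \ge \min(|z_0|/2 + j/10, n/10)$, so for $R$ large and $s_0$ small enough the whole leaf is in $\widetilde\om_R \times D$; Lemma \ref{lem:graph} then gives $\mathcal{F}_{-(j+1)}$ as a vertical graph of slope $\le 1/100$. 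To extract the recursion for $\zeta_{-(j+1)}$, I use that $f_u(\zeta_{-(j+1)}, 0) = (z', w')$ lies on $\mathcal{F}_{-j}$, where
\[
z' = (1+u)\zeta_{-(j+1)} - 1 + \eta_u(\zeta_{-(j+1)}, 0), \qquad w' = \theta_u(\zeta_{-(j+1)}, 0).
\]
Since $\mathcal{F}_{-j}$ is a vertical graph of slope $\le 1/100$ through $(\zeta_{-j}, 0)$ and $|w'| \le M/|\zeta_{-(j+1)}|^\gamma$, we get $|z' - \zeta_{-j}| \le M/(100|\zeta_{-(j+1)}|^\gamma)$. Combined with the bound on $\eta_u$ this yields
\[
(1+u)\bigl(\zeta_{-(j+1)} - 1/u\bigr) \;=\; \bigl(\zeta_{-j} - 1/u\bigr) + O\!\left(\frac{M}{|\zeta_{-(j+1)}|^\gamma}\right),
\]
which is precisely the analogue of \eqref{eq:zj+1} with the roles of $j$ and $j+1$ exchanged.

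From this point the proof proceeds exactly as in Lemma \ref{lem:fu push}: iterating the recursion yields a telescoping product for $(\zeta_{-k} - 1/u)/(\zeta_0 - 1/u)$ whose modulus is controlled by $|1+u|^{-k}$ times a convergent product of corrections of size $O(M/(n |\zeta_{-(i+1)}|^\gamma))$. Taking moduli and using the geometric observation that $|\zeta_0 - 1/u| \ge d(1/u, S^o) \ge n/(2\sqrt{2}\pi)$ gives (ii), and then (iii) follows from the same ``either $x_j$ decreases by $1/10$ or $y_j \ge n/10$'' dichotomy already used in Lemma \ref{lem:fu}, with the direction of approximate rotation reversed (counter-clockwise instead of clockwise, since we are pulling back). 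The argument estimate shows that $\arg(\zeta_{-j} - 1/u)$ remains within $1/50$ of $\arg(\zeta_0 - 1/u) + 2\pi j/n$, which for $j \le \lceil n/2\rceil$ keeps $\zeta_{-j}$ outside the forbidden region and hence in $\om_R$, completing the inductive verification of (i).

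The main obstacle is bookkeeping: one must verify that the $w'$-error coming from the second coordinate of $f_u$ (through the slope bound on $\mathcal{F}_{-j}$) is absorbed into the same $O(M/|\zeta_{-(j+1)}|^\gamma)$ remainder as $\eta_u$, so that the backward recursion has exactly the same form as \eqref{eq:zj+1}. Once this is established, the situation is entirely symmetric to the pushforward case and the detailed estimates from Lemma \ref{lem:fu push} (the $\exp(\sum)-1$ bounds and the argument calculation) apply mutatis mutandis, so no genuinely new computation is required.
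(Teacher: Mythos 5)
Your proof is correct and follows essentially the same route as the paper: you apply Lemma \ref{lem:graph} inductively to obtain (i), derive the backward recursion for $\zeta_{-j}$ by noting that $f_u(\zeta_{-(j+1)},0)$ must lie on the graph $\mathcal{F}_{-j}$ and absorbing the small $w$-displacement into the $O(M/|\zeta|^\gamma)$ error, which reproduces the paper's recursion \eqref{eq:z-j}, and then invoke the one-dimensional estimates from Lemma \ref{lem:fu} to conclude (ii) and (iii). The paper presents the same derivation more compactly by reading the recursion directly off the bound \eqref{eq:psi} established inside the proof of Lemma \ref{lem:graph}.
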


\begin{proof}
From \eqref{eq:psi} we infer that 
$$\abs{\zeta_{-j-1} - \frac{1+\zeta_{-j}}{1+u}}\leq \frac{2M}{\abs{\zeta_{-j}}^{\gamma}},$$ that is, 
\begin{equation}\label{eq:z-j}
\frac{\zeta_{-j-1} - \unsur{u}}{\zeta_{-j} - \unsur{u}} = \unsur{1+u}+ 
\frac{\e_j}{\zeta_{-j} - \unsur{u}}, \text{ with} \abs{\e_j}\leq \frac{2M}{\abs{\zeta_{-j}}^{\gamma}},
\end{equation}
so as before the result follows exactly as in the one-dimensional case (for  {(i)} we also use Lemma \ref{lem:graph}).
\end{proof}

Pick now an $R$ satisfying all the above requirements.
Let as above $p^\iota  = (z^\iota, w^\iota) \in Q^\iota$,    $p^o\in Q^o$, and let 
 $\mathcal{F}^o$ be the leaf of $\mathcal{F}$ through $p^o$.
Let $l = \lfloor n/2 \rfloor$ and $m=n-l$. By Lemma \ref{lem:fu push}, for $1\leq j\leq l$  
$ f_u^j(p^\iota)\in \om_R$. 
Then, using \eqref{eq:zj+1} exactly as in (\ref{X})  
(i.e. by taking the product from 0 to  $l-1$) 
we deduce that  
$$
   \abs{\frac{ z^\iota_l - \unsur{u}}{(1+u)^{l} (z^\iota -\unsur{u})}-1}  
             = O\lrpar{\unsur{n^{\gamma}}}.
$$

%
%

On the pullback side, recall that  $\mathcal{F}_{-j}(p^o)$ denotes 
the $j^{\rm th}$ graph transform of $\mathcal{F}_0$, and  let  $\zeta_{-j}
= \mathcal{F}_{-j}(p^o)\cap \set{w=0}$. 
Using \eqref{eq:z-j} and taking the product from $j=0$  to $j =-m+1$
we obtain: 
$$
   \abs{(1+u)^{m} \frac{\zeta_{-m} - \unsur{u}}{\zeta_0 - \unsur{u}} -1 }=
   O\lrpar{\unsur{n^{\gamma}}}.
$$ 
Thus, writing 
$u = \frac{-2\pi i}{n} + \frac{v}{n^{1+\gamma/2}}$, from the two previous displayed equations   together with 
 \eqref{eq:DL}, we obtain that 
\begin{equation}
\label{eq:pr1}
 \frac{z^\iota_l - \unsur{u}}{\zeta_{-m} - \unsur{u}}  = 
1+\frac{v}{n^{\gamma/2}}+ O\lrpar{\unsur{n^{\gamma}}}.
\end{equation}
Now express the graph $\mathcal{F}_{-m}$ as $z=\psi(w)$, with $\psi(0) = \zeta_{-m}$. Since 
$\abs{\psi(w^\iota_l) - \psi(0)}\leq 1/100$, we infer that 
$$\frac{\psi(w^\iota_l) -\unsur{u}}{\zeta_{-m} - \unsur{u}}= 1+ O\lrpar{\unsur{n}},$$
so from \eqref{eq:pr1} we finally deduce that   
$$ \frac{z^\iota_l -
  \unsur{u}}{\psi(w^\iota_l)  - \unsur{u}}  =
1+\frac{v}{n^{\gamma/2}}+ O\lrpar{\unsur{n^{\gamma}}}.$$ 
By the Argument Principle we conclude that for every $(p^\iota, p^o)\in Q^\iota\times Q^o$, there exists a unique (hence, depending holomorphically on $(p^\iota, p^o)$)
 $u = u (p^\iota, p^o) \in W_n$ such that 
 $\psi(w^\iota_l)=z^\iota_l$, that is, 
 $f_u^{l}(p^\iota)\in \mathcal{F}_{-m}(p^o)$. 
 
 For   this parameter $u$ we can pull back   $\mathcal{F}_{-m}(p^o)$ under $f^{m}_u$, 
 thus obtaining a vertical graph $\mathcal{F}_{-n}(p^o)$
  through $p^\iota$.  It is clear that the 
 derivative $df^n$ contracts exponentially  along this graph, more precisely $\norm{d(f^n\rest{\mathcal{F}_{-n}(p^o)})}\lesssim b^n$.  
Indeed,  the tangent vectors to $\mathcal{F}_{-n}(p^o)$
 remain in a   cone field close to the vertical under iteration, and the second factor gets 
contracted at rate $b$. 

\medskip

From now on the parameter $u$ is fixed. To simplify notation we drop the subscript $u$ and write $f^j_u = (f_1^j, f_2^j)$ 
We will prove at the same time 
that $f_u^n$ is defined in a fixed domain around $p^\iota$ and
estimate its derivatives. Let $r>0$ be such that all iterates $f^j$,
$1\leq j\leq n$ are well defined on $  D(z^\iota, r)\times \set{w^\iota}$, 
and for all $j\leq n$,
$\norm{f^j(p^\iota) - f^j (z,w^\iota)}$ is bounded by, say, 1. 
For the moment $r$ 
 depends on $n$.  
 
The estimate we need is contained in  the following lemma. Denote $f^j(z, w^\iota) $ by $ (z_j,w_j^\iota)$.
  
 \begin{lem}\label{lem:derivative} For $r$ as above,  
let $K = 2(1-b)^{-1} M$.   Then 
 for every $z\in D(z^\iota, r)$, and  every $1\leq j\leq n$
 $$ {\frac{\fr f_1^j}{\fr z}(z,w^\iota)} = (1+u)^j \prod_{i=1}^j(1+\delta_i), \text{ with } \abs{\delta_i}
 \leq \frac{K}{\abs{z_{i-1}}^{1+\gamma}}, \text{ and } \abs{\frac{\fr f_2^j}{\fr z}(z,w^\iota)}\leq \frac{K}{\abs{z_{j-1}}^{1+\gamma}} $$   
 \end{lem}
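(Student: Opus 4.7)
The proof will proceed by induction on $j$, using the chain rule together with the block structure of $Df_u$.

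First, I would set up the basic building blocks. The differential of $f_u$ at a point $(z,w) \in \om_R \times D$ is the matrix
\[
Df_u = \begin{pmatrix} (1+u) + \partial_z \eta_u & \partial_w \eta_u \\ \partial_z\theta_u & b_u + \partial_w\theta_u \end{pmatrix},
\]
whose entries are controlled by the standing bounds $|\partial_z\eta_u| \leq M/|z|^{1+\gamma}$, $|\partial_w\eta_u|, |\partial_w\theta_u| \leq M/|z|^\gamma$, together with the analogous bound $|\partial_z\theta_u|\leq M/|z|^{1+\gamma}$ coming from the fact that $\theta_u$ has the same form $z^{-\gamma}\varphi_u(z^{-\gamma},w)$ as $\eta_u$ (enlarging $M$ if necessary). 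Differentiating $f^j_u = f_u \circ f^{j-1}_u$ in the $z$-variable and evaluating at $(z,w^\iota)$, I obtain the recursions
\[
\partial_z f_1^j = \bigl((1+u)+\partial_z\eta\bigr)\,\partial_z f_1^{j-1} + \partial_w\eta \,\partial_z f_2^{j-1},
\qquad
\partial_z f_2^j = \partial_z\theta \,\partial_z f_1^{j-1} + (b_u + \partial_w\theta)\,\partial_z f_2^{j-1},
\]
where the partials of $\eta,\theta$ are evaluated at $(z_{j-1},w^\iota_{j-1})$.

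The induction hypothesis is the pair of bounds stated in the lemma. For the second coordinate, dividing the recursion by $(1+u)$ and using $|\partial_z f_1^{j-1}|\leq (1+|1+u|^{-1})^{j-1}\cdot\!\prod(1+|\delta_i|)\leq 2$ (valid once $n$ is large), I get
\[
|\partial_z f_2^j| \leq \frac{2M}{|z_{j-1}|^{1+\gamma}} + \bigl(|b_u|+M/|z_{j-1}|^\gamma\bigr)\,|\partial_z f_2^{j-1}|.
\]
Iterating this from $j=1$ gives a geometric sum dominated by $|b_u|+M/R^\gamma \leq b+o_R(1)$, and provided $R$ is chosen so large that this contraction factor is at most $(1+b)/2 < 1$, the sum $2M\sum_{i\leq j} (b_u+\ldots)^{j-i}/|z_{i-1}|^{1+\gamma}$ is at most $2M/(1-b) \cdot C(z_\bullet) / |z_{j-1}|^{1+\gamma}$. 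The ratio correction $C(z_\bullet)$ encoding the discrepancy between $|z_{i-1}|$ and $|z_{j-1}|$ is absorbed using Lemma \ref{lem:fu push}(iii), which gives $|z_i|\geq \min(|z_0|/2+i/10,\, n/10)$: ratios of consecutive terms stay close to $1$ and the tail sum $\sum_i 1/|z_i|^{1+\gamma}$ converges, hence $C$ is an absolute constant close to $1$. This proves $|\partial_z f_2^j|\leq K/|z_{j-1}|^{1+\gamma}$ with $K = 2(1-b)^{-1}M$.

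Having the bound on $\partial_z f_2^{j-1}$, the first recursion yields the multiplicative formula for $\partial_z f_1^j$ directly: write $\partial_z f_1^j = (1+u)\bigl(1+\delta_j\bigr)\partial_z f_1^{j-1}$ with
\[
\delta_j = \frac{\partial_z\eta}{1+u} + \frac{\partial_w\eta\cdot\partial_z f_2^{j-1}}{(1+u)\,\partial_z f_1^{j-1}}.
\]
The first term is bounded by $2M/|z_{j-1}|^{1+\gamma}$, and the second by $(M/|z_{j-1}|^\gamma)\cdot (K/|z_{j-2}|^{1+\gamma})$; since $|z_{j-2}|$ and $|z_{j-1}|$ are comparable up to a uniform constant (again by Lemma \ref{lem:fu push}(iii)) and $R^\gamma$ is large, the latter term is bounded by $K/|z_{j-1}|^{1+\gamma}$ up to an absorbable small factor. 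Telescoping, $\partial_z f_1^j = (1+u)^j\prod_{i=1}^j(1+\delta_i)$ with $|\delta_i|\leq K/|z_{i-1}|^{1+\gamma}$, which closes the induction.

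The main obstacle is the bookkeeping of the ratio $|z_{j-1}|/|z_{j-2}|$ when propagating the bound on $\partial_z f_2^j$: the estimate "closes up" only because the lower bound on $|z_j|$ from Lemma \ref{lem:fu push} is essentially monotone and the contraction rate is controlled by $b < 1$, so that the constant $K = 2(1-b)^{-1}M$ is chosen precisely to absorb both the geometric sum and the ratio corrections coming from the semi-parabolic slow-down.
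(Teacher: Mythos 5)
Your proposal follows the same overall skeleton as the paper's proof — chain rule applied to $f_u^j = f_u\circ f_u^{j-1}$, yielding the coupled recursions for $\partial_z f_1^j$ and $\partial_z f_2^j$, then a simultaneous induction in $j$ — but it diverges at the point of bounding $\partial_z f_2^j$. The paper closes the induction \emph{step by step}: writing
$$
\abs{\partial_z f_2^{j+1}} \le \frac{K}{\abs{z_j}^{1+\gamma}}\lrpar{b\abs{\tfrac{z_j}{z_{j-1}}}^{1+\gamma}+\tfrac{6M}{5K}+\tfrac{M}{R^\gamma}\abs{\tfrac{z_j}{z_{j-1}}}^{1+\gamma}}
$$
and observing that the bracket is $<1$ precisely because $K = 2(1-b)^{-1}M$ makes $6M/(5K)=3(1-b)/5$ and the single consecutive ratio $|z_j/z_{j-1}|^{1+\gamma}$ is close to $1$. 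You instead unroll the recursion into the weighted sum $\sum_{i\le j}(b')^{j-i}\cdot|\partial_z f_1^{i-1}|\cdot M/|z_{i-1}|^{1+\gamma}$ and try to bound it globally. That route can be made to work, but your justification of the crucial bound is not correct as written, and this is a genuine gap.

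Concretely, you claim the ratio correction $C(z_\bullet)=\sup_{i\le j}(b')^{j-i}|z_{j-1}|^{1+\gamma}/|z_{i-1}|^{1+\gamma}\cdot(1-b')$ is "close to $1$" because "ratios of consecutive terms stay close to $1$" and "$\sum_i 1/|z_i|^{1+\gamma}$ converges." Neither of these statements yields the conclusion. Along the connecting orbit $|z_j|$ grows from $\sim R$ to $\sim n/10$ and back, so the cumulative ratio $|z_{j-1}|^{1+\gamma}/|z_{i-1}|^{1+\gamma}$ is typically of polynomial size in $n$ (indeed $\log(|z_{j-1}|/|z_{i-1}|)\sim\sum_k c/|z_k|$ is of order $\log n$, not bounded; it is $\sum 1/|z_k|$, not $\sum 1/|z_k|^{1+\gamma}$, that controls the ratio, and that series diverges). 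What actually saves the sum is that the \emph{per-step} ratio satisfies $|z_k|^{1+\gamma}/|z_{k-1}|^{1+\gamma}\le e^{c(1+\gamma)/R}$ for a universal $c$ (since each step moves $z$ by a bounded amount and $|z_{k-1}|\ge R$), so each term of the unrolled sum carries an effective factor $q = b'e^{c(1+\gamma)/R}$; this is $<1$ once $R$ is large, so the sum is geometric with rate $q\to b$ as $R\to\infty$, and the constants then do close (with room: one needs $\tfrac65\cdot\tfrac{1}{1-q}\le\tfrac{2}{1-b}$, which holds for $q$ close enough to $b$). This is the missing mechanism; invoking convergence of $\sum 1/|z_i|^{1+\gamma}$ or nearness of consecutive ratios does not establish it. Separately and more minor: the displayed bound $|\partial_z f_1^{j-1}|\le(1+|1+u|^{-1})^{j-1}\prod(1+|\delta_i|)\le 2$ is garbled — the prefactor grows exponentially — and should read $|1+u|^{j-1}\prod|1+\delta_i|\le 6/5$ (which is what \eqref{eq:1/5} gives, and is the constant the paper actually uses in the $f_2$ estimate). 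Your handling of the $\partial_z f_1^j$ step is otherwise in line with the paper.
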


As a preliminary observation, notice 
 that if $R\geq (10^6k(1-b)^{-1}M)^k$, and $\delta_i$ is as in the statement of the lemma and $n$ is large enough, then for every $1\leq j\leq n$,
\begin{equation}\label{eq:1/5}
\abs{(1+u)^j \prod_{i=1}^j(1+\delta_i)-1}\leq \frac{1}{5}.
\end{equation}
Indeed, this follows from the proof of Theorem \ref{thm:tv dim1} (see \eqref{eq:exp} and \eqref{eq:exp2} there; also   
 if $j\geq l$ we need to split the product at $l$ and 
to estimate separately the two terms).

\begin{proof} 
We argue by induction on $j$. The result holds true  for $j=1$. So assume that it holds for some $j$. We compute
\begin{align*}
\frac{\fr f_1^{j+1}}{\fr z}(z,w^\iota) &= \lrpar{(1+u)   +\frac{\fr \eta_u}{\fr z} (z_j, w_j^\iota)}  \frac{\fr f_1^{j}}{\fr z}(z,w^\iota)  + 
\frac{\fr f_2^{j}}{\fr z}(z,w^\iota) \frac{\fr \eta_u}{\fr w} (z_j, w_j^\iota) \\
&=(1+u)^{j+1} \prod_{i=1}^j(1+\delta_i) \lrpar{1+ \unsur{1+u} \frac{\fr \eta_u}{\fr z} (z_j, w_j^\iota)}   
+  
\frac{\fr f_2^{j}}{\fr z}(z,w^\iota) \frac{\fr \eta_u}{\fr w} (z_j, w_j^\iota) .
\end{align*}
By the induction hypothesis, 
$$\abs{\frac{\fr f_2^{j}}{\fr z}(z,w^\iota) \frac{\fr \eta_u}{\fr w} (z_j, w_j^\iota) } \leq 
\frac{K}{\abs{z_{j-1}}^{1+\gamma}} \frac{M}{\abs{z_{j}}^{\gamma}}. $$ 
Since $\frac{z_{j+1}}{z_j}$ is close to $1+u$ and $(1+u)^{j+1} \prod_{i=1}^j(1+\delta_i)$ is close to 1, we can write 
$$\frac{\fr f_1^{j+1}}{\fr z}(z,w^\iota)  = 
(1+u)^{j+1} \prod_{i=1}^j(1+\delta_i) \lrpar{1+ \unsur{1+u} \frac{\fr \eta_u}{\fr z} (z_j, w_j^\iota) + \delta}, \text{ with } \abs{ \delta }
\leq \frac{2KM}{\abs{z_j}^{1+2\gamma}}.$$ Thus if we  put 
$\delta_{j+1} =   \unsur{1+u} \frac{\fr \eta_u}{\fr z} (z_j, w_j^\iota) + \delta$
we get that 
$$\abs{\delta_{j+1}}\leq \lrpar{\frac{M}{\abs{1+u}}+ \frac{2KM}{R^{\gamma}}} \unsur{\abs{z_j}^{1+\gamma}},$$ which, from the choice of $R$ and $K$ is not greater than $\frac{K}{\abs{z_j}^{1+\gamma}}$.

To get the bound on the derivative of $f_2^{j+1}$, we write
 $$\frac{\fr f_2^{j+1}}{\fr z}(z,w^\iota) = b_u \frac{\fr f_2^{j}}{\fr z}(z,w^\iota) +
  \frac{\fr f_1^{j}}{\fr z}(z,w^\iota) 
\frac{\fr \theta_u}{\fr z} (z_j, w_j^\iota)  + 
  \frac{\fr f_2^{j}}{\fr z}(z,w^\iota) \frac{\fr \theta_u}{\fr w} (z_j, w_j^\iota),$$
and  we get that 
\begin{align*}
\abs{\frac{\fr f_2^{j+1}}{\fr z}(z,w^\iota) }&\leq b \frac{K}{\abs{z_{j-1}}^{1+\gamma}} 
+ \frac65 \frac{M}{\abs{z_{j}}^{1+\gamma}}
+ \frac{K}{\abs{z_{j-1}}^{1+\gamma}}  \frac{M}{\abs{z_{j}}^{\gamma}}\\  
&\leq \frac{K}{\abs{z_{j} }^{1+\gamma}} 
 \lrpar{ 
b \abs{ \frac{z_{j}}{z_{j-1}}} ^{1+\gamma}
+ \frac{6M}{5K} + 
  \frac{M}{R^{\gamma}} \abs{\frac{z_{j} } {{z_{j-1}} }}^{1+\gamma}
}.
\end{align*}
To conclude, we observe that when $n$ is large enough, due to the choice of $R$ and $K$, the expression within parentheses is smaller than 1. The proof of the lemma is complete.   
\end{proof}

We are now in position to conclude the proof of Theorem \ref{thm:tv}. 
Let $r_0$ be the supremum of the radii $r>0$ such that $f^j$ is 
well-defined, and $f^j(z,w^\iota)$ stays at distance at most 1 from $f^j(z^\iota,w^\iota)$ for $1\leq j\leq n$. 
By the above lemma and \eqref{eq:1/5},  
$r_0\geq \frac23$. Then the image of $D(z^i, r_0)\times\set{w^\iota}$ under $f^n$ is a graph over some
 neighborhood of $z^o$, which 
by Lemma \ref{lem:classical} must contain $ D(z^o, \unsur{2})$. Now since the repelling petal $\Sigma$ is a graph (relative to  the first 
coordinate) over $\set{z, \ \Re(z)>R}$, we infer that for $p\in B(p^o, \frac{1}{4})\cap \Sigma$, 
$f^n( D(z^i, r_0)\times\set{w^\iota})$ intersects $\mathcal{F}(p)$ close to $p$. Therefore we can pull back 
$\mathcal{F}(p)$ under $f^n$ to get a vertical graph intersecting $D(z^i, r_0)\times\set{w^\iota}$
along which (for the same reasons as before)  the derivative of $f^n$ along  is smaller than $b^n$, and the proof is complete in the case 
where $\mathcal{F}$ is a foliation by vertical graphs in $\om_R\times D$.

\medskip

What remains to be done is to remove the simplifying assumption on $\mathcal{F}$. 
For this we   simply iterate backwards  and use the previous analysis to show that   for $k$ large enough, 
$f^{-k}(\mathcal{F})\rest{\om_R\times D}$ is made of vertical graphs of slope $\leq 1/100$. Indeed, let $\Delta$ be a germ of a holomorphic disk transverse to $\Sigma$ at $p^o =(z_0,w_0)\in \om_R\times D$. Let $f_0^{-k}(p) = (z_{-k}, w_{-k})$ which (in our coordinates) converges to infinity by staying in $\om_R\times D$. Applying the reasoning of Lemma
 \ref{lem:derivative} for $u=0$ (together with \eqref{eq:1/5})
  shows that for every $w\in D$, 
 $f^k(D(z_{-k}, \frac23)\times\set{w})$ is a horizontal graph over $D(z_0, \unsur{2})$, which is exponentially close to $\Sigma$ due to vertical contraction. Thus when $k$ is large enough, it intersects $\Delta$ exactly in one point, and the result follows.\qed


 \section{Proof of the main theorem on homoclinic tangencies}\label{sec:proof}

\subsection{Creating tangencies between horizontal and vertical moving curves}

Here we explain how to obtain a tangency between two holomorphically moving complex curves by using only
``soft complex analysis", i.e. basically the Argument Principle. We work in the unit bidisk $\bb = \dd^2$. A subvariety $V$ in $\bb$ (or current, etc.) is   {\em horizontal} 
if there exists some $\e>0$ such that $V\subset \dd\times
\dd_{1-\e}$.  
 Vertical objects are defined similarly. 

Following \cite{hov}, we define the {\em horizontal (resp. vertical)  Poincaré cone field} as the set of tangent vectors 
$v=(v_1, v_2)\in T_x\bb\simeq\cd$ such that $\abs{v_1}_{\rm Poin} > \abs{v_2}_{\rm Poin}$ (resp. 
$\abs{v_2}_{\rm Poin} > \abs{v_2}_{\rm Poin}$), where $\abs{\cdot}_{\rm Poin}$ denotes the Poincaré metric in $\dd$. 
The contraction property of the Poincaré metric implies that if $\Gamma$ is a horizontal graph in $\bb$, then for every $x\in \Gamma$, 
$T_x\Gamma$ is contained in the horizontal Poincaré cone field. 

A horizontal manifold (or subvariety) $V$ in $\bb$ has a {\em degree}, which is the degree of the branched cover $\pi_1:V\cv\dd$ (here 
of course $\pi_1$ is the first coordinate projection). If $V$ is irreducible and $d>1$ then $\pi_1\rest{V}$ must have critical points 
(indeed otherwise it would be a non-trivial covering of the unit
disk). 
In particular, it admits tangent vectors in the vertical Poincaré cone
field. 

By definition a holomorphic family of submanifolds $(V_\la)_{\la \in \La}$ of a complex manifold $M$ is the data of a codimension 1 analytic set (which   might be singular)
 $\widehat V\subset \La\times M $ such that for every $\la\in \La$, $V_\la = \widehat V\cap (  \set{\la} \times \bb)$. 

%
%
Here is the precise statement. 

\begin{prop}\label{prop:tangencies}
Let $(V_\la)_{\la\in \La}$ be a holomorphic family of horizontal submanifolds of degree $k$ in $\bb$, parameterized by a connected Stein manifold $\La$. We assume that:
\begin{enumerate}[{\rm (i)}]
\item There exists a compact subset $\La_0\Subset \La$ such that if $\la\notin \La_0$, $V_\la$ is the union of $k$ graphs.
\item There exists $\la_0\in \La$ such that $V_\lo$ is not the union of  graphs.
\end{enumerate}
Then, if $(W_\la)_{\la \in \La}$ is any holomorphic family of vertical graphs in $\bb$, there exists $\la_1\in\La$ such that 
$V_{\la_1}$ and $W_{\la_1}$ admit a point of tangency.
\end{prop}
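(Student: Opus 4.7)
The plan is to exhibit the family of intersections $V_\la\cap W_\la$ as a branched cover of $\La$ of degree $k$, and to derive a contradiction from monodromy if no tangency exists.

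I would first form the total spaces $\widetilde V=\{(\la,p)\in\La\times\bb:p\in V_\la\}$ and $\widetilde W=\{(\la,p):p\in W_\la\}$, two codimension-$1$ analytic subsets of $\La\times\bb$, and set $X:=\widetilde V\cap\widetilde W$. By the Poincar\'e cone field argument, any horizontal graph meets any vertical graph in $\bb$ in a single transverse point (Schwarz--Pick applied to the composition $\phi_\la\circ\psi:\dd\to\dd$ of parametrizations yields a unique attracting fixed point). Hence on $\La\setminus\La_*$, where $\La_*:=\{\la\in\La:V_\la\ \text{is not a union of $k$ graphs}\}\subseteq\La_0$, the fibre $V_\la\cap W_\la$ consists of $k$ distinct transverse points; by continuity of the intersection number $\pi:X\to\La$ is a proper finite map of generic degree $k$, \'etale over $\La\setminus\La_*$, and the tangency locus coincides with its ramification locus, contained in $\La_*$.

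Second, I would reduce to $\La=\dd$. Because $\La$ is Stein and non-compact while $\La_0\Subset\La$, there is a point $\la_1\in\La\setminus\La_0$, and by connectedness I can choose a holomorphic disk $\iota:\dd\to\La$ with $\iota(0)=\la_0$ and $\iota(\dd)\not\subset\La_0$. Pulling back the two families preserves (i) and (ii), and a tangency in the pullback is a tangency in the original, so we may assume $\La=\dd$ and $\la_0=0$.

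Third, I would decompose $\widetilde V=\bigcup_i W_i$ into irreducible analytic components. Each $W_i$ is a $2$-dimensional irreducible subvariety of $\dd\times\bb$ surjecting onto $\dd$; for $\la\in\dd\setminus\La_*$ the fibre $W_i|_\la$ is a union of $d_i$ graphs with $\sum_i d_i=k$. If every $d_i$ equalled $1$, then each $W_i$ would have generic horizontal degree $1$ over $\dd\times\dd$; by Riemann extension of the bounded associated $w$-function $f_i(\la,z)$ across a thin set, $W_i$ would globally be the graph $\{w=f_i(\la,z)\}$, so $V_\la=\bigcup_i W_i|_\la$ would be a union of $k$ graphs for every $\la\in\dd$, contradicting (ii). Hence some component $W_*$ has degree $d_*\geq 2$. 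Since $W_*$ is irreducible and $\dd\setminus\La_*$ is non-empty, the open subvariety $W_*\setminus\pi^{-1}(\La_*)\subseteq W_*$ is again irreducible, hence connected; this connectedness is precisely the transitivity of the monodromy of the natural $d_*$-fold \'etale covering $C_*\to\dd\setminus\La_*$ whose fibre over $\la$ is the set of graphs composing $W_*|_\la$.

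Finally, let $X_*:=W_*\cap\widetilde W\subseteq X$, a degree-$d_*$ branched cover of $\dd$. Over $\dd\setminus\La_*$, the bijection ``graph of $W_*|_\la$ $\leftrightarrow$ unique transverse intersection with $W_\la$'' identifies $X_*|_{\dd\setminus\La_*}$ with $C_*$, so this restriction is a \emph{connected} \'etale $d_*$-cover. If $X_*\to\dd$ were \'etale everywhere, then as $\dd$ is simply connected it would be the trivial cover $\dd\sqcup\cdots\sqcup\dd$ ($d_*$ copies), and its restriction to $\dd\setminus\La_*$ would have $d_*\geq 2$ disjoint components, contradicting the connectedness of $C_*$. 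Therefore $X_*$ must ramify over some $\la_1\in\dd$, exhibiting an intersection of multiplicity $\geq 2$ between a component of $W_*|_{\la_1}\subseteq V_{\la_1}$ and $W_{\la_1}$, i.e.\ the desired tangency.

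The delicate point is the irreducibility/connectedness chain in the third step: one must accommodate the possibility that $\La_*$ has non-empty interior (as happens in the typical application, where an open set of parameters carries branched $V_\la$), which rules out standard Riemann--Stein extension across $\La_*$. The fact that irreducibility is automatically inherited by non-empty open subvarieties of an irreducible analytic set is precisely what circumvents this issue.
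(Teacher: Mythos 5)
Your approach is genuinely different from the paper's. The paper lifts both families to the projectivized tangent bundle $\La\times\bb\times\pu$, observes that the intersection $\widehat{\pp TV}\cap\widehat{\pp TW}$ is compactly supported and hence zero-dimensional, produces one tangency with a fixed vertical line $L$ from hypothesis (ii), and then deforms $L$ into the given family $(W_\la)$ through vertical graphs, using continuity of the intersection index (openness) together with compactness (closedness) to propagate non-emptiness of the tangency locus. Yours instead tries to force ramification of the branched cover $X_*=W_*\cap\widetilde W\to\dd$ by a monodromy argument over $\dd\setminus\La_*$.

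The gap is in the sentence you yourself flag as the delicate point: ``the open subvariety $W_*\setminus\pi^{-1}(\La_*)\subseteq W_*$ is again irreducible, hence connected.'' The statement that a non-empty open subset of an irreducible analytic set is irreducible (equivalently, connected) is simply false. What is true is that the complement of a proper \emph{analytic} subset of an irreducible analytic set is connected; but $\La_*$ is merely a compact set in $\dd$, possibly with non-empty interior, and $\pi^{-1}(\La_*)$ is a closed, non-analytic subset of $W_*$. Removing such a set can disconnect $W_*$. Concretely: take, after rescaling so everything fits in $\bb$, the family $V_\la = \{(z,w): w^2 = \e(z^2 - 9\la^4)\}$ for $\la\in\dd$. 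The total space $W_*$ is defined by the irreducible polynomial $w^2 - \e z^2 + 9\e\la^4$, so $W_*$ is irreducible, and $\La_*=\{|\la|\le 1/\sqrt3\}$ is compact with non-empty interior. Yet for $|\la|>1/\sqrt3$ the two sheets are globally the graphs $w=\pm\sqrt{\e(z^2-9\la^4)}$, where the square root admits a single-valued branch on the annulus because $3\la^2$ winds twice as $\la$ winds once; hence the monodromy of the ``graph cover'' $C_*$ is trivial and $W_*\setminus\pi^{-1}(\La_*)$, which maps onto $C_*$, is disconnected. This kills the final step: the nonexistence of ramification of $X_*$ would only force a trivial restriction $X_*|_{\dd\setminus\La_*}$, but that is no longer incompatible with the structure of $C_*$, which may itself be trivial. (The proposition, of course, still holds for this example — one finds the ramification of $X_*$ lying over the interior of $\La_*$ — but your argument does not see it.) To repair this along your lines you would essentially need to track tangencies with vertical \emph{lines}, which is exactly what the paper's projectivized-tangent-bundle deformation does.
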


  Using the above remarks, we see that  condition (ii) could  be replaced by ``there exists $x\in \bb$ and $\la_0\in \La$ such that 
  $T_xV_\lo$ is contained  in the vertical Poincaré cone field". Notice also that (ii) implies that $k>1$ in (i). 
  
\begin{proof}
To simplify the exposition, we assume that $\La$ is the unit disk (we
will use the result in that case only). The proof in the general case is similar. 

Notice first that if $\la$ is close to $\fr \La$, then  there are no tangencies between $V_\la$ and $W_\la$. Indeed the tangent vectors to 
$V_\la$ and $W_\la$  belong to  disjoint    cone fields. In particular, reducing $\La$ a little bit if needed, we may assume that the $V_\la$ (resp. $W_\la$) are uniformly horizontal (resp. vertical), that is,  that they are contained in 
$\dd\times \dd_{1-\e}$ (resp. $\dd_{1-\e} \times \dd$) for some fixed $\e>0$. 

\medskip

If $V\subset \bb$ is a smooth holomorphic curve, 
we let $\pp TV$ be its lift (which is also a holomorphic curve) to the
projectivized tangent bundle $\pp T \bb \simeq \bb\times \pu$.
 Notice that since $V$ is smooth, $\pp TV$ intersects every $\pu$ fiber at a single point. 
If $(V_\la)_{\la\in \La}$ is a holomorphic  family of submanifolds, we obtain in this way a holomorphic family of submanifolds 
$(\pp TV_\la)_{\la\in \La}$ in $\bb\times \pu$. In other words, there
exists a subvariety of  $\La\times \bb\times \pu  $, of dimension 2, 
which we  denote $\widehat{\pp TV}$ such that for every $\la\in \La$, 
$$
    \widehat{\pp TV}\cap (\set{\la} \times\bb\times \pu ) = 
     \pp  TV_\la.
$$

Let now $W= (W_\la)_{\la\in \La}$ be any holomorphic family of vertical graphs in $\bb$. An intersection point between 
$\widehat{\pp TV}$ and $\widehat{\pp TW}$ corresponds to a parameter $\lo$  at which $V_\lo$ and $W_\lo$ are tangent. 
We claim that 
then  $\widehat{\pp TV}\cap \widehat{\pp TW} $ has dimension 0 (if non-empty). 
 In particular, the varieties $\widehat{\pp TV}$ and $\widehat{\pp TW}$ intersect properly in  $\bb\times \pu \times \La$. 
 Observe first that this 
 intersection is compactly supported in $\La \times\bb\times \pu  $, indeed:
\begin{itemize}
\itm  as observed above, there are no tangencies between $V_\la$ and $W_\la$ when $\la$ is close to $\fr\La$;
\itm the intersection points  between  $V_\la$ and $W_\la$ are contained in $\dd_{1-\e}^2$ for some $\e>0$. 
\end{itemize}
By the Maximum Principle, 
the projection of $\widehat{\pp TV}\cap \widehat{\pp TW}$ to
$\La\times \bb$ is a finite set. Hence
any  component of 
$\widehat{\pp TV}\cap \widehat{\pp TW}$ of positive dimension is contained in a $\pu$ fiber, 
which is impossible by definition of the lifts  $\widehat{\pp TV}$ and $\widehat{\pp TW}$. This proves our claim. 

\medskip

By assumption,  there exists $\lo$ such that $V_\lo$ admits a vertical tangent vector, hence a tangency with some vertical line $L$. Let 
  $\widehat{\pp TL}\subset \La\times \bb\times \pu$ be the surface corresponding to the trivial family where 
$L$ is fixed.  Then $\widehat{\pp TV}\cap \widehat{\pp TL}$ is non-empty, therefore  it is a finite set.  
 
We can now deform $L$ to $W$ through some 
 holomorphic family $(W_{\la, s})$ of vertical graphs
 with $W_{\la, 0} = L$ and $W_{\la, 1} = W_\la$, and $s$ ranges in
 some neighborhood $  \dd_{1+\e}$ 
 of the closed 
 unit disk. For this we can simply take a linear interpolation. In this way  we obtain a holomorphic deformation  
$(\widehat{\pp T W_s})_{s\in \overline\dd_\e}$ from $ \widehat{\pp TL}$ to $ \widehat{\pp TW}$, parameterized by a neighborhood of the unit disk.

To conclude that $\widehat{\pp TV}\cap \widehat{\pp TW}\neq \emptyset$, we argue that the set of parameters 
$s\in  \dd_{1+\e}$ such that 
$\widehat{\pp TV}\cap \widehat{\pp TW_s}\neq \emptyset$ is open   in $ \dd_{1+\e}$
 by the continuity of the intersection index of properly intersecting analytic sets of complementary dimensions
  (see \cite[Prop. 2 p.141]{chirka}) and closed 
because intersection points stay compactly contained in $\bb$. This completes the proof.
\end{proof}

\subsection{From critical points to tangencies}
In this section 
 we settle the second main step of Theorem \ref{theo:tangencies}. 
If $p_\la$ is a holomorphically varying  
periodic point for a holomorphic family 
$(f_\la)_{\la\in \La}$ of dissipative polynomial automorphisms of
$\cd$, we say that $p_\la$ {\it undergoes a non-degenerate 
semi-parabolic bifurcation at $\la_0$} 
 if one of the multipliers $\rho_\la$ of $p_\la$ satisfies  $\rho_\lo=1$ and 
 $\frac{\fr \rho}{\fr \la}\rest{\la =\lo}$ is a submersion $\La\cv\cc$. 

\begin{prop}\label{prop:critical to tangency}
Let $(f_\la)_{\la\in \La}$ be a holomorphic family of dissipative
polynomial automorphisms of $\cd$ 
of dynamical degree $d$ with positive entropy, parameterized by the unit disk. Assume that: 
\begin{itemize}
\itm there exists a holomorphically varying periodic point $p_\la$ which   
admits a non-degenerate semi-parabolic bifurcation at $\la_0$;
\itm   for $\la=\lo$, there is a    
critical point in one of the basins of attraction of $p_\lo$. 
\end{itemize}
Then $\la_0$ can be approximated by  parameters possessing homoclinic tangencies. 
\end{prop}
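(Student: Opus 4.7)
\textit{Plan.} The proof has three stages: a reduction to the adapted normal form, an application of Theorem~\ref{thm:tv} to bring the forward orbit of the critical point into contact with a stable manifold, and an application of Proposition~\ref{prop:tangencies} to upgrade this contact to an honest tangency.

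After replacing $f_\la$ by its $q$-th iterate so that the bifurcating periodic point becomes fixed, I would use Proposition~\ref{prop:good coord} to put $f_\la$ in the form \eqref{eq:flambdaqbis} in $\la$-dependent adapted coordinates. Let $\mathcal{B}$ be the component of the semi-parabolic basin containing the given critical point $c$, with attracting sector $S^\iota$ and opposite repelling petal $\Sigma$. Iterating forward, $\tilde c := f^{j_0}_{\la_0}(c)$ lies in a compact set $Q^\iota\Subset (S^\iota\cap V_1)\times V_2$ for $j_0$ large enough, and remains a point of tangency between $W^u(q_{\la_0})$ and the strong stable foliation $\mathcal{F}^{ss}$ by forward invariance of both objects. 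Using density of stable manifolds of saddles in $J^+$ together with the fact that the semi-parabolic point lies in $\overline{\Sigma}\cap J^+$, I would choose a saddle $q'$ (possibly $q'=q$) whose stable manifold meets $\Sigma$ transversally at some $p^o_0\in Q^o\Subset\Sigma$, and extend $W^s(q'_{\la_0})$ near $p^o_0$ to a holomorphic foliation $\mathcal{F}$ transverse to $\Sigma$ along $Q^o$. Theorem~\ref{thm:tv} then produces parameters $\la_n\to\la_0$ with $f^n_{\la_n}(\tilde c)\in\mathcal{F}(p^o_0)=W^s(q'_{\la_0})$, together with the derivative estimates on a fixed bidisk $D^2(\tilde c,r)$.

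For the tangency step, I would center a bidisk $\bb$ at $f^n_{\la_n}(\tilde c)$ with ``horizontal'' direction along $\Sigma$ (so that $W^s(q'_\la)$ becomes a vertical graph), and work over a small parameter disk $\La'\subset\La$ around $\la_n$. Setting
\[
V_\la = f^n_\la\bigl(W^u(q_\la)\cap D^2(\tilde c,r)\bigr)\cap\bb,\qquad W_\la = W^s(q'_\la)\cap\bb,
\]
the $W_\la$ form a holomorphic family of vertical graphs, since $W^s(q'_{\la_0})$ is transverse to $\Sigma$ and $W^s(q'_\la)$ varies holomorphically. If the $V_\la$ can be arranged into a holomorphic family of horizontal submanifolds of degree $k\ge 2$ in $\bb$ with a non-graph (vertical tangent) leaf at an interior parameter of $\La'$, Proposition~\ref{prop:tangencies} yields a parameter $\la'_n\in\La'$ at which $V_{\la'_n}$ and $W_{\la'_n}$ are tangent, which translates into a tangency between $W^u(q_{\la'_n})$ and $W^s(q'_{\la'_n})$. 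If $q'\neq q$, this heteroclinic tangency can be turned into a homoclinic one at a nearby parameter by a standard $\la$-lemma perturbation; since $\la'_n\to\la_0$ as $n\to\infty$, this shows that $\la_0$ is accumulated by parameters with homoclinic tangencies.

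The main obstacle is verifying the hypotheses of Proposition~\ref{prop:tangencies} for $V_\la$. The vertical tangent of $W^u(q_{\la_0})$ at $\tilde c$ is carried through $n$ semi-parabolic iterations and gets blended, at $\la=\la_n$, with the exponentially small vertical contraction $b^n$ (from Theorem~\ref{thm:tv}) and with the $O(1/n)$ separation $|\la_n-\la_0|$ dictated by the semi-parabolic implosion. The bidisk $\bb$ and the parameter disk $\La'$ must therefore be tuned to exponentially small scales so that the ``fold'' of $V_\la$ induced by the critical point remains visible inside $\bb$ and persists as $\la$ varies in $\La'$, while preserving the correct degree and boundary behavior required by condition~(i) of Proposition~\ref{prop:tangencies}. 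Once this delicate geometric bookkeeping is in place, Proposition~\ref{prop:tangencies} closes the argument.
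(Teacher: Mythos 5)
Your overall plan (normal form via Proposition~\ref{prop:good coord}, transit maps via Theorem~\ref{thm:tv}, tangency creation via Proposition~\ref{prop:tangencies}) is the same as the paper's, but the final step is not a gap to be filled ``once the bookkeeping is in place''---it is the heart of the proof and your setup makes it harder than it needs to be. The paper does \emph{not} parameterize the family of curves by the dynamical parameter $\lambda$ ranging over a small disk $\La'$ around $\lambda_n$; instead it uses the target point $z^o\in\dd_{r/4}$ in the outgoing region as the parameter, and the holomorphic selection $z^o\mapsto\lambda_n(t,z^o)$ furnished by Theorem~\ref{thm:tv} to pass back to $\La$. This sidesteps exactly the exponential-scale rigidity you correctly identify: as $\lambda$ moves directly, $f^n_\lambda(\tilde c)$ sweeps through the region at a rate that forces $\La'$ (and $\bb$) to shrink exponentially in $n$, whereas with $z^o$ as the parameter everything happens at a fixed geometric scale.

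The genuinely missing ingredient is the verification of conditions (i) and (ii) of Proposition~\ref{prop:tangencies}. The key fact supplying both is that the pulled-back foliation $\mathcal{F}^{-n}_{\lambda_n}$ converges, uniformly in $z^o$ as $n\to\infty$, to the strong stable foliation $\mathcal{F}^{ss}$ of $f_{\lambda_0}$ on $D^2(t,r)$ (this follows from Theorem~\ref{thm:tv}: the leaves of $\mathcal{F}^{-n}$ are vertical graphs of bounded geometry along which $f^n_{\lambda_n}$ contracts like $b^n$, so any cluster foliation must be $\mathcal{F}^{ss}$). Given this convergence: at $z^o=0$ one has $\mathrm{pr}_1\bigl(f^n_{\lambda_n}(t)\bigr)=0$, and since $\Gamma^u_{\lambda_0}$ is tangent to $\mathcal{F}^{ss}$ at $t$, for large $n$ the curve $\Gamma^u_{\lambda_n}$ has a tangency with a leaf of $\mathcal{F}^{-n}$ near $t$, hence $f^n_{\lambda_n}(\Gamma^u_{\lambda_n})$ has a vertical tangency near $m$, giving (ii). On $|z^o|=r/4$ the leaf $f^{-n}_{\lambda_n}(\{z\}\times\dd_r)$ is close to a leaf of $\mathcal{F}^{ss}$ meeting the horizontal at definite distance from $t$, so $\Gamma^u_{\lambda_n}$ is transverse to $\mathcal{F}^{-n}$ in the relevant region and the image is a union of $d$ graphs, giving (i). Without this convergence statement your argument does not close; flagging it as ``delicate geometric bookkeeping'' understates its role.

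A minor point: there is no need to pass through a second saddle $q'$ and then convert a heteroclinic tangency into a homoclinic one. The repelling petal $\Sigma$ already has transverse intersections with $W^s(q_0)$ for the same saddle $q_0$ whose unstable manifold carries the critical point (this follows from the laminarity arguments in the paper, or from the hyperbolic $\lambda$-lemma together with the existence of transverse heteroclinic intersections between any pair of saddles), so one can produce a homoclinic tangency directly.
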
   


\begin{proof}
Without loss of generality we may assume that $\La$ is the unit disk, $\lo = 0$, and $p_\la$ is fixed. Normalize the situation so that $f_\la$ is locally of the form \eqref{eq:flambdaqbis}. Conjugating by a rotation, we may assume that the critical point lies in the basin 
$\mathcal{B}$ corresponding to the attracting direction $\set{(x, 0), \ \arg(x) = -\frac{\pi}{k}}$. Let $q_0$ be a saddle point such that 
$W^u(q_0)$ admits a point of tangency  with the strong stable
foliation in $\mathcal{B}$. (Then, by the hyperbolic $\la$-lemma
 (see e.g. \cite[Thm 2, p. 155]{palis takens})
and the fact that any pair of 
saddle points have transverse heteroclinic intersections, any saddle
point would do.)  Let  $t$ be such a point of tangency. 

The global 
repelling petal $\Sigma$ in the direction of $\set{(x, 0), \ \arg(x) = 0}$ is an immersed curve biholomorphic to $\cc$  (\cite{ueda}, see \S\ref{sec:semi parabolic}).
 Hence, using the 
theory of laminar currents,  exactly as  in \cite{bls}, it admits transversal intersections with $W^s(q_0)$. We  fix a transverse intersection point $m\in \Sigma\cap W^s(q_0)$.

Fix  a neighborhood $W$ of 0 in $\La$. Iterating $t$ forward and $m$
backward a few times, we may assume that both points are 
close to 0.
 Theorem \ref{thm:tv} thus provides us with an integer $N$ and a radius $r$, so that for $n\geq N$, 
a  transition mapping $f^n_{\la_n}$ is  defined from 
$D^2(t,r)$ to $D^2(m,r)$   
 such that $f^n_{\la_n}(t)=m$. Let $\Gamma^u_\la$ be the  
component of $W^u_\la(q_\la)\cap D^2(t,r)$ containing $t$. 
Reducing  $W$ if necessary,  $\Gamma^u_\la$ can be followed 
holomorphically as $\la\in W$. 

To make the situation visually clearer, we consider adapted coordinates close to $t$ and $m$. These  changes of coordinates  have 
bounded derivatives. Abusing   slightly, we declare that  in the new coordinates, the neighborhoods remain of size $r$.  
Near $t$ we choose $(z^\iota, w^\iota)$ so that $t=(0,0)$,  the strong stable foliation $\mathcal{F}^{ss}$
becomes the vertical foliation $\set{z^\iota = C^{st}}$ and  for $\la\in W$, $\Gamma^u_\la$ is a horizontal manifold in $D^2(t, r)$ of some degree $d\geq 2$, which is transverse to  $\mathcal{F}^{ss}$ outside $t$.  
Near $m$ we choose local coordinates $(z^o,w^o)$ such  that  $m=(0,0)$ 
  $\Sigma^0 = \set{w^o=0}$, and the component of $W^s(q_\la)$ containing $m$ is $\set{z^o=0}$. Denote by $\mathcal{F}$ the vertical foliation in the target bidisk $D^2(m,r)$. 

 For $\abs{z^o}\leq r$,  let us     consider
 the parameter $\la_n= \la_n(t,z^o)$  
 given by Theorem \ref{thm:tv} such that 
the first coordinate of $f^n_{\la_n } (t)$ is $z^o$. 
For every such parameter, by Theorem \ref{thm:tv}
$f^n_{\la_n }$ 
realizes a crossed mapping  of degree 1 \cite{hov}  from $D^2(t,r)$ to $D(z^o, \frac{r}{2})\times \dd_r$.  
 So when $\abs{z^o}\leq \frac{r}{4}$ we get  by restriction a crossed mapping from $D^2(t,r)$ to  $\dd_{\frac{r}{4}} \times \dd_r$. 
In particular, we infer that for $\abs{z^o}\leq \frac{r}{4}$, 
$ f^n_{\la_n }  (\Gamma^u_{\la_n  } )$
  is a horizontal submanifold of degree $d$ in $\dd_{\frac{r}{4}} \times \dd_r$.

\medskip

 To conclude the argument, let us show that when $z^o$ ranges in
 $\dd_{\frac{r}{4}}$ and $n$ is large,
 the family of curves $ f^n_{\la_n }(\Gamma^u_{\la_n })$ satisfies the assumptions of Proposition \ref{prop:tangencies} in 
 $\dd_{\frac{r}{8}} \times \dd_r$. 
 
 The first observation is that the   preimage $\mathcal{F}^{-n}_{\la_n}$ of $\mathcal{F}$ under  
$f^n_{\la_n}$ converges to the strong stable foliation associated to $f_0$ in $D^2(t,r)$, uniformly in $z^o$. 
Indeed, we know that the 
 leaves of $\mathcal{F}_{-n}$ are graphs with bounded geometry over some fixed direction, 
and the  $f^n_{\la_n}$ contract exponentially 
 along these graphs, with uniform bounds. So any cluster limit  of $\mathcal{F}^{-n}$ must be $\mathcal{F}^{ss}(f_0)$,  which proves our claim. 
 
 Now,  when $\abs{z^o} = \frac{r}{4}$, 
for every $z$ such that $\abs{z}<\frac{3r}{16}$, when $n$ is large, for $\la_n=\la_n(t,z^o)$
$f^{-n}_{\la_n}(\set{z}\times \dd_r )$ is close to a leaf of $\mathcal{F}^{ss}$
which intersects $\dd_r\times\set{0}$ transversely  at a distance $\geq \frac{r}{8}$ from $t$. 
Therefore, 
$f^{-n}_{\la_n}\big(\dd_{\frac{3r}{16}} \times \dd_r \big)\cap \Gamma_{\la_n}^u$ is the union of $d$ graphs, 
over a disk of radius greater than $\frac{3r}{16} \cdot \frac45 = \frac{3r}{20}$ by Lemma \ref{lem:classical}. 
Pushing by $f^n_{\la_n}$ and applying the lemma again, we conclude that 
 $f^n_{\la_n}(\Gamma_{\la_n}^u)$ is a union of $d$ graphs over $\dd_{\frac{3r}{25}}$.
  
 On the contrary, when ${z^o}=0$, for $\la_n = \la_n(t,0)$,  $\mathrm{pr_1}(f^n_{\la_n }) = 0$. Since $\mathcal{F}^{-n}$ converges to 
 $\mathcal{F}^{\rm  ss}$, when $n$ is large $ \Gamma_{\la_n }^u$ has a tangency with $\mathcal{F}^{-n}$ close to $t$, hence 
 $f^n_{\la_n }(\Gamma_{\la_n }^u)$ has a vertical tangency close to $m$.  
 
We see that  the assumptions of Proposition \ref{prop:tangencies} are satisfied so there exists a parameter $\la_n=\la_n(t,z^o)$ such that $f^n_{\la_n}(\Gamma_{\la_n }^u)$ has a tangency with $W^s(q_{\la_n})$ close to $m$, and we are done.
\end{proof}

\begin{proof}[Proof of Theorem \ref{theo:tangencies}]
Let $(f_\la)_{\la\in \La}$ be a holomorphic family of polynomial automorphisms of $\cd$ of dynamical 
degree $d\geq 2$ with a bifurcation at $\la_0$. By Proposition \ref{prop:henon} we may assume that the 
 $f_\la$ are products of Hénon mappings.  Then close 
to $\la_0$ there is a periodic point with a multiplier $\rho$ crossing the unit circle. Without loss of generality we may 
assume that $\dim(\La) =1$.
Hence there exists $\la_1$ close to $\la_0$ such that at $\la_1$, the multiplier  is a root of unity, different from 1, 
so that this periodic point $p_\la$ can be followed holomorphically close to $\la_1$. In addition we may assume that 
$\frac{\fr\rho}{\fr\la}\rest{\la=\la_1}\neq 0 $. Replacing $f_\la$ by $f_\la^k$ for some $k$, we may assume that 
$p_\la$ is fixed and $\rho_{\la_1} = 1$ (we keep the same notation for the new multiplier, which is the $k^{\rm th}$ power of the previous one) . Notice that for the new multiplier we still have that   $\frac{\fr\rho}{\fr\la}\rest{\la=\la_1}\neq 0$.

Since the condition that $\abs{\Jac f_\la} < \deg(f_\la)^{-2}$  is preserved under iteration, Theorem \ref{theo:critical} asserts that there is a critical point in every  component of the attracting basin of $p_{\la_1}$. Thus  the result follows from 
 Proposition \ref{prop:critical to tangency}.   
 \end{proof}

\appendix
 \section{Attracting basins}  \label{sec:attracting}
 The methods of \S\ref{subs:critical} also give 
 the existence of ``critical points" in attracting basins, under
 certain minor   hypotheses (that are needed to even  define the
 critical points).   Though these results are not used   in the paper,
they are interesting on their own right. 
 
 Let $f$ be a polynomial automorphism of dynamical degree $d\geq 2$ 
 with an attracting point $p$. As usual, we may assume that $p$ is fixed, 
 and we denote by $\mathcal{B}$ its basin of attraction. It is classical that there is a local 
 holomorphic change of coordinates which puts $f$ in a simple normal form 
 (this result goes apparently  back 
 to Lattès \cite{lattes}).
 Let $\kappa_1$ and $\kappa_2$ be the eigenvalues of $Df_p$,  
 ordered so that $0< \abs{\kappa_2}\leq \abs{\kappa_1} <1$. 
 We say that $(\kappa_1, \kappa_2)$ is resonant if there exists an integer $i\geq 1$ such that $\kappa_2 = \kappa_1^i$ (notice that   $i=1$ is allowed).  
Then there exists a local change of coordinates near $p$    
such that in the new coordinates $(z_1, z_2)$, $f$ expresses as 
 $$f(z_1, z_2 ) =  \begin{cases}
 (\kappa_1 z_1, \kappa_2  z_2) \text{ if }(\kappa_1, \kappa_2) \text{ is not resonant,}\\
 (\kappa_1 z_1, \kappa_2  z_2+ \alpha z_1^i) \text{ otherwise, where }i\text{ is as above, and }\alpha\in \set{0,1}.
 \end{cases} $$ 
In any case, we see that the vertical foliation $\set{z_1 = C}$ 
 is invariant under $f$.  If $\abs{\kappa_2}<\abs{\kappa_1}$   
this is the ``strong stable foliation", characterized by the property that points in the same leaf approach 
each other at the fastest possible rate $\kappa_2^n$. As before, it
 will be denoted by $\mathcal{F}^{ss}$.
Using the dynamics, the  coordinates $(z_1, z_2)$ extend to the basin and define a biholomorphism $\mathcal{B}\simeq \cd$.
 In the non-resonant (i.e. linearizable) 
 case, the foliation  $\set{z_2 = C}$ 
 is invariant as well. 
  We then simply refer to $\set{z_1=C}$ 
 and $\set{z_2=C}$ as the invariant coordinate foliations in $\mathcal{B}$.  
  
We give two statements on the existence of critical points. The first one     parallels Theorem~\ref{theo:critical}.
 
 \begin{thm}\label{thm:critical hyperbolic1}
 Let $f$ be a polynomial automorphism of $\cd$ of dynamical degree
 $d\geq 2$, possessing an attracting point $p$,  whose 
eigenvalues satisfy  $0<\abs{\kappa_2}<\abs{\kappa_1}<1$, with basin of
 attraction $\mathcal{B}$. Assume that 
$\abs{\Jac f}<{d^{-4}}$, or more generally that the connected component of  $p$ in $W^{ss}(p)\cap J^-$ is $\set{p}$. 
Then for every saddle periodic
  point $q$, every component of $W^u(q)\cap \mathcal{B}$ contains a 
 critical point, that is, a point of tangency with the strong stable foliation in $\mathcal{B}$.
 \end{thm}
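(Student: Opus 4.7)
The plan is to follow the two-step strategy used to prove Theorem \ref{theo:critical}: first reduce the Jacobian hypothesis to the geometric one on $W^{ss}(p)\cap J^-$ via an analogue of Corollary \ref{cor:point}, then derive the existence of critical points along the lines of Proposition \ref{prop:critical}.

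For the reduction, the strong stable manifold $W^{ss}(p)$ is parameterized dynamically via the expansion of $f^{-1}$ along it: the standard Graph Transform construction yields an entire $\psi^{ss}:\cc\to\cd$ satisfying $f^{-1}\circ\psi^{ss}(t)=\psi^{ss}(\kappa_2^{-1}t)$, with $|\kappa_2^{-1}|>1$. Lemma \ref{thm:jin} applied to $f^{-1}$ (which shares the dynamical degree $d$ of $f$) gives $\psi^{ss}$ order $\rho=\log d/\log|\kappa_2|^{-1}$. From $|\kappa_2|\le|\kappa_1|$ and $|\kappa_1\kappa_2|=|\Jac f|<d^{-4}$ one deduces $|\kappa_2|<d^{-2}$, hence $\rho<1/2$. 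The argument of Corollary \ref{cor:point} then applies verbatim: Wiman's theorem produces circles $\{|t|=r_n\}$ on which $|\psi^{ss}|\to\infty$, and the inclusion $W^{ss}(p)\subset K^+$ gives $W^{ss}(p)\cap J^-\subset K^+\cap J^-=K$, bounded in $\cd$, so the component of $0$ in $(\psi^{ss})^{-1}(J^-)$ is bounded and invariant under the expansion $t\mapsto\kappa_2^{-1}t$, hence equal to $\{0\}$.

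For the main step, I would mimic the proof of Proposition \ref{prop:critical} using the linearizing biholomorphism $\Phi=(\phi_1,\phi_2):\mathcal{B}\to\cd$ in place of the Fatou coordinates, so that $\mathcal{F}^{ss}=\{\phi_1=\mathrm{const}\}$. Parameterize $W^u(q)$ by $\psi^u:\cc\to\cd$, fix a component $\omega$ of $(\psi^u)^{-1}(\mathcal{B})$ (non-empty by Proposition \ref{prop:basin unstable}), and assume for contradiction that $\phi_1\circ\psi^u|_\omega$ has no critical point. It cannot be constant (else $W^u(q)$ would lie in a single strong stable leaf and therefore be contained in $K$, contradicting the unboundedness of unstable manifolds in $\cd$), so it is locally univalent and admits an asymptotic path $\gamma:[0,\infty)\to\omega$ along which $\phi_1\circ\psi^u\to a\in\cc$. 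The argument then splits depending on whether $\phi_2\circ\psi^u(\gamma)$ is unbounded (Case 1) or bounded (Case 2).

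In Case 1, I would fix a forward-invariant bidisk $D^2$ around $p$ in the linearizing coordinates. For large $n$, $|\phi_1\circ f^n\circ\psi^u(\gamma(t))|\le|\kappa_1|^n\sup_s|\phi_1\circ\psi^u(\gamma(s))|\to 0$ uniformly in $t$, so the iterated curve cannot exit $D^2$ through its vertical boundary; the unboundedness of $\phi_2\circ\psi^u(\gamma)$ meanwhile lets one choose $t(n)$ at which the $\phi_2$-coordinate of $f^n\circ\psi^u(\gamma(t(n)))$ has modulus $\rho$, forcing the connected component of $f^n\circ\psi^u(\gamma)\cap D^2$ through the starting point to reach the horizontal boundary. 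A Hausdorff limit of these components is a non-trivial closed connected subset of $W^{ss}_{\rm loc}(p)\cap J^-$ passing through $p$, contradicting Step 1. In Case 2, $\Phi\circ\psi^u(\gamma)$ stays in a compact of $\cd\simeq\mathcal{B}$, forcing $\gamma$ to infinity in $\cc$; the cluster set $K''\subset\{z_1=a\}$ is then compact in $\cd$, and its forward iterates $f^n(K'')$ sit in leaves $\{z_1=\kappa_1^n a\}$ with diameters of order $|\kappa_2|^n$ and inter-leaf separation of order $|\kappa_1|^n$. Since $|\kappa_2|<|\kappa_1|$, projecting to any linear functional $\mathrm{pr}_1$ with nonzero component along the $\kappa_1$-eigendirection at $p$ produces, for every $k$, a common $\e$ and $k$ choices of $n$ yielding $\e$-approximate asymptotic values of $\mathrm{pr}_1\circ\psi^u$ that are pairwise $5\e$-separated; Theorem \ref{thm:fuzzy DCA} then contradicts the finite order of $\psi^u$ provided by Lemma \ref{thm:jin}. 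The hardest point is Case 1, where one must verify that the Hausdorff limit genuinely reaches $\{|\phi_2|=\rho\}$, but this follows cleanly from the uniform $\phi_1$-estimate above; the resonant case $\kappa_2=\kappa_1^i$ contributes only an $O(n)$ correction to the contraction in $\phi_2$, which is easily absorbed by the exponential gap $|\kappa_1|/|\kappa_2|>1$.
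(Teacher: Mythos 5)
Your proposal tracks the paper's proof closely: the reduction of the Jacobian hypothesis to the geometric hypothesis via Lemma \ref{thm:jin} applied to $f^{-1}$, Wiman's theorem, and the argument of Corollary \ref{cor:point} is exactly right, and the split into two cases according to the boundedness of the second coordinate along the asymptotic path mirrors the proof of Proposition \ref{prop:critical}. Case 1 is also handled correctly, including the observation that the resonant correction is harmless.

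There is, however, a genuine gap in Case 2. You allow the asymptotic value $a$ of $\phi_1\circ\psi^u|_\omega$ to be an arbitrary point of $\cc$, and in particular you do not exclude $a=0$. But $a=0$ is precisely the value corresponding to the distinguished leaf $W^{ss}(p)=\{z_1=0\}$: if $a=0$, the cluster set $K''$ lies in $W^{ss}(p)$, and its forward iterates $f^n(K'')$ all sit in the \emph{same} leaf $\{z_1=0\}$. The ``inter-leaf separation of order $|\kappa_1|^n$'' that you invoke is then $|\kappa_1^n a-\kappa_1^{n'}a|=0$, while the diameters of $\mathrm{pr}_1(f^n(K''))$ \emph{and} the distances between their centers both decay at the same rate $|\kappa_2|^n$. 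The $\epsilon$-separation needed for Theorem \ref{thm:fuzzy DCA} cannot be achieved, and the contradiction evaporates. Note that this degeneracy has no analogue in Proposition \ref{prop:critical}: in the semi-parabolic case $p\notin\mathcal{B}$, the Fatou coordinate $\varphi^\iota$ takes all values in $\cc$ with no distinguished leaf inside the basin, and the centers decay only polynomially while the diameters decay exponentially. In the attracting case $p\in\mathcal{B}$ and $\phi_1(p)=0$ singles out one leaf, which is why the issue appears.

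The paper's remedy is to consider $\pi_1\circ\psi^u$ not on $\omega$ but on $\omega\setminus(\psi^u)^{-1}(W^{ss}(p))$, viewed as a locally univalent map into $\cc^*$; since it is then not a covering of $\cc^*$, its asymptotic value lies in $\cc^*$ and is in particular nonzero, which restores the leaf separation $|\kappa_1^n a - \kappa_1^{n'}a|\gtrsim |\kappa_1|^{\min(n,n')}$ and allows the fuzzy DCA argument to run. You should incorporate this restriction of domain and codomain; as it stands your Case 2 argument is incomplete.
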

  
  The second statement concerns the hyperbolic case. 
  
 \begin{thm}\label{thm:critical hyperbolic2}
 Let $f$ be a polynomial automorphism of $\cd$ of dynamical degree $d\geq 2$, possessing an attracting point $p$ with 
 basin $\mathcal{B}$. Assume that $f$ is uniformly hyperbolic on $J$, and fix any saddle periodic point $q$.
 
 If the eigenvalues of $p$ satisfy      $\abs{\kappa_2}<\abs{\kappa_1}$ (resp. are  non-resonant), then every component of 
    $W^u(q)\cap \mathcal{B}$ admits a tangency  with the strong stable foliation of $\mathcal{B}$   
  (resp. with   both invariant coordinate foliations).
 \end{thm}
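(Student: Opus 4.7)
The plan is to mimic the proof of Proposition \ref{prop:critical}, the main difference being that the no-hair condition ``the connected component of $p$ in $W^{ss}(p)\cap J^-$ is $\set{p}$'', which was deduced in the moderately dissipative setting from Corollary \ref{cor:point} via Wiman's theorem, must now be extracted from the uniform hyperbolicity of $f$ on $J$. Concretely, I parameterize $W^u(q)$ by an entire map $\psi^u:\cc\to\cd$ of finite order (Lemma \ref{thm:jin}), and let $\omega$ be a component of $(\psi^u)^{-1}(\mathcal{B})$, which is non-empty by Proposition \ref{prop:basin unstable} and simply connected by the maximum principle. Working in the linearizing coordinates $(z_1,z_2):\mathcal{B}\simeq\cd$, in which $\mathcal{F}^{ss}=\set{z_1=C}$ (and, in the non-resonant case, $\set{z_2=C}$ is also invariant), I set $g=z_1\circ\psi^u:\omega\to\cc$. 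Assuming by contradiction that there is no tangency with $\mathcal{F}^{ss}$ in $\omega$, $g$ is locally univalent; exactly as in Proposition \ref{prop:critical}, $g$ is not constant, and the Gross--Iversen theorem yields an asymptotic path $\gamma:[0,\infty)\to\omega$ with $g(\gamma(t))\to a\in\cc$.

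I then run the same dichotomy as in Proposition \ref{prop:critical}. If $z_2\circ\psi^u(\gamma)$ is bounded, the cluster set $K_0$ of $\psi^u(\gamma)$ is a compact subset of $\mathcal{B}$ contained in $\set{z_1=a}$, its $f$-iterates $K_n=f^n(K_0)$ lie in $\set{z_1=\kappa_1^n a}$ with $z_2$-diameter $|\kappa_2|^n\cdot\mathrm{diam}(K_0)$, and their images under a fixed linear coordinate of $\cd$ furnish $\e$-approximate asymptotic values of the entire function $\mathrm{pr}_i\circ\psi^u$; by selecting a large number of indices $n_j$ for which these approximate values are mutually well-separated (possible because the negative iterates sweep arbitrarily many distinct regions of $\partial\mathcal{B}$ in $\cd$), Theorem \ref{thm:fuzzy DCA} is violated in view of the finite order of $\psi^u$. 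If $z_2\circ\psi^u(\gamma)$ is unbounded, the Hausdorff limit procedure in a local bidisk around $p$, in which the strong stable foliation consists of vertical graphs clustering on $W^{ss}_{\mathrm{loc}}(p)$, produces a non-trivial closed connected subset of $W^{ss}_{\mathrm{loc}}(p)\cap J^-$ containing $p$.

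To conclude I must show that, under hyperbolicity, the connected component of $p$ in $W^{ss}(p)\cap J^-$ reduces to $\set{p}$ (and likewise for the leaf of $\set{z_2=C}$ through $p$ in case (ii)). My plan is as follows: any $z\in W^{ss}(p)\cap J^-\setminus\set{p}$ has forward orbit converging to $p$ and bounded backward orbit, hence accumulating on the non-wandering set in the complement of $\mathcal{B}$, which by hyperbolicity is contained in $J$; if $w\in J$ is an accumulation point of $\set{f^{-n}(z)}$, then hyperbolicity forces $z\in W^u(w)$. In a neighborhood of $w$, $J^-$ is locally a Riemann surface lamination by unstable manifolds with Cantor transverse structure, so either $W^{ss}(p)$ is transverse to the unstable direction at $z$, making $W^{ss}(p)\cap J^-$ locally a Cantor subset of $W^{ss}(p)$ with singleton components, or $W^{ss}(p)$ is tangent to an unstable leaf at $z$, in which case the Hyperbolic $\lambda$-lemma combined with density of $W^u(q)$ in $J^-$ gives the sought-for tangency between $W^u(q)$ and a leaf of $\mathcal{F}^{ss}$ directly. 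The $\kappa_2$-invariance of $W^{ss}(p)\cap J^-$ then propagates this local disconnection to the component through $p$. Case (ii) is obtained by running the same scheme with $g=z_2\circ\psi^u$ and the foliation $\set{z_2=C}$, the no-hair argument being symmetric in the two coordinates under non-resonance. The main obstacle I anticipate is the careful control of $W^{ss}(p)$ near its accumulation points in $J$: $W^{ss}(p)$ only extends as a leaf of $\mathcal{F}^{ss}$ inside $\mathcal{B}$, while the lamination structure of $J^-$ only lives in a neighborhood of $J$ disjoint from $p$, so the backward iteration $f^{-n}$ is used precisely to transport the disconnection result from a neighborhood of $J$ back to $W^{ss}_{\mathrm{loc}}(p)$.
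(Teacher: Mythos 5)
The bounded sub-case of your argument (where $z_2\circ\psi^u(\gamma)$ is bounded) essentially matches the paper's when $|\kappa_2|<|\kappa_1|$, though your parenthetical explanation of why the $\e$-approximate asymptotic values can be taken pairwise well-separated is off: the point is simply that $\mathrm{pr}_1(f^n(K_0))$ has diameter $\lesssim|\kappa_2|^n$ about a center $\sim\kappa_1^n a$, so the gap $|\kappa_2|<|\kappa_1|$ makes consecutive images eventually disjoint — it has nothing to do with negative iterates sweeping $\partial\mathcal{B}$.

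The real divergence — and a genuine gap — is your handling of the unbounded sub-case. You propose to first establish, under hyperbolicity, the ``no-hair condition'' that the component of $p$ in $W^{ss}(p)\cap J^-$ is $\{p\}$, and then feed it into Proposition~\ref{prop:critical}. The paper never proves this statement and does not need it. Instead it accepts, for contradiction, that the Hausdorff-cluster continuum $C\subset W^{ss}_{\mathrm{loc}}(p)\cap J^-$ through $p$ is non-trivial, picks a point $c\in C\setminus\{p\}$ at which $W^{ss}(p)$ is \emph{transverse} to the unstable lamination (such $c$ exists because tangencies between $W^{ss}(p)$ and the unstable lamination are discrete, by \cite[Lemma~6.4]{bls}), sets up a bidisk $\bb$ near $c$ with $W^{ss}(p)=\{x=0\}$ and the unstable leaves horizontal, and observes that the iterates $f^{n_j}(\psi^u(\gamma))$ contain components $C_j$ passing \emph{vertically} through $\bb$ and touching the boundary — yet $C_j\subset W^u(q)\subset J^-$ must lie in a single \emph{horizontal} unstable leaf. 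That is a clean contradiction requiring no information about the transverse Cantor structure of the lamination. Your transverse sub-case, by contrast, rests on the additional claim that $W^{ss}(p)\cap J^-$ is locally totally disconnected near $z$ (i.e.\ that the transverse structure of the unstable lamination is Cantor), which is an extra fact you would need to prove separately and which the paper carefully avoids. More seriously, your tangent sub-case does not yield a contradiction at all: it produces a tangency of $W^u(q)$ with $\mathcal{F}^{ss}$ near $z$, but that tangency lies near $\partial\mathcal{B}$ and in an unspecified component of $W^u(q)\cap\mathcal{B}$ — not necessarily the fixed component $\omega$ with which the proof by contradiction began (and if $z\in J$ the foliation $\mathcal{F}^{ss}$ is not even defined at $z$). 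Since the theorem asserts a tangency in \emph{every} component, the branch must terminate in a contradiction, not a conclusion somewhere else; the correct escape, which you do not take, is to use the discreteness of tangencies to choose $z$ transverse. Finally, the non-resonant case (ii) is not simply ``the same scheme with $z_2\circ\psi^u$'': when $|\kappa_1|=|\kappa_2|$ the diameter/spacing comparison fails, and the paper instead uses the vertical-versus-horizontal lamination argument to force the cluster set $E$ to be a point, then invokes the \emph{ordinary} Denjoy--Carleman--Ahlfors theorem rather than the $\e$-approximate version.
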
 

%
%
%

Here is an interesting geometric consequence. Recall that if $f$ is dissipative and 
hyperbolic, $J^+$ is (uniquely) laminated by stable manifolds. Let us denote by $W^s(J)$ this lamination.
It is natural to wonder whether 
the strong stable foliation in $\mathcal{B}$ matches continuously with the lamination of $J^+$ (recall that $\fr \mathcal{B} =J^+$). 
The existence of critical points implies that this is never the case 
(compare \cite[Cor. A.2]{bs7}).  

\begin{cor}
Let $f$ be as in the previous theorem,  in particular $f$ is hyperbolic on $J$.  
Then if $p$ is an attracting point with eigenvalues $\abs{\kappa_2}<\abs{\kappa_1}$ and basin $\mathcal{B}$, then
  for every $x\in J$,  $W^s(J)\cup \mathcal{F}^{ss}(\mathcal{B})$ does not define a lamination near $x$.  
If $p$ is linearizable, the same holds for 
both invariant coordinate foliations. 
\end{cor}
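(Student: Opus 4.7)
The plan is to argue by contradiction, leveraging Theorem \ref{thm:critical hyperbolic2} together with the fact that both $\mathcal{F}^{ss}$ and the unstable manifold of a fixed saddle are $f$-invariant, so that a tangency between them is preserved along the orbit. Suppose that $W^s(J)\cup\mathcal{F}^{ss}(\mathcal{B})$ assembles into a continuous lamination $\mathcal{L}$ in a neighborhood $U$ of some $x\in J$. Since its leaves are holomorphic disks varying continuously, the tangent plane field $T\mathcal{L}$ is a continuous complex line field on $U$.

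First, by density of saddle periodic points in $J$, I would pick a saddle $q\in U\cap J$; replacing $f$ by a suitable iterate, we may assume $q$ is fixed. Then $W^s(q)$ is a leaf of $\mathcal{L}$ near $q$, and by hyperbolicity $T_q W^u(q)$ is transverse to $T_q\mathcal{L}$. Combining this with continuity of $T\mathcal{L}$ and smoothness of $W^u_{\rm loc}(q)$ gives a smaller bidisk $V\subset U$ around $q$ on which $W^u(q)\cap V$ is transverse to every leaf of $\mathcal{L}$ meeting $V$; in particular, $W^u(q)$ admits no tangency with $\mathcal{F}^{ss}$ inside $V\cap\mathcal{B}$.

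Next, Theorem \ref{thm:critical hyperbolic2} produces at least one tangency $t\in\mathcal{B}$ of $W^u(q)$ with $\mathcal{F}^{ss}$. Parametrize $W^u(q)$ by $\psi^u:\cc\to\cd$ with $\psi^u(0)=q$ and $f\circ\psi^u(\zeta)=\psi^u(\kappa^u\zeta)$, where $|\kappa^u|>1$ is the unstable multiplier at $q$. Writing $t=\psi^u(\tau)$, the pre-iterates satisfy $f^{-n}(t)=\psi^u(\kappa^{-n}\tau)\to q$ in $\cd$. Since the basin $\mathcal{B}$ is fully $f$-invariant, and both $\mathcal{F}^{ss}$ and $W^u(q)$ are $f$-invariant (so that tangencies are preserved by each $f^{-n}$, the differential being a linear isomorphism), each $f^{-n}(t)$ lies in $\mathcal{B}\cap W^u(q)$ and is again a tangency with some leaf of $\mathcal{F}^{ss}$. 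For $n$ sufficiently large, $f^{-n}(t)\in V$, directly contradicting the transversality established in the previous step.

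The linearizable case is handled identically, by applying the same argument separately to each of the two invariant coordinate foliations in $\mathcal{B}$ and using that Theorem \ref{thm:critical hyperbolic2} furnishes tangencies of $W^u(q)\cap\mathcal{B}$ with each of them. No serious obstacle arises here: the corollary reduces to the dynamical contraction of $f^{-1}$ along $W^u(q)$, which pulls any tangency arbitrarily close to the saddle $q$ and hence into the zone $V$ where the hypothetical lamination forces transversality. All the real work is already packaged into Theorem \ref{thm:critical hyperbolic2}.
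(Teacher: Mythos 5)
Your argument is correct and is essentially the same as the paper's: reduce to the case where $x$ is a saddle $q$ (using density of saddles in $J$), observe that the hypothetical lamination would force $\mathcal{F}^{ss}$ to be transverse to $W^u(q)$ near $q$, and then pull the critical point supplied by Theorem \ref{thm:critical hyperbolic2} back along $W^u(q)$ via $f^{-n}$ to produce a tangency arbitrarily close to $q$, a contradiction. Your write-up merely makes explicit the points the paper leaves terse (the reduction to a saddle, the persistence of tangencies under $f^{-n}$, and the continuity of the putative lamination's tangent line field).
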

 
 \begin{proof}
 Let us deal with the case where $\abs{\kappa_1}<\abs{\kappa_2}$.
  It is enough to assume that $x$ is a saddle periodic point. Hyperbolicity implies that 
  $W^u(J)$ and $W^s(J)$ are transverse near $x$, so if $W^s(J)\cup \mathcal{F}^{ss}(\mathcal{B})$  is a lamination near $x$, 
 $\mathcal{F}^{ss}(\mathcal{B})$ must be transverse to $W^u(J)$ near $x$.  
 On the other hand, there exist critical points on $W^u(x)$ arbitrary close to $x$ (obtained from the previous ones by iterating backwards). This contradiction finishes the proof. \end{proof}

\begin{proof}[Proof of Theorems   \ref{thm:critical hyperbolic1} and \ref{thm:critical hyperbolic2}]
This is very similar to Proposition \ref{prop:critical} so the proof is merely sketched. Let us first deal with the case where 
$\abs{\kappa_2}<\abs{\kappa_1}$, with $f$ hyperbolic or not. 
  Let $\pi_1:\mathcal{B}\cv\cc$ be the projection along the strong stable foliation. 
In the coordinates $(z_1, z_2)$, it simply expresses as $(z_1, z_2)\mapsto z_1$. 
Assume by contradiction that there is no critical point in $\om$. 
Then $\pi_1\circ \psi^u: \om\setminus (\psi^u)^{-1} (W^{ss}(p)) \cv \cc^*$ is a locally univalent map. 
Since it cannot be a covering it must possess an asymptotic value, hence there is a 
diverging path $\gamma $ in $\om$ such that the limit  
$\lim_{t\cv\infty} \pi_1\circ \psi^u(\gamma(t)) = \omega$ exists in $\cc^*$. 
Let $\pi_2:\mathcal{B}\cv\cc$ be the second coordinate projection. As before, we split the argument according to the bounded or unbounded character of $\pi_2\circ \psi^u(\gamma)$. 

\medskip

If $\pi_2\circ \psi^u(\gamma)$ is unbounded, we iterate forward and take cluster values to create an un\-bounded 
component $C$ of $J^-\cap W^{ss}(p)$ containing $p$.  Now if 
$\abs{\Jac f}< d^{-4}$, then $\abs{\kappa_2}<d^{-2}$, so by Corollary \ref{cor:point}, the component of $p$ in 
$W^{ss}(p)\cap J^-$ is a point, and we get a contradiction.

 
If $f$ is hyperbolic 
we argue as follows:  in $\mathcal{B}\setminus\set{p}$, $J^-$ is laminated by unstable manifolds. In 
particular by \cite[Lemma 6.4]{bls} the set of tangencies between
$W^{ss}(p)$   
and the unstable lamination is discrete. 
Pick $c\in C\setminus\set{p}$   such that $W^{ss}(p)$ and the unstable lamination are transverse near $c$. There exist coordinates $(x,y)$ close to $c$ in which $W^{ss}(p)$ is $\set{x=0}$, $c=(0,0)$, 
and the leaves of the unstable lamination close to $c$ are horizontal in the 
unit bidisk 
$\bb$.  By construction, there is a sequence of integers  $n_j$ such that 
$f^{n_j}(\psi^u(\gamma))$  
 has a component $C_j$, vertically  contained in $\bb$,  
 touching the boundary, and 
 passing close to $c$. On the other hand $C_j$ must be contained in a leaf of the unstable foliation so we get a contradiction. 
 
 
 \medskip
 
If $\pi_2\circ \psi^u(\gamma)$ is bounded, then as before the path $\gamma$ must be unbounded in $W^u(q)$. Let $E$ 
be the cluster set of $\psi^u(\gamma)$, which is a compact subset of the  strong stable leaf 
$\set{z_1 =\omega}$. If $\abs{\kappa_2}<\abs{\kappa_1}$, then as in Proposition  \ref{prop:critical} 
we make a linear change of coordinates close to $p$ such that in the new coordinates, $f$ expresses as 
$f(x,y )  = (\kappa_1 x, \kappa_2y)+ h.o.t.$
We see that in these coordinates, $\mathrm{pr_1}(f^n(E))$ is a set of diameter 
$\lesssim \kappa_2^n$ about $x_n\sim c \kappa_1^n$, which leads to  a contradiction with Theorem \ref{thm:fuzzy DCA}, 
exactly as in Proposition  \ref{prop:critical}.

\medskip

It remains to treat the case where $f$ is hyperbolic, $p$ is linearizable, 
and we look for tangencies with any of the invariant coordinate foliations. 
We argue  exactly as before, with $(\pi_1, \pi_2)$ 
being  the linearizing  coordinate projections, in any order, and keep the same notation. 
The case where $\pi_2\circ \psi^u(\gamma)$ is unbounded is dealt with exactly as above. If now
 $\pi_2\circ \psi^u(\gamma)$ is bounded and $E$ denotes its cluster set in the leaf $\set{z_1 = \omega}$,  
  we observe  that   as in the unbounded case, the laminar 
structure of $J^-$ outside $p$ forces $E$ to be reduced to a point. Therefore $\psi^u$ admits an asymptotic value in $\mathcal{B}
\setminus\set{p}$ and the contradiction arises by iterating and applying the   ordinary Denjoy-Carleman-Ahlfors theorem.
\end{proof}


\begin{thebibliography}{[ABCD]}

\bibitem[Be]{beardon iteration} Beardon, Alan F. {\em  Iteration of rational functions. Complex analytic dynamical systems.} Graduate 
Texts in Mathematics, 132. Springer-Verlag, New York, 1991.

\bibitem[BLS1]{bls}
Bedford, Eric;  Lyubich, Mikhail;  Smillie, John. {\em Polynomial
diffeomorphisms of $\cc^ 2$. IV. The measure of maximal entropy and
laminar currents. }Invent. Math.  112  (1993), 77--125.

\bibitem[BLS2]{bls2}
Bedford, Eric;  Lyubich, Mikhail;  Smillie, John.
{\em Distribution of periodic points of polynomial diffeomorphisms of
  $\cd$.} Invent. Math.  114 (1993), 277-288.
  
  \bibitem[BS1]{bs1} Bedford, Eric; Smillie, John. 
\newblock {\em Polynomial
diffeomorphisms of $\cc^ 2$: currents, equilibrium measure and
hyperbolicity.} 
\newblock Invent. Math. 103 (1991), 69--99. 

\bibitem[BS3]{bs3} Bedford, Eric; Smillie, John. {\em Polynomial
diffeomorphisms of $\cc^ 2$. III. Ergodicity, exponents and entropy of
the equilibrium measure.} Math. Ann. 294 (1992),  395--420.

\bibitem[BS6]{bs6} Bedford, Eric; Smillie, John.
{\em Polynomial diffeomorphisms of $\cd$. VI. Connectivity of  $J$.}
  Ann. of Math. (2)  148  (1998),   695--735.

  \bibitem[BS7]{bs7}Bedford, Eric; Smillie, John.
{\em Polynomial diffeomorphisms of $\cd$. VII. Hyperbolicity and  external rays.}
  Ann. Sci. {\'E}cole Norm. Sup. (4)  32  (1999),  455--497.


\bibitem[BSU]{bsu} Bedford, Eric; Smillie, John; Ueda, Tetsuo. {\em Parabolic bifurcations in 
complex dimension two}. Preprint (2011).






 \bibitem[BR]{bers royden}
Bers, Lipman; Royden, Halsey L. 
{\em Holomorphic families of injections.}
Acta Math. 157 (1986),   259--286.

 \bibitem[Bt]{berteloot} Berteloot, François {\em 
Méthodes de changement d'échelles en analyse complexe.  }
Ann. Fac. Sci. Toulouse Math. (6) 15 (2006), 427--483.

\bibitem[Bu]{buzzard} Buzzard, Gregery T. {\em  Infinitely many periodic attractors for holomorphic maps of 2 variables.}
 Ann. of Math. (2) 145 (1997),  389--417. 




\bibitem[BHI]{bhi}   Buzzard, Gregery T.; Hruska, Suzanne Lynch; Ilyashenko, Yulij.
{\em Kupka-Smale theorem for polynomial automorphisms of $\mathbb{C}^2$ and 
persistence of heteroclinic intersections.}  Invent. Math.  161  (2005),  45--89.

\bibitem[BV]{buzzard verma} Buzzard, Gregery T.; Verma, Kaushal. {\em Hyperbolic 
automorphisms and holomorphic motions in ${\mathbb C}^2$.}
 Michigan Math. J.  49  (2001),   541--565.
 
\bibitem[Ca]{cantat croissance}  Cantat, Serge. {\em Croissance des variétés instables.} 
 Ergodic Theory Dynam. Systems 23, 4 (2003), 1025--1042.

\bibitem[Ch]{chirka} Chirka, Evgueny M.  {\em Complex analytic sets.}  
 Mathematics and its Applications (Soviet Series), 46. Kluwer Academic Publishers Group, Dordrecht, 1989. 
 
 \bibitem[Cr]{crovisier} Crovisier, Sylvain. {\em   Perturbation de la dynamique de difféomorphismes en petite régularité.} 
 Astérisque 354 (2013). 
 
 \bibitem[DS]{tv} Douady, Adrien; Sentenac, Pierrette. {\em Un tour de valse,} in {\em \'Etude dynamique des polynômes complexes, Partie II. }, by Adrien Douady and John H. Hubbard. Publications Mathématiques d'Orsay, 1985.

 \bibitem[Du]{connex}  Dujardin, Romain.
\newblock{\em Some remarks on the connectivity of Julia sets for
  2-dimensional diffeomorphisms,}
\newblock in {\em  Complex Dynamics,} 63-84, Contemp. Math., 396, 
Amer. Math. Soc., Providence, RI, 2006.


\bibitem[EL]{EL} Eremenko, ALexandre; Levin, Gennadi. 
{\em On periodic points of a polynomial.}, English translation:
Ukrainian Mth. J.,   41 (1989),  1258--1262.

\bibitem[FS]{fornaess sibony} Forn\ae ss, John Erik; 
Sibony, Nessim {\em Complex H{\'e}non mappings in $\cd$ and Fatou-Bieberbach  
domains.}  Duke Math. J.  65  (1992), 345--380.

\bibitem[FM]{fm} Friedland, Shmuel; Milnor, John.
\newblock{\em Dynamical properties of plane polynomial automorphisms.}
Ergodic Theory Dynam. Systems 9 (1989),  67--99.

\bibitem[Ga]{gavosto}  Gavosto, Estela Ana. {\em Attracting basins in ${\mathbf{P}}^2$.} 
J. Geom. Anal. 8 (1998),  433--440.

\bibitem[GK]{goldberg keen}  Goldberg, Lisa R.; Keen, Linda.
{\em A finiteness theorem for a dynamical class of entire functions.} Ergodic Theory Dynam. Systems 6 (1986), no. 2, 183--192.

\bibitem[GO]{goldberg ostrovskii} Goldberg, Anatoly A. and Ostrovskii, Iossif V. {\em Value distribution of meromorphic functions.} Translations of Math. Monographs, v. 236, AMS, 2008.

\bibitem[GS]{GS} J. Graczyk and G. Swiatek. 
{\em  Generic hyperbolicity in the logistic family}.
  Annals of Math.  146 (1997),  1--52.

\bibitem[Fu]{furter} Furter, Jean-Philippe. {\em Plane polynomial automorphisms of fixed multidegree.}
 Math. Ann. 343 (2009), no. 4, 901--920.

\bibitem[H]{hakim} Hakim, Monique. {\em Attracting domains for semi-attractive transformations of $\mathbf{C}^p$}. Publ. Mat. 38 (1994), no. 2, 479--499.
%
%
 
\bibitem[HO]{hov} 
   Hubbard, John H.; Oberste-Vorth, Ralph W. {\em H\'enon mappings 
in the complex domain. II. Projective and inductive
limits of polynomials.} Real and complex dynamical systems, 89--132, 
NATO Adv. Sci. Inst. Ser. C Math. Phys. Sci.,
464, Kluwer Acad. Publ., Dordrecht, 1995.


\bibitem[Ji]{jin} Jin, Teisuke. {\em Unstable manifolds and the Yoccoz inequality for complex Hénon mappings.} 
Indiana Univ. Math. J. 52 (2003) 727--751.

\bibitem[Jo]{jonsson}  Jonsson, Mattias. {\em Holomorphic motions of hyperbolic sets.}
  Michigan Math. J.  45  (1998),   409--415.
  
\bibitem[KM]{km}   Kleinbock, Dmitry; Margulis, Gregory A.
{\em Flows on homogeneous spaces and Diophantine approximation on manifolds.}
Ann. of Math. (2) 148 (1998),  339--360.

\bibitem[Ko]{kobayashi} Kobayashi, Shoshichi. {\em Hyperbolic complex spaces.} Grundlehren der Mathematischen Wissenschaften, 
318. Springer-Verlag, Berlin, 1998.

\bibitem[La]{lattes} Lattès, Samuel. {\em Sur les formes réduites des transformations 
ponctuelles dans le domaine d'un point double.} Bull. Soc. Math. Fr. 39 (1911), 309--345.


 \bibitem[Ln]{langley} Jim K. Langley, {\em Postgraduate notes on complex analysis.} Available online at 
{\tt http://www.maths.nottingham.ac.uk/personal/jkl/pg1.pdf}.
\bibitem[Lv]{lavaurs} Lavaurs, Pierre. {\em Systèmes dynamiques holomorphes: explosion de points périodiques paraboliques.} Thèse, Université Paris-Sud, 1989.

 \bibitem[KSS]{KSS} O. Kozlovski, W. Shen and  S. van Strien.
{\em  Density of hyperbolicity in dimension one}. 
   Annals of Math., 166 (2007), 145--182. 


\bibitem[Le]{levin} 
G. M. Levin. 
\newblock{\em Irregular values of the parameter of a family of polynomial mappings.}
\newblock Russian Math. Surveys 36:6 (1981), 189--190. 


\bibitem[Ly1]{lyubich} Lyubich, Mikhail. {\em
Some typical properties of the dynamics of rational mappings.}
Russian Math. Surveys 38 (1983), no. 5, 154--155.

\bibitem[Ly2]{lyubich-2} Lyubich, Mikhail.
{\em  An analysis of stability of the dynamics of rational functions.}
    Teoriya Funk., Funk. Anal. \& Prilozh., , no 42 (1984), 72--91
    (Russian). English translation: Selecta Mathematica Sovetica,  9 (1990),
    69--90. 

\bibitem[Ly3]{Acta} Lyubich, Mikhail.
{\em  Dynamics of quadratic polynomials}, I-II.
   Acta Math.,  178 (1997), 185--297.



\bibitem[LyP]{lyubich peters}  Lyubich, Mikhail; Peters, Han. 
{\em Classification of invariant Fatou components for dissipative Hénon maps.} 
Preprint IMS at Stony Brook, no 7 (2012). 

\bibitem[MSS]{mss} Ma{\~n}{\'e}, Ricardo; Sad, Paulo; Sullivan, Dennis.
{\em On the dynamics of rational maps. }
 Ann. Sci. {\'E}cole Norm. Sup.  16 (1983), 193--217.
 
 \bibitem[Mu]{mummert} Mummert, Philipp. P. {\em The solenoid and holomorphic motions for Hénon maps.} Indiana Univ. Math. J. 
  56 (2007),  2739--2761.
 
 \bibitem[Nv]{nevanlinna} Nevalinna, Rolf. {\em Analytic functions}. Grundlehren der Mathematischen Wissenschaften, 
 162, Springer-Verlag, New York-Berlin, 1970.  
 
 \bibitem[Nw1]{newhouse1} Newhouse, Sheldon. {\em Non-density of Axiom $A(a)$ on $S^2$.} in {\em Global Analysis, } Proc. Symp. Pure Math. XIV, 191--203, AMS, Providence (1970).
  \bibitem[Nw2]{newhouse2} Newhouse, Sheldon. {\em Diffeomorphisms with infinitely many sinks.} Topology 13 (1974), 9--18. 
  

\bibitem[Pa]{Palis}  Palis, Jacob. {\em A global view of dynamics and a Conjecture of the denseness
    of finitude of attractors.} 
In: Asterisque volume in honor of Douady's 60th birthday 
``G\'eom\'etrie complexe et syst\'emes dynamiques'',
 261 (2000), 335--348. 

\bibitem[PT]{palis takens} Palis, Jacob; Takens, Floris. {\em Hyperbolicity and Sensitive-Chaotic Dynamics at Homoclinic Bifurcations.}
  Cambridge Univ. Press, 1993

\bibitem[PS]{pujals sambarino} Pujals, Enrique R.; Sambarino, Martín {\em
Homoclinic tangencies and hyperbolicity for surface diffeomorphisms. } Ann. of Math. (2) 151 (2000), no. 3, 961--1023.


\bibitem[Sc]{schmidt} Schmidt, Wolfgang M. {\em
Metrische Sätze über simultane Approximation abhängiger Grössen.}
Monatsh. Math. 68 (1964), 154--166.

\bibitem[Sh]{shishikura}  Shishikura, Mitsuhiro. {\em 
Bifurcation of parabolic fixed points.} The Mandelbrot set, theme and variations, 325--363, 
London Math. Soc. Lecture Note Ser., 274, Cambridge Univ. Press, Cambridge, 2000. 

\bibitem[Si]{sibony}  Sibony, Nessim. 
\newblock{\em Dynamique des applications rationnelles de $\mathbb{P} ^k$}.
\newblock Dynamique et g{\'e}om{\'e}trie complexes (Lyon, 1997), 
  Panoramas et Synth{\`e}ses, 8, 1999. 
  
\bibitem[Sl]{slodkowski}  S\l odkowski, Zbigniew. {\em 
Approximation of analytic multifunctions.}
Proc. Amer. Math. Soc. 105 (1989), no. 2, 387--396. 

\bibitem[U1]{ueda} Ueda, Tetsuo. {\em Local structure of analytic transformations of two complex variables. I.} 
J. Math. Kyoto Univ. 26 (1986), no. 2, 233--261
\bibitem[U2]{ueda2} Ueda, Tetsuo. {\em Local structure of analytic transformations of two complex variables. II.} 
J. Math. Kyoto Univ. 31 (1991), no. 3, 695--711.

\bibitem[X]{xie} Xie, Junyi. {\em  Periodic points of birational maps on projective surfaces.} Preprint (2011), {\tt math.AG:1106.1825.}
 
 \bibitem[Z]{zalcman}  Zalcman, Lawrence. {\em  Normal families: new perspectives.}
 Bull. Amer. Math. Soc. (N.S.) 35 (1998),  215--230.
 
\end{thebibliography}
\end{document}